\newlength{\myhmargin} \setlength{\myhmargin}{1in} \addtolength{\myhmargin}{18pt}
\DeclareMathAlphabet\euscript{U}{eus}{m}{n}
\SetMathAlphabet\euscript{bold}{U}{eus}{b}{n}
\title{Khovanov homology detects the trefoils}
\author[John A. Baldwin]{John A. Baldwin}
\address{Department of Mathematics \\ Boston College}
\email{john.baldwin@bc.edu}
\author[Steven Sivek]{Steven Sivek}
\address{Department of Mathematics\\Imperial College London}
\email{s.sivek@imperial.ac.uk}
\thanks{JAB was supported by NSF Grant DMS-1406383 and NSF CAREER Grant DMS-1454865.}
\def\C{{\mathbb{C}}}
\def\ZZ{{\mathbb{Z}}}
\newcommand\sminus-
\newcommand\ssm{\smallsetminus}
\newcommand\cB{\mathcal{B}}
\newcommand\cC{\mathcal{C}}
\newcommand\cG{\mathcal{G}}
\newcommand\pt{\mathrm{pt}}
\newcommand\Z{\mathbb{Z}}
\newcommand\data{\mathscr{D}}
\newcommand\SFH{\mathit{SFH}}
\newcommand\CF{\mathit{\widehat{CF}}}
\newcommand\HFKm{\mathit{HFK^-}}
\newcommand\CFKm{\mathit{CFK^-}}
\newcommand\HFK{\mathit{\widehat{HFK}}}
\newcommand\EH{\mathit{EH}}
\newcommand\Kh{\mathit{Kh}}
\newcommand\Khr{\mathit{Khr}}
\newcommand\Inat{\mathit{I^\natural}}
\newcommand\SHI{\mathit{SHI}}
\newcommand\KHI{\mathit{KHI}}
\newcommand\cinvt{\theta}
\newcommand\linvt{\mathscr{L}}
\newcommand\kinvt{\mathscr{T}} % the Legendrian invariant defined via Stipsicz-Vertesi
\newcommand{\id}{\operatorname{id}}
\newcommand{\longcomment}[2]{#2}
\DeclareFontFamily{U}{mathx}{\hyphenchar\font45}
\DeclareFontShape{U}{mathx}{m}{n}{
      <5> <6> <7> <8> <9> <10>
      <10.95> <12> <14.4> <17.28> <20.74> <24.88>
      mathx10
      }{}
\DeclareSymbolFont{mathx}{U}{mathx}{m}{n}
\DeclareMathAccent{\widecheck}{0}{mathx}{"71}
\newcommand{\HMto}{\widecheck{\mathit{HM}}}
    \def\HMto{%
       \setbox0=\hbox{$\widehat{\mathit{HM}}$}
       \setbox1=\hbox{$\mathit{HM}$}
       \dimen0=1.1\ht0
       \advance\dimen0 by 1.17\ht1
       \smash{\mskip2mu\raise\dimen0\rlap{%
          \begin{turn}{180}
              {$\widehat{\phantom{\mathit{HM}}}$}
           \end{turn}} \mskip-2mu    
                \mathit{HM}
    }{\vphantom{\widehat{\mathit{HM}}}}{}}
    \newcommand*\oline[1]{%
  \vbox{%
    \hrule height 0.35pt%                  % Line above with certain width
    \kern0.1ex%                          % Distance between line and content
    \hbox{%
      \kern-0.0em%                        % Distance between content and left side of box, negative values for lines shorter than content
      \ifmmode#1\else\ensuremath{#1}\fi%  % The content, typeset in dependence of mode
      \kern-0.1em%                        % Distance between content and left side of box, negative values for lines shorter than content
    }% end of hbox
  }% end of vbox
}
\newtheorem{theorem}{Theorem}[section]
\newtheorem{lemma}[theorem]{Lemma}
\newtheorem{conjecture}[theorem]{Conjecture}
\newtheorem{corollary}[theorem]{Corollary}
\newtheorem{proposition}[theorem]{Proposition}
\newtheorem{question}[theorem]{Question}
\theoremstyle{definition}
\newtheorem{definition}[theorem]{Definition}
\newtheorem{notation}[theorem]{Notation}
\newtheorem{remark}[theorem]{Remark}
\newtheorem{example}[theorem]{Example}
\newtheorem*{rep@thm}{\rep@title}
\newcommand{\newreptheorem}[2]{%
\newenvironment{rep#1}[1][0,0]{%
\def\rep@title{#2##1}%
\begin{rep@thm}}%
{\end{rep@thm}}}
\begin{document}
\begin{abstract} 
We prove that Khovanov homology  detects  the trefoils. Our proof incorporates an array of ideas in Floer homology and contact geometry. It uses open books; the contact invariants  we defined in the instanton Floer setting; a    bypass exact triangle in   sutured instanton homology, proven here; and Kronheimer and Mrowka's spectral sequence relating Khovanov homology with singular instanton knot homology. As a byproduct, we also strengthen a result of Kronheimer and Mrowka on  $SU(2)$ representations of the knot group.% and  the computational complexity of trefoil detection.
 \end{abstract}

\maketitle

%%%%%%%%%%%%%%%%%%%%%%%%%%%%%%%%%%%%%%%%%%%%%%%%%%%%%%%
\section{Introduction}
\label{sec:intro}

Khovanov homology assigns to a knot $K\subset S^3$ a bigraded abelian group \[\Kh(K)=\bigoplus_{i,j}\Kh^{i,j}(K)\] whose graded Euler characteristic recovers the  Jones polynomial of $K$. %whose graded Euler characteristic recovers the  Jones polynomial of $K$. A famous open problem asks whether the Jones polynomial detects the unknot. 
In their landmark paper \cite{km-khovanov}, Kronheimer and Mrowka proved that Khovanov homology detects the unknot, answering a weaker version of the famous open question below.
\begin{question} Does the Jones polynomial detect the unknot?
\end{question}
The question below is perhaps even more difficult.

\begin{question}\label{ques:jones-trefoil} Does the Jones polynomial detect the trefoils?
\end{question}
The  goal of this paper is to  prove that Khovanov homology detects the right- and left-handed trefoils, $T_+$ and $T_-$, answering a weaker version of Question \ref{ques:jones-trefoil}. 

Recall that $\Kh(T_+)$ and $\Kh(T_-)$ are both isomorphic to $\Z^4\oplus\Z/2\Z$   but are supported in different bigradings. Our main result is the following.

%In 2010, Kronheimer and Mrowka proved the landmark result that Khovanov homology detects the unknot. The  goal of this paper is to prove that Khovanov homology detects  each of the two trefoils as well. Unsurprisingly, our proof makes use of the relationship between Khovanov homology and instanton Floer homology established by Kronheimer and Mrowka in the course of proving their unknot detection theorem. What is much more surprising is that our trefoil detection theorem also  hinges fundamentally on ideas in contact geometry. In particular, a key tool in this work is the  invariant  of contact 3-manifolds with boundary we recently defined in the sutured instanton Floer context. 

\begin{theorem}\label{thm:kh-detects-trefoil}
$\Kh(K)\cong \Z^4\oplus\Z/2\Z$ if and only if $K$ is a trefoil.
\end{theorem}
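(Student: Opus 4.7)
The ``if'' direction is a direct computation of $\Kh(T_\pm)$. For the ``only if'' direction, assume $\Kh(K) \cong \Z^4\oplus\Z/2\Z$; the plan is to translate this hypothesis into sharp geometric information about $K$ using instanton Floer theory.

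First, I would invoke Kronheimer and Mrowka's spectral sequence, whose $E_2$-page is a form of reduced Khovanov homology and which converges to the singular instanton knot homology $\Inat(K)$. The hypothesis on $\Kh(K)$ pins down $\Khr(K)$ (up to the usual reduced/unreduced relationship) and so produces an upper bound on $\dim \Inat(K)$, which via the known correspondence between $\Inat$ and the sutured invariant yields a small upper bound on $\dim \KHI(K)$.

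Next, I would extract geometry from this dimension bound using structural results for $\KHI$. Kronheimer and Mrowka proved that the top non-trivial Alexander grading of $\KHI(K)$ equals the Seifert genus $g(K)$ (with rank at least one there), and that the graded Euler characteristic of $\KHI(K)$ recovers the Alexander polynomial $\Delta_K(t)$ up to sign. Combined with the dimension bound from the first step and the symmetry of $\KHI$, these constraints force $g(K)=1$. The analog of Ni's fiberedness detection theorem for $\KHI$ --- proved in this paper via open books, the contact invariants defined by the authors in the instanton setting, and the bypass exact triangle in sutured instanton homology established here --- then implies that $K$ is fibered.

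The only fibered genus-one knots in $S^3$ are $T_+$, $T_-$, and the figure-eight knot $4_1$. Since $\Delta_{4_1}(t) = -t+3-t^{-1}$ forces $\dim \KHI(4_1) \geq 5$, the figure-eight is incompatible with the bound from step one; alternatively, $4_1$ has non-right-veering monodromy, hence supports an overtwisted contact structure on $S^3$ with vanishing instanton contact invariant, and the bypass triangle then forces $\KHI(4_1)$ to have rank exceeding what the hypothesis allows. The two trefoils are finally distinguished from each other by the bigradings in which $\Kh$ is supported.

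The main obstacle is the machinery underlying the second step: establishing the bypass exact triangle in sutured instanton homology and combining it with the open-book/contact-invariant framework of the authors to prove an instanton version of Ni's fiberedness detection theorem. Once that infrastructure is available, the passage from the hypothesis to the conclusion is essentially linear algebra together with the classification of fibered genus-one knots in $S^3$.
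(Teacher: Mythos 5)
Your reduction to $\KHI$-detection via the Kronheimer--Mrowka spectral sequence and the relation $\KHI(Y,K)\cong \Inat(Y,K)\otimes_\Z\C$ is correct and matches the paper, as is your elimination of the figure-eight once $K$ is known to be a genus-one fibered knot. The genuine gap is in how you obtain $g(K)=1$. You assert that genus detection and the fact that $\chi(\KHI)$ recovers the Alexander polynomial, combined with the dimension bound and symmetry, ``force $g(K)=1$.'' They do not: a knot with $\KHI(S^3,K)$ supported in Alexander gradings $\{-g,0,g\}$, each of rank one, for some $g>1$ is entirely consistent with every constraint you have invoked (for instance, nothing you cite rules out $\Delta_K(t)=\pm(t^g-1+t^{-g})$ with $g\geq 2$). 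What you actually get from genus detection, symmetry, and the three-dimensional bound is that $\KHI(S^3,K,i)\cong\C$ for $i\in\{-g,0,g\}$ for \emph{some} $g\geq 1$, and then Kronheimer--Mrowka's fiberedness detection (which, incidentally, is their earlier theorem, not one proved here) yields that $K$ is fibered --- but $g\geq 2$ is still alive.

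The missing ingredient is precisely the paper's central new theorem: for a fibered knot $K\subset Y$ of genus $g>0$ with fiber $\Sigma$ and $Y\not\cong\#^{2g}(S^1\times S^2)$, the next-to-top grading $\KHI(Y,K,[\Sigma],g-1)$ is nonzero. Since your dimension count makes $\KHI(S^3,K,g-1)=0$ whenever $g\geq 2$, this is what forces $g=1$. Your instinct that the bypass exact triangle and instanton contact invariants are the technical crux is right, but you deploy them only against the figure-eight; they are in fact needed to prove the next-to-top nonvanishing theorem, which is what rules out all $g\geq 2$ at once. Note also that the paper's logical order is the reverse of yours: it deduces fiberedness first (from the dimension count and KM's fiberedness detection) and only then applies the next-to-top grading theorem to get $g=1$.
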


As a bigraded theory, Khovanov homology therefore detects each of $T_+$ and $T_-$.  One should not expect similar results for other knots in general, since for example Khovanov homology does not distinguish the knots $10_{22}$ and $10_{35}$ from each other.

Like Kronheimer and Mrowka's unknot detection result,  Theorem \ref{thm:kh-detects-trefoil} relies on a relationship between Khovanov homology and instanton Floer homology. More surprising is that our proof also hinges fundamentally on ideas from contact geometry. Essential tools  include the  invariant  of contact 3-manifolds with boundary we  defined  in the instanton Floer setting \cite{bs-shi};  our naturality result for sutured instanton homology  \cite{bs-naturality}; and an instanton Floer version of Honda's bypass exact triangle, established here. 

We describe below how our main Theorem \ref{thm:kh-detects-trefoil} follows from a certain result, Theorem \ref{thm:khi-fibered}, in the instanton Floer setting. We then explain how  the latter theorem can be used to strengthen a result of Kronheimer and Mrowka on  $SU(2)$ representations of the knot group. Finally, we outline both the ideas which motivated our approach to Theorem \ref{thm:khi-fibered} and the proof itself, and along the way we state a bypass exact triangle for instanton Floer homology.

%We describe below  how Theorem \ref{thm:kh-detects-trefoil} follows from  a   result, Theorem \ref{thm:khi-fibered}, in the instanton Floer  setting. %We then explain how the latter theorem can be used to strengthen a result of Kronheimer and Mrowka on  $SU(2)$ representations of the knot group. Finally, outline the motivation behind and proof of Theorem \ref{thm:khi-fibered}. 

\subsection{Trefoils and reduced Khovanov homology} We first note that Theorem \ref{thm:kh-detects-trefoil}   follows  from the detection result below for  reduced Khovanov homology $\Khr$. %$Kh(T_{\pm})$ and the reduced Khovanov homology $\Khr(T_{\pm})$.

\begin{theorem}\label{thm:khr-detects-trefoil}
$\dim_\Z\Khr(K)=3$ if and only if $K$ is a trefoil.
\end{theorem}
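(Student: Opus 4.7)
The plan splits into the two implications. For the forward direction, I would read off $\dim_\Z \Khr(T_\pm) = 3$ from the known bigraded description of $\Kh(T_\pm) \cong \Z^4 \oplus \Z/2\Z$; this is a routine computation. The substance of the theorem is the converse: assume $K \subset S^3$ satisfies $\dim_\Z \Khr(K) = 3$, and show that $K$ is a trefoil.

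The first step is to push this bound into the instanton Floer setting via the Kronheimer--Mrowka spectral sequence from $\Khr(K)$ to the framed singular instanton knot homology $\Inat(K)$, which yields $\dim \Inat(K) \leq 3$. Since $\chi(\Inat(K)) = \pm \det(K)$ is an odd integer, the dimension $\dim \Inat(K)$ is odd, and therefore equals $1$ or $3$. The value $1$ is ruled out by Kronheimer and Mrowka's unknot detection theorem, since that would force $K$ to be the unknot, contradicting $\dim_\Z \Khr(K) = 3$. Hence $\dim \Inat(K) = 3$.

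The next step is to convert this into a statement about the sutured instanton knot invariant $\KHI(K)$ so that the announced Theorem \ref{thm:khi-fibered} can be applied. Once its hypothesis is verified, I would use that theorem to conclude that $K$ is a fibered knot of genus one. The classification of genus-one fibered knots in $S^3$ limits $K$ to $T_+$, $T_-$, or the figure-eight $4_1$; since $\dim_\Z \Khr(4_1) = 5 \neq 3$, the figure-eight is excluded and $K$ must be a trefoil.

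The principal obstacle is precisely Theorem \ref{thm:khi-fibered}: while the Kronheimer--Mrowka spectral sequence readily produces the numerical bound on $\Inat(K)$, the key features---fiberedness and the Seifert genus---are properties of the Alexander grading structure of $\KHI(K)$ that are not visible from $\Inat(K)$ alone. Extracting them is what requires the heavier machinery advertised in the abstract: the contact invariant in sutured instanton homology, the naturality framework for $\SHI$, and the bypass exact triangle established in this paper. The hard work is concentrated in Theorem \ref{thm:khi-fibered}; once it is in hand, the reduction sketched above is essentially bookkeeping.
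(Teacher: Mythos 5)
Your proposal follows essentially the same route as the paper: push the bound through the Kronheimer--Mrowka spectral sequence to get $\dim\Inat(S^3,K)\leq 3$, use the parity and nontriviality of $\dim\Inat$ to pin it at $3$, pass to $\KHI(S^3,K)$, and invoke Theorem~\ref{thm:khi-fibered} together with the genus and fiberedness detection properties of the Alexander grading to force $K$ to be a genus-one fibered knot, then rule out the figure-eight. The only cosmetic difference is that you justify oddness via the Euler characteristic where the paper cites it directly, and you compress the intermediate reductions through Theorems~\ref{thm:skhi-detects-trefoil} and~\ref{thm:khi-detects-trefoil} into a single paragraph, but the argument is the same.
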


To see how Theorem \ref{thm:kh-detects-trefoil} follows, let us suppose 
 $\Kh(K)\cong \Z^4\oplus\Z/2\Z.$ Then \[\Kh(K;\Z/2\Z)\cong (\Z/2\Z)^6\] by the Universal Coefficient Theorem. Recall the general facts that\begin{enumerate}
\item $\Kh(K)$ and $\Khr(K)$ fit into an exact triangle \[ \xymatrix@C=-15pt@R=15pt{
\Kh(K) \ar[rr] & & \Khr(K) \ar[dl] \\
& \Khr(K);  \ar[ul] & \\
} \] 
\item  $\Kh(K;\Z/2\Z)\cong \Khr(K;\Z/2\Z)\oplus \Khr(K;\Z/2\Z)$. 
\end{enumerate} The first implies that $\dim_\Z\Khr(K)\geq 2$ while the second implies that \[\Khr(K;\Z/2\Z) \cong(\Z/2\Z)^3.\] These together force   $\dim_\Z\Khr(K)=3$ by  another application of the UCT.  Therefore, $K$ is a trefoil by Theorem \ref{thm:khr-detects-trefoil}. We describe below how Theorem \ref{thm:khr-detects-trefoil} follows from Theorem \ref{thm:khi-fibered}.

\subsection{Trefoils and instanton Floer homology}

To prove that Khovanov homology detects the unknot, Kronheimer and Mrowka established in \cite{km-khovanov} a spectral sequence relating  Khovanov homology and singular instanton knot  homology, the latter of which assigns to a knot $K\subset Y$ an abelian group $\Inat(Y,K)$. In particular, they proved that \begin{equation*}\label{eqn:kh-i} \dim_\Z \Khr(K) \geq \dim_\Z \Inat(S^3,K).\end{equation*} Kronheimer and Mrowka moreover showed that the right side is odd and greater than one for nontrivial knots. Theorem \ref{thm:khr-detects-trefoil} therefore follows immediately from the  result below.

\begin{theorem}\label{thm:skhi-detects-trefoil}
If $\dim_\Z \Inat(S^3,K)=3$ then $K$ is a trefoil.
\end{theorem}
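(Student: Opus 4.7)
The plan is to force $K$ to be a genus-one fibered knot and then rule out the figure-eight knot using the contact invariant and bypass machinery developed in the paper.

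First I would translate the hypothesis into a dimension bound on the sutured instanton knot homology $\KHI(K)$. Using the expected dimension inequality $\dim_\Z\KHI(K)\leq \dim_\Z\Inat(S^3,K)$ relating these two instanton theories, the hypothesis yields $\dim_\Z\KHI(K)\leq 3$; on the other hand $K$ is nontrivial (the unknot has $\dim\Inat=1$) and unknot detection for $\KHI$ combined with the lower bound $\dim\KHI(K)\geq 2g(K)+1$ due to Kronheimer--Mrowka forces $\dim\KHI(K)=3$ and $g(K)=1$. The $i\leftrightarrow -i$ symmetry of the Alexander-grading decomposition $\KHI(K)=\bigoplus_i \KHI(K,i)$ then makes each of the three nonvanishing summands $\KHI(K,i)$, $i\in\{-1,0,1\}$, of rank one.

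In particular the top Alexander summand $\KHI(K,1)$ has rank one, so Theorem~\ref{thm:khi-fibered}---the instanton-Floer analog of the Ghiggini--Ni fiberedness theorem---implies that $K$ is fibered. Fibered knots of genus one in $S^3$ are exactly $T_+$, $T_-$, and the figure-eight knot $4_1$; it remains to rule out $4_1$.

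This last step is the main obstacle, and is where I would invoke the contact-geometric tools from the body of the paper. Viewing the exterior of a genus-one fibered knot as a once-punctured-torus bundle with monodromy $\phi \in \mathrm{MCG}(\Sigma_{1,1})$, one can apply the bypass exact triangle for sutured instanton homology to an arc properly embedded in the fiber: this relates $\KHI(K)$ to the sutured instanton homology of open-book-like sutured manifolds whose contact structures have monodromies obtained from $\phi$ by composition with a Dehn twist, and the connecting maps are read off from the contact invariant $\cinvt$. For $T_\pm$ the monodromy $\phi$ is (conjugate to) a single $\pm$-Dehn twist and the bypass triangle collapses to give exactly the rank three we have computed. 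For $4_1$, however, $\phi$ is pseudo-Anosov with trivial fractional Dehn twist coefficient, and this distinction should force the bypass triangle to contribute strictly more than three generators to $\KHI(4_1)$ (paralleling the Heegaard Floer equality $\dim\HFK(4_1)=5$), yielding a contradiction.
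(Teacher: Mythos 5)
Your argument has a genuine gap at its central step: you invoke a lower bound $\dim\KHI(K)\geq 2g(K)+1$ attributed to Kronheimer--Mrowka, but no such result exists in this paper or the literature, and indeed the analogous Heegaard Floer inequality fails --- the $(3,5)$ torus knot has genus $4$ but $\dim\hfk(S^3,T(3,5))=7<9$, because the $t^{\pm 2}$ coefficients of its Alexander polynomial vanish and it is an $L$-space knot, so $\hfk$ has rank zero in Alexander gradings $\pm 2$. A telling sign that something has gone wrong is that if such a bound held, Theorem~\ref{thm:khi-fibered}---the principal technical result of the paper---would be entirely unnecessary for trefoil detection.

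The paper's actual route is different. From $\dim_\C\KHI(S^3,K)=3$, the symmetry $\KHI(S^3,K,i)\cong\KHI(S^3,K,-i)$ and the genus detection \eqref{eqn:genus} force the support to lie in Alexander gradings $0$ and $\pm g(K)$, each of $\C$-dimension one (with $g(K)\geq 1$ since $K$ cannot be the unknot). Fiberedness detection \eqref{eqn:genusfibered} then shows $K$ is fibered, and only now does Theorem~\ref{thm:khi-fibered} enter: it gives $\KHI(S^3,K,g-1)\neq 0$, which combined with the known support forces $g-1\in\{0,\pm g\}$ and hence $g=1$. Two secondary issues in your writeup: you cite Theorem~\ref{thm:khi-fibered} as the fiberedness detection theorem, but that is \eqref{eqn:genusfibered} (equivalently Theorem~\ref{thm:fiberedS}); Theorem~\ref{thm:khi-fibered} is instead the new statement about the next-to-top Alexander grading of fibered knots, which your argument never uses for its intended purpose. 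Also, your proposed bypass-triangle argument to rule out the figure-eight knot is both speculative (``should force\ldots'') and unnecessary --- the paper simply appeals to the known computation $\dim_\C\KHI(S^3,4_1)=5$.
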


We prove Theorem \ref{thm:skhi-detects-trefoil} using yet another knot invariant. The instanton knot Floer homology of a knot $K\subset Y$  is a $\C$-module defined in \cite{km-excision} as the sutured instanton  homology of the knot complement with two oppositely oriented meridional sutures, \[\KHI(Y,K):=\SHI(Y(K),\Gamma_\mu):=\SHI(Y\ssm\nu(K),\mu\cup -\mu)\]
where $\mu$ is an oriented meridian.  It is related to singular instanton knot homology as follows \cite[Proposition 1.4]{km-khovanov}, \begin{equation*}\label{eqn:khi-i}\KHI(Y,K) \cong \Inat(Y,K)\otimes_\Z \C.\end{equation*} 
Theorem \ref{thm:skhi-detects-trefoil} therefore follows immediately from the result below.

\begin{theorem}\label{thm:khi-detects-trefoil}
If $\dim_\C \KHI(S^3,K)=3$ then $K$ is a trefoil.
\end{theorem}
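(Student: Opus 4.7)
My plan is to narrow $K$ down to a short list of candidates using structural properties of $\KHI$, and then invoke the contact-geometric machinery developed earlier in the paper to finish. Using the Alexander grading symmetry, which gives $\dim_\C \KHI_i(S^3,K) = \dim_\C \KHI_{-i}(S^3,K)$, the unknot detection theorem of Kronheimer--Mrowka for $\KHI$, and the genus detection property (the largest $i$ with $\KHI_i(S^3,K) \neq 0$ equals the Seifert genus $g(K)$), the hypothesis $\dim_\C \KHI(S^3,K) = 3$ forces $K$ to be nontrivial and the graded dimensions to be $1$ in each of the three Alexander gradings $-g, 0, g$. In particular $g(K) = 1$ and $\dim_\C \KHI_{g(K)}(S^3,K) = 1$.

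Next, I would appeal to the fiberedness detection theorem for $\KHI$: the condition $\dim_\C \KHI_{g(K)}(S^3,K) = 1$ forces $K$ to be fibered. The classification of genus-one fibered knots in $S^3$ then pins $K$ down to one of three candidates: $T_+$, $T_-$, or the figure-eight knot.

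It remains to rule out the figure-eight. Here I would use Theorem \ref{thm:khi-fibered}, together with the bypass exact triangle in sutured instanton homology and the contact class $\cinvt$ in $\SHI$. The figure-eight fibers with pseudo-Anosov monodromy $D_a D_b^{-1}$ on the once-punctured torus, while the trefoils fiber with periodic monodromy $D_a^{\pm 1}$. Applied to the sutured knot complement $(S^3(K), \Gamma_\mu)$, a judicious sequence of bypass attachments produces open books with successively simplified monodromies; feeding them into the bypass triangle and tracking the contact class through the resulting long exact sequence should exhibit extra nonzero classes in $\KHI(S^3,K)$ for the figure-eight, beyond the three demanded by the symmetric graded structure. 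This would force $\dim_\C \KHI(S^3, \text{figure-eight}) \geq 5$, contradicting the hypothesis and completing the proof.

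The main obstacle is clearly this last step. The reduction to a three-element list follows from relatively formal properties of $\KHI$, but separating the figure-eight from the trefoils requires cohomological access to the distinction between periodic and pseudo-Anosov monodromy on the once-punctured torus. This is precisely the role played by the new bypass triangle in instanton Floer homology and the $\SHI$ contact invariants introduced in \cite{bs-shi}; executing the argument will require careful bookkeeping of contact classes through a specific sequence of bypass surgeries designed to isolate the single-Dehn-twist monodromies characterizing $T_\pm$.
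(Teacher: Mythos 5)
There is a genuine gap, and the key theorem is deployed in the wrong place. You assert that symmetry plus genus detection force $g(K)=1$, but those facts only show that $\KHI(S^3,K)$ is supported with one-dimensional summands in the three gradings $\{-g(K),\,0,\,g(K)\}$; nothing so far rules out $g(K)=7$, say. In the paper's proof, $g(K)=1$ is extracted only \emph{after} establishing that $K$ is fibered, by invoking Theorem~\ref{thm:khi-fibered}: since $K$ is fibered of genus $g>0$ in $S^3\not\cong\#^{2g}(S^1\times S^2)$, that theorem gives $\KHI(S^3,K,g-1)\neq 0$, so $g-1$ must lie in the support set $\{-g,0,g\}$, which forces $g-1=0$. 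This is the entire content and purpose of Theorem~\ref{thm:khi-fibered} in the deduction.

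Your proposed use of Theorem~\ref{thm:khi-fibered} to separate the figure-eight from the trefoils cannot work: for a genus-one fibered knot the theorem merely says $\KHI$ is nonzero in Alexander grading $0$, which holds equally for $T_\pm$ and the figure-eight, so it carries no information about periodic versus pseudo-Anosov monodromy. The figure-eight is disposed of by a much more elementary fact: $\dim_\C\KHI(S^3,4_1)=5$, which already follows from the Alexander polynomial $\Delta_{4_1}(t)=-t+3-t^{-1}$ (so the grading-zero summand has dimension at least $3$) and contradicts the hypothesis $\dim_\C\KHI(S^3,K)=3$. No bypass triangle or contact class is needed at this stage.
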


Our proof of Theorem \ref{thm:khi-detects-trefoil} makes use of  some additional structure on $\KHI$. Namely, if $\Sigma$ is a Seifert surface for $K$ then $\KHI(Y,K)$ may be endowed with a symmetric Alexander grading, \[\KHI(Y,K)=\bigoplus_{i=-g(\Sigma)}^{g(\Sigma)}\KHI(Y,K,[\Sigma],i), \] where \[\KHI(Y,K,[\Sigma],i)\cong\KHI(Y,K,[\Sigma],-i) \text{ for all i}.\]  This grading depends only on the relative homology class of the surface in $H_2(Y,K)$. We will omit this class from the notation when it is unambiguous, as when $Y=S^3$. Kronheimer and Mrowka proved in \cite{km-excision} that if $K$ is fibered with fiber $\Sigma$ then \[\KHI(Y,K,[\Sigma],g(\Sigma))\cong \C.\] Moreover, they showed \cite{km-excision,km-alexander} that the  Alexander grading completely detects genus and fiberedness when $Y=S^3$. Specifically,  \begin{align}
\label{eqn:genus}&\KHI(S^3,K,g(K))\neq 0 \textrm{ and } \KHI(S^3,K,i)=0 \textrm{ for } i>g(K)\\
\label{eqn:genusfibered}&\KHI(S^3,K,g(K))\cong \C \textrm{ if and only if } K \textrm{ is fibered},\end{align} exactly as in Heegaard knot Floer homology.

We claim that  Theorem \ref{thm:khi-detects-trefoil} (and therefore each preceding theorem) follows  from the result below, which states that the instanton knot Floer homology of a fibered knot is nontrivial in the next-to-top Alexander grading.

\begin{theorem}\label{thm:khi-fibered}
Suppose $K$ is a genus $g>0$ fibered knot in $Y\not\cong \#^{2g}(S^1\times S^2)$ with fiber $\Sigma$. Then $\KHI(Y,K,[\Sigma],g{-}1)\neq 0$.%\footnote{We  expect the conclusion of this theorem to hold  for $Y\cong \#^{2g}(S^1\times S^2)$ as well.}
\end{theorem}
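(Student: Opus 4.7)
The plan is to combine the three ingredients highlighted in the introduction---the open book decomposition of $Y$ coming from the fibration of $K$, the contact invariant in sutured instanton homology from \cite{bs-shi}, and the bypass exact triangle announced above---to exhibit a nonzero class in $\KHI(Y,K,[\Sigma],g-1)$ as (the image of) a contact class.

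\emph{Setup.} The fibered knot $K$ determines an open book $(\Sigma,\phi)$ for $Y$, where $\Sigma$ is a once-punctured genus-$g$ surface and $\phi \in \mathrm{MCG}(\Sigma,\partial\Sigma)$ is the monodromy. The hypothesis $Y\not\cong\#^{2g}(S^1\times S^2)$ is equivalent to $\phi\not\simeq\id$ rel $\partial\Sigma$, since the trivial monodromy is the unique one producing $\#^{2g}(S^1\times S^2)$. The contact structure $\xi$ supported by this open book has contact invariant $\cinvt(Y,\xi)\in \KHI(-Y,K)$ living in Alexander grading $g$, and by \eqref{eqn:genusfibered} it generates that one-dimensional summand. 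The idea is to transport this class, via a bypass move, into the next-to-top grading.

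\emph{Bypass step.} Since $\phi\not\simeq\id$, choose a properly embedded essential arc $a\subset\Sigma$ with $\phi(a)$ not isotopic to $a$ rel endpoints. Attaching a bypass along an arc on $\partial Y(K)$ dictated by $a$ produces a bypass exact triangle whose first term is $\SHI(Y(K),\Gamma_\mu)=\KHI(Y,K)$ and whose other two terms are the sutured instanton homologies of the two related bypass configurations. By naturality of the contact invariant under bypass attachments established in \cite{bs-shi}, the class $\cinvt(Y,\xi)$ is carried by the triangle to the contact class of the open book obtained by bypassing along $a$; a grading-tracking argument should place this image in Alexander grading $g-1$ of $\KHI(Y,K)$, so that it suffices to show the image is nonzero.

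\emph{The main obstacle.} The hardest step will be this nonvanishing. Vanishing of the image would require cancellation with a class arising from the third term of the bypass triangle, and I plan to rule this out by computing the contact class of the bypassed open book directly and showing it is nontrivial. This is exactly where the assumption $Y\not\cong \#^{2g}(S^1\times S^2)$ enters: the condition $\phi(a)\not\simeq a$ guarantees that the bypassed open book is not a trivial stabilization and, in particular, does not give back $\#^{2g}(S^1\times S^2)$ with the identity monodromy---so that its contact invariant can be shown nonzero using the structural properties of the instanton contact invariant proved in \cite{bs-shi}. Choosing the arc $a$ carefully enough that the bypass triangle places the image precisely in grading $g-1$ (rather than in some other grading), and identifying the sutured manifolds appearing in the triangle explicitly, are the remaining technical points I would need to work out.
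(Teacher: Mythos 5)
There are several substantive gaps here, starting with a confusion about where the relevant contact-theoretic class lives. You write that ``the contact structure $\xi$ supported by this open book has contact invariant $\cinvt(Y,\xi)\in \KHI(-Y,K)$ living in Alexander grading $g$.'' This is not right: the contact class of the closed manifold $(Y,\xi)$ lives in $\SHI(-Y(1))$, not in $\KHI(-Y,K)$. The class in $\KHI(-Y,K)$ that the paper uses is the \emph{transverse} knot invariant $\kinvt(K)$ of the binding, which requires a separate construction (the Stipsicz--V\'ertesi bypass attachment on a Legendrian approximation complement) and whose basic properties---that it lies in Alexander grading $g$ (Theorem \ref{thm:ttopgrading}) and that it is nonzero (Theorem \ref{thm:nonzero})---are each substantial theorems in their own right, the latter relying on the surgery exact triangle and the adjunction inequality. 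Your proposal silently assumes both.

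More fundamentally, the plan to ``transport this class, via a bypass move, into the next-to-top grading'' cannot work as stated: a bypass attachment map changes the sutured manifold and its sutures, so it sends $\SHI(-Y(K),-\Gamma)$ to a \emph{different} group $\SHI(-Y(K),-\Gamma')$, not back into $\KHI(Y,K)$ in a lower grading. The paper's mechanism is inverted from yours. It places $\KHI(-Y,K)$ as the \emph{third} vertex of a bypass triangle, shows (via the right-veering/overtwisted dichotomy: one of $h,h^{-1}$ is not right-veering, and then $\xi_1^+$ is overtwisted, so $\cinvt(\xi_1^+)=0$) that an outgoing bypass map vanishes on $\cinvt(\xi_0^-)$, and uses exactness to produce a \emph{preimage} $x\in\KHI(-Y,K)$. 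The assumption $Y\not\cong\#^{2g}(S^1\times S^2)$ enters precisely through the right-veering dichotomy, not through the weaker condition ``$\phi(a)\not\simeq a$ for some arc'' that you invoke. Finally, the grading statement is not a simple shift: $x$ is a priori a sum over all Alexander gradings, and showing its degree-$(g-1)$ component is nonzero requires both a quantitative grading-shift bound for cobordism maps (Proposition \ref{prop:grading-shift}, via the genus of the difference surface in the cobordism) and a second application of exactness to kill the degree-$g$ contribution. None of this is visible in the proposal's final paragraph, and the ``compute the contact class of the bypassed open book directly'' strategy you suggest there is not what makes the argument close.
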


To see how Theorem \ref{thm:khi-detects-trefoil} follows, let us
 suppose  that \[\dim_\C \KHI(S^3,K)=3.\] Then $\KHI(S^3,K)$ is supported in Alexander gradings $0$ and $\pm g(K)$ by  symmetry  and  genus detection \eqref{eqn:genus}. Note that $g(K)\geq 1$ since  $K$ is otherwise the unknot and \[\dim_\C \KHI(S^3,K)=1,\] a contradiction. So we have that \[\KHI(S^3,K,i)\cong \begin{cases}
\C,&i=g(K),\\
\C,&i=0,\\
\C,&i=-g(K).
\end{cases}\] The fiberedness detection \eqref{eqn:genusfibered} therefore implies that $K$ is fibered. But Theorem \ref{thm:khi-fibered} then forces $g(K)=1$. We conclude that $K$ is a genus one fibered knot. It follows that $K$ is either a trefoil or the figure eight, but $\KHI$ of the latter is 5-dimensional, so $K$ is a trefoil.%(this follows from (\ref{eqn:kh-i}) and (\ref{eqn:khi-i}) together with the relationship between $\KHI$ and the Alexander polynomial established in \cite{km-alexander}), so $K$ must be a trefoil.

\begin{remark} \label{rem:alexander-fibered-gap}
The nonvanishing guaranteed by Theorem~\ref{thm:khi-fibered} is a feature of instanton knot homology which is not present in the Alexander polynomial.  For example, the knot $10_{161}$ is fibered of genus 3, with Alexander polynomial
\[ \Delta_{10_{161}}(t) = t^3 - 2t + 3 - 2t^{-1} + t^{-3}. \]
Theorem~\ref{thm:khi-fibered} says that $\KHI(S^3,10_{161},2)$ is nonzero even though the coefficient of $t^2$ in $\Delta_{10_{161}}(t)$ vanishes.
\end{remark}

In summary, we have shown that Theorem \ref{thm:khi-fibered} implies all of the other results above including  that Khovanov homology detects the trefoils. The bulk of this paper is therefore devoted to proving Theorem \ref{thm:khi-fibered}. Before outlining its proof in detail below, we describe an application  of Theorem \ref{thm:skhi-detects-trefoil} to $SU(2)$ representations of the knot group.

\subsection{Trefoils and   $SU(2)$ representions}
\label{ssec:trefoil-reps}

%Much of the discussion below comes from \cite[Section 7.6]{km-excision} and \cite[Section 4.2]{km-alexander}.

Given a knot $K$ in the 3-sphere, consider the representation variety \[\mathscr{R}(K,\mathbf{i}) = \{\rho:\pi_1(S^3\ssm K)\to SU(2)\mid \rho(\mu)=\mathbf{i}\},\] where $\mu$ is a chosen meridian and \[\mathbf{i}=\left[\begin{array}{cc}
i&0\\
0&-i
\end{array}\right].\] Recall that the representation variety of a trefoil $T$ is given by \[\mathscr{R}(T,\mathbf{i}) \cong \{*\} \sqcup S^1,\] where   $*$ is the    reducible homomorphism in $\mathscr{R}(T,\mathbf{i})$ and  $S^1$ is the unique conjugacy class of irreducibles.  We conjecture that $\mathscr{R}(K,\mathbf{i})$ detects the trefoil.

%Given a knot $K$ in the 3-sphere, consider the representation variety \[\mathscr{R}(K,i) = \{\rho:\pi_1(S^3\ssm K)\to SU(2)\mid \rho(m)=i\},\] where $m$ is a chosen meridian and $SU(2)$ is viewed as the set of unit quaternions. Recall that the knot group of a trefoil $T$ is isomorphic to the 3-strand braid group $B_3$. Moreover,  \[\mathscr{R}(T,i) \cong \{*\} \sqcup S^1,\] where   $*$ is the    reducible homomorphism and  $S^1$ is the unique conjugacy class of   irreducibles.  We conjecture  that the representation variety $\mathscr{R}(K,i)$ detects the trefoil.

\begin{conjecture}
$\mathscr{R}(K,\mathbf{i}) \cong \{*\} \sqcup S^1$ if and only if $K$ is a trefoil.
\end{conjecture}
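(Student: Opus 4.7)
The forward direction is classical: for a trefoil $T$, the abelianization of the knot group produces the reducible $*$, while the irreducible representations in $\mathscr{R}(T,\mathbf{i})$ factor through the dihedral quotient of $\pi_1(S^3\ssm T)$ and form a single circle of conjugacy classes. So I focus on the reverse direction, whose plan is to bound $\dim_\Z \Inat(S^3,K)$ from above by the number of critical points of a Morse-Bott perturbed Chern-Simons functional and then invoke Theorem \ref{thm:skhi-detects-trefoil}. First, the hypothesis $\mathscr{R}(K,\mathbf{i}) \cong \{*\} \sqcup S^1$ immediately rules out the unknot, since $\mathscr{R}(\text{unknot},\mathbf{i})$ is just $\{*\}$.

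The generators of the chain complex for $\Inat(S^3,K)$ arise from critical points of a perturbed Chern-Simons functional whose unperturbed critical set, up to the modifications coming from the earring, is identified with $\mathscr{R}(K,\mathbf{i})/{\rm conjugation}$. I would choose a holonomy perturbation supported in a tubular neighborhood of the $S^1$ component of irreducibles, designed to restrict to a Morse function on $S^1$ with exactly two critical points, and small enough not to create new critical points away from $\mathscr{R}(K,\mathbf{i})$. After such a perturbation and a standard transversality argument, the chain complex has at most three generators: the reducible plus two perturbed irreducibles. Hence $\dim_\Z \Inat(S^3,K)\leq 3$. On the other hand, Kronheimer-Mrowka showed that $\dim_\Z\Inat(S^3,K)$ is odd and strictly greater than one for nontrivial $K$, so $\dim_\Z \Inat(S^3,K)=3$. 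Theorem \ref{thm:skhi-detects-trefoil} then forces $K$ to be a trefoil.

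The main obstacle is carrying out this Morse-Bott reduction rigorously within the singular instanton framework for $\Inat$. One needs to: (i) confirm that a holonomy perturbation exists which is Morse-Bott non-degenerate along the circle of irreducibles, restricts to a function on $S^1$ with exactly two critical points, and has no spurious critical orbits elsewhere, and (ii) verify that the reducible contributes exactly one generator to the chain complex, which is delicate in the $\Inat$ construction since the earring was introduced precisely to control bubbling at reducibles. A further subtlety is that the critical set computing $\Inat$ really consists of flat connections on $S^3\ssm(K\cup \text{earring})$ with prescribed meridional holonomies, so one must translate the topological hypothesis $\mathscr{R}(K,\mathbf{i})\cong \{*\}\sqcup S^1$ into the corresponding gauge-theoretic statement about the enlarged representation variety. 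I expect that step (i), the perturbation-theoretic statement, will be where most of the technical work lies.
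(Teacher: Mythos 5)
The statement you were asked to prove is labeled a \emph{Conjecture} in the paper, and the authors are explicit that they do \emph{not} prove it unconditionally; they prove it only ``modulo an assumption of nondegeneracy.'' What they actually establish (Theorem~\ref{thm:rep-detects-trefoil}) carries the extra hypothesis that the circle of irreducible representations is nondegenerate in the Morse--Bott sense. Your proposal tries to prove the full conjecture by perturbation, and the gap is exactly where the paper inserts that hypothesis.

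Concretely: the step where you claim that ``after such a perturbation and a standard transversality argument, the chain complex has at most three generators'' does not follow unless you already know the circle $S^1\subset \mathscr{R}(K,\mathbf{i})$ is a Morse--Bott nondegenerate critical manifold of the Chern--Simons functional, i.e.\ that the Hessian is nondegenerate in the directions normal to the circle. Holonomy perturbations cannot \emph{produce} this property. What a generic small perturbation does, when Morse--Bott nondegeneracy holds, is split the circle into two Morse critical points; but if the normal Hessian is degenerate somewhere along $S^1$, an arbitrarily small perturbation can create many more critical points near the circle, and there is no a priori bound of the form ``two per circle.'' Your safeguard that the perturbation be ``small enough not to create new critical points away from $\mathscr{R}(K,\mathbf{i})$'' misses the point: the danger is uncontrolled critical points \emph{near} $S^1$. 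You flag your step (i) as a technical matter you ``expect'' to be routine, but it is not a transversality statement about perturbations --- it is a nontrivial property of $K$ (a topological/analytic condition on the representation variety of the given knot), and it is precisely what the authors cannot prove and therefore isolate as a hypothesis. Absent that, the bound $\dim_\Z\Inat(S^3,K)\le 3$ is not justified, and the argument does not close.

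So your proposal correctly reproduces the heuristic the paper describes in Section~\ref{ssec:trefoil-reps} and the reduction to Theorem~\ref{thm:skhi-detects-trefoil}, but as a proof of the stated conjecture it has a genuine gap at the nondegeneracy step; the honest conclusion is the conditional Theorem~\ref{thm:rep-detects-trefoil}, not the conjecture itself.
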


We prove this conjecture modulo an assumption of nondegeneracy, using Theorem \ref{thm:khi-detects-trefoil} together with the relationship between $\mathscr{R}(K,\mathbf{i})$ and  $\KHI$ described in \cite[Section 7.6]{km-excision} and \cite[Section 4.2]{km-alexander}. 

The rough idea is that points in $\mathscr{R}(K,\mathbf{i})$ should correspond to  critical points of the Chern-Simons functional whose Morse-Bott homology computes $\KHI(S^3,K)$; the reducible corresponds to a single critical point while conjugacy classes of irreducibles ought to correspond to circles of critical points. In other words, the reducible should contribute 1 generator and each  class of irreducibles should contribute 2 generators (generators of the homology of the corresponding circle of critical points) to a chain complex which computes $\KHI(S^3,K)$. This heuristic holds true as long as   the circles of critical points corresponding to irreducibles are nondegenerate in the Morse-Bott sense. %Furthermore, as noted in \cite{km-alexander}, nondegeneracy at a critical point corresponding to $\rho\in \mathscr{R}(K,i)$ can be interpreted as the condition that the map \[H^1(S^3\ssm K; \mathfrak{g}_\rho)\to H^1(m;\mathfrak{g}_\rho)=\R\] is an isomorphism, where $\mathfrak{g}_\rho$ is the local system on the knot complement with fiber $\mathfrak{su}(2)$ associated to  the representation $\rho$.
 Thus, if $n(K)$ is   the number of conjugacy classes of irreducibles and the corresponding circles of critical points are nondegenerate then \[\dim_\C\KHI(S^3,K)\leq 1+2n(K).\] Theorem \ref{thm:khi-detects-trefoil} therefore implies the following.

\begin{theorem}
\label{thm:rep-detects-trefoil}
Suppose  there is one conjugacy class of irreducible homomorphisms in $\mathscr{R}(K,\mathbf{i})$. If these homomorphisms are nondegenerate, then $K$ is a trefoil.
\end{theorem}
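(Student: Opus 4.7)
The plan is to turn the Morse--Bott heuristic preceding the theorem into a rigorous bound on $\dim_\C \KHI(S^3,K)$, combine it with the lower bound coming from Kronheimer--Mrowka's detection results, and then invoke Theorem \ref{thm:khi-detects-trefoil}. My starting point is the framework of \cite[Section 7.6]{km-excision} and \cite[Section 4.2]{km-alexander}, in which $\KHI(S^3,K)$ arises as the Morse--Bott homology of a Chern--Simons functional on a configuration space associated to the knot complement, whose critical set (after suitable perturbation) is identified with $\mathscr{R}(K,\mathbf{i})$.

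Under this identification, the reducible homomorphism gives a single isolated critical point, while each conjugacy class of irreducibles sweeps out a circle of critical points due to a residual $S^1$--action. Assuming nondegeneracy of each such circle in the Morse--Bott sense, a chain complex computing $\KHI(S^3,K)$ can be chosen with one generator for the reducible and two generators (corresponding to the generators of $H_*(S^1;\C)$) for each irreducible conjugacy class. Summing these contributions gives
\[
\dim_\C \KHI(S^3,K) \leq 1 + 2 n(K),
\]
where $n(K)$ is the number of conjugacy classes of irreducibles in $\mathscr{R}(K,\mathbf{i})$. The hypothesis $n(K)=1$ therefore yields $\dim_\C \KHI(S^3,K) \leq 3$.

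To pin this down to an equality, note that since $\mathscr{R}(K,\mathbf{i})$ contains an irreducible, $K$ is not the unknot. The isomorphism $\KHI(S^3,K) \cong \Inat(S^3,K)\otimes_\Z \C$ together with the Kronheimer--Mrowka result that the free rank of $\Inat(S^3,K)$ is odd and strictly greater than one for nontrivial knots then forces $\dim_\C \KHI(S^3,K) \geq 3$. Combining the two bounds yields $\dim_\C \KHI(S^3,K) = 3$, and Theorem \ref{thm:khi-detects-trefoil} identifies $K$ as a trefoil.

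The main obstacle is making the first step rigorous: carrying out the Morse--Bott reduction explicitly enough to confirm that $\mathscr{R}(K,\mathbf{i})$ really governs the critical set of the relevant Chern--Simons functional, verifying that the nondegeneracy hypothesis of the theorem corresponds precisely to Morse--Bott nondegeneracy of the irreducible circles, and controlling the holonomy perturbations needed for transversality so that they do not spawn additional critical points. These technicalities are largely addressed in the cited Kronheimer--Mrowka analyses; the remaining task is to package them together with Theorem \ref{thm:khi-detects-trefoil}.
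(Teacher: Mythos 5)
Your proof is correct and takes essentially the same approach as the paper: the paper simply records the Morse--Bott inequality $\dim_\C\KHI(S^3,K)\leq 1+2n(K)$ and asserts that Theorem \ref{thm:khi-detects-trefoil} then gives the result. You correctly fill in the unwritten step that the existence of an irreducible forces $K$ to be nontrivial, whence $\dim_\C\KHI(S^3,K)$ is odd and greater than $1$, so the upper bound is in fact an equality.
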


This  improves  upon a  result of Kronheimer and Mrowka \cite[Corollary 7.20]{km-excision} which under the same hypotheses concludes only that $K$ is fibered.   %We   provide some motivation for its proof  below and in doing so  state a bypass exact triangle for sutured instanton homology. The reader more interested in applications can skip  to Section \ref{ssec:trefoil-reps}.

\subsection{The proof of Theorem \ref{thm:khi-fibered}} 
\label{ssec:outline}The rest of this introduction is  devoted to explaining the proof of Theorem \ref{thm:khi-fibered}. This result and its proof were  inspired by work of Baldwin and Vela-Vick \cite{bvv} who proved the  following analogous result in Heegaard knot Floer homology.% (Hedden and Watson  also proved a weaker form of the theorem below in \cite{hw}).

\begin{theorem}
\label{thm:hfk-fibered}
Suppose $K$ is a genus $g>0$ fibered knot  in $Y\not\cong \#^{2g}(S^1\times S^2)$ with fiber $\Sigma$. Then $\HFK(Y,K,[\Sigma],g{-}1)\neq 0$.\footnote{The conclusion of this theorem  also holds for $Y\cong \#^{2g}(S^1\times S^2)$.}
\end{theorem}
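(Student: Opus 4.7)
The plan is to follow the open-book/contact-invariant strategy inside Heegaard Floer homology. The fibered knot $K\subset Y$ with fiber $\Sigma$ of genus $g$ determines an open book decomposition $(\Sigma,\phi)$ of $Y$ with binding $K$, supporting a contact structure $\xi=\xi_{(\Sigma,\phi)}$ in which $K$ is the transverse binding. I would first invoke the Ozsv\'ath--Szab\'o fiberedness theorem, as refined by Ni, to identify $\HFK(-Y,K,[\Sigma],g)\cong\Z$, with generator the Ozsv\'ath--Szab\'o contact/transverse class $t(K)$; under the canonical map $\HFK(-Y,K,g)\to\widehat{HF}(-Y)$ this element maps to $c(\xi)$.

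Next, I would positively stabilize $(\Sigma,\phi)$ to produce an open book $(\Sigma^+,\phi^+)$ whose fiber has genus $g+1$ and whose binding is a new fibered knot $K^+\subset Y$ supporting the \emph{same} contact structure $\xi$. By naturality of the contact/transverse class under positive stabilization, $t(K^+)$ generates the top Alexander grading $\HFK(-Y,K^+,[\Sigma^+],g+1)\cong\Z$.

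The crucial step is to relate the top Alexander grading of $K^+$ to the next-to-top grading of $K$. I would work in sutured Floer homology on the complement $M=Y\ssm\nu(K)$, equipped with $2n$ parallel oriented sutures of varying slope $\Gamma_n$. One has direct-sum decompositions of $\SFH(-M,\Gamma_n)$ as sums of summands $\HFK(-Y,K,i)$ over appropriate ranges of $i$, together with Honda's bypass exact triangles relating $\SFH$ for neighboring $\Gamma_n$. The stabilized open book $(\Sigma^+,\phi^+)$ corresponds naturally to one such sutured configuration, and a bypass/destabilization map should identify $t(K^+)$ with a specific class in $\HFK(-Y,K,g-1)\subset \SFH(-M,\Gamma_\mu)$; the objective is to show that this class is nonzero.

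The main obstacle is proving that the relevant bypass map does not annihilate $t(K^+)$, and this is exactly where the hypothesis $Y\not\cong\#^{2g}(S^1\times S^2)$ must enter: $\#^{2g}(S^1\times S^2)$ is realized by the open book with genus-$g$ fiber and identity monodromy and is precisely the case in which $\HFK$ vanishes in the next-to-top grading. For any other monodromy, a Honda--Kazez--Mati\'c style analysis of positive stabilizations, combined with right-veeringness considerations and careful tracking of $t(K^+)$ through the bypass exact triangle, should produce the required nonzero element of $\HFK(-Y,K,g-1)$. Verifying injectivity of this bypass map on the transverse invariant under the nontrivial-monodromy hypothesis is the heart of the argument and the step I expect to be hardest.
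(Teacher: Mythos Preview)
Your proposal has a genuine conceptual gap. The positive stabilization you describe produces a new fibered knot $K^+\subset Y$ with a genus $g+1$ fiber, but there is no natural map from $\HFK(-Y,K^+)$ to $\HFK(-Y,K)$ that would carry the top-grading generator $t(K^+)$ into $\HFK(-Y,K,[\Sigma],g-1)$. The sutured Floer groups $\SFH(-M,\Gamma_n)$ you invoke are all groups attached to the complement $M=Y\ssm\nu(K)$ of the \emph{original} knot $K$, with varying slopes; the stabilized binding $K^+$ is a different knot with a different complement, so it does not naturally appear in that bypass tower. You are conflating two distinct constructions: changing the sutures on a fixed knot complement, and changing the knot by stabilizing the open book.

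The argument the paper summarizes (from Baldwin--Vela-Vick) stays entirely inside the complement of $K$ and uses the hypothesis differently. One first reduces, without loss of generality, to the case where the monodromy $h$ is \emph{not right-veering}: since $Y\not\cong\#^{2g}(S^1\times S^2)$ we have $h\neq\id$, so at least one of $h,h^{-1}$ fails to be right-veering, and one may reverse orientation if needed. Then, working with a Heegaard diagram adapted to $(\Sigma,h)$, one exhibits explicit chains $c\in\euscript{F}_{-g}$ and $d\in\euscript{F}_{1-g}$ in the knot filtration on $\CF(-Y)$ with $[c]$ generating $H_*(\euscript{F}_{-g})$ and $\partial d=c$; the class $[d]\in\HFK(-Y,-K,-[\Sigma],1-g)$ is then nonzero by $\partial^2=0$, and symmetry finishes. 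The contact-geometric reformulation the paper gives likewise stays in $Y(K)$: one shows that when $h$ is not right-veering, the contact structure $\xi_1^+$ on the complement (obtained by removing a neighborhood of a positive stabilization of a Legendrian approximation of $K$) is overtwisted, so its invariant vanishes, and the bypass exact triangle on $Y(K)$ produces the desired nonzero class. Your sketch never isolates this overtwistedness mechanism, which is precisely where the non-right-veering hypothesis does its work.
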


Theorem \ref{thm:hfk-fibered} can be used to give new proofs that the dimension of $\HFK$ detects the trefoil \cite{hw} and that   $L$-space knots are prime \cite{krcatovich}. It has no bearing, however, on whether Khovanov homology detects the trefoils, as there is no known relationship between Khovanov  homology and Heegaard  knot Floer homology. 

We summarize below the proof of Theorem \ref{thm:hfk-fibered} from \cite{bvv} and then explain how it can be reformulated in a manner that is translatable to the instanton Floer setting. 

Suppose $K$ is a fibered knot as in Theorem \ref{thm:hfk-fibered} and let $(\Sigma,h)$ be an open book with page $\Sigma$ and monodromy $h$ corresponding to the fibration of $K$ with $g(\Sigma)=g,$ supporting a contact structure $\xi$ on $Y$.

When it suits us, we are free to assume in proving Theorem \ref{thm:khi-fibered} that $h$ is \emph{not right-veering}, meaning that $h$ sends some arc in $\Sigma$ \emph{to the left} at one of its endpoints, as shown in Figure \ref{fig:left} and made precise in  \cite{hkm-rv}. 
\begin{figure}[ht]
\labellist
\small \hair 2pt

\pinlabel $a$ at 47 28
\pinlabel $h(a)$ at 20 28
\pinlabel $p$ at 41 -3
\pinlabel $\Sigma$ at 65 57

\endlabellist
\centering
\includegraphics[width=2cm]{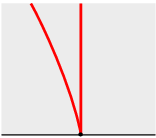}
\caption{$h$ sends $a$ to the left at $p$.
}
\label{fig:left}
\end{figure}
To see that we can make this assumption without loss of generality, note that one of $h$ or $h^{-1}$ is not right-veering since otherwise  $h=\id$ and $Y\cong \#^{2g}(S^1\times S^2)$. (Indeed, if $h=\id$ then we can take $2g$ disjoint, properly embedded arcs which cut $\Sigma$ into a disk, and these arcs trace out $2g$ embedded spheres in $Y$ whose complement is $S^3$ minus $4g$ balls, so gluing the spheres back in gives $\#^{2g}(S^1\times S^2)$.  Alternatively, the open book $(\Sigma,\id)$ is a Murasugi sum of $b_1(\Sigma)=2g$ open books $(S^1\times[0,1],\id)$, each of which supports $S^1\times S^2$ with its tight contact structure, and hence $(\Sigma,\id)$ supports their connected sum.) If $h$ is right-veering then we can use the fact that knot Floer homology is invariant under reversing the orientation of $Y$ and consider instead the knot $K\subset -Y$ with open book $(\Sigma,h^{-1})$.

Recall that the knot Floer homology of $-K\subset -Y$ is the homology of the associated graded object of a filtration 
\[\euscript{F}_{-g}\subset\euscript{F}_{1-g}\subset \dots \subset \euscript{F}_{g}=\CF(-Y)\] of the Heegaard Floer complex of $-Y$ induced by the knot. By careful inspection of a Heegaard diagram for $-K\subset -Y$  adapted to the open book $(\Sigma,h)$, Baldwin and Vela-Vick prove: 
\begin{lemma}
\label{lem:nonrv} If the monodromy $h$ is not right-veering then there exist $c\in\euscript{F}_{-g}$ and $d\in \euscript{F}_{1-g}$ such that $[c]$ generates $H_*(\euscript{F}_{-g})\cong\Z/2\Z$ and $\partial d=c$.
\end{lemma}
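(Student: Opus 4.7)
The plan is to work directly with an explicit Heegaard diagram for $-K \subset -Y$ adapted to the open book $(\Sigma,h)$, in the style of Ozsv\'ath--Szab\'o and Honda--Kazez--Mati\'c. First I would choose a system of properly embedded arcs $a_1,\dots,a_{2g}$ cutting $\Sigma$ into a disk, together with small isotopic pushoffs $b_1,\dots,b_{2g}$, and take the doubled surface $\Sigma \cup_\partial (-\Sigma)$ as the Heegaard surface, with $\alpha_i = a_i \cup a_i$ and $\beta_i = b_i \cup h(b_i)$, placing the two basepoints just off $\partial\Sigma$ in the usual way. The generator $c$ of $H_*(\euscript{F}_{-g}) \cong \Z/2\Z$ is then represented by the intersection tuple $\mathbf{x} = \{x_1,\dots,x_{2g}\}$ whose components sit near $\partial\Sigma$, one on each pair $(\alpha_i,\beta_i)$; this tuple is the Ozsv\'ath--Szab\'o contact class of the contact structure supported by $(\Sigma,h)$, and it generates the bottom filtration level by the standard Alexander-grading calculation for open book diagrams.

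Using the non-right-veering hypothesis, I would arrange the arc system so that $a_1$ is an arc sent to the left by $h$ at the endpoint $p$, as in Figure~\ref{fig:left}. The local configuration near $p$ forces an additional transverse intersection point $y_1 \in \alpha_1 \cap \beta_1$ adjacent to $x_1$, created by the leftward excursion of $h(a_1)$ past $a_1$. Substituting yields the tuple $\mathbf{y} = \{y_1, x_2, \dots, x_{2g}\}$, and a direct bookkeeping calculation with the Alexander grading (using that swapping $x_1$ for $y_1$ costs exactly one Alexander unit while the other coordinates are unchanged) shows that $\mathbf{y} \in \euscript{F}_{1-g}\setminus \euscript{F}_{-g}$. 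I would set $d = \mathbf{y}$.

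The final and most delicate step is to verify that $\partial d = c$. The leftward twisting at $p$ produces an embedded bigon in $\Sigma$ with corners $y_1$ and $x_1$, disjoint from all other $\alpha$- and $\beta$-curves and from both basepoints. This bigon is holomorphically rigid and contributes $c$ to $\partial d$. The main obstacle is to rule out every other contribution: one must show that any Whitney disk emanating from $\mathbf{y}$ whose target lies in $\euscript{F}_{-g}$ must be precisely this distinguished bigon. The argument here is combinatorial: changing any coordinate other than $y_1$ away from its $x_i$ value, or crossing a basepoint region, must strictly raise the Alexander grading because the $x_i$ already saturate the bottom filtration level. Combining this control with the planarity of the local picture near $p$ pins down $\partial d = c$ over $\Z/2\Z$, giving the lemma. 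The entire argument hinges on showing that the obvious local contribution is in fact the only contribution inside $\euscript{F}_{-g}$, and this is where the careful diagram analysis does all the work.
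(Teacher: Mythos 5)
The paper does not itself prove Lemma~\ref{lem:nonrv}; it cites \cite{bvv}, where it is established by exactly the sort of Heegaard-diagram inspection you sketch. The paper's own contribution at this point is to reformulate the lemma as Lemma~\ref{lem:nonrv2}, a statement about the $U$-action on $\HFKm(-Y,K)$, and to give an alternative contact-geometric proof via \cite{evvz}: the generator of $\HFKm(-Y,K,[\Sigma],g)$ is the transverse invariant $\euscript{T}(K)$, the element $U\euscript{T}(K)$ corresponds under the direct limit to $\EH(\xi_1^+)$, and $\EH(\xi_1^+)=0$ because $\xi_1^+$ is overtwisted when $h$ is not right-veering (Lemma~\ref{lem:otstab}). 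This route is genuinely different from yours, and the paper adopts it precisely because it transports to instanton Floer homology, whereas a Heegaard-diagram argument cannot.

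As a reconstruction of \cite{bvv}, your sketch has the right skeleton but a concrete error in the basepoint bookkeeping. You assert both that the bigon $\phi$ from $\mathbf{y}$ to $\mathbf{x}$ is disjoint from both basepoints and that swapping $x_1$ for $y_1$ costs one Alexander unit. These cannot both hold: $A(\mathbf{y}) - A(\mathbf{x}) = n_z(\phi) - n_w(\phi)$, so $n_z(\phi) = n_w(\phi) = 0$ forces $A(\mathbf{y}) = A(\mathbf{x}) = -g$, putting $d = \mathbf{y}$ inside $\euscript{F}_{-g}$; then $\partial d = c$ would exhibit $c$ as a boundary in $\euscript{F}_{-g}$, contradicting $[c]\neq 0$ in $H_*(\euscript{F}_{-g})$. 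What is needed is $n_w(\phi) = 0$ (so $\phi$ is counted by $\partial$ on $\CF(-Y)$) together with $n_z(\phi) = 1$ (so the filtration drops by exactly one). Your remark that crossing a basepoint region ``raises'' the Alexander grading has the direction reversed and reflects the same confusion about the roles of $w$ and $z$. Finally, $\partial d = c$ requires controlling \emph{every} term of $\partial\mathbf{y}$, not only those landing in $\euscript{F}_{-g}$: there could \emph{a priori} be Maslov-index-one domains with $n_w = n_z = 0$ to other generators at filtration level $1-g$, and your combinatorial argument as stated does not rule these out or show how to absorb them into a modified $d$.
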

To see how Lemma \ref{lem:nonrv} implies Theorem \ref{thm:hfk-fibered}, let us assume that the monodromy $h$ is not right-veering. Given $c$ and $d$ as guaranteed by Lemma \ref{lem:nonrv}, 
%it is immediate that $d$ is a cycle in $\euscript{F}_{1-g}/\euscript{F}_{-g}$.  If $d$ is a boundary in $\euscript{F}_{1-g}/\euscript{F}_{-g}$ then there must be classes $e \in \euscript{F}_{1-g}$ and $f \in \euscript{F}_{-g}$ such that $\partial e = d+f$ in $\CF(-Y)$, and then using $\partial \circ \partial = 0$, we have 
%\[ 0 = \partial(\partial e) = \partial d + \partial f = c + \partial f. \]
%This says that $[c]=0$ in $H_*(\euscript{F}_{-g})$, which is a contradiction.  So $[d]$ is nonzero as an element of $\HFK(-Y,-K,-[\Sigma],1-g)$, and
it is then an easy exercise to see that $\partial \circ \partial = 0$ and $[c]$ nonzero imply that the class
\[ [d] \in H_*(\euscript{F}_{1-g}/\euscript{F}_{-g}) = \HFK(-Y,-K,-[\Sigma],1-g) \]
is nonzero.
Theorem \ref{thm:hfk-fibered} then follows from the symmetry \[\HFK(Y,K,[\Sigma],g-1)\cong\HFK(-Y,-K,-[\Sigma],1-g).\]

Our strategy is to translate a version of this proof to the instanton Floer setting. Of course, it does not translate readily. For one thing, it makes use of Heegaard diagrams in an essential way. For another, it relies on a description of knot Floer homology as coming from a filtration of the Floer complex of the ambient manifold, for which there is no analogue in $\KHI$.  

Our solution to these difficulties starts with a reformulation of Lemma \ref{lem:nonrv} in terms of the \emph{minus} version of knot Floer homology, which assigns to a knot   a module over the polynomial ring $(\mathbb{Z}/2\mathbb{Z})[U]$.  Specifically, we observe that Lemma \ref{lem:nonrv} can be recast as follows:

\begin{lemma}
\label{lem:nonrv2} If the monodromy $h$ is not right-veering then the generator of \[\HFKm(-Y,K,[\Sigma],g)\cong\Z/2\Z\] is in the kernel of multiplication by $U$.
\end{lemma}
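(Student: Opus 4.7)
The plan is to show that Lemma~\ref{lem:nonrv2} is a direct reformulation of Lemma~\ref{lem:nonrv} via the standard dictionary between the knot filtration on $\CF(-Y)$ and the minus knot Floer complex. First I would verify the chain-complex identification
\[\euscript{F}_{-s}\;\cong\;CFK^-(-Y,K,s)\]
for each $s$, where the left-hand side is the $(-s)$-th level of the filtration of $\CF(-Y)$ induced by the knot $-K$ as in Lemma~\ref{lem:nonrv}, and the right-hand side is the Alexander grading $s$ piece of the minus knot Floer complex for $K\subset -Y$. Both sides are generated by the intersection points $x$ in a doubly-pointed Heegaard diagram for $(-Y,K)$ satisfying $A_K(x)\geq s$, via the correspondence $x\leftrightarrow[x,\,s-A_K(x),\,s]$. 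The differentials count the same disks, namely those with $n_{z_K}=0$, because the basepoint for $-K$ is the $z$-basepoint for $K$; so this is an isomorphism of chain complexes.

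Next, I would observe that under this identification the inclusion $\euscript{F}_{-g}\hookrightarrow \euscript{F}_{1-g}$ corresponds to the $U$-action
\[U\colon CFK^-(-Y,K,g)\longrightarrow CFK^-(-Y,K,g-1),\]
since an $x$ with $A_K(x)=g$ corresponds to $[x,0,g]$ in the source and to $[x,-1,g-1]=U\cdot[x,0,g]$ in the target. In homology this gives the usual $U$-action $\HFKm(-Y,K,g)\to\HFKm(-Y,K,g-1)$.

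Finally, Lemma~\ref{lem:nonrv} provides $c\in\euscript{F}_{-g}$ generating $H_*(\euscript{F}_{-g})\cong\zzt$ and $d\in\euscript{F}_{1-g}$ with $\partial d=c$; the existence of such a $d$ says exactly that the inclusion-induced map $H_*(\euscript{F}_{-g})\to H_*(\euscript{F}_{1-g})$ kills $[c]$. By the previous paragraph this map is $U$, so $U$ annihilates the generator of $\HFKm(-Y,K,g)$, which is the conclusion of Lemma~\ref{lem:nonrv2}. The only nontrivial step is the chain-level identification $\euscript{F}_{-s}\cong CFK^-(-Y,K,s)$, and the main thing to be careful about there is keeping the conventions for orientation, basepoints, and the Alexander grading straight; once these are fixed, the two complexes count the same holomorphic disks by definition and the rest is bookkeeping.
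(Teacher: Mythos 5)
Your argument is correct, but it follows a genuinely different route from the one the paper takes. You deduce Lemma~\ref{lem:nonrv2} directly from Lemma~\ref{lem:nonrv} via the standard chain-level dictionary between the knot filtration on $\CF(-Y)$ and the associated-graded minus complex $CFK^-(-Y,K,*)$, identifying the inclusion $\euscript{F}_{-g}\hookrightarrow\euscript{F}_{1-g}$ with multiplication by $U$. This is exactly the ``recasting'' that the paper asserts when introducing the lemma, and your proposal makes that assertion rigorous; modulo the orientation/basepoint bookkeeping you flag, the argument goes through. The paper, however, never carries out this bookkeeping. Instead it gives what it explicitly calls an ``alternative proof'': it invokes the Etnyre--Vela-Vick--Zarev direct-limit model of $\HFKm$, identifies the generator of $\HFKm(-Y,K,[\Sigma],g)$ with the LOSS transverse invariant $\euscript{T}(K)$ (using Vela-Vick's nonvanishing theorem and the Alexander-grading computation $(sl(K)+1)/2=g$), shows that $U\euscript{T}(K)$ is represented by the contact class $\EH(\xi_1^+)=\psi_0^+(\EH(\xi_0^-))$, and then kills that class via Lemma~\ref{lem:bypassclaim1}, which reduces to the overtwistedness statement in Lemma~\ref{lem:otstab}. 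The two approaches are not interchangeable for the paper's purposes: yours is shorter and more elementary, but it leans on $\HFKm$ and the knot filtration, structures with no instanton analogue, whereas the paper's contact-geometric route through sutured Floer groups, bypass maps, and the $\EH$ invariant is precisely what they port to $\SHI$ in Sections~\ref{sec:leg} and~\ref{sec:proof}. It is worth noting that the paper's proof of Lemma~\ref{lem:nonrv2} is logically independent of Lemma~\ref{lem:nonrv}, while yours depends on it.
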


Indeed,  if we reverse the roles of the $z$ and $w$ basepoints in the above Heegaard diagram for $-K\subset -Y$, then we obtain a diagram for $K \subset -Y$ where the generators $c$ and $d$ lie in filtration gradings $g$ and $g-1$, respectively.  In $\CFKm$, one records disks which cross the $w$ basepoint $n$ times with the coefficient $U^n$, so the relation $\partial d = c$ in $\CF$ from the original Heegaard diagram, which is certified by  a holomorphic disk from $d$ to $c$ passing through $z$ once,  becomes $\partial d = Uc$ in $\CFKm$ after the basepoint swap; hence, $U[c] = 0$ in $\HFKm$.
%so it is then the case that $[d]$ lies in $\HFKm(-Y,K,[\Sigma],g-1)$ and $\partial d = Uc$.

It may seem as though this reformulation of Lemma \ref{lem:nonrv} makes translation even \emph{more} difficult, as there is no analogue of $\HFKm$ whatsoever in the instanton Floer setting. Surprisingly, however, it is Lemma \ref{lem:nonrv2} that proves most amenable to translation.

Our approach is inspired by work of Etnyre, Vela-Vick, and Zarev, who provide in \cite{evvz} a more contact-geometric description of $\HFKm$ with its $(\Z/2\Z)[U]$-module structure. As we show below, their work enables a proof of Lemma \ref{lem:nonrv2} in terms of sutured  Floer homology groups, bypass attachment maps, and contact invariants. The value for us in proving Lemma \ref{lem:nonrv2} from this perspective is that while there is no analogue of $\HFKm$ in the instanton Floer setting, there \emph{are} instanton Floer analogues of these groups, bypass maps, and contact invariants, due to Kronheimer and Mrowka \cite{km-excision} and the authors \cite{bs-shi}. We are thus able to port key elements of this alternative proof of Lemma \ref{lem:nonrv2} to the instanton Floer setting and, with additional work, use these elements to prove Theorem \ref{thm:khi-fibered}. Below, we: 
\begin{itemize}
\item review the work of \cite{evvz}, tailored to the case of our fibered knot $K$,
\item prove Lemma \ref{lem:nonrv2} from this \emph{direct limit} point of view,
\item outline in detail the proof of Theorem \ref{thm:khi-fibered}, based on these ideas.
\end{itemize} 

As the binding of the open book $(\Sigma,h)$, the knot $K$ is naturally a transverse knot in $(Y,\xi)$. Moreover, we will show in Section~\ref{sec:proof} that $K$ has a Legendrian approximation $\mathcal{K}_0^-$ with Thurston-Bennequin invariant \[tb_\Sigma(\mathcal{K}_0^-)=-1.\] For each $i\geq 1$, let $\mathcal{K}^{\pm}_i$ be the result of negatively Legendrian stabilizing the knot $\mathcal{K}^-_0$ a total of $i-1$ times and then positively/negatively stabilizing the result one additional time. Note that each $\mathcal{K}^-_i$ is also a Legendrian approximation of $K$. Let \[(Y(K),\Gamma_i,\xi^\pm_i)\] be the contact manifold with convex boundary and dividing set $\Gamma_i$ obtained by removing a standard neighborhood of $\mathcal{K}_i^\pm$ from $Y$.  These contact manifolds are related to one another via positive and negative bypass attachments. By  work of Honda, Kazez, and Mati{\'c} in \cite{hkm-tqft}, these bypass attachments induce maps on sutured Floer homology, \[\psi_i^\pm:\SFH(-Y(K),-\Gamma_i)\to \SFH(-Y(K),-\Gamma_{i+1})\] for each $i$, which satisfy \[
\psi_i^-(\EH(\xi_i^-)) = \EH(\xi_{i+1}^-)\textrm{ and } \psi_i^+(\EH(\xi_i^-)) = \EH(\xi_{i+1}^+),
\]
where $\EH$ refers to the  Honda-Kazez-Mati{\'c} contact invariant defined in \cite{hkm-sutured}. The main result of \cite{evvz} says that $\HFKm(-Y,K)$ is isomorphic to the direct limit \begin{equation*}\label{eqn:limit}\SFH(-Y(K),-\Gamma_0)\xrightarrow{\psi_0^-}\SFH(-Y(K),-\Gamma_1)\xrightarrow{\psi_1^-}\SFH(-Y(K),-\Gamma_2)\xrightarrow{\psi_2^-}\cdots\end{equation*} of these sutured Floer homology groups and the negative bypass attachment maps. Moreover, under this identification, multiplication by $U$ is  the map on this  limit induced by the positive bypass  attachment maps $\psi_i^+$.

%We now observe  that Lemma \ref{lem:nonrv2} has a very natural interpretation and proof in this direct limit formulation. The value of proving Lemma \ref{lem:nonrv2} from  this perspective is that while  there is no analogue of $\HFKm$ in the instanton Floer setting, there \emph{are} instanton Floer analogues of the sutured  Floer homology groups,   contact invariants, and  bypass attachment maps above, due to Kronheimer and Mrowka \cite{km-excision} and the authors \cite{bs-shi}.

We now observe  that Lemma \ref{lem:nonrv2} has a very natural interpretation and proof in this direct limit formulation. 
The first step  is to identify the element of the direct  limit which corresponds to the generator of  $\HFKm(-Y,K,[\Sigma],g)$. 
For this, recall that Vela-Vick proved in \cite{vv} that the transverse binding $K$ has nonzero   invariant \[\euscript{T}(K)\in\HFKm(-Y,K),\] where $\euscript{T}$ refers to the transverse knot invariant defined by Lisca, Ozsv{\'a}th, Stipsicz, and Szab{\'o} in \cite{loss}. Moreover, this class lies in Alexander grading \begin{equation}\label{eqn:alexgradingcalc}(sl(K)+1)/2=g \end{equation} according to \cite{loss}.
So $\euscript{T}(K)$ is the generator of \[\HFKm(-Y,K,[\Sigma],g)\cong \Z/2\Z.\] But Etnyre, Vela-Vick, and Zarev proved that $\euscript{T}(K)$ corresponds to the element of  the direct limit represented by the contact invariant \[\EH(\xi_i^-)\in\SFH(-Y(K),-\Gamma_i)\] for any $i$. % (these  contact classes  represent the same element of the direct limit as they are  related by the maps $\phi_i^-$). %In fact, the map \[\phi_{SV}:\SFH(-Y(K),-\Gamma_i)\to\SFH(-Y(K),-\Gamma_\mu)=\HFK(-Y,K)\] corresponding to a Stipsicz-V{\'e}rtesi bypass attachment sends \[\phi_{SV}(EH(\xi_i^-)) = \widehat{\mathscr{T}}(K)\in \HFK(-Y,K,g).\] 
It follows that $U \euscript{T}(K)$ corresponds to the element of the  limit represented by \[\psi_0^+(\EH(\xi_0^-))=\EH(\xi_1^+).\] 
 Lemma \ref{lem:nonrv2} therefore follows from the lemma below.
\begin{lemma}
\label{lem:bypassclaim1} If the monodromy $h$ is not right-veering then $\psi_0^+(\EH(\xi_0^-))=\EH(\xi_1^+)=0.$\end{lemma}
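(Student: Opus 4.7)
The equality $\psi_0^+(\EH(\xi_0^-))=\EH(\xi_1^+)$ is an instance of the Honda--Kazez--Mati\'c bypass attachment formula from \cite{hkm-tqft} already recorded in the text just above, so the substantive content of the lemma is the vanishing $\EH(\xi_1^+)=0$. My plan is to exhibit a partial open book for $(Y(K),\Gamma_1,\xi_1^+)$ that is visibly not right-veering and then to invoke the standard vanishing criterion for the $\EH$ invariant of a non-right-veering partial open book.

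The complement $(Y(K),\Gamma_0,\xi_0^-)$ of a standard neighborhood of the canonical Legendrian approximation $\mathcal K_0^-$ of the binding $K$ is supported by a partial open book whose page contains $\Sigma$ and whose monodromy is $h$. Positively Legendrian stabilizing $\mathcal K_0^-$ to obtain $\mathcal K_1^+$ modifies this partial open book in the standard way: one attaches a small rectangular band $R$ to $\Sigma$ along a short arc in $\partial\Sigma$ and post-composes the monodromy with a positive Dehn twist about the co-core of $R$.

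By hypothesis, there is a properly embedded arc $a\subset\Sigma$ with an endpoint $p\in\partial\Sigma$ at which $h(a)$ lies strictly to the left of $a$. Since $R$ can be attached along any sufficiently short arc in $\partial\Sigma$, I would arrange its attaching region to be disjoint from a neighborhood of $p$. Then $a$ persists as a properly embedded arc in the enlarged page $\Sigma\cup R$, and the added positive Dehn twist, being supported near the co-core of $R$, does not disturb $a$ near $p$. Hence the modified monodromy still sends $a$ to the left at $p$, which says that the partial open book for $\xi_1^+$ fails to be right-veering, and thereby forces $\EH(\xi_1^+)=0$.

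The principal obstacle I anticipate lies in the middle paragraph: pinning down the precise effect of a positive Legendrian stabilization of the binding on the partial open book of the complement, and confirming that the added positive Dehn twist admits the required localization away from the chosen arc $a$. Once this piece of open book bookkeeping is settled, the remainder is a routine application of the non-right-veering vanishing theorem.
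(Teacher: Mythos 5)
Your high-level plan coincides with the paper's: argue that $\xi_1^+$ admits a non-right-veering partial open book, then invoke the Honda--Kazez--Mati\'c criterion to conclude overtwistedness, hence $\EH(\xi_1^+)=0$. But your execution has a genuine gap, and it is not quite where you anticipate it.

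Recall that a partial open book for a Legendrian complement is a triple $(S',P,f'|_P)$ where $P=S'\setminus\nu(\mathcal K)$ is a \emph{proper subsurface} of the page containing the Legendrian. The HKM vanishing criterion \cite[Proposition 4.1]{hkm-sutured} requires a witness arc lying in $P$, not merely on the page $S'$. Your proposal treats the page alone (``a partial open book whose page contains $\Sigma$ and whose monodromy is $h$'') and never identifies the subsurface $P$, so the step ``$a$ persists as a properly embedded arc in the enlarged page'' establishes the wrong thing. In fact, in the paper's realization the arc $a$ intersects $\mathcal K_0^-$ in one point, so $a$ is \emph{not} an arc of $P_0=S\setminus\nu(\mathcal K_0^-)$ before the stabilization; the entire point of the paper's construction of $\mathcal K_1^+$ --- pushing $\mathcal K_0^-$ across the arc $a$ and over the newly attached $1$-handle (Figures \ref{fig:surface} and \ref{fig:posstab}) --- is to slide the Legendrian off $a$ so that $a$ \emph{does} become an arc of $P=S'\setminus\nu(\mathcal K_1^+)$. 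Once that is arranged, the added positive Dehn twist is supported away from $p$ and $f'|_P$ still sends $a$ to the left at $p$.

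Relatedly, your claim that the stabilizing band $R$ ``can be attached along any sufficiently short arc in $\partial\Sigma$'' away from $p$ is not available: the placement of the band and the way the stabilized Legendrian traverses the new handle are dictated by the requirement that the resulting curve actually be the \emph{positive} stabilization $\mathcal K_1^+$ of $\mathcal K_0^-$ (as opposed to the negative one, or an unrelated knot). So the obstacle you flag --- whether the Dehn twist is ``localized away from $a$'' --- is secondary; the missing step is verifying that $a$ lies in the subsurface $P$ at all, which forces the specific realization of $\mathcal K_1^+$ used in the paper.
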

 But this lemma follows immediately from the result below since the   $\EH$ invariant vanishes for overtwisted contact manifolds.
\begin{lemma}
\label{lem:otstab}
If the monodromy $h$ is not right-veering then $\xi_1^+$ is overtwisted.
\end{lemma}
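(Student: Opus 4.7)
Plan: The statement is one of contact topology, linking the non-right-veering property of the monodromy of an open book to the overtwistedness of a specific contact structure on the complement of a particular Legendrian approximation of its binding. My approach is to construct an explicit overtwisted disk inside $(Y(K),\Gamma_1,\xi_1^+)$ using the non-right-veering arc $a$ directly. As an alternative to check consistency, I would verify the result against the Honda--Kazez--Mati\'c theorem that a contact structure is overtwisted whenever some supporting open book has non-right-veering monodromy.

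First I would set up a concrete local model near the binding. The tubular neighborhood of $K$ is standard $D^2\times S^1$ with pages modeled by angular slices $\{\theta = \mathrm{const}\}$, and the Legendrian approximation $\mathcal{K}_0^-$ can be realized as a Legendrian curve lying on a single page just inside $K$, with page framing $tb_\Sigma=-1$. The positive stabilization $\mathcal{K}_1^+$ differs from $\mathcal{K}_0^-$ by a positive Legendrian zig-zag introduced near a chosen point $p\in K$. Since the zig-zag can be placed arbitrarily, by hypothesis we take $p$ to be the endpoint of $a$ at which $h(a)$ leaves to the left of $a$.

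Next I would build the candidate overtwisted disk $D$. Working in a small neighborhood $U$ of $p$, the arc $a$ appears as a short Legendrian-ready arc ending at $p$, while $h(a)$ appears as another short arc ending at $p$ but approaching from the left. In the mapping torus description of $Y\setminus K$, a short portion of $a$ on one page and a short portion of $h(a)$ on the adjacent page fit together with a small meridional arc that loops over the positive zig-zag of $\mathcal{K}_1^+$ to yield a piecewise smooth closed curve; after a small Legendrian perturbation this curve bounds an embedded disk $D\subset Y\setminus\nu(\mathcal{K}_1^+)$ with Legendrian boundary.

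The crucial and most delicate step, which I expect to be the main obstacle, is verifying that $\partial D$ has Thurston--Bennequin number zero with respect to the disk framing $D$. The framing contributions from the page framing of $\mathcal{K}_0^-$, the positive zig-zag of $\mathcal{K}_1^+$, and the angle between $a$ and $h(a)$ at $p$ must combine to zero. Crucially, the content of the hypothesis ``$h$ is not right-veering at $p$'' is precisely the presence of an extra left twist of $h(a)$ against $a$ at $p$; this twist contributes a compensating amount that cancels the negative contributions from the page framing and the zig-zag, yielding $tb_D(\partial D)=0$. This identifies $D$ as an overtwisted disk in $(Y(K),\Gamma_1,\xi_1^+)$, completing the proof. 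The whole argument thus reduces to a careful local computation in the standard model near $p$, and the essential step is verifying that the signs of the veering and the stabilization conspire to match.
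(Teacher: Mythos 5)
Your proposal takes a genuinely different tack from the paper, but there is a real gap, and the ``alternative consistency check'' you mention — which would essentially be the paper's argument — is stated in a way that misses the key subtlety.

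On the explicit-disk construction: the proposed closed curve is not well-defined as described. A ``short portion of $a$'' near $p$ and a ``short portion of $h(a)$'' near $p$, joined by a meridional arc near the zig-zag, does not obviously close up: $a$ and $h(a)$ both emanate from $p$ but their far endpoints are two distinct interior points of $\Sigma$, and nothing in the construction connects them. Any genuine disk boundary must involve a global arc of $\Sigma$, not just a local picture at $p$, and the non-right-veering hypothesis only constrains $h$ near $p$; the behavior elsewhere is arbitrary. Beyond this, the framing computation you identify as ``the crucial and most delicate step'' is never carried out, and it is exactly where the nontrivial content lies; one cannot simply assert that the signs conspire. Finally, you do not address why the resulting disk would lie in $Y\ssm\nu(\mathcal{K}_1^+)$ with the correct boundary behavior relative to the dividing set $\Gamma_1$.

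On the HKM route: this is what the paper does, but the relevant theorem is about \emph{partial} open books for the sutured manifold $(Y(K),\Gamma_1,\xi_1^+)$, not about the closed open book $(\Sigma,h)$ for $(Y,\xi)$. The paper's actual work is to exhibit a partial open book $(S',\,P = S'\ssm\nu(\mathcal{K}_1^+),\,f'|_P)$ for the complement, verify that the non-right-veering arc $a$ survives into $P$, and check that the stabilizing Dehn twists defining $f'$ do not disturb the ``to the left at $p$'' behavior — so that $f'|_P$ is still not right-veering. Then one applies \cite[Proposition~4.1]{hkm-sutured}. Your proposal treats the HKM theorem as applying directly to $(\Sigma,h)$, which only tells you that $\xi$ on $Y$ is overtwisted, not that an overtwisted disk can be placed in the complement of a standard neighborhood of $\mathcal{K}_1^+$. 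Making that transition is precisely the content of the lemma.
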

%\begin{remark}
%Baker and Onaran proved in \cite[Theorem 5.2.3]{baker-onaran} a similar result under the strictly stronger assumption that $(\Sigma,h)$ is the negative stabilization of another open book.
%\end{remark}

This concludes our alternative proof of Lemma \ref{lem:nonrv2}, modulo the proof of Lemma \ref{lem:otstab} which we provide in Section \ref{sec:proof}. 
We now describe in detail our proof of  Theorem \ref{thm:khi-fibered}, inspired by these ideas.

The instanton Floer analogues of  $\EH(\xi_i^\pm)$ and $\psi_i^\pm$ are the contact invariants \[\cinvt(\xi_i^\pm)\in\SHI(-Y(K),-\Gamma_i)\] and bypass attachment maps \[\phi_i^\pm:\SHI(-Y(K),-\Gamma_i)\to\SHI(-Y(K),-\Gamma_{i+1})\] we defined in \cite{bs-shi}. Guided by the discussion above, our  approach to proving Theorem \ref{thm:khi-fibered}  begins with the following analogue of Lemma \ref{lem:bypassclaim1}.
\begin{lemma}\label{lem:bypassclaim}If the monodromy $h$ is not right-veering then $\phi_0^+(\cinvt(\xi_0^-))=\cinvt(\xi_1^+)=0$.\end{lemma}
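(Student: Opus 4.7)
The plan is to derive Lemma \ref{lem:bypassclaim} by mirroring the proof of its Heegaard Floer analogue Lemma \ref{lem:bypassclaim1}, simply substituting the Honda--Kazez--Mati\'c invariant $\EH$ and its bypass maps with their instanton counterparts $\cinvt$ and $\phi_i^{\pm}$ constructed in \cite{bs-shi}. The statement naturally decomposes into two independent claims, $\phi_0^+(\cinvt(\xi_0^-))=\cinvt(\xi_1^+)$ and $\cinvt(\xi_1^+)=0$, which I would address in that order.

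The first equality is a naturality statement: positive bypass attachments send the contact invariant of a Legendrian approximation to the contact invariant of its positive Legendrian stabilization. Recall that attaching a positive bypass to $(Y(K),\Gamma_0,\xi_0^-)$ along the relevant arc produces exactly the contact manifold $(Y(K),\Gamma_1,\xi_1^+)$, and the corresponding Legendrian knot in $Y$ is transformed from $\mathcal{K}_0^-$ to $\mathcal{K}_1^+$. By the construction of $\phi_i^{\pm}$ and $\cinvt$ in \cite{bs-shi}, the gluing/cobordism map associated to this bypass attachment carries $\cinvt(\xi_0^-)$ to $\cinvt(\xi_1^+)$. This should require essentially no new work beyond citing those naturality properties.

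The second equality is where the contact topology enters. Assuming Lemma \ref{lem:otstab}, the contact structure $\xi_1^+$ is overtwisted whenever $h$ is not right-veering. The vanishing $\cinvt(\xi_1^+)=0$ then follows from the analogue, established in \cite{bs-shi}, of the fact that the $\EH$ invariant of an overtwisted contact manifold with convex boundary is zero.

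The real obstacle is therefore Lemma \ref{lem:otstab}, whose proof is purely contact-geometric and is deferred to Section \ref{sec:proof}. The plan there will be to start from a non-right-veering arc $a\subset\Sigma$ at one of its endpoints $p$, use the open book description of $(Y,\xi)$ to understand how $a$ sits relative to a standard neighborhood of $\mathcal{K}_0^-$, and then exhibit an explicit overtwisted disk in $(Y(K),\Gamma_1,\xi_1^+)$ arising from the single positive stabilization at $\mathcal{K}_1^+$. Once Lemma \ref{lem:otstab} and the properties of $\cinvt$ and $\phi_i^{\pm}$ from \cite{bs-shi} are in hand, Lemma \ref{lem:bypassclaim} follows by a one-line combination of the two ingredients.
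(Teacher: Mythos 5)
Your proof of Lemma \ref{lem:bypassclaim} is correct and takes the paper's approach exactly: the first equality is the naturality of the bypass attachment map (Proposition \ref{prop:bypass}) applied to the Stipsicz--V\'ertesi observation that the positive bypass along the arc $p$ carries $(Y(K),\Gamma_0,\xi_0^-)$ to $(Y(K),\Gamma_1,\xi_1^+)$, and the second equality follows from Lemma \ref{lem:otstab} together with the vanishing of $\cinvt$ for overtwisted sutured contact manifolds (Theorem \ref{thm:zero-overtwisted}). One remark on your sketch of the deferred Lemma \ref{lem:otstab}: the paper does not construct an explicit overtwisted disk, but instead observes that the non-right-veering arc $a$ survives as an arc in the page $P = S'\ssm\nu(\mathcal{K}_1^+)$ of a partial open book for $(Y(K),\Gamma_1,\xi_1^+)$ with $f'|_P$ still sending it to the left, and then invokes Honda--Kazez--Mati\'c's criterion \cite[Proposition 4.1]{hkm-sutured} that a partial open book with non-right-veering restricted monodromy supports an overtwisted contact structure; this is considerably cleaner than locating a disk by hand.
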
 

We note that Lemma \ref{lem:bypassclaim} follows immediately from  Lemma \ref{lem:otstab} since our contact  invariant $\theta$ vanishes for overtwisted contact manifolds, just as the $\EH$ invariant does.

Unfortunately, Lemma \ref{lem:bypassclaim} does not  automatically   imply Theorem \ref{thm:khi-fibered} in the same way that Lemma \ref{lem:bypassclaim1} implies Theorem \ref{thm:hfk-fibered}, as the latter implication ultimately makes use of structure that is  unavailable in the instanton Floer setting. Indeed, proving Theorem \ref{thm:khi-fibered} from the starting point of Lemma \ref{lem:bypassclaim} requires some  additional  ideas, as explained below.

First, we recall that in \cite{stipsicz-vertesi}, Stipsicz and V{\'e}rtesi proved  that the \emph{hat} version of the Lisca, Ozsv{\'a}th, Stipsicz, Szab{\'o} transverse invariant, \[\widehat{\euscript{T}}(K)\in\HFK(-Y,K),\] can be described as the $\EH$ invariant of the contact manifold obtained by attaching a certain bypass to the  complement of a standard neighborhood of \emph{any} Legendrian approximation of $K$. In particular, the contact manifold resulting from these Stipsicz-V{\'e}rtesi bypass attachments is independent of the Legendrian approximation, as shown in the proof of \cite[Theorem~1.5]{stipsicz-vertesi}. Inspired by this, we  define an   element \[{\kinvt}(K):=\phi^{SV}_i(\cinvt(\xi_i^-))\in \KHI(-Y,K),\] where \begin{equation*}\label{eqn:mapsv}\phi_i^{SV}:\SHI(-Y(K),-\Gamma_i)\to\SHI(-Y(K),-\Gamma_\mu)=\KHI(-Y,K)\end{equation*} is the map   our work  \cite{bs-shi}  assigns to the Stipsicz-V{\'e}rtesi bypass attachment. Since each $\mathcal{K}_i^-$ is a Legendrian approximation of $K$, the contact manifold obtained from these  attachments, and hence $\kinvt(K)$, is independent of $i$.  

We prove that the $\kinvt$ invariant of the transverse binding $K$  lies in the top Alexander grading, just as in  Heegaard Floer homology:

\begin{theorem}
\label{thm:ttopgrading}
$\kinvt(K)\in\KHI(-Y,K,[\Sigma],g)$.
\end{theorem}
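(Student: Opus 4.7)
My plan is to reduce the grading statement to a careful tracking of the Alexander grading through each step of the construction of $\kinvt(K)$. Since the Alexander grading on $\KHI(-Y,K) = \SHI(-Y(K),-\Gamma_\mu)$ is defined only for the meridional sutures, the first step is to extend it to each intermediate group $\SHI(-Y(K),-\Gamma_i)$. For each $i$, the Seifert surface $\Sigma$ extends to a properly embedded surface $R_i$ in $Y(K)$ whose boundary arcs are chosen compatibly with the sutures $\Gamma_i$. This surface should induce an eigenspace decomposition of $\SHI(-Y(K),-\Gamma_i)$ analogous to the one defining the Alexander grading on $\KHI$, and one then needs to pin down how the bypass attachment maps $\phi_i^\pm$ shift these gradings as a function of how the bypass arcs meet $R_i$.

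Next, I would show that the contact invariant $\cinvt(\xi_i^-)$ lies in the top Alexander grading of $\SHI(-Y(K),-\Gamma_i)$ with respect to $R_i$. This is the instanton Floer analogue of the Lisca--Ozsv\'ath--Stipsicz--Szab\'o computation $(sl(K)+1)/2 = g$ recalled in \eqref{eqn:alexgradingcalc}. The argument should proceed via the partial open book presentation of $\cinvt(\xi_i^-)$ from \cite{bs-shi}: making $R_i$ convex inside $(Y(K),\xi_i^-)$ endows it with a dividing set whose topology is pinned by the Thurston--Bennequin invariant of $\mathcal{K}_i^-$ together with the genus of $\Sigma$, and this in turn fixes the grading of $\cinvt(\xi_i^-)$ at the top.

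Finally, I would track the grading through the sequence of bypass attachments comprising $\phi_i^{SV}$, whose net effect changes the sutures from $\Gamma_i$ to $\Gamma_\mu$. Using the explicit shift formulas obtained in the first step, the top Alexander grading of $\SHI(-Y(K),-\Gamma_i)$ should be sent precisely to $\KHI(-Y,K,[\Sigma],g)$, yielding the desired statement. As a useful sanity check, the same calculation in the Heegaard Floer analogue of Etnyre--Vela-Vick--Zarev recovers the known grading of $\widehat{\euscript{T}}(K)$.

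The primary obstacle is the first step: developing the Alexander grading for $\SHI$ with non-meridional sutures and establishing explicit grading-shift formulas for the bypass maps $\phi_i^\pm$. In the Heegaard Floer setting this framework is essentially built in through the filtration description of $\HFKm$; in the instanton setting it requires additional foundational work, including checking independence of the grading from choices and compatibility with the naturality established in \cite{bs-naturality}.
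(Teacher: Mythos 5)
Your overall strategy---track the Alexander grading through the construction of $\kinvt(K)$---is the right idea, and you correctly identify that one must extend the grading to intermediate sutured manifolds and control how the maps interact with it. However, your proposed execution diverges from the paper's and has at least two genuine gaps.

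First, the route. The paper does not pass through the entire sequence $\Gamma_0\to\Gamma_1\to\cdots\to\Gamma_\mu$ via the bypass maps $\phi_i^\pm$. It goes in two steps: show $\cinvt(\xi_0^-)\in\SHI(-Y(K),-\Gamma_0,[\Sigma],g)$, then show the Stipsicz--V\'ertesi map $\phi_0^{SV}$ preserves that grading. The choice of $\mathcal{K}_0^-$ with $tb_\Sigma(\mathcal{K}_0^-)=-1$ is essential here precisely because it forces $\partial\Sigma$ to meet $\Gamma_0$ in exactly two points, which is what the Section~\ref{sec:alex} construction of the Alexander grading requires. For the stabilized sutures $\Gamma_i$ with $i\geq 1$, the Thurston--Bennequin number drops, so $\partial\Sigma$ meets $\Gamma_i$ in more than two points (as noted in Remark~\ref{rmk:tbneg1}), and the construction of Section~\ref{sec:alex} does not directly apply. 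You propose replacing $\Sigma$ by modified surfaces $R_i$ compatible with the $\Gamma_i$, but this is additional machinery the paper deliberately avoids, and it would then require showing that the resulting gradings are all compatible with the one on $\KHI$, which is nontrivial. You also frame ``explicit grading-shift formulas for the bypass maps'' as the primary obstacle; the paper sidesteps this entirely by observing that the specific surgery curves involved (the $\gamma_i$ from the partial open book, and the Stipsicz--V\'ertesi arc $c$) are \emph{disjoint} from $\Sigma$, so the capped-off surfaces on the two ends of each cobordism are isotopic inside it and Lemma~\ref{lem:commute} gives grading preservation directly. No shift formula is needed.

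Second, and more substantively, your step establishing that the contact invariant lies in the top grading is where the real work is, and your sketch does not contain the mechanism that makes it go through. You invoke convex surface theory and the dividing set of $R_i$; but the Alexander grading in the instanton setting is an eigenspace of a $\mu$-operator, and there is no direct dictionary from dividing set topology to $\mu$-eigenvalues. The paper's argument instead works with a concrete closure $\data_S=(Z,R,\eta,\alpha)$ of the product sutured manifold $(H_S,\Gamma_S)$, observes that the generator $\cinvt(\xi_S)$ lies by definition in the $(2g(R)-2)$-eigenspace of $\mu(R)$, shows that the capped-off surface $\overline\Sigma$ differs from $R$ in $H_2(Z)$ by a \emph{torus} (the union of the two annuli $A$ and $B$ in Figure~\ref{fig:torusdifference}), and then applies Corollary~\ref{cor:grading-shift} to conclude that $\cinvt(\xi_S)$ also lies in the $2g$-eigenspace of $\mu(\overline\Sigma)$. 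This torus-difference argument, together with the disjointness argument for the handle-attachment cobordisms, is the technical core of the proof, and it is absent from your proposal. Without it, the claim that $\cinvt(\xi_0^-)$ is top-graded is not established.
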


Moreover, we prove the following analogue of Vela-Vick's result \cite{vv}  that the transverse binding of an open book has nonzero Heegaard Floer invariant.

\begin{theorem}
\label{thm:nonzero}
$\kinvt(K)$ is nonzero.
\end{theorem}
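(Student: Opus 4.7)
The plan is to prove nonvanishing of $\kinvt(K)$ by reinterpreting it as the contact invariant $\cinvt(\eta)$ of an explicitly identified contact manifold carrying a partial open book structure inherited from $(\Sigma,h)$.

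First, by the naturality of $\cinvt$ under bypass attachment maps established in \cite{bs-shi}, we have
\[
\kinvt(K) \;=\; \phi_i^{SV}(\cinvt(\xi_i^-)) \;=\; \cinvt(\eta),
\]
where $\eta$ is the contact structure on $(-Y(K),-\Gamma_\mu)$ obtained by attaching the Stipsicz--V\'ertesi bypass to $(Y(K),\Gamma_i,\xi_i^-)$. Since each $\mathcal{K}_i^-$ is a Legendrian approximation of the transverse binding $K$, after orientation reversal $\eta$ is naturally identified with the complement of a standard transverse neighborhood of $K$ in $(Y,\xi)$, where $\xi$ is the contact structure supported by $(\Sigma,h)$. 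Thus it suffices to prove $\cinvt(\eta)\neq 0$ in $\SHI(-Y(K),-\Gamma_\mu)=\KHI(-Y,K)$.

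A useful simplification is that by Theorem \ref{thm:ttopgrading} the class $\kinvt(K)$ lies in $\KHI(-Y,K,[\Sigma],g)$, and by Kronheimer--Mrowka's fiberedness detection \eqref{eqn:genusfibered} this subspace is one-dimensional. Nonvanishing of $\cinvt(\eta)$ is therefore equivalent to $\cinvt(\eta)$ being a generator of this one-dimensional summand, reducing the problem to a question about a single class in a single $\C$-line.

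The main step is then to show this class is nontrivial. The plan is to exploit the partial open book decomposition of $(Y(K),\Gamma_\mu,\eta)$ with page $\Sigma$ and monodromy $h$, inherited from the fibration of $Y\setminus K$ over $S^1$. Using the construction of $\cinvt$ via partial open books from \cite{bs-shi}, $\cinvt(\eta)$ is represented by a distinguished class in the appropriate sutured instanton group. To show this class survives, I would pursue a gluing strategy: attach contact handles to the partial open book so as to cap it off into a closed contact manifold which is Stein fillable, and then invoke multiplicativity of $\cinvt$ under such attachments to deduce nonvanishing of $\cinvt(\eta)$ from the well-known nonvanishing of $\cinvt$ for Stein fillable manifolds. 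An alternative route is induction on the genus of $\Sigma$ or the word length of $h$ in the mapping class group, reducing to a base case (such as a small-genus fibered knot like a trefoil, where the rank of $\KHI$ is already known) amenable to direct computation.

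The hard part will be this final nonvanishing step. Vela-Vick's Heegaard Floer proof proceeds by exhibiting the transverse invariant as a distinguished generator in a carefully chosen Heegaard diagram adapted to $(\Sigma,h)$; there is no analogous diagrammatic calculus in the instanton setting, so the argument must proceed entirely through the categorical and contact-geometric properties of $\cinvt$ developed in \cite{bs-shi}---naturality under bypasses, construction via partial open books, and behavior under contact gluings---possibly supplemented by the bypass exact triangle established earlier in this paper in order to fit $\cinvt(\eta)$ into an exact sequence with already-known nonvanishing classes.
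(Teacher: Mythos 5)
There is a genuine gap in your plan, and it is the central one: the ``cap off to a Stein fillable closed manifold'' strategy cannot work in general. The theorem is stated with no hypothesis on $h$, and in the argument for Theorem~\ref{thm:khi-fibered} we precisely want to apply it when $h$ is \emph{not} right-veering, in which case the supported contact structure $\xi$ on $Y$ is overtwisted and the complement of $K$ has no hope of embedding in any Stein fillable (or even tight) closed contact manifold as a contact submanifold in a way compatible with your proposed gluing. Nonvanishing of $\kinvt(K)$ is genuinely \emph{not} a consequence of nonvanishing of the ambient contact invariant; it is a subtler statement, valid even when $\cinvt(\xi)=0$. Your fallback suggestion of ``induction on word length of $h$'' points in a better direction, but you do not identify a mechanism that would carry the nonvanishing across a single Dehn twist, so as written the inductive step is unsupported.

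The paper's proof supplies exactly the missing mechanism and is structured differently from what you propose. First it exhibits one genus-$g$ fibered knot for which nonvanishing can be checked directly: the $(2,2g{+}1)$-torus knot $K'$, whose Legendrian representative has $tb = 2g_4-1 > 0$, so that Theorem~\ref{thm:tb} (proved via the surgery exact triangle and the adjunction inequality) gives $\linvt(\mathcal{K}')\neq 0$; transverse simplicity of torus knots then shows $\mathcal{K}'$ is a Legendrian approximation of the binding, so $\kinvt(K')\neq 0$. (This base case is the one place where fillability-type reasoning appears, and there $\xi_{std}$ is indeed Stein fillable.) Second, and this is the step your proposal is missing, the paper shows that if two fibered knots with fiber $\Sigma$ have monodromies differing by a positive Dehn twist along a nonseparating curve $\beta\subset\Sigma$, then the map $G_\beta$ coming from contact $(+1)$-surgery on $\beta$ restricts to an \emph{isomorphism} on the top Alexander grading $\KHI(\cdot,[\Sigma],g)$ sending $\kinvt$ to $\kinvt$. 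The isomorphism is not a fillability fact at all: it comes from the surgery exact triangle (Theorem~\ref{thm:exacttri}) in which the third term---the $2g$-eigenspace of $\mu(\overline\Sigma)$ on the $0$-surgered closure---vanishes because $\overline\Sigma$ is homologous there to a surface of strictly smaller genus, so Proposition~\ref{prop:mu-spectrum} kills that eigenspace. Chaining these isomorphisms across a factorization of $h\phi^{-1}$ into Dehn twists completes the proof. You should replace the fillability/capping-off idea with this two-step structure: a single computable example plus a surgery-exact-triangle isomorphism in the top Alexander grading.
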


\begin{remark}
To be  clear, Theorems \ref{thm:ttopgrading} and \ref{thm:nonzero} hold without any assumption on  $h$.\end{remark}

\begin{remark}Our proof of Theorem \ref{thm:nonzero} relies on  formal properties of our contact invariants as well as  the surgery exact triangle and  adjunction inequality in instanton Floer homology. In fact, our argument can be ported directly to the Heegaard Floer setting to give a new proof of Vela-Vick's theorem. 
\end{remark}

The task remains to put all of these pieces together to conclude Theorem \ref{thm:khi-fibered}. This involves proving a bypass exact triangle in sutured instanton homology analogous to Honda's triangle in sutured Heegaard Floer homology. In Section \ref{sec:bypass} we prove the following.

\begin{theorem}
\label{thm:bypass}
Suppose $\Gamma_1,\Gamma_2,\Gamma_3\subset \partial M$ is a 3-periodic sequence of sutures related by the moves in a bypass triangle as in Figure \ref{fig:bypass-triangle2}. Then there is an exact triangle
\[ \xymatrix@C=-35pt@R=30pt{
\SHI(-M,-\Gamma_1) \ar[rr] & & \SHI(-M,-\Gamma_2) \ar[dl] \\
& \SHI(-M,-\Gamma_3), \ar[ul] & \\
} \]
in which the  maps are the corresponding bypass attachment maps.
\end{theorem}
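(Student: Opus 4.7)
The plan is to deduce the bypass triangle for $\SHI$ from the instanton surgery exact triangle applied to suitable closures. The three sutures $\Gamma_1,\Gamma_2,\Gamma_3$ in a bypass triangle agree outside a small disk $D\subset\partial M$, and inside $D$ they differ by three configurations which, after choosing a convex annular neighborhood of the common attaching arc, are related to one another by $\pm 1$ Dehn twists along a single simple closed curve. Equivalently, the three bypass configurations are the three consecutive Dehn fillings of a fixed knot complement. This is the same link between bypass triples and surgery triples that powers the proof of Honda's triangle in sutured Heegaard Floer homology, and it is the mechanism we want to exploit.

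Concretely, I would first form closures $\bar M_j$ of $(M,\Gamma_j)$ for $j=1,2,3$ using a common auxiliary surface whose gluing region is disjoint from a neighborhood of $D$. Since the $\Gamma_j$ differ only inside $D$, the closures $\bar M_1,\bar M_2,\bar M_3$ can be arranged to be three Dehn fillings, with three consecutive slopes, of one knot complement $M_0\subset\bar M_1$. The instanton surgery exact triangle then produces an exact triangle
\[ I_*(\bar M_1) \to I_*(\bar M_2) \to I_*(\bar M_3) \to I_*(\bar M_1)[1] \]
whose connecting maps are induced by the corresponding $2$-handle surgery cobordisms. Because these cobordisms are products outside a neighborhood of $D\times[0,1]$, they act as the identity on the cut surface $R$ used in Kronheimer--Mrowka's definition of $\SHI$, so taking the appropriate generalized eigenspace of $\mu(R)$ restricts the triangle above to an exact triangle
\[ \SHI(-M,-\Gamma_1) \to \SHI(-M,-\Gamma_2) \to \SHI(-M,-\Gamma_3) \to \SHI(-M,-\Gamma_1)[1]. \]

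The remaining, and I expect most delicate, task is to identify the maps in this triangle with the bypass attachment maps $\phi_j$ constructed in \cite{bs-shi}. Recall that a bypass attachment is encoded there by a contact $2$-handle attached to $(-M,-\Gamma_j)$ whose attaching curve is the dividing curve on the boundary of the bypass half-disk. Under the closure operation, this contact $2$-handle extends to exactly the $2$-handle surgery cobordism between $\bar M_j$ and $\bar M_{j+1}$ that realizes the corresponding Dehn filling. One therefore expects the triangle maps to agree with $\phi_j$ by functoriality of instanton Floer homology under $2$-handle cobordisms; making this rigorous will require checking that the framing conventions, the product extensions across the closure region, and the transitive naturality system of \cite{bs-naturality} are all compatible with the conventions used in the surgery triangle. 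This final bookkeeping step is the one I expect to demand the most care, but once it is carried out, Theorem \ref{thm:bypass} follows immediately.
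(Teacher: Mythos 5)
Your overall strategy matches the paper's: realize the three bypass moves on closures as a surgery triple, apply the instanton surgery exact triangle (Theorem~\ref{thm:exacttri}), and identify the resulting maps with the bypass attachment maps. But the step you label ``bookkeeping'' hides a genuine gap, and your description of it as a routine check of ``framing conventions \ldots and the transitive naturality system'' misses the actual obstruction, which is about the bundle data.

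Look carefully at the surgery exact triangle: one corner is $I_*(Y)_{\alpha+K}$, not $I_*(Y)_\alpha$---the $1$-manifold defining the bundle has an extra component, the surgery curve $K$ itself. Moreover the map out of that corner uses the $2$-dimensional cobordism $\kappa$ (which contains the cocore of the attached $2$-handle), whereas the contact $2$-handle/bypass attachment map is by definition induced by the cylinder $\nu=(\alpha\sqcup\eta)\times[0,1]$. So the triangle you get by naively applying Theorem~\ref{thm:exacttri} to a closure is \emph{not} literally the triangle of $\SHI$ groups with bypass maps; there is both a bundle mismatch at one vertex and a $2$-dimensional-cobordism mismatch in (at least) one map. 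Fixing this requires an explicit argument, not just naturality. The paper's resolution is to choose the closure $\data_1$ so that $K=\beta_1$ bounds a once-punctured torus with framing matching the surgery framing (this is exactly the construction used to define the Alexander grading in Section~\ref{sec:alex}); then $K$ is nullhomologous, so $I_*(-Y_1)_{-\alpha-\eta+K}\cong I_*(-Y_1)_{-\alpha-\eta}$, and one further checks that $\nu$ and the capped-off cobordism $\overline\kappa_1$ represent the same class in $H_2(W_1,\partial W_1)$ by a $b_1$ count and the long exact sequence of the pair, hence induce the same map. Crucially, this choice of closure is adapted to one particular vertex of the triangle, so the paper proves exactness at each of the three $\SHI$ groups separately, with a possibly different closure for each; your proposal reads as if a single closure should produce the full exact triangle at once, which is not how the argument can be made to work. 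With the correct nullhomologous-closure trick in hand, your outline does become a proof, but as written the proposal does not identify (let alone close) this gap.
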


\begin{figure}[ht]
\labellist
\small \hair 2pt
\pinlabel $\alpha_1$  at 24 117
\pinlabel $\alpha_2$  at 149 118
\pinlabel $\alpha_3$  at 97 37
\pinlabel $\Gamma_1$  at -3 145
\pinlabel $\Gamma_2$  at 173 144
\pinlabel $\Gamma_3$  at 85 -8
\endlabellist
\centering
\includegraphics[width=4.7cm]{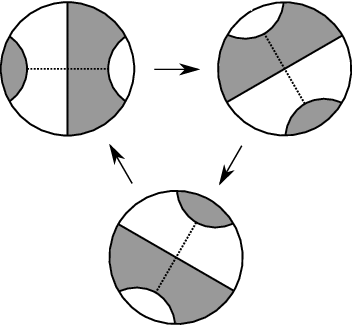}
\caption{The bypass triangle.  Each picture shows the  arc $\alpha_i$ along which a bypass is attached to achieve the next set of sutures in the triangle. The gray and white regions indicate the negative and positive regions, respectively.}
\label{fig:bypass-triangle2}
\end{figure}

For example, we  show that the map $\phi_0^+$ fits into a bypass exact triangle of the form
\begin{equation}\label{eqn:bypasstriintro} \xymatrix@C=-25pt@R=35pt{
\SHI(-Y(K),-\Gamma_0) \ar[rr]^{\phi_0^+} & & \SHI(-Y(K),-\Gamma_1) \ar[dl]^{\phi_1^{SV}} \\
& \KHI(-Y,K) \ar[ul]^C, & \\
} \end{equation}
as provided by Theorem~\ref{thm:bypass}, in which the map $\phi_1^{SV}$ comes from attaching a Stipsicz-V\'ertesi bypass.

To prove Theorem \ref{thm:khi-fibered}, let us now assume that the monodromy $h$ is not right-veering. Then \[\phi_0^+(\cinvt(\xi_0^-))=0\] by Lemma \ref{lem:bypassclaim}. Exactness of the triangle \eqref{eqn:bypasstriintro} then tells us that there is a class $x\in\KHI(-Y,K)$ such that \[C(x)=\cinvt(\xi_0^-).\] 
 The composition \begin{equation}\label{eqn:phicompintro}\phi_0^{SV}\circ C: \KHI(-Y,K)\to \KHI(-Y,K)\end{equation} therefore satisfies \begin{equation*}\phi_0^{SV}(C(x))=\kinvt(K),\end{equation*} which is nonzero by Theorem \ref{thm:nonzero}. It follows that the class $x$ is nonzero as well.%The class $x$ is not \emph{a priori}  homogeneous with respect to the Alexander grading on $\KHI(-Y,K)$. However, 
 
 Although the map in \eqref{eqn:phicompintro} is not \emph{a priori} homogeneous with respect to the Alexander grading, we prove that it shifts the  grading by at most 1. On the other hand, this composition  is trivial on  the top summand \[\KHI(-Y,K,[\Sigma],g)\cong\C\] since by Theorems \ref{thm:ttopgrading} and \ref{thm:nonzero} this summand is generated by $\kinvt(K)=\phi_1^{SV}(\xi_1^-)$, and \[C(\kinvt(K)) = C(\phi_1^{SV}(\xi_1^-))=0\] by  exactness of the triangle \eqref{eqn:bypasstriintro}. This immediately implies the result below.

\begin{theorem}
The component of $x$  in $\KHI(-Y,K,[\Sigma],g-1)$ is nonzero.
\end{theorem}

Theorem \ref{thm:khi-fibered} then follows from the symmetry \[\KHI(Y,K,[\Sigma],g-1)\cong \KHI(-Y,K,[\Sigma],g-1).\]

This completes our outline of the proof of Theorem \ref{thm:khi-fibered}. There are several  challenges  involved in making this outline  rigorous. The most substantial and interesting of these has to do with the Alexander grading, as described below.

\subsection{On the Alexander grading}
Kronheimer and Mrowka define the Alexander  grading on $\KHI$ by embedding the knot complement in a  particular closed $3$-manifold. On the other hand, the argument outlined above relies on the contact invariants in $\SHI$ we defined in \cite{bs-shi} and our naturality results from \cite{bs-naturality} (the latter tell us that different choices in the construction of $\SHI$ yield groups that are \emph{canonically} isomorphic, which is needed to talk sensibly about maps between $\SHI$ groups). Both require that we use a much larger class of \emph{closures}. Accordingly, one obstacle we had to overcome was showing that the Alexander grading can be defined in this broader setting in such a way that it agrees with the one Kronheimer and Mrowka defined (so that it still detects genus and fiberedness). We hope this contribution might prove useful for other purposes as well.

\subsection{Organization}  Section \ref{sec:bkgnd} provides the necessary background on instanton Floer homology, sutured instanton homology, and our contact invariants. We also prove several results in this section which do not appear elsewhere but are familiar to experts. In Section \ref{sec:alex}, we  give a more robust definition of the Alexander grading associated with a properly embedded surface in a sutured manifold. In Section \ref{sec:bypass}, we  prove a bypass exact triangle in  sutured instanton homology. In Section \ref{sec:leg}, we define invariants of Legendrian and transverse knots in $\KHI$ and establish some of their basic properties. In Section \ref{sec:proof}, we prove Theorem \ref{thm:khi-fibered} according to the outline above. As discussed, this theorem implies the other theorems stated above, including our main result that Khovanov homology detects the trefoil. 

\subsection{Acknowledgments} We thank Chris Scaduto and Shea Vela-Vick for helpful conversations, and the referees for many useful comments which improved the exposition.  We also thank Etnyre, Vela-Vick, and Zarev for their beautiful article \cite{evvz} which inspired certain parts of our approach. Finally, we would like the acknowledge the debt this paper owes to the foundational work of Kronheimer and Mrowka.%In particular, the work of Etnyre, Vela-Vick, and Zarev inspired some of the ideas in this paper. 

\section{Background}
\label{sec:bkgnd}

\subsection{Instanton Floer homology}
\label{ssec:instanton} This section provides the necessary background on instanton Floer homology. Our discussion is borrowed from \cite{km-excision}, though we include proofs of some propositions and lemmas which are familiar to experts but do not appear explicitly elsewhere. Our description of the surgery exact triangle is taken from \cite{scaduto}.  All surfaces in this paper will be orientable.

Let $(Y,\alpha)$ be an \emph{admissible pair}; that is, a closed, oriented 3-manifold $Y$ and a closed, oriented 1-manifold $\alpha \subset Y$ intersecting some embedded surface transversally in an odd number of points.  We associate the following data to this pair:
\begin{itemize}
\item A Hermitian line bundle $w \to Y$ with $c_1(w)$ Poincar\'e dual to $\alpha$;
\item A $U(2)$ bundle $E \to Y$ equipped with an isomorphism $\cinvt: \wedge^2 E \to w$.
\end{itemize}
The \emph{instanton Floer homology} $I_*(Y)_\alpha$ is the Morse homology of the Chern-Simons functional on the space $\cB=\cC/\cG$ of $SO(3)$ connections on $\operatorname{ad}(E)$ modulo determinant-1 gauge transformations, as in \cite{donaldson-book}. It is a $\ZZ/8\ZZ$-graded $\C$-module. 

\begin{notation}
Given disjoint oriented $1$-manifolds $\alpha,\eta\subset Y$ we will use the shorthand \[I_*(Y)_{\alpha+\eta}:=I_*(Y)_{\alpha\sqcup\eta}\] as it will make the notation cleaner in what follows.
\end{notation}
%The group $I_*(Y)_\alpha$ depends, up to isomorphism, only on the homology class $[\alpha]\in H_1(Y)$.
For each even-dimensional class $\Sigma \in H_d(Y)$, there is an operator \[\mu(\Sigma):I_*(Y)_\alpha\to I_{*+d-4}(Y)_\alpha,\] defined by a cohomology class in $H^{4-d}(\cB)$; these are introduced in \cite[\S7.2]{km-excision} as a straightforward adaptation of a construction for closed 4-manifold invariants in \cite{donaldson-kronheimer}, though they had already appeared in some form in \cite{munoz}. These operators are additive in  that \[\mu(\Sigma_1+\Sigma_2)=\mu(\Sigma_1)+\mu(\Sigma_2).\] Moreover, any two such operators commute.  Using work of Mu\~noz \cite{munoz}, Kronheimer and Mrowka  prove the following in  \cite[Corollary~7.2]{km-excision}.
\begin{theorem} 
\label{thm:simultaneouseigenvalues} Suppose $R$ is a closed surface in $Y$ of positive genus with $\alpha\cdot R$ odd. Then the simultaneous eigenvalues of the operators $\mu(R)$ and $\mu(\pt)$ on $I_*(Y)_\alpha$ belong to a subset of the pairs
\[ (i^r(2k), (-1)^r\cdot 2) \]
for $0 \leq r \leq 3$ and $0 \leq k \leq g(R)-1$, where $i = \sqrt{-1}$.
\end{theorem}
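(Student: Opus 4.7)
The plan is to reduce this eigenvalue constraint to Muñoz's explicit computation of the instanton Floer ring of the product manifold $\Sigma_g \times S^1$ equipped with a twisting class that meets $\Sigma_g \times \{\pt\}$ in an odd number of points. In that model, Muñoz gives a complete description of the joint action of $\mu(\Sigma_g)$ and $\mu(\pt)$, and the simultaneous spectrum is exactly the finite set of pairs listed in the statement. So the task becomes transplanting this model spectrum to an arbitrary admissible pair $(Y,\alpha)$ in which $R$ has genus $g$ and $\alpha \cdot R$ is odd.

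The mechanism for this transplant is a universal polynomial identity. I would aim to show that there is a fixed polynomial
\[ P_g(x,y) \,=\, \prod_{r=0}^{3}\prod_{k=0}^{g-1}\bigl(x - i^r(2k)\bigr)\bigl(y - (-1)^r\cdot 2\bigr) \]
such that $P_g(\mu(R),\mu(\pt))$ annihilates $I_*(Y)_\alpha$ for every admissible pair satisfying the hypotheses. From such a relation, the joint spectrum of the commuting operators $\mu(R)$ and $\mu(\pt)$ is forced to lie in the zero locus of $P_g$, which is precisely the set in the statement. To prove the relation, I would use the naturality of the $\mu$-operators under cobordisms together with a neck-stretching argument along a copy of $R$: a tubular neighborhood of $R$ in $Y$ is diffeomorphic to a neighborhood of a fiber $\Sigma_g \times \{\pt\}$ in $\Sigma_g \times S^1$, and the actions of $\mu(R)$ and $\mu(\pt)$ are supported in such a neighborhood in a universal way. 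Consequently, any polynomial identity verified for these operators in the model Floer group propagates to an identity on $I_*(Y)_\alpha$.

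The main obstacle will be making this transplant rigorous when $Y$ is an arbitrary closed $3$-manifold containing $R$. Since one does not have direct access to the model inside $Y$, one must interpolate, for example via Floer's excision principle (used repeatedly in \cite{km-excision}) applied to a cut along $R$, or via a suitable surgery cobordism whose induced map intertwines the relevant $\mu$-actions. The odd-intersection hypothesis on $\alpha \cdot R$ plays two essential roles in this step: it ensures that the cut-and-reglue remains admissible, so that the Floer groups continue to be defined, and it selects the specific branch of Muñoz's calculation that produces the claimed eigenvalue pattern rather than a different one (the parity affects which bundle is obtained, and hence which representation variety governs the critical set).

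Once the universal identity $P_g(\mu(R),\mu(\pt)) = 0$ is in hand, the conclusion is immediate: any generalized simultaneous eigenvector of $\mu(R)$ and $\mu(\pt)$ has its eigenvalue pair $(\lambda,\mu)$ among the zeros of $P_g$, i.e.\ among the pairs $(i^r(2k),(-1)^r\cdot 2)$ with $0 \le r \le 3$ and $0 \le k \le g(R)-1$, as claimed.
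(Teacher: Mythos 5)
The paper does not give its own proof of this statement; it is cited directly from \cite[Corollary~7.2]{km-excision}, and your high-level plan (Mu\~noz's computation for $\Sigma_g\times S^1$, then transplanting to $(Y,\alpha)$ by excision/naturality of the $\mu$-operators) is indeed the route Kronheimer and Mrowka take. However, there is a genuine algebraic error in the final step that makes the argument as written fail. The zero locus of your polynomial
\[ P_g(x,y)=\prod_{r=0}^{3}\prod_{k=0}^{g-1}\bigl(x-i^r(2k)\bigr)\bigl(y-(-1)^r\cdot 2\bigr) \]
is not the finite set $S=\{(i^r(2k),(-1)^r\cdot 2)\}$: a point $(x_0,y_0)$ lies in $V(P_g)$ as soon as, for \emph{some single} $(r,k)$, either $x_0=i^r(2k)$ \emph{or} $y_0=(-1)^r\cdot 2$. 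Thus $V(P_g)$ is a union of vertical and horizontal lines (for instance $(7,2)\in V(P_g)$ but $(7,2)\notin S$), so $P_g(\mu(R),\mu(\pt))=0$ only constrains the joint spectrum to that union of lines, not to $S$. In fact no single polynomial can do the job: a finite nonempty subset of $\C^2$ is never the zero set of one polynomial, since a nonconstant polynomial cuts out a curve. To make the argument go through you need to annihilate $I_*(Y)_\alpha$ by (powers of) a \emph{generating set} of the ideal $I(S)\subset\C[x,y]$; for example $Q_1(y)=(y-2)(y+2)$ together with
\[ Q_2(x,y)=(y+2)\prod_{k=0}^{g-1}\bigl(x^2-4k^2\bigr)-(y-2)\prod_{k=0}^{g-1}\bigl(x^2+4k^2\bigr) \]
satisfy $V(Q_1,Q_2)=S$, so showing that $Q_1(\mu(\pt))$ and $Q_2(\mu(R),\mu(\pt))$ are nilpotent would pin the joint spectrum to $S$. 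Equivalently, one can first show $\mu(\pt)$ has only the eigenvalues $\pm 2$ and then, on each $\mu(\pt)$-eigenspace separately, constrain $\mu(R)$ by the appropriate one-variable polynomial. Your excision/neck-stretching mechanism for propagating relations from the Mu\~noz model to $(Y,\alpha)$ would then need to be run for each of these polynomials (or for the statement that the whole ideal $I(S)$ acts nilpotently), rather than for the single $P_g$.
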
  
With this, they make the following definition.
\begin{definition}Given $Y,\alpha,R$  as in Theorem \ref{thm:simultaneouseigenvalues}, let
\[ I_*(Y|R)_\alpha\subset I_*(Y)_\alpha \]
be the simultaneous generalized $(2g(R)-2,2)$-eigenspace of $(\mu(R),\mu(\pt))$ on $I_*(Y)_\alpha$.
\end{definition}

%\begin{remark}
%\label{rmk:homologousonelower}
%Note that if $R$ is homologous to a surface of genus $g(R)-1$ then $I_*(Y|R)_\alpha=0$.
%\end{remark}

The commutativity of these operators implies that for any closed surface $\Sigma\subset Y$ the operator $\mu(\Sigma)$ acts on $I_*(Y|R)_\alpha$. Moreover, Kronheimer and Mrowka obtain the following bounds on the spectrum of this operator \emph{without} the assumption that $\alpha\cdot \Sigma$ is odd \cite[Proposition~7.5]{km-excision}.  %For this and the next proposition, let us suppose that $Y,\alpha,R$ are such that  $I_*(Y|R)_\alpha$ is defined.

\begin{proposition} \label{prop:mu-spectrum}
For any closed surface $\Sigma\subset Y$ of positive genus, the eigenvalues of \[\mu(\Sigma):I_*(Y|R)_\alpha\to I_{*-2}(Y|R)_\alpha\] belong to the set of even integers between $2-2g(\Sigma)$ and $2g(\Sigma)-2$.
\end{proposition}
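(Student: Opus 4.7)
The plan is to reduce the claim to Theorem~\ref{thm:simultaneouseigenvalues}, which already handles the case when the $1$-cycle intersects the surface in question in an odd number of points. The only real work is to accommodate the possibility that $\alpha\cdot\Sigma$ is even.

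First suppose $\alpha\cdot\Sigma$ is odd. Then I would apply Theorem~\ref{thm:simultaneouseigenvalues} directly to $(Y,\alpha)$ with $\Sigma$ playing the role of $R$: the simultaneous eigenvalues of $(\mu(\Sigma),\mu(\pt))$ on $I_*(Y)_\alpha$ lie among the pairs $(i^r(2k),(-1)^r\cdot 2)$ for $0\leq r\leq 3$ and $0\leq k\leq g(\Sigma)-1$. Because $\mu(R)$, $\mu(\Sigma)$, and $\mu(\pt)$ pairwise commute, the generalized eigenspace $I_*(Y|R)_\alpha$ is $\mu(\Sigma)$-invariant; on it, $\mu(\pt)$ has generalized eigenvalue $2$, so only pairs with $(-1)^r=1$ can occur, forcing $r$ to be even. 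The possible $\mu(\Sigma)$-eigenvalues are therefore $\pm 2k$ for $0\leq k\leq g(\Sigma)-1$, which is exactly the set of even integers between $2-2g(\Sigma)$ and $2g(\Sigma)-2$.

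Now suppose $\alpha\cdot\Sigma$ is even. My plan is to enlarge $\alpha$ to a $1$-manifold $\alpha'=\alpha\sqcup\eta$, where $\eta\subset Y$ is a closed $1$-manifold disjoint from $R$ (and, after a small isotopy, from $\alpha$) chosen so that $\eta\cdot\Sigma$ is odd. Then $\alpha'\cdot\Sigma$ is odd and $\alpha'\cdot R=\alpha\cdot R$ remains odd, so $(Y,\alpha')$ is again admissible and Case~1 applies to it. The two admissible pairs $(Y,\alpha)$ and $(Y,\alpha')$ have different singular bundle data and hence different instanton Floer groups, but one can compare the action of $\mu(\Sigma)$ on the $(\mu(R),\mu(\pt))=(2g(R)-2,2)$ generalized eigenspaces in each group. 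The point is that the spectrum of $\mu(\Sigma)$ is controlled by relations among the $\mu$-classes coming from Mu\~noz's analysis of the moduli of ASD connections in a neighborhood of $\Sigma$, and these relations depend only on $g(\Sigma)$, not on the global singular bundle data. Consequently the spectral bound from Case~1 on $I_*(Y|R)_{\alpha'}$ transfers to the desired bound on $I_*(Y|R)_\alpha$.

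The main obstacle is this last step: rigorously comparing $\mu(\Sigma)$ on the two Floer homologies. I would try to effect this either via a neck-stretching argument across a tubular neighborhood of $\eta$, which reduces the ASD moduli problem near $\Sigma$ to a universal local model, or more directly by establishing the Mu\~noz-type polynomial identity
\[
\prod_{k=0}^{g(\Sigma)-1}\bigl(\mu(\Sigma)^2-(2k)^2\bigr)=0
\]
on $I_*(Y|R)_\alpha$ for arbitrary admissible $\alpha$, using only the local structure of the instanton moduli space in a neighborhood of $\Sigma$. Such a polynomial identity immediately yields the claimed spectral bound and bypasses the parity distinction altogether.
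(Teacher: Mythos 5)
Your treatment of the case $\alpha\cdot\Sigma$ odd is correct and is exactly the intended reduction: apply Theorem~\ref{thm:simultaneouseigenvalues} with $\Sigma$ in place of $R$, then use $\mu(\pt)=2$ on $I_*(Y|R)_\alpha$ to force $r$ even.

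The even case, however, does not go through, and the heuristic you lean on is actually false. Mu\~noz's relations among the $\mu$-classes for a surface $\Sigma$ depend essentially on the restriction of the $SO(3)$-bundle to $\Sigma$, i.e.\ on $w_2|_\Sigma$, which is exactly $\alpha\cdot\Sigma \pmod 2$. The odd and even cases correspond to the nontrivial and trivial $SO(3)$-bundles over $\Sigma$, and the representation varieties (and hence the universal relations) are genuinely different in the two cases. So the claim that ``these relations depend only on $g(\Sigma)$, not on the global singular bundle data'' is precisely where the argument breaks: the relevant bundle data \emph{is} local to $\Sigma$. For the same reason, passing from $\alpha$ to $\alpha'=\alpha\sqcup\eta$ with $\eta\cdot\Sigma$ odd changes the Floer group $I_*(Y)_\alpha$ to a genuinely different group $I_*(Y)_{\alpha'}$, and there is no map between them along which one could transport a spectral bound; ``neck-stretching across $\eta$'' does not produce such a comparison, and the proposed polynomial identity is in fact \emph{not} expected to hold verbatim in the even case (also note the operators need not be diagonalizable, so even in the odd case one only gets nilpotence of $\prod(\mu(\Sigma)-2k)$ on each generalized eigenspace, not vanishing).

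The correct way to handle $\alpha\cdot\Sigma$ even is to leave $\alpha$ alone and modify the \emph{surface}, using $R$ itself to fix the parity. Since $\alpha\cdot R$ is odd, a connected surface representing a class such as $\Sigma+R$, or $2\Sigma\pm R$, has odd pairing with $\alpha$, so Theorem~\ref{thm:simultaneouseigenvalues} applies to it on $I_*(Y|R)_\alpha$. One then recovers the spectrum of $\mu(\Sigma)$ from that of $\mu(\Sigma+R)$ (resp.\ $\mu(2\Sigma\pm R)$) by subtracting $\mu(R)$, which acts with generalized eigenvalue $2g(R)-2$ on $I_*(Y|R)_\alpha$; the binomial-expansion manipulation needed to pass between generalized eigenvalues of sums of commuting operators is exactly the computation carried out for $z_\lambda$ in the proof of Proposition~\ref{prop:grading-shift}. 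Using $\Sigma+R$ gives that the eigenvalues of $\mu(\Sigma)$ are even integers, and using $2\Sigma+R$ and $2\Sigma-R$ (built, as in the proof of Proposition~\ref{prop:grading-shift}, by cutting and regluing two parallel copies of $\Sigma$ to get a genus-$(2g(\Sigma)-1)$ representative of $2[\Sigma]$ and then tubing to $\pm R$) gives the sharp bounds $|\mu(\Sigma)|\leq 2g(\Sigma)-2$. This is the argument Kronheimer and Mrowka give for this proposition, and it is the same device this paper deploys in the even case of Proposition~\ref{prop:grading-shift}.
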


We are interested in studying the eigenspace decompositions of operators of the form $\mu(\Sigma)$, which may not be diagonalizable.  In this section, we will use \emph{eigenspace} in proofs to mean \emph{generalized eigenspace}, but in the statements of results we will always say ``generalized eigenspace'' so that they can be understood unambiguously later.
%Throughout this paper we will therefore use \emph{eigenspace} to mean \emph{generalized eigenspace}.

\begin{lemma}
\label{lem:symmetric}
If $g(R)=1$ then the generalized $m$-eigenspace of $\mu(\Sigma)$ acting on $I_*(Y|R)_\alpha$ is isomorphic to its generalized $-m$-eigenspace for each $m$.
\end{lemma}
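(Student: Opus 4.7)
The plan is to construct an explicit linear automorphism of $I_*(Y)_\alpha$ that anti-commutes with $\mu(\Sigma)$ while preserving the subspace $I_*(Y|R)_\alpha$ when $g(R)=1$; such a map will automatically carry the generalized $m$-eigenspace of $\mu(\Sigma)$ isomorphically onto the generalized $(-m)$-eigenspace. The candidate, built purely from the ambient $\mathbb{Z}/8$-grading on $I_*(Y)_\alpha$, is the operator
\[
J\colon I_*(Y)_\alpha\to I_*(Y)_\alpha, \qquad J|_{I_d(Y)_\alpha}=i^d\cdot\id,
\]
where $i=\sqrt{-1}$.

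First I would record the commutation relations. Since $\mu(Z)$ shifts the $\mathbb{Z}/8$-grading by $\dim Z - 4$, a direct unwinding of the definition gives $J\,\mu(Z) = i^{\dim Z - 4}\,\mu(Z)\,J$ for any homology class $Z$. In particular, $J$ commutes with $\mu(\pt)$ (which has degree $-4$) and anti-commutes with both $\mu(\Sigma)$ and $\mu(R)$ (each of degree $-2$). The first fact implies $J$ preserves the generalized $2$-eigenspace of $\mu(\pt)$, and the identity $J\,\mu(R)^N = (-1)^N \mu(R)^N J$ implies that $J$ interchanges the generalized $\lambda$- and $(-\lambda)$-eigenspaces of $\mu(R)$.

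The hypothesis $g(R)=1$ now enters in precisely the right place: the eigenvalue $2g(R)-2 = 0$ singled out in the definition of $I_*(Y|R)_\alpha$ is its own negative, so $J$ restricts to a linear automorphism of $I_*(Y|R)_\alpha$. For $g(R)\geq 2$ this step would fail, which is consistent with the absence of a general statement at higher genus. The same identity applied with $\mu(\Sigma)$ in place of $\mu(R)$ gives $(\mu(\Sigma)+m)^N J = (-1)^N J (\mu(\Sigma)-m)^N$ on $I_*(Y|R)_\alpha$, and since $J$ is invertible (with inverse multiplication by $(-i)^d$ on the $d$-th summand) this exhibits the desired isomorphism between the generalized $m$- and $(-m)$-eigenspaces of $\mu(\Sigma)$.

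The only real content to check is the commutation relation $J\,\mu(Z) = i^{\dim Z - 4}\,\mu(Z)\,J$, which reduces to bookkeeping with the known degrees of the $\mu$-operators recorded in \cite{donaldson-kronheimer}. Once that is in hand, the rest of the argument is formal.
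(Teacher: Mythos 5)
Your argument is correct and takes essentially the same approach as the paper: both proofs define a diagonal automorphism of the $\ZZ/8\ZZ$-graded group $I_*(Y)_\alpha$ that anti-commutes with $\mu(\Sigma)$ and $\mu(R)$, commutes with $\mu(\pt)$, and therefore preserves $I_*(Y|R)_\alpha$ when $g(R)=1$ while swapping the generalized $\pm m$ eigenspaces of $\mu(\Sigma)$. The paper's map acts on the grading-$d$ summand by $(-1)^{\lfloor d/2\rfloor}$ rather than your $i^d$; these differ only by a degree-preserving unit, so the two arguments are interchangeable.
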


\begin{proof} Suppose $g(R)=1$. Then the $m$-eigenspace of $\mu(\Sigma)$ acting on $I_*(Y|R)_\alpha$ is the simultaneous $(m,0,2)$-eigenspace of the operators $(\mu(\Sigma),\mu(R),\mu(\pt))$ on $I_*(Y)_\alpha$. Recall that $I_*(Y)_\alpha$ is a $\Z/8\Z$-graded group. We may thus write an element of this group as  $(c_0,c_1,c_2,c_3,c_4,c_5,c_6,c_7)$, where $c_i$ is in grading $i$ mod 8. It then follows immediately from the fact that $\mu(\Sigma)$ and $\mu(R)$ are degree 2 operators and $\mu(\pt)$ is a degree 4 operator that the map which sends \[(c_0,c_1,c_2,c_3,c_4,c_5,c_6,c_7) \textrm{ to }(c_0,c_1,-c_2,-c_3,c_4,c_5,-c_6,-c_7)\] defines an isomorphism from the 
$(m,0,2)$-eigenspace of $(\mu(\Sigma),\mu(R),\mu(\pt))$ to the $(-m,0,2)$-eigenspace of these operators.
\end{proof}

Suppose $(Y_1,\alpha_1)$ and $(Y_2,\alpha_2)$ are admissible pairs. A cobordism  $(W,\nu)$ from the first pair  to the second induces   a map \begin{equation*}\label{eqn:maptwo}I_*(W)_\nu:I_*(Y_1)_{\alpha_1}\to I_*(Y_2)_{\alpha_2}\end{equation*} which depends up to sign only on the homology class  $[\nu]\in H_2(W,\partial W;\Z/2\Z)$ and the isomorphism class of $(W,\nu)$, where two such pairs  are isomorphic if they are diffeomorphic by a map which intertwines the boundary identifications (the surface $\nu$ specifies a bundle over $W$ restricting to the bundles on the boundary specified by $\alpha_1$ and $\alpha_2$). Moreover, if $\Sigma_1\subset Y_1$ and $\Sigma_2\subset Y_2$ are homologous in $W$ then \begin{equation}\label{eqn:commute}\mu(\Sigma_2)(I_*(W)_\nu(x)) = I_*(W)_\nu(\mu(\Sigma_1)x),\end{equation} which implies the following.

\begin{lemma}
\label{lem:commute}
Suppose $x\in I_*(Y_1)_{\alpha_1}$ is in the generalized $m$-eigenspace of $\mu(\Sigma_1)$. Then $I_*(W)_\nu(x)$ is in the generalized $m$-eigenspace of $\mu(\Sigma_2)$.
\end{lemma}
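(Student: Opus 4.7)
The plan is to deduce Lemma \ref{lem:commute} directly from the commutation relation \eqref{eqn:commute}, being mindful that, per the footnote in the definition of $I_*(Y|R)_\alpha$, ``$m$-eigenspace'' really means generalized eigenspace with respect to $\mu(\Sigma_i)$. So the content to verify is that $(\mu(\Sigma_1)-m)^k x = 0$ for some $k\geq 1$ implies $(\mu(\Sigma_2)-m)^k I_*(W)_\nu(x) = 0$.

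First, I would rewrite \eqref{eqn:commute} as the operator identity
\[ \mu(\Sigma_2)\circ I_*(W)_\nu \;=\; I_*(W)_\nu\circ \mu(\Sigma_1), \]
valid because $\Sigma_1$ and $\Sigma_2$ are homologous in $W$. Since multiplication by the scalar $m$ trivially commutes with $I_*(W)_\nu$, this gives
\[ (\mu(\Sigma_2)-m)\circ I_*(W)_\nu \;=\; I_*(W)_\nu\circ(\mu(\Sigma_1)-m). \]
Iterating this intertwining relation $k$ times yields
\[ (\mu(\Sigma_2)-m)^k\circ I_*(W)_\nu \;=\; I_*(W)_\nu\circ(\mu(\Sigma_1)-m)^k \]
for every $k\geq 1$.

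Now I would apply both sides to the given $x$. By hypothesis, $x$ lies in the generalized $m$-eigenspace of $\mu(\Sigma_1)$, so there exists $k\geq 1$ with $(\mu(\Sigma_1)-m)^k x = 0$. The displayed identity then gives
\[ (\mu(\Sigma_2)-m)^k\, I_*(W)_\nu(x) \;=\; I_*(W)_\nu\bigl((\mu(\Sigma_1)-m)^k x\bigr) \;=\; 0, \]
which shows that $I_*(W)_\nu(x)$ belongs to the generalized $m$-eigenspace of $\mu(\Sigma_2)$, as desired.

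There is no genuine obstacle here: the lemma is a formal consequence of \eqref{eqn:commute}, and the only mild subtlety is remembering to use the generalized eigenspace convention so that the argument goes through polynomially in $\mu(\Sigma_i)-m$ rather than just at the level of honest eigenvectors. If one wanted to be even more careful, one could package the observation as ``any linear map intertwining two commuting endomorphisms preserves their generalized eigenspaces'', and apply it to the pair $\mu(\Sigma_1),\mu(\Sigma_2)$ with the intertwiner $I_*(W)_\nu$ furnished by \eqref{eqn:commute}.
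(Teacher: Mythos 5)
Your proof is correct and takes essentially the same route as the paper: both proceed from $(\mu(\Sigma_1)-m)^N x = 0$ and use the intertwining relation \eqref{eqn:commute} to conclude $(\mu(\Sigma_2)-m)^N I_*(W)_\nu(x) = 0$. The paper states this more tersely, but the content is identical; your explicit spelling-out of the iterated intertwining identity is just a cleaner presentation of the same argument.
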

\begin{proof}
Since $x\in I_*(Y_1)_{\alpha_1}$ is in the $m$-eigenspace of $\mu(\Sigma_1)$, there exists an integer $N$ such that \[(\mu(\Sigma_1)-m)^Nx=0.\] The relation \eqref{eqn:commute} then implies that \[(\mu(\Sigma_2)-m)^NI_*(W)_\nu(x) = I_*(W)_\nu\big((\mu(\Sigma_1)-m)^Nx\big)=0,\] which confirms that $I_*(W)_\nu(x)$ is in the $m$-eigenspace of $\mu(\Sigma_2)$.
\end{proof}
A similar result  holds if $(Y_1,\alpha_1)$ is the disjoint union of two admissible pairs \[(Y_1,\alpha_1) = (Y_1^a,\alpha_1^a)\sqcup(Y_1^b,\alpha_1^b).\] In this case, $(W,\nu)$ induces a map \begin{equation*}\label{eqn:mapthree}I_*(W)_\nu:I_*(Y_1^a)_{\alpha_1^a}\otimes I_*(Y_1^b)_{\alpha_1^b}\to I_*(Y_2)_{\alpha_2}.\end{equation*} Moreover, if \[\Sigma_1^a\sqcup \Sigma_1^b\subset Y_1^a\sqcup Y_1^b\] is homologous in $W$ to $\Sigma_2\subset Y_2$ then \begin{equation}\label{eqn:commutethree}\mu(\Sigma_2)\big(I_*(W)_\nu(x\otimes y)\big) = I_*(W)_\nu\big(\mu(\Sigma_1^a)x\otimes y\big)+I_*(W)_\nu\big(x\otimes \mu(\Sigma_1^b)y\big),\end{equation} which implies the following analogue of Lemma \ref{lem:commute}.
\begin{lemma}
\label{lem:commutethree}
Suppose $x\in I_*(Y_1^a)_{\alpha_1^a}$ is in the generalized $m$-eigenspace of $\mu(\Sigma_1^a)$ and $y\in I_*(Y_1^b)_{\alpha_1^b}$ is in the generalized $n$-eigenspace of $\mu(\Sigma_1^b)$. Then $I_*(W)_\nu(x\otimes y)$ is in the generalized $(m+n)$-eigenspace of $\mu(\Sigma_2)$.
\end{lemma}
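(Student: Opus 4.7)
The plan is to mirror the argument for Lemma \ref{lem:commute}, only now accounting for the tensor-product structure. Set
\[ A = (\mu(\Sigma_1^a) - m)\otimes \id, \qquad B = \id\otimes (\mu(\Sigma_1^b) - n), \]
which are commuting operators on $I_*(Y_1^a)_{\alpha_1^a}\otimes I_*(Y_1^b)_{\alpha_1^b}$ because they act on different tensor factors. Equation \eqref{eqn:commutethree}, together with the linearity of $I_*(W)_\nu$, rearranges to the intertwining identity
\[ (\mu(\Sigma_2)-(m+n))\circ I_*(W)_\nu \;=\; I_*(W)_\nu\circ(A+B), \]
so iterating gives $(\mu(\Sigma_2)-(m+n))^N\circ I_*(W)_\nu = I_*(W)_\nu\circ(A+B)^N$ for every $N\geq 1$.

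By hypothesis there is an integer $M$ for which $A^M(x\otimes y)=0$ and $B^M(x\otimes y)=0$. Since $A$ and $B$ commute, the binomial theorem expands
\[ (A+B)^{2M-1} \;=\; \sum_{k=0}^{2M-1}\binom{2M-1}{k}A^k B^{\,2M-1-k}. \]
For each $k$, either $k\geq M$ (so $A^k$ annihilates $x\otimes y$) or $2M-1-k\geq M$ (so $B^{2M-1-k}$ does); in either case the term kills $x\otimes y$. Therefore $(A+B)^{2M-1}(x\otimes y)=0$, and the intertwining identity yields
\[ (\mu(\Sigma_2)-(m+n))^{2M-1}\,I_*(W)_\nu(x\otimes y) \;=\; 0, \]
which is exactly the assertion that $I_*(W)_\nu(x\otimes y)$ lies in the (generalized) $(m+n)$-eigenspace of $\mu(\Sigma_2)$.

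There is no real obstacle; the only step beyond the proof of Lemma \ref{lem:commute} is the observation that the two factor-wise operators $A$ and $B$ commute, which is what lets us apply the binomial theorem and convert two separate nilpotency bounds into a single nilpotency bound for $A+B$.
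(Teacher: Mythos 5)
Your proof is correct and is essentially the same as the paper's: both expand a sufficiently high power of $(\mu(\Sigma_2)-(m+n))$ applied to $I_*(W)_\nu(x\otimes y)$ via the binomial theorem using relation \eqref{eqn:commutethree}, and observe that in each summand one of the two factor-wise nilpotent operators appears with exponent at least the nilpotency degree. The only cosmetic difference is that you package the two factor operators as commuting operators $A$ and $B$ on the tensor product and phrase the step as an intertwining identity before applying $I_*(W)_\nu$, whereas the paper expands directly under $I_*(W)_\nu$; the content is the same.
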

\begin{proof}
Under the hypotheses of the lemma, there exists an integer $N>0$ such that \[(\mu(\Sigma_1^a)-m)^Nx=(\mu(\Sigma_1^b)-n)^Ny=0.\] It follows easily from the relation \eqref{eqn:commutethree}  that 
\begin{multline*}
\big(\mu(\Sigma_2) - (m+n)\big)^{2N}I_*(W)_{\nu}(x\otimes y)
=\\ \sum_{j=0}^{2N} {2N\choose j} I_*(W)_{\nu}\big((\mu(\Sigma_1^a)-m)^jx\otimes (\mu(\Sigma_1^b)-n)^{2N-j}y\big).
\end{multline*}
Each term in this sum vanishes since either $j\geq N$ or $2N-j\geq N$,  which confirms that $I_*(W)_{\nu}(x\otimes y)$ lies in the $(m+n)$-eigenspace of $\mu(\Sigma_2)$.
\end{proof}

Lemmas \ref{lem:commute} and \ref{lem:commutethree} will be used repeatedly in Section \ref{sec:alex}. They are also used to prove the next proposition and its corollary, which will in turn be important in the proof of Theorem \ref{thm:khi-fibered} in Section \ref{sec:proof}. In particular, Proposition \ref{prop:grading-shift} will be used to constrain the Alexander grading shift of the map $\phi^{SV}_0\circ C$ described in Section \ref{ssec:outline}.

Suppose for the proposition below that $(W,\nu)$ is a cobordism from $(Y_1,\alpha_1)$ to $(Y_2,\alpha_2)$ and  that $R_1\subset Y_1$ and $R_2\subset Y_2$ are closed surfaces of the same positive genus which are homologous in $W$ with $\alpha_1\cdot R_1$ and $\alpha_2\cdot R_2$ odd. Then Lemma \ref{lem:commute} implies that $I_*(W)_\nu$ restricts to a map \[I_*(W)_\nu:I_*(Y_1|R_1)_{\alpha_1}\to I_*(Y_2|R_2)_{\alpha_2}.\] 

\begin{proposition}
\label{prop:grading-shift}
Suppose $\Sigma_1\subset Y_1$ and $\Sigma_2 \subset Y_2$ are closed surfaces
% of the same genus $g\geq 1$ 
and  $F \subset W$ is a closed surface of genus $k \geq 1$ and self-intersection $0$ such that
\[ \Sigma_1 + F = \Sigma_2 \]
in $H_2(W)$. If  $x \in I_*(Y_1|R_1)_{\alpha_1}$ belongs to the generalized $2m$-eigenspace of $\mu(\Sigma_1)$, then we can write \[ I_*(W)_\nu(x) = y_{2m-2k+2} + y_{2m-2k+4} + \dots + y_{2m+2k-2}, \]
where each $y_\lambda$ lies in the generalized $\lambda$-eigenspace of the action of $\mu(\Sigma_2)$ on $I_*(Y_2|R_2)_{\alpha_2}$.
\end{proposition}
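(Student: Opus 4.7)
The plan is to combine the homological relation $[\Sigma_1]+[F]=[\Sigma_2]$ in $H_2(W)$ with an eigenvalue bound for the ``internal'' $\mu(F)$-operator associated with $F$ viewed as a closed surface in the interior of $W$.

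The first step is to establish an operator identity. The $\mu$-map is additive on $H_2(W)$, and its values at disjoint surfaces in the cobordism are mutually commuting insertions of cohomology classes on the moduli space. Since $\Sigma_1\subset Y_1$ and $\Sigma_2\subset Y_2$ lie on the two boundary components, inserting $\mu(\Sigma_1)$ into the cobordism map is pre-composition with $\mu(\Sigma_1)$ on $I_*(Y_1)_{\alpha_1}$, and inserting $\mu(\Sigma_2)$ is post-composition with $\mu(\Sigma_2)$ on $I_*(Y_2)_{\alpha_2}$. Writing $M$ for the cobordism map with an internal $\mu(F)$-insertion, additivity yields
\[\mu(\Sigma_2)\circ I_*(W)_\nu \;=\; I_*(W)_\nu \circ \mu(\Sigma_1) \;+\; M,\]
and $M$ commutes with $\mu(\Sigma_1)$ and $\mu(\Sigma_2)$ in the sense made precise below.

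The second and main step is to bound the spectrum of $M$. Because $F$ has self-intersection zero, it admits a product neighborhood $F\times D^2\subset W$. Stretching the neck along $F\times S^1$ factors $I_*(W)_\nu$ through the Floer homology of an intermediate admissible pair $(Y',\alpha')$ in which $F$ appears as a closed surface of genus $k$ and a pushoff of $R_2$ sits inside $Y'$ with $\alpha'\cdot R_2$ odd, in such a way that $M$ becomes precisely the action of $\mu(F)$ on the joint eigenspace $I_*(Y'|R_2)_{\alpha'}$. Proposition~\ref{prop:mu-spectrum} then bounds the spectrum of this action within $\{2-2k,\,4-2k,\,\ldots,\,2k-2\}$; equivalently, $M$ is annihilated on the image of $I_*(W)_\nu$ by the polynomial $\prod_{j=0}^{2k-2}\bigl(t-(2k-2-2j)\bigr)$.

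The third step combines these ingredients. Since $M$ commutes with $\mu(\Sigma_2)$, an induction on $n$ using the identity of step one yields, for every $n\geq 0$,
\[ (\mu(\Sigma_2)-2m)^{n}\circ I_*(W)_\nu \;=\; \sum_{j=0}^{n}\binom{n}{j}\,M^{\,n-j}\circ I_*(W)_\nu\circ(\mu(\Sigma_1)-2m)^{j}.\]
Choosing $n$ large enough that $(\mu(\Sigma_1)-2m)^{n}$ annihilates $x$ collapses the right-hand side to a polynomial in $M$ applied to $I_*(W)_\nu(x)$, and further multiplication by the annihilator of $M$ from step two shows that $\prod_{\lambda}(\mu(\Sigma_2)-2m-\lambda)^{n}$ kills $I_*(W)_\nu(x)$, with $\lambda$ ranging over $\{2-2k,\ldots,2k-2\}$. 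Consequently $I_*(W)_\nu(x)$ lies in the direct sum of the generalized $\mu(\Sigma_2)$-eigenspaces with eigenvalues in $\{2m-2k+2,\ldots,2m+2k-2\}$, and the spectral projections onto these summands give the desired decomposition $I_*(W)_\nu(x)=y_{2m-2k+2}+\cdots+y_{2m+2k-2}$. The main obstacle is the neck-stretching in step two: identifying the internal $\mu(F)$-insertion with an operator to which Proposition~\ref{prop:mu-spectrum} applies while preserving the bundle data $\nu$ and arranging the odd-intersection condition with $R_2$ in the intermediate $3$-manifold, possibly by augmenting $\nu$ along a meridian of $F$ in a way that does not alter the cobordism map.
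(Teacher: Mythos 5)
Your overall strategy (stretch the neck near $F$, bound the spectrum of the internal $\mu(F)$-insertion, then combine with the $\mu(\Sigma_1)$-eigenvalue of $x$ via a binomial expansion) matches the paper's proof in spirit. In fact, the binomial bookkeeping in your third step is essentially the same calculation the paper performs, and the decomposition of the relative invariant of $F\times D^2$ by $\mu(F)$-eigenvalues plays exactly the role of your operator $M$. Where your second step is imprecise, though, is in what the intermediate object actually is: after removing a neighborhood $F\times D^2$ from $W$, the relevant Floer group is $I_*(F\times S^1)_{\alpha_F}$ (with $\alpha_F=\nu\cap(F\times S^1)$), carrying the relative invariant $\psi$ of $F\times D^2$, and the eigenvalue bound comes from Theorem \ref{thm:simultaneouseigenvalues} and Proposition \ref{prop:mu-spectrum} applied to $(F\times S^1,\alpha_F)$ with $R=F$ --- not from a group of the form $I_*(Y'|R_2)_{\alpha'}$ containing both $F$ and a pushoff of $R_2$. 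Also, one must first use the $\mu(\pt)$-eigenvalue of $x$ (which is $2$ by assumption) to kill the component of $\psi$ in the $(-2)$-eigenspace of $\mu(\pt)$, a step you omit.

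The genuine gap is the parity condition: Theorem \ref{thm:simultaneouseigenvalues} requires $\alpha_F\cdot F=\nu\cdot F$ to be odd, and your entire second step is contingent on this. You flag the issue but your proposed fix, ``augmenting $\nu$ along a meridian of $F$,'' does not work: a meridian disk of $F$ has boundary in the interior of $W$, so adding it to $\nu$ yields something that is not a relative $2$-cycle in $(W,\partial W)$ and therefore does not define bundle data for a cobordism map; and any closed surface you could add without changing $[\nu]\in H_2(W,\partial W)$ would not change $\nu\cdot F$ modulo $2$. When $\nu\cdot F$ is even, the paper instead replaces $F$ by a genus-$(2k-1+g(R_2))$ surface $F'$ homologous to $2F\mp R_2$ (built by a cut-and-reglue of $F$ and a parallel copy $F'$, tubed to $R_2$), which has $\nu\cdot F'$ odd, applies the odd case to $F'$, and then unwinds the resulting bound on $\mu(2\Sigma_2\mp R_2)$-eigenvalues using a further binomial argument to recover the sharp bound on $\mu(\Sigma_2)$-eigenvalues. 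Running the argument twice (with both signs) is needed to pin down both ends of the eigenvalue range. This case occupies roughly half the paper's proof and is not optional; without it your argument only establishes the proposition when $\nu\cdot F$ is odd.
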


\begin{proof} First, suppose $\nu\cdot F$ is odd. Consider the cobordism  \[(\overline{W},\overline{\nu}):(Y_1,\alpha_1)\sqcup (F\times S^1,\alpha_F)\to(Y_2,\alpha_2)\] obtained from $W$ by removing a tubular neighborhood $F\times D^2$ of $F$. We may assume that $\alpha_F=\nu\cap (F\times S^1)$ intersects a fiber $F$ transversely in an odd number of points. Then  for $x \in I_*(Y_1|R_1)_{\alpha_1}$ we have  \[I_*(W)_{\nu}(x) = I_*(\overline{W})_{\overline{\nu}}(x\otimes \psi),\] where  $I_*(\overline{W})_{\overline{\nu}}$ is the cobordism map \[I_*(\overline{W})_{\overline{\nu}}:I_*(Y_1|R_1)_{\alpha_1}\otimes I_*(F\times S^1)_{\alpha_F}\to I_*(Y_2|R_2)_{\alpha_2}\] and  $\psi$ is the relative invariant of the $4$-manifold $(F\times D^2,\nu\cap (F\times D^2))$.
From the discussion above, we can write \[\psi=\psi_-+\psi_+,\] where $\psi_{\pm}$ is in the $\pm2$-eigenspace of the operator $\mu(\pt)$ on $I_*(F\times S^1)_{\alpha_F}$.   Recall that an element $x \in I_*(Y_1|R_1)_{\alpha_1}$ lies  in the $2$-eigenspace of $\mu(\pt)$ on  $I_*(Y_1)_{\alpha_1}$ by definition. Since a point in either $Y_1$ or $F\times S^1$ is homologous to a point in $Y_2$, Lemma \ref{lem:commutethree} implies that $I_*(\overline{W})_{\overline{\nu}}(x\otimes \psi_-)$ lies in both the $(+2)$- and $(-2)$-eigenspaces of $\mu(\pt)$ on $I_*(Y_2)_{\alpha_2}$. Thus, \[I_*(\overline{W})_{\overline{\nu}}(x\otimes \psi_-)=0.\]We therefore have that \[I_*(W)_{\nu}(x) = I_*(\overline{W})_{\overline{\nu}}(x\otimes \psi_+).\] From the discussion above, we can  write $\psi_+$ as a sum \[\psi_+ = \psi_{2-2k} + \psi_{4-2k}+\dots +\psi_{2k-4} + \psi_{2k-2},\] where each $\psi_\lambda$ is in the $\lambda$-eigenspace of the operator $\mu(F)$ on $I_*(F\times S^1)_{\alpha_F}.$ 
 
 Suppose $x \in I_*(Y_1|R_1)_{\alpha_1}$ belongs to the $2m$-eigenspace of the operator $\mu(\Sigma_1)$ as in the proposition. It then follows from Lemma \ref{lem:commutethree} that  \[I_*(\overline{W})_{\overline{\nu}}(x\otimes \psi_\lambda)\] lies in the $(2m+\lambda)$-eigenspace of $\mu(\Sigma_2)$ for each $\lambda$. We may therefore write \[I_*(\overline{W})_{\overline{\nu}}(x\otimes \psi)=y_{2m-2k+2} + y_{2m-2k+4} + \dots + y_{2m+2k-2},\] where \[y_\lambda := I_*(\overline{W})_{\overline{\nu}}(x\otimes \psi_{\lambda-2m})\] is in the $\lambda$-eigenspace of $\mu(\Sigma_2)$.

Now suppose  $\nu\cdot F$ is even. We claim that  there is a surface $G\subset W$ homologous to $2F$ of genus $2k-1$. Let $F\times D^2$ be a tubular neighborhood of $F$ in $W$. Let $F'$ be a parallel copy of $F$ in $F\times D^2$. We cut $F$ open along a non-separating curve $c$, cut $F'$ open along a parallel curve $c'$, and glue these cut open surfaces together in a way that is consistent with their orientations and results in a connected surface $G$ of genus $2k-1$. Figure \ref{fig:cut} shows how we modify $F\sqcup F'$ in $A\times D^2$ to obtain $G$, where $A$ is an annular neighborhood of $c$ in $F$.

\begin{figure}[ht]
\labellist

 \tiny\hair 2pt
\pinlabel $p$ at 18 54
\pinlabel $p'$ at 20 84

\endlabellist
\centering
\includegraphics[width=8.2cm]{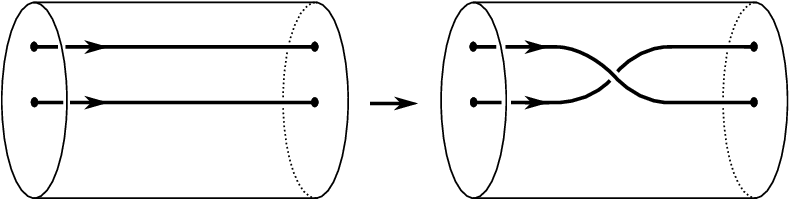}
\caption{A schematic for the modification of $F\sqcup F'$ in $A\times D^2$ to obtain $G$. Left, $A\times D^2$ is represented by $I\times D^2$ while $A\subset F$ and its parallel copy $A'\subset F'$ are represented by the horizontal segments $I\times\{p\}$ and $I\times \{p'\}$. Taking the product of these pictures with $S^1$ is a local model for the actual modification.}
\label{fig:cut}
\end{figure}

By tubing $G$ to a copy of $R_2$, we obtain a closed surface $F'\subset W$  homologous to $2F-R_2$ of genus $2k-1+r$, where $r=g(R_2)$. This surface  has $\nu\cdot F'$ odd and self-intersection $0$, and we have the relation \[2\Sigma_1+F'=2\Sigma_2-R_2\] in $H_2(W)$.  Now  suppose that  $x \in I_*(Y_1|R_1)_{\alpha_1}$ belongs to the $2m$-eigenspace of $\mu(\Sigma_1)$. Then $x$ belongs to the $4m$-eigenspace of $\mu(2\Sigma_1)$. The argument in the previous case tells us that we can write \[ I_*(W)_\nu(x) = z_{4m-2(2k-1+r)+2} + z_{4m-2(2k-1+r)+4} + \dots + z_{4m+2(2k-1+r)-2}, \] where each $z_\lambda$ lies in the $\lambda$-eigenspace of the action of $\mu(2\Sigma_2-R_2)$ on $I_*(Y_2|R_2)_{\alpha_2}$. Then
%\[\mu(2\Sigma_2-R_2)z_\lambda = 2\mu(\Sigma_2)z_\lambda + (2-2r)z_\lambda = \lambda z_\lambda,\]
\[ \big(2\mu(\Sigma_2) - (\lambda+2r-2)\big)^nz_\lambda = \sum_{j=0}^n {n\choose j} \big(\mu(2\Sigma_2-R_2)-\lambda\big)^j\big(\mu(R_2) - (2r-2)\big)^{n-j}z_\lambda, \]
and the right side is again zero for $n$ large enough, meaning that $z_\lambda$ is in the $((\lambda+2r-2)/2)$-eigenspace of $\mu(\Sigma_2)$. Since \[4m-2(2k-1+r)+2\leq \lambda\leq 4m+2(2k-1+r)-2,\] we have that \[2m-2k+1\leq(\lambda+2r-2)/2\leq 2m+2k+2r-3.\] Since the eigenvalues of $\mu(\Sigma_2)$  must also be even integers, we see that the minimum eigenvalue of $\mu(\Sigma_2)$ showing up in the expansion of $I_*(W)_\nu(x)$ into eigenvectors of $\mu(\Sigma_2)$ is $2m-2k+2$. Applying the same argument but for a surface $F'$ homologous to $2F+R_2$ of genus $2k-1+r$ and satisfying\[2\Sigma_1+F'=2\Sigma_2+R_2\] shows that the maximum eigenvalue of $\mu(\Sigma_2)$ showing up in the expansion of $I_*(W)_\nu(x)$ is $2m+2k-2$. This proves the result.
\end{proof}

For the corollary below, suppose  $Y,\alpha,R$ are such that $I_*(Y|R)_\alpha$ is defined.

\begin{corollary} \label{cor:grading-shift}
Suppose $\Sigma_1,\Sigma_2 \subset Y$ are closed surfaces of  the same genus $g\geq 1$ and  $F \subset Y$ is a closed surface of genus $k \geq 1$ such that
\[ \Sigma_1 + F = \Sigma_2 \]
in $H_2(Y)$.  If  $x \in I_*(Y|R)_\alpha$ belongs to the generalized $2m$-eigenspace of $\mu(\Sigma_1)$, then we can write \[ x = x_{2m-2k+2} + x_{2m-2k+4} + \dots + x_{2m+2k-2}, \]
where each $x_\lambda$ lies in the generalized $\lambda$-eigenspace of the action of $\mu(\Sigma_2)$ on $I_*(Y|R)_\alpha$.
\end{corollary}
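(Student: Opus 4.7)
The plan is to deduce the corollary directly from Proposition~\ref{prop:grading-shift} by viewing the statement through the cylindrical cobordism from $Y$ to itself. Specifically, I would take
\[ (W,\nu) := \bigl(Y \times [0,1],\, \alpha \times [0,1]\bigr), \]
regarded as a cobordism from $(Y,\alpha)$ to $(Y,\alpha)$, and set $R_1 = R_2 = R$. The induced cobordism map
\[ I_*(W)_{\nu} : I_*(Y|R)_\alpha \to I_*(Y|R)_\alpha \]
is the identity, so if we can produce an embedded closed surface in $W$ that realizes the homological relation $\Sigma_1 + F = \Sigma_2$ and has self-intersection $0$, then the eigenspace decomposition asserted by Proposition~\ref{prop:grading-shift} applied to $x = I_*(W)_\nu(x)$ becomes exactly the decomposition asserted by the corollary.

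The first step is to lift the hypothesis $\Sigma_1 + F = \Sigma_2$ in $H_2(Y)$ to a relation in $H_2(W)$. Since $W = Y \times [0,1]$ deformation retracts onto $Y \times \{1/2\}$, the inclusion $Y \hookrightarrow W$ induces an isomorphism $H_2(Y) \xrightarrow{\cong} H_2(W)$. I push $F$ into the interior of $W$ by placing it in the slice $Y \times \{1/2\}$, and I view $\Sigma_1 \subset Y \times \{0\}$ and $\Sigma_2 \subset Y \times \{1\}$ as lying in the boundary; under the isomorphism above the relation $\Sigma_1 + F = \Sigma_2$ automatically persists in $H_2(W)$. The second step is to verify the self-intersection condition. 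For any surface $F \subset Y \times \{1/2\}$ the normal bundle in $W$ splits as $\nu_{F \subset Y} \oplus \underline{\mathbb{R}}$, where the second factor is the $[0,1]$-direction; thus the Euler number of the normal bundle in $W$ equals $0$, so $F \cdot F = 0$ in $W$, as required.

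The final step is to check that $I_*(W)_\nu$ is indeed the identity: this is a standard property of instanton Floer homology for the product cobordism with the corresponding product bundle data, and it holds verbatim on the eigenspace $I_*(Y|R)_\alpha$ since the cobordism map commutes with the $\mu$-operators associated to $R$ and to a point. Granting this, Proposition~\ref{prop:grading-shift}, applied with $g(F) = k \geq 1$, gives
\[ x = I_*(W)_\nu(x) = y_{2m-2k+2} + y_{2m-2k+4} + \dots + y_{2m+2k-2}, \]
with $y_\lambda$ in the $\lambda$-eigenspace of $\mu(\Sigma_2)$, which is the conclusion.

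The main obstacle I anticipate is a bookkeeping one rather than a conceptual one: the proposition is stated for cobordisms between a priori \emph{distinct} admissible pairs with distinct reference surfaces $R_1,R_2$, and one must verify that nothing in its proof prevents specialization to the cylindrical case with $R_1 = R_2 = R$ (in particular that $R$ and $F$ can be assumed disjoint and that the product cobordism map restricts as claimed to $I_*(Y|R)_\alpha$). Once this is confirmed, the corollary is immediate.
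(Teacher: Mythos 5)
Your proposal is correct and takes the same route as the paper: the paper's entire proof is the one-line remark ``Simply apply Proposition~\ref{prop:grading-shift} to the product cobordism from $(Y,\alpha)$ to itself.'' You have filled in exactly the checks the paper leaves implicit --- that $H_2(Y)\cong H_2(W)$ under the product retraction, that $F\cdot F=0$ holds automatically once $F$ is pushed into a slice (its normal bundle picks up a trivial $\mathbb{R}$-factor), and that the product cobordism map is the identity on $I_*(Y|R)_\alpha$ --- and your closing concern about specializing $R_1=R_2=R$ is harmless, since Lemma~\ref{lem:commute} already guarantees the product map preserves the eigenspace.
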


\begin{proof}
Simply apply Proposition \ref{prop:grading-shift}  to the product cobordism from $(Y,\alpha)$ to itself.
\end{proof} 

We will make repeated use of the surgery exact triangle in instanton Floer homology. This triangle goes back to Floer but appears in the form below in work of Scaduto \cite{scaduto}.

Suppose $K$ is a framed knot in $Y$. Let $\alpha$ be an oriented $1$-manifold in $Y$ which is disjoint from $K$. Let $Y_i$  denote the result of $i$-surgery on $K$, and let $\alpha_i$ be the induced $1$-manifold in $Y_i$, for $i=0,1$.  There is then a sequence of $2$-handle cobordisms
\begin{equation} \label{eq:triangle-cobordisms}
Y\xrightarrow{\,W\,}Y_0\xrightarrow{\,W_0\,}Y_1\xrightarrow{\,W_1\,}Y,
\end{equation}
which induce homomorphisms on instanton Floer homology that fit into an exact triangle as follows.

\begin{theorem}
\label{thm:exacttri}
There is an exact triangle \[ \xymatrix@C=-5pt@R=30pt{
I_*(Y)_{\alpha+ K} \ar[rr]^{I_*(W)_\kappa}  & &I_*(Y_0)_{\alpha_0} \ar[dl]^{I_*(W_0)_{\kappa_0}} \\
&I_*(Y_1)_{\alpha_1}\ar[ul]^{I_*(W_1)_{\kappa_1}} & \\
} \] as long as $(Y,\alpha\sqcup K),$ $(Y_0,\alpha_0)$, and $(Y_1,\alpha_1)$ are all admissible pairs.  Here each of $\kappa$, $\kappa_0$, and $\kappa_1$ is a properly embedded surface in $W$, $W_0$, or $W_1$ bounded by the indicated 1-manifolds at either end: $\kappa_0$ is the union of a product cylinder from $\alpha_0$ to $\alpha_1$ and a closed surface, and $\kappa_1$ and $\kappa$ are each the union of the analogous product cylinder with a disk bounded by $K$ in the corresponding 2-handle.
\end{theorem}

%Recall that each manifold in the surgery exact triangle is obtained from the preceding one via integer surgery, as shown in each row of Figure \ref{fig:surgery}.  The maps in the exact triangle are induced by the associated $2$-handle cobordisms \[Y\xrightarrow{\,W\,}Y_0\xrightarrow{\,W_0\,}Y_1\xrightarrow{\,W_1\,}Y,\] equipped with the described 2-dimensional cobordisms  \[\alpha\sqcup K\xrightarrow{\, \,\kappa\,\, }\alpha_0\xrightarrow{\, \,\kappa_0\,\, }\alpha_1\xrightarrow{\, \,\kappa_1 \,\,}\alpha\sqcup K\] between the $1$-manifolds on the ends. 

\begin{figure}[ht]
\labellist

 \tiny\hair 2pt
\pinlabel $K$ at 25 100
\pinlabel $\mu_0$ at 62.5 51
\pinlabel $-1$ at 96.5 51
\pinlabel $0$ at 26.5 51
\pinlabel $0$ at 26.5 17.5
\pinlabel $1$ at 61.5 17.5
\pinlabel $\mu_1$ at 96.5 17.5
\pinlabel $0$ at 59 100
\pinlabel $1$ at 94 100
\pinlabel $0$ at 25 66.5
\pinlabel $0$ at 59.5 66.5
\pinlabel $0$ at 93.5 66.5
\pinlabel $1$ at 25 33
\pinlabel $1$ at 59.5 33
\pinlabel $1$ at 93.5 33

\small
\pinlabel $Y$ at 14 1.5
\pinlabel $Y_0$ at 49 1
\pinlabel $Y_1$ at 83 1

\endlabellist
\centering
\includegraphics[width=4cm]{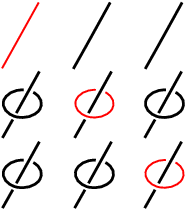}
\caption{Each row describes a slight variation on the surgery exact triangle, in which we realize all three $3$-manifolds by integer surgeries on the red knot.}
%Each manifold in the surgery exact triangle is obtained by integer surgery on the red knot in the preceding manifold.  Each row produces a slightly different exact triangle by using the red knot as a starting point.
\label{fig:surgery}
\end{figure}

Since each manifold in the surgery exact triangle is obtained from the preceding one via integer surgery, we can in fact produce three different surgery exact triangles involving the same 3-manifolds and 4-dimensional cobordisms.  This is illustrated in Figure~\ref{fig:surgery}, where in each row we use a different pair of 3-manifold and framed knot to construct an exact triangle; in the first row, we use $K \subset Y$ to recover Theorem~\ref{thm:exacttri}, while the other two exact triangles correspond to surgeries on $\mu_0 \subset Y_0$ and $\mu_1 \subset Y_1$.  The only differences between these three triangles are the 1-manifolds used to define the instanton Floer groups and the 2-dimensional cobordisms between those 1-manifolds.

%Note that there are two additional surgery exact triangles involving these same 3-manifolds and 4-dimensional cobordisms, corresponding to surgeries on $\mu_0\subset Y_0$ and $\mu_1\subset Y_1$ as shown in Figure \ref{fig:surgery}. The only differences between these three triangles are the $1$-manifolds used to define the instanton Floer groups and the $2$-dimensional cobordisms between those $1$-manifolds.

\subsection{Sutured instanton homology} \label{ssec:background-shi}
This section provides the necesary background on sutured instanton homology. Our discussion is  adapted from \cite{bs-naturality,bs-shi}, though  the basic construction of $\SHI$ is  of course due to Kronheimer and Mrowka \cite{km-excision}.
\subsubsection{Closures of sutured manifolds} We first recall the following definition.
\begin{definition}A \emph{balanced sutured manifold}  $(M,\Gamma)$ is a compact, oriented 3-manifold $M$ together with an oriented multicurve $\Gamma\subset \partial M$ whose components are called \emph{sutures}. Letting \[R(\Gamma) = \partial M\smallsetminus\Gamma,\] oriented as a subsurface of $\partial M$, it is required that:
\begin{itemize}
\item neither $M$ nor $R(\Gamma)$ has  closed components,
\item $R(\Gamma) = R_+(\Gamma)\sqcup R_-(\Gamma)$ with $\partial R_+(\Gamma) = -\partial R_-(\Gamma) = \Gamma$, and
\item $\chi(R_+(\Gamma)) = \chi(R_-(\Gamma))$.
\end{itemize}
\end{definition}

The following examples will be important for us.
\begin{example}
\label{eg:prodsutured}
Suppose $S$ is a compact, connected, oriented surface with nonempty boundary. The pair \[(H_S,\Gamma_S):=(S\times[-1,1],\partial S\times\{0\})\] is called a \emph{product sutured manifold}.
\end{example}

\begin{example}
\label{eg:knotcomplement}
Given a knot $K$ in a closed, oriented $3$-manifold $Y$, let \[(Y(K),\Gamma_\mu) := (Y\ssm\nu(K),\mu\cup -\mu),\] where $\nu(K)$ is a tubular neighborhood of $K$ and $\mu$ and $-\mu$ are oppositely oriented meridians.
\end{example}

%\begin{example}
%\label{eg:ballcomplement}
%Given a closed, oriented $3$-manifold $Y$, let \[(Y(1),\Gamma_S^1) = (Y\ssm B^3,S^1),\] be the complement of a ball in $Y$, where $S^1$ is an equator of $\partial Y(1)=S^2$.
%\end{example}

\begin{definition} An \emph{auxiliary surface} for  $(M,\Gamma)$ is a compact, connected, oriented surface $T$ with the same number of boundary components as components of $\Gamma$.\end{definition} Suppose $T$ is an auxiliary surface for $(M,\Gamma)$, that $A(\Gamma)\subset\partial M$ is a  tubular neighborhood of $\Gamma$,  and  that  \[h:\partial T\times[-1,1]\xrightarrow{\cong} A(\Gamma)\] is an orientation-reversing diffeomorphism. 
\begin{definition} \label{def:preclosure}
We form a \emph{preclosure} of $M$ \begin{equation*}\label{eqn:bF}M'=M\cup \big(T\times [-1,1]\big)\end{equation*}  by gluing $T\times[-1,1]$ to $M$ according to $h$.  
\end{definition} 
This preclosure has two diffeomorphic boundary components, \[\partial M' = \partial_+M'\sqcup\partial_-M',\] where \[ \partial_+M':=\big(R_+(\Gamma)\cup T\big)\cong\big(R_-(\Gamma)\cup -T\big)=:\partial_-M'.\] Let $R:=\partial_+M'$ and choose an orientation-reversing diffeomorphism \[\varphi:\partial_+M'\xrightarrow{\cong}\partial_-M'\] which fixes a point $q\in T$.
We   form a closed $3$-manifold \[Y=M'\cup \big(R\times[1,3]\big)\] by gluing $R\times[1,3]$ to $M'$ according to the maps
\begin{align*}
\id&:R\times\{1\}\to \partial_+M',\\
\varphi&:R\times\{3\}\to \partial_-M',
\end{align*}
and we let $r: R\times[1,3] \hookrightarrow Y$ and $m: M \hookrightarrow Y$ be the natural inclusions.

Let $\alpha\subset Y$ be the curve formed as the union of the arcs \[\{q\}\times[-1,1]\textrm{ in } M' \textrm{ and }\{q\}\times [1,3]\textrm{ in } R\times[1,3].\] Choose a nonseparating curve $\eta\subset R\ssm\{q\}$; we note that this requires $g(R) \geq 1$.
For  convenience, we will also use $R$ to denote the \emph{distinguished surface} $R\times\{2\}\subset Y$ and $\eta$ to denote the curve $\eta\times\{2\}\subset R\times\{2\}\subset Y$. 

\begin{definition} We refer to the tuple $\data = (Y,R,r,m,\eta,\alpha)$ as a \emph{closure} of $(M,\Gamma)$.\footnote{In \cite{bs-shi}, we called such a tuple a \emph{marked odd closure}.}
\end{definition}

 The \emph{genus} $g(\data)$ refers to the genus of $R$.

%\begin{definition}
%\label{def:markedsmoothclosure} A \emph{marked closure} of $(M,\Gamma)$ is a tuple $(Y,R,r,m,\eta)$, where $(Y,R,r,m)$ is a closure of $(M,\Gamma)$, as defined above, and $\eta$ is an oriented, nonseparating, smoothly embedded curve in $R$.
% \end{definition}

\begin{remark} Suppose $\data = (Y,R,r,m,\eta,\alpha)$ is a closure of $(M,\Gamma)$. Then the tuple \[-\data:=(-Y,-R,r,m,-\eta,-\alpha)\]  is a closure of $-(M,\Gamma):=(-M,-\Gamma).$   \end{remark}

\subsubsection{Sutured instanton homology}

\label{ssec:shi}

Following Kronheimer and Mrowka \cite{km-excision}, we make the definition below.

\begin{definition}
Given a closure $\data = (Y,R,r,m,\eta,\alpha)$ of $(M,\Gamma)$, the \emph{sutured instanton homology}\footnote{In \cite{bs-naturality}, we called this the \emph{twisted sutured instanton homology} and denoted it by $\underline{SHI}(\data)$ instead.} of $\data$ is the $\C$-module $\SHI(\data) = I_*(Y|R)_{\alpha+ \eta}.$
\end{definition}

Kronheimer and Mrowka proved that, up to isomorphism, $\SHI(\data)$ is an invariant of $(M,\Gamma)$.  (In fact, they took $I_*(Y|R)_\alpha$ to be their invariant, but proved it isomorphic to $I_*(Y|R)_{\alpha+\eta}$ via excision.)  In \cite{bs-naturality}, we constructed for any two closures $\data,\data'$ of $(M,\Gamma)$ of genus at least two a \emph{canonical} isomorphism \[\Psi_{\data,\data'}:\SHI(\data)\to\SHI(\data')\] which is well-defined up to multiplication in $\C^\times$.  (See the proof of Theorem~\ref{thm:alexwelldefined} for details of the construction.)  In particular, these isomorphisms satisfy, up to multiplication in $\C^\times$,  \[\Psi_{\data,\data''} =  \Psi_{\data',\data''}\circ\Psi_{\data,\data'}\] for any triple $\data,\data',\data''$ of such closures. The groups $\SHI(\data)$ and  isomorphisms $\Psi_{\data,\data'}$  ranging over closures of $(M,\Gamma)$ of genus at least two thus define what we called a \emph{projectively transitive system of $\C$-modules} in \cite{bs-naturality}.
\begin{definition} The \emph{sutured instanton homology of $(M,\Gamma)$} is the projectively transitive system of $\C$-modules  $\SHI(M,\Gamma)$ defined by the groups and canonical isomorphisms above. \end{definition} 

\begin{remark}
The isomorphisms $\Psi_{\data,\data'}$ are defined using $2$-handle and excision cobordisms. We will provide more details in Section \ref{sec:alex}, where we show that these isomorphisms respect the Alexander gradings associated to certain properly embedded surfaces in $(M,\Gamma)$.
\end{remark}

\begin{remark} \label{rem:canonical-genus-one}
We emphasize that if either $\data$ or $\data'$ has genus one, then there is still an isomorphism $\SHI(\data) \xrightarrow{\sim} \SHI(\data')$, though it may not be canonical.  We will show in Theorem~\ref{thm:alexwelldefined} that even these non-canonical isomorphisms respect the Alexander gradings, which will be necessary to relate our Alexander grading to one defined by Kronheimer and Mrowka (see Remarks~\ref{rem:khi-genus-one-closure} and \ref{rmk:symmetryknots}).
\end{remark}

The following result of Kronheimer and Mrowka \cite[Proposition 7.8]{km-excision} will be important for us. We sketch their proof below so that we can refer to this construction later.

\begin{proposition}
\label{prop:prodsutured}
Suppose $(H_S,\Gamma_S)$ is a product sutured manifold as in Example \ref{eg:prodsutured}. Then $\SHI(H_S,\Gamma_S)\cong\C.$
\end{proposition}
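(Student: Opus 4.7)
Proof proposal. The plan is to exhibit a particularly clean closure of $(H_S,\Gamma_S)$ and reduce the computation to the known value of instanton Floer homology on a mapping torus of the identity.

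First I would choose the auxiliary surface. Let $T$ be any compact, connected, oriented surface with $|\partial T| = |\partial S|$ and with $g(T)$ large enough that the resulting closure has genus at least two. Identify a collar neighborhood $A(\Gamma_S)$ of $\Gamma_S$ in $\partial H_S$ with $\partial S \times [-1,1]$ in the obvious way (using the product structure on $H_S$), and glue $T \times [-1,1]$ via an orientation-reversing diffeomorphism $h$ of $\partial T \times [-1,1]$ with $A(\Gamma_S)$. Since the gluing is done entirely in a collar, the preclosure is canonically diffeomorphic to $\Sigma \times [-1,1]$, where $\Sigma := S \cup_{\partial S = \partial T} T$. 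Thus $\partial_+ M' = \Sigma \times \{1\}$ and $\partial_- M' = \Sigma \times \{-1\}$. Choose $\varphi : \partial_+ M' \to \partial_- M'$ to be the orientation-reversing diffeomorphism $(x,1) \mapsto (x,-1)$ fixing a basepoint $q \in T$. Then the closed manifold
\[
Y = M' \cup \bigl(\Sigma \times [1,3]\bigr)
\]
is canonically diffeomorphic to the mapping torus $\Sigma \times S^1$, with distinguished surface $R = \Sigma \times \{pt\}$ and with $\alpha = \{q\} \times S^1$ a single $S^1$-fiber. Pick a nonseparating curve $\eta \subset \Sigma \smallsetminus \{q\}$.

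Next I would compute $\SHI(\data) = I_*(\Sigma \times S^1 \mid \Sigma)_{\alpha + \eta}$. The 1-cycle $\alpha + \eta$ in $\Sigma \times S^1$ is (after an isotopy) the union of the $S^1$-factor over $q$ and a nonseparating curve in a fiber, so the associated $U(2)$-bundle is the one whose second Stiefel--Whitney class pairs nontrivially with both $\Sigma$ and with a transverse torus. This is exactly the setting of the Dostoglou--Salamon / Mu\~noz computation of the instanton Floer homology of a surface times a circle (also recalled in Section 7 of \cite{km-excision}): the $(2g(\Sigma)-2, 2)$-eigenspace of $(\mu(R), \mu(\mathrm{pt}))$ on $I_*(\Sigma \times S^1)_{\alpha + \eta}$ is $1$-dimensional. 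This immediately gives $\SHI(\data) \cong \C$, and invariance of $\SHI$ under change of closure (Kronheimer--Mrowka) finishes the proof.

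The main obstacle is the input computation of $I_*(\Sigma \times S^1 \mid \Sigma)_{\alpha+\eta}$; everything else is bookkeeping about the closure construction. I would handle that step by direct citation of Mu\~noz's diagonalization of the $\mu$-operators on $I_*(\Sigma \times S^1)$, specialized to the top eigenvalue $2g(\Sigma)-2$ of $\mu(R)$, where the eigenspace is known to be one-dimensional in the twisted setting determined by a nonseparating $\eta \subset \Sigma$ together with the loop $\alpha$. I would also verify, as a sanity check, that the choice of $T$ and of the isotopy class of $\eta$ do not matter, which is automatic from the invariance of $\SHI(M,\Gamma)$ as a projectively transitive system.
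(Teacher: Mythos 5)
Your proposal is correct and takes essentially the same route as the paper: form a closure in which the preclosure is the product $(S\cup T)\times[-1,1]$, so that the closed manifold is a surface bundle over $S^1$ with fiber $R=S\cup T$, and then quote the Kronheimer--Mrowka/Mu\~noz computation (\cite[Proposition~7.8]{km-excision}) that the top $(\mu(R),\mu(\pt))$-eigenspace is one-dimensional. The only cosmetic difference is that you specialize the regluing map $\varphi$ to the identity, producing the trivial bundle $\Sigma\times S^1$, whereas the paper allows a general $\varphi$ fixing $q$ and so gets a general mapping torus $R\times_\varphi S^1$; both are handled by the same cited result, so the argument is unaffected.
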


\begin{proof}
Let $T$ be an auxiliary surface for $(H_S,\Gamma_S)$. Form a preclosure  by gluing $T\times[-1,1]$ to $S\times[-1,1]$ according to a map \[h:\partial T\times[-1,1]\to\partial S\times[-1,1]\] of the form $f\times \id$ for some diffeomorphism $f:\partial T\to \partial S$. This preclosure is then a product \[M'=(S\cup T)\times[-1,1].\] To form a closure, we let $R=S\cup T$ and glue $R\times[1,3]$ to $M'$ by   maps 
\begin{align*}
\id&:R\times\{1\}\to \partial_+M',\\
\varphi&:R\times\{3\}\to \partial_-M'
\end{align*}
as usual, where $\varphi$ fixes a point $q$ on $T$. We then define $\eta$ and $\alpha$ as described earlier to obtain a closure $\data_S = (Y,R,r,m,\eta,\alpha)$, where \[Y = R\times_{\varphi}S^1\] is the surface bundle over the circle with fiber $R$ and monodromy $\varphi$. We then have that \[\SHI(\data_S)=I_*(R\times_\varphi S^1|R)_{\alpha+\eta}\cong\C,\] by \cite[Proposition 7.8]{km-excision} in the case $\varphi=\id$, and more generally by excision \cite[Theorem~7.7]{km-excision} exactly as in the proof of \cite[Lemma~4.7]{km-excision}.
\end{proof}

As mentioned in the introduction, Kronheimer and Mrowka define in \cite[Section 7.6]{km-excision} the instanton Floer homology of a knot as follows.

\begin{definition}
\label{def:khi}
Suppose $K$ is a knot in a closed, oriented $3$-manifold $Y$. The \emph{instanton knot Floer homology} of $K$ is given by \[\KHI(Y,K):=\SHI(Y(K),\Gamma_\mu),\] where $(Y(K),\Gamma_\mu)$ is the knot complement with two meridional sutures as in Example \ref{eg:knotcomplement}.
\end{definition}

\subsubsection{Contact handle attachments and surgery}
\label{sssec:handles}

In \cite{bs-shi}, we defined maps on $\SHI$ associated to contact handle attachments and surgeries, which we will review below; see Figure \ref{fig:handles} for illustrations of such handle attachments.  These contact handle attachments can be viewed purely as operations on sutured manifolds, and this is the perspective we take in this subsection.  However, we will first motivate the name by summarizing the contact geometric origin of this construction, which we will return to in Section \ref{ssec:contact}.

A collection of sutures $\Gamma \subset \partial M$ specifies an $I$-invariant contact structure in a collar neighborhood of $\partial M$, for which $\partial M$ is convex with dividing set $\Gamma$; see \cite{giroux-convexite} for details on convex surfaces.  We can then naturally glue other contact manifolds with convex boundaries to $M$ by orientation-reversing diffeomorphisms which identify the dividing sets.  For example, if we view $D^2$ as the unit disk in the plane and let $I=[-\epsilon,\epsilon]$, then $D^2_{(x,y)} \times I_z$ has a tight contact form $\alpha = dz-ydx$ which makes the boundary convex.  Its dividing set is disconnected, consisting of the four arcs
\[ \big(\{y = 0\} \times \partial I \big) \sqcup \big(\{(0,\pm1)\} \times I\big), \]
lying on $D^2 \times \partial I$ and $\partial D^2 \times I$ respectively.  Attaching a contact 1-handle means gluing this $D^2\times I$ to $M$ along $D^2 \times \partial I$ by a map taking dividing curves to sutures, whereas a contact 2-handle is instead glued along $\partial D^2 \times I$.  In either case we get a manifold with corners, and when we smooth them out, Honda's ``edge-rounding'' procedure \cite[Lemma~3.11]{honda-lens} tells us how to connect the various sutures/dividing arcs on the boundary, exactly as shown in Figure~\ref{fig:handles}.

%\[ \{y = 0\} \subset D^2 \times \partial I \quad\textrm{and}\quad \{x=0\} \subset \partial D^2 \times I. \]

%A suture on the boundary of $M$ specifies an $I$-invariant contact structure in a neighborhood of the boundary, for which the boundary is convex with the given dividing set [see \cite{std ref} for references on convex surfaces]. This allows us to naturally glue other contact manifolds with convex boundaries to M by orientation-reversing diffeomorphisms which identify dividing sets. Let \gamma be a properly embedded arc on D^2. \gamma specifies an [-1,1]-invariant contact structure on the manifold with corners D^2x[-1,1], with convex boundary. Its dividing set looks as follows [either include a pic or concisely describe the dividing set: if D^2 is the unit disk in the complex plane, then take \gamma to be the real axis and the dividing set on the product is \gamma\times\{\pm 1} \cup \{\pm i\} \times [-1,1]]. Attaching a contact 1-handle means gluing this to the manifold along D^2\times\{\pm 1\} by an orientation-reversing diffeo, while attaching a contact 2-handle along a curve s means gluing this to the manifold along \partial D^2\times[-1,1] by a diffeomorphism which identifies \partial D^2\times \{0\} with s, identifying dividing sets in both cases. Then we round corners.

%We describe the constructions of these maps below.

From here on we will suppress the inclusion maps $r$ and $m$ from the notation specifying a closure $\data$, since any time we want to use $\data$ to construct some new closure $\data'$ there will be a natural, essentially unique way to define the corresponding $r'$ and $m'$.

\begin{figure}[ht]
\labellist

 \hair 2pt
\pinlabel $c$ at 300 277

\endlabellist
\centering
\includegraphics[width=7cm]{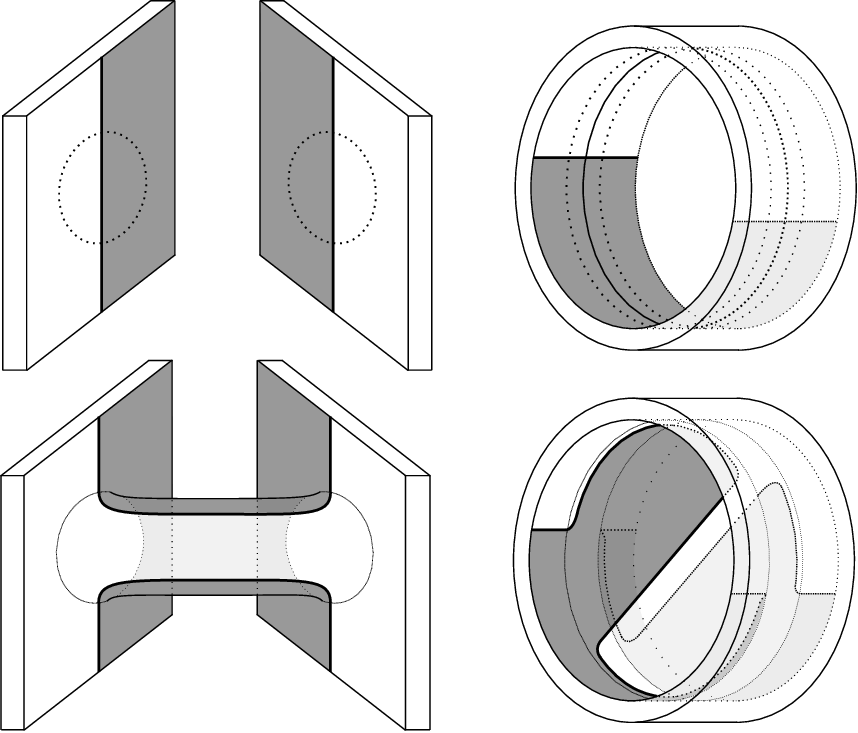}
\caption{Left, a $1$-handle attachment. Right, a $2$-handle attachment.  Recall that a $2$-handle is attached along an annular neighborhood of a curve $c$ which intersects $\Gamma$ in exactly two points, as shown. The gray regions represent $R_-(\Gamma)$ and the white regions $R_+(\Gamma)$, and appear as shown \emph{after} edge-rounding. }
\label{fig:handles}
\end{figure}

First, we consider contact $1$-handle attachments.

Suppose $(M',\Gamma')$ is obtained from $(M,\Gamma)$ by attaching a contact $1$-handle. Then one can show that any closure $\data' = (Y,R,\eta,\alpha)$ of the first also naturally specifies a closure $\data = (Y,R,\eta,\alpha)$ of the second (the only difference between these closures are the embeddings $M,M'\hookrightarrow Y$). We  therefore define the $1$-handle attachment map  \[\id:\SHI(-\data)\to\SHI(-\data')\] to be  the identity map on instanton Floer homology. For closures   of genus at least two, these maps  commute with the canonical isomorphisms described above and thus define  a  map  \[H_1:\SHI(-M,-\Gamma)\to\SHI(-M',-\Gamma'),\] as   in \cite[Section 3.2]{bs-shi}.

Next, we consider contact $2$-handle attachments.  %These are products $D^2 \times D^1$, attached to $(M,\Gamma)$ by identifying $\partial D^2 \times D^1$ with a neighborhood of a curve $c \subset \partial M$ which intersects the suture $\Gamma$ transversely in two points.  To understand why this is the right condition, we view $D^2$ as the unit disk in $\R^2$ and $D^1=[-\epsilon,\epsilon]$, and we equip $D^2_{(x,y)} \times D^1_z$ with the standard contact form $\alpha=dz-xdy$.  Then both $D^2 \times \partial D^1$ and $\partial D^2 \times D^1$ are convex surfaces since they are transverse to contact vector fields, namely $v=\partial_x$ and $v=x\partial_x + y\partial_y + 2z\partial_z$ respectively.  The corresponding dividing curves, where $v_p\in \xi_p$, are then \[ \Gamma_1=\{y=0\} \textrm{ on }D^2\times \partial D^1 \quad\textrm{and}\quad \Gamma_2=\{z = \frac{1}{2}xy\} \textrm{ on }\partial D^2 \times D^1. \] In particular, if $\Gamma \subset \partial M$ is the dividing set of a contact structure on $M$, and if we arrange the handle attachment so that $\Gamma_2$ is glued to arcs of $\Gamma$, then the contact structures on $M$ and on the 2-handle glue together as well.  When we smooth out the corners, the ``edge-rounding'' procedure of \cite[Lemma~3.11]{honda-lens} tells us that the new sutures/dividing curves should look exactly as shown on the right side of Figure~\ref{fig:handles}.

Suppose $(M',\Gamma')$ is obtained from $(M,\Gamma)$ by attaching a contact $2$-handle along a curve $c\subset \partial M$. Let $\data = (Y,R,\eta,\alpha)$ be a closure of $(M,\Gamma)$. We proved  in \cite[Section 3.3]{bs-shi} that $\partial M$-framed surgery on $c\subset Y$ naturally yields a closure $\data'=(Y',R,\eta,\alpha)$ of $(M',\Gamma')$, where $Y'$ is the surgered manifold.  Let \[W:-Y\to-Y'\] be the cobordism obtained from $-Y\times[0,1]$ by attaching a $2$-handle along $c\times\{1\}\subset -Y\times\{1\}$, and let $\nu$ be the cylinder \[\nu=(-\alpha\sqcup-\eta)\times [0,1].\] The fact that  $c$ is disjoint from $R$ means that  $-R\subset-Y$ and $-R\subset-Y'$ are isotopic in $W$.
Since these surfaces have the same genus, Lemma \ref{lem:commute} implies that the induced map \[I_*(W)_\nu:I_*(-Y)_{-\alpha-\eta}\to I_*(-Y')_{-\alpha-\eta}\]
restricts to a map \[I_*(W)_\nu:\SHI(-\data)\to\SHI(-\data').\]
%Let \[W:-Y\to-Y'\] be  the cobordism   obtained from $-Y\times[0,1]$ by attaching a $2$-handle along $c\subset -Y\times\{1\}$, and let $\nu$ be the cylinder \[\nu=(-\alpha\sqcup-\eta)\times [0,1].\] The fact that  $c$ is disjoint from $R$ means that the distinguished surfaces $R\subset Y$ and $R\subset Y'$ are isotopic in $W$. The discussion in Section \ref{ssec:instanton} and in particular \eqref{eqn:commute} then implies that the cobordism map associated to $(W,\nu)$ defines a map \[I_*(W)_\nu:\SHI(-\data)=I_*(-Y|{-}R)_{-\alpha\sqcup-\eta}\to I_*(-Y'|{-}R)_{-\alpha\sqcup-\eta}=\SHI(-\data').\]
For closures of genus at least two, these maps commute with the canonical isomorphisms and therefore define a map \[H_2:\SHI(-M,-\Gamma)\to\SHI(-M',-\Gamma'),\] as shown in  \cite[Section 3.3]{bs-shi}.

Finally, we consider surgeries.

Suppose $(M',\Gamma')$  is obtained from $(M,\Gamma)$ via  $(+1)$-surgery on a framed knot $K\subset M$. A closure $\data = (Y,R,\eta,\alpha)$ of $(M,\Gamma)$ naturally gives rise to a closure $\data'=(Y',R,\eta,\alpha)$ of $(M',\Gamma')$, where $Y'$ is the surgered manifold as in the $2$-handle attachment case. The $2$-handle cobordism $(W,\nu)$ corresponding to this surgery induces a map
\begin{equation} \label{eq:2-handle-surgery-nu-map}
I_*(W)_\nu:\SHI(-\data)\to\SHI(-\data')
\end{equation}
as in the previous case. We showed in \cite[Section 3.3]{bs-shi} that for closures of genus at least two, these maps commute with the canonical isomorphisms and thus give rise to a map \begin{equation*}\label{eqn:leg-surg}F_K:\SHI(-M,-\Gamma)\to\SHI(-M',-\Gamma').\end{equation*}

\begin{remark}
We have asserted that all of the handle-attaching and surgery maps commute with the canonical isomorphisms $\Psi_{\data,\data'}$ when the closures $\data$ and $\data'$ have genus at least two, but this restriction can be weakened: if either closure has genus one, then these maps still commute with the non-canonical isomorphisms mentioned in Remark~\ref{rem:canonical-genus-one}.
\end{remark}

\subsection{The contact invariant} \label{ssec:contact}

We assume some familiarity with contact structures and open books; see Etnyre \cite{etnyre-intro} or Geiges \cite{geiges} for a general introduction, Etnyre \cite{etnyre-ob} for material on open books, and Massot \cite{massot} for convex surfaces.  One of the key things we will take from convex surface theory is Giroux flexibility \cite{giroux-convexite}, which essentially says that given a surface $\Sigma$ and multicurve $\Gamma$ bounding a subsurface $R_+ \subset \Sigma$, there is a unique (up to isotopy) vertically invariant contact structure on $\Sigma \times [0,1]$ such that $\Sigma \times \{t\}$ is convex with dividing set $\Gamma$.  In particular, this allows us to glue contact structures along convex surfaces with the same dividing sets.

The background material on partial open books is introduced in large part to establish common notation. In what follows, we write $(M,\Gamma,\xi)$ to refer to a contact manifold $(M,\xi)$ with nonempty convex boundary and dividing set $\Gamma$; we call such a triple a \emph{sutured contact manifold}.

 \begin{definition}
 \label{def:pob} A \emph{partial open book} is a triple $(S,P,h)$, where: 
\begin{itemize}
\item $S$ is a connected, oriented surface with nonempty boundary, 
\item $P$ is a subsurface of $S$ formed as the union of a neighborhood of $\partial S$ with $1$-handles in $S$, and
\item $h:P\to S$ is an embedding which restricts to the identity on $\partial P\cap \partial S$.
\end{itemize}
\end{definition}

\begin{definition}
A \emph{basis} for a partial open book $(S,P,h)$ is a collection \[\mathbf{c}=\{c_1,\dots,c_n\}\] of disjoint, properly embedded arcs in $P$ such that $S\ssm \mathbf{c}$ deformation retracts onto $S\ssm P$; essentially, the \emph{basis arcs} in $\mathbf{c}$ specify the cores of $1$-handles used to form $P$.
\end{definition}
A partial open book specifies a sutured contact manifold as follows.
%\begin{remark}
%Though we find it convenient here, the set $\mathbf{c}$ of \emph{basis arcs} for $P$ is not always recorded in the data of a partial open book. Usually, it is only required that $P$ admits such a collection. In particular, the contact manifold $M(S,P,h,\mathbf{c})$  below does not depend on $\mathbf{c}$; we will therefore sometimes omit $\mathbf{c}$ from the notation.
%\end{remark}

Suppose $(S,P,h)$ is a partial open book with basis $\mathbf{c}=\{c_1,\dots,c_n\}$. Let $\xi_S$ be the unique tight contact structure on the handlebody \[H_S=S\times[-1,1] \textrm{ with dividing set }\Gamma_S = \partial S\times\{0\}.\] %Note that $(H_S,\Gamma_S)$ is a product sutured manifold as in Example \ref{eg:prodsutured}.
For $i=1,\dots,n$, let $\gamma_i$ be the curve on $\partial H_S$ given by   \begin{equation}\label{eqn:basishandle}\gamma_i=(c_i\times\{1\})\cup (\partial c_i\times [-1,1])\cup (h(c_i)\times\{-1\}).\end{equation} 
\begin{definition} We define $M(S,P,h,\mathbf{c})$ to be the sutured contact manifold obtained from $(H_S,\Gamma_S,\xi_S)$ by attaching contact $2$-handles along the curves $\gamma_1,\dots,\gamma_n$ above.
\end{definition}

\begin{remark}
Up to a canonical isotopy class of contactomorphisms, $M(S,P,h,\mathbf{c})$ does not depend on the choice of basis $\mathbf{c}$.  It does depend on $h$, however, via the $2$-handles attached along the $\gamma_i$, which can be understood as gluing each arc $c_i \times \{1\}$ to its image $h(c_i) \times \{-1\}$.
\end{remark}

\begin{definition}
\label{def:pobd}
A \emph{partial open book decomposition} of $(M,\Gamma,\xi)$ consists of a partial open book $(S,P,h)$ together with a contactomorphism  \[M(S,P,h,\mathbf{c})\xrightarrow{\cong}(M,\Gamma,\xi)\] for some basis $\mathbf{c}$ of $(S,P,h)$.
\end{definition}
\begin{remark} We will generally conflate partial open book decompositions with partial open books. Given a partial open book decomposition  as above, we will simply think of $(M,\Gamma,\xi)$ as being equal to $M(S,P,h,\mathbf{c})$. In particular, we will view $(M,\Gamma,\xi)$ as obtained from $(H_S,\Gamma_S,\xi_S)$ by attaching contact $2$-handles along the curves $\gamma_1,\dots,\gamma_n$ in \eqref{eqn:basishandle}.
\end{remark}

Half of the \emph{relative Giroux correspondence}, proven in \cite{hkm-sutured},   states the following.
\begin{theorem}
Every $(M,\Gamma,\xi)$ admits a partial open book decomposition.
\end{theorem}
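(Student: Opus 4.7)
The plan is to follow the Honda--Kazez--Mati\'c strategy via convex surface theory and contact cell decomposition. The central object to construct is a properly embedded Legendrian graph $\Lambda \subset M$ with $\partial\Lambda \subset \Gamma$ (endpoints on the sutures, in fact lying on $\partial R_-(\Gamma)$) such that, after rounding corners, the closed neighborhood $N := \nu(R_+(\Gamma) \cup \Lambda)$ is a convex handlebody whose complement $M \smallsetminus \mathrm{int}(N)$ has a prescribed standard form. The existence of such a $\Lambda$ is a relative version of Giroux's theorem on contact cell decompositions: starting from a convex cell decomposition of $(M,\xi)$ adapted to the boundary, one takes the $1$-skeleton lying in the interior that dually spans the handlebody structure, then $C^0$-perturbs it to be Legendrian with edges tangent to $\xi$.

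Once $\Lambda$ is in hand, I would extract the partial open book data as follows. Define the page $S$ to be (a smoothing of) $R_+(\Gamma)\cup \nu(\Lambda)$, so that $S$ deformation retracts onto $R_+(\Gamma)$ with one $1$-handle attached for each edge of $\Lambda$; let $P \subset S$ be the union of a collar of $\partial S$ together with those $1$-handles, and let $\mathbf{c} = \{c_1,\dots,c_n\}$ be the collection of cocore arcs of the $1$-handles. By construction, $H_S = S\times[-1,1]$ with its unique tight contact structure $\xi_S$ (dividing set $\partial S \times \{0\}$) models a standard neighborhood of $R_+(\Gamma)\cup \Lambda$ in $(M,\xi)$. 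The complement $M\smallsetminus H_S$ is then a contact handlebody whose dividing set on its boundary, after isotopy, consists of $n$ curves bounding compressing disks, so by Eliashberg's uniqueness of tight contact structures on the ball (applied inductively, or equivalently Honda's classification on handlebodies) it is contactomorphic to a union of standard contact $2$-handles attached along the curves $\gamma_i$ of \eqref{eqn:basishandle}.

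The monodromy $h:P \to S$ is now forced. Each basis arc $c_i\subset P$, pushed across the $2$-handle along $\gamma_i$, reemerges as an arc $h(c_i)$ in $S$ with $\partial h(c_i) = \partial c_i$; this defines $h$ on $\bigcup c_i$, and extends to an embedding of $P$ into $S$ (equal to the identity on $\partial P \cap \partial S$) by the way the contact $2$-handles glue the two copies of $P\cap \partial H_S$ together. By construction we obtain a contactomorphism $M(S,P,h,\mathbf{c}) \xrightarrow{\cong} (M,\Gamma,\xi)$, which is precisely Definition \ref{def:pobd}.

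The main obstacle will be the first step: establishing the existence of a Legendrian skeleton $\Lambda$ whose complement has the required standard form compatible with the prescribed dividing set $\Gamma$ on $\partial M$. This rests on adapting Giroux's contact cell decomposition to the sutured setting---one must produce a cellulation whose $2$-cells are convex, whose Legendrian $1$-skeleton has dividing set realizing $\Gamma$, and which interacts correctly with the convex boundary. Once this decomposition is built, the identification of $M\smallsetminus N$ with a union of standard contact $2$-handles is a formal application of the uniqueness of tight contact structures on $B^3$ with connected dividing set, and the remaining assembly of $(S,P,h,\mathbf{c})$ is essentially bookkeeping.
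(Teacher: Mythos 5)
The paper does not prove this theorem itself: it states it as half of the \emph{relative Giroux correspondence} and simply cites Honda--Kazez--Mati\'c \cite{hkm-sutured}. Your sketch is a faithful reconstruction of the Honda--Kazez--Mati\'c argument the paper is referring to --- Legendrian skeleton built from a contact cell decomposition adapted to the convex boundary, $H_S = S \times [-1,1]$ as a standard $I$-invariant neighborhood of $R_+(\Gamma)$ together with the Legendrian ribbon, identification of the complement as a standard tight piece via the classification of tight contact structures, and the monodromy read off from the return data --- so beyond confirming that this is the relevant approach, there is no internal proof in the paper to compare against.
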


\begin{example}
Given an open book decomposition $(\Sigma, h)$ of a closed contact manifold $(Y,\xi)$, the triple \[(S=\Sigma, P=\Sigma\ssm D^2, h|_P)\] is a partial open book decomposition for the complement of a Darboux ball in $(Y,\xi)$.
\end{example}

\begin{example}
\label{eq:legendrian-complement}
Given an open book decomposition $(\Sigma, h)$ of a closed contact manifold $(Y,\xi)$ with a Legendrian knot $K$ realized on the page $\Sigma$, the triple \[(S=\Sigma, P=\Sigma\ssm\nu(K), h|_P)\] is a partial open book decomposition for the complement of a standard neighborhood of $K$.
\end{example}

To define the contact invariant of  $(M,\Gamma,\xi)$, we choose a partial open book decomposition $(S,P,h)$ of $(M,\Gamma,\xi)$. Let $\mathbf{c}=\{c_1,\dots,c_n\}$ be a basis for $P$ and let \[\gamma_1,\dots,\gamma_n\subset \partial H_S\] be the corresponding curves as in \eqref{eqn:basishandle}. Let \[H:\SHI(-H_S,-\Gamma_S)\to\SHI(-M,-\Gamma)\] be the composition of the maps associated to contact $2$-handle attachments along the curves $\gamma_1,\dots,\gamma_n$, as  described in Section \ref{sssec:handles}. Recall from Proposition \ref{prop:prodsutured} that \[\SHI(-H_S,-\Gamma_S)\cong\C,\] and let $\mathbf{1}$ be a generator of this group. The following is from  \cite[Definition 4.2]{bs-shi}.
\begin{definition}
\label{def:contact} We define the contact class to be
 \[\cinvt(M,\Gamma,\xi):= H(\mathbf{1})\in\SHI(-M,-\Gamma).\] 
\end{definition} As the notation suggests, this class does not depend on the partial open book decomposition of $(M,\Gamma,\xi)$.  Indeed, we proved the following in \cite[Theorem 4.3]{bs-shi}.

\begin{theorem}
$\cinvt(M,\Gamma,\xi)$ is an invariant of the sutured contact manifold $(M,\Gamma,\xi)$. 
\end{theorem}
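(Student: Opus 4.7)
The definition of $\cinvt(M,\Gamma,\xi)$ involves three choices: a partial open book decomposition $(S,P,h)$, a basis $\mathbf{c}=\{c_1,\dots,c_n\}$ for $P$, and a generator $\mathbf{1}$ of $\SHI(-H_S,-\Gamma_S)\cong\C$. The scalar ambiguity in choosing $\mathbf{1}$ is automatically absorbed by the projective transitive system structure on $\SHI(-M,-\Gamma)$, so the plan is to establish invariance under the first two choices. I would proceed in two steps: first show independence of the basis for a fixed partial open book, then show independence of the partial open book decomposition by invoking the relative Giroux correspondence.

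For the choice of basis, I would use the remark that any two bases $\mathbf{c},\mathbf{c}'$ of $(S,P,h)$ yield canonically contactomorphic sutured contact manifolds. Any two bases are connected by a sequence of elementary arc slides and isotopies in $P$, each of which modifies one attaching curve $\gamma_i$ on $\partial H_S$ by a corresponding boundary handle slide over some $\gamma_j$. I would reduce to the case where the two bases differ in a single arc and verify that the two orderings of contact $2$-handle attachments factor through a common intermediate sutured manifold; combined with the naturality of the $2$-handle maps, this shows the two composite maps $H,H'$ agree up to scalar on the generator $\mathbf{1}$.

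For independence of the partial open book decomposition itself, I would invoke the relative Giroux correspondence of Honda--Kazez--Mati\'c \cite{hkm-sutured}, according to which any two partial open book decompositions of $(M,\Gamma,\xi)$ are related by a finite sequence of positive stabilizations (and ambient isotopy). A positive stabilization replaces $(S,P,h)$ by $(S',P',h')$, where $S'$ is obtained from $S$ by attaching a $1$-handle $h^1$ along $\partial S$, $P'=P\cup h^1$, and $h'$ extends $h$ by a right-handed Dehn twist along a curve through the co-core $c_{n+1}$ of $h^1$. I would take $\mathbf{c}'=\mathbf{c}\cup\{c_{n+1}\}$ and compare the map $H'$ associated with the stabilized decomposition to the original $H$. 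The key claim is that attaching the contact $2$-handle corresponding to the new curve $\gamma_{n+1}\subset\partial H_{S'}$ cancels the $1$-handle $h^1$, yielding $(H_S,\Gamma_S,\xi_S)$ again, and that the induced map $\SHI(-H_{S'},-\Gamma_{S'})\to\SHI(-H_S,-\Gamma_S)$ is an isomorphism sending a generator to a generator. Once this is established, $H'$ factors as $H$ composed with this canceling $2$-handle map, so $H'(\mathbf{1}')=H(\mathbf{1})$ up to scalar.

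The main obstacle is verifying this last claim about the cancellation step. Geometrically it is clear that the new contact $2$-handle undoes the stabilization $1$-handle, producing a contactomorphism to $(H_S,\Gamma_S,\xi_S)$, but translating this into a statement about the sutured instanton cobordism map requires care: one must identify a closure of $(H_{S'},\Gamma_{S'})$ in which the surgery along $\gamma_{n+1}\times\{1\}$ implements precisely the canceling handle pair, then verify that the resulting instanton Floer cobordism map intertwines the generators of the one-dimensional groups $\SHI(-H_{S'},-\Gamma_{S'})$ and $\SHI(-H_S,-\Gamma_S)$ given by Proposition \ref{prop:prodsutured}. This verification is the crux of the invariance proof and is where the bulk of the work lies.
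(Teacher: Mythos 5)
The paper does not prove this theorem; it defers to \cite[Theorem 4.3]{bs-shi}, where the authors established invariance of the contact class in their earlier paper. Your outline matches the strategy of that reference: absorb the scalar ambiguity in $\mathbf{1}$ via the projectively transitive system structure, prove basis-independence through arc slides, and reduce independence of the partial open book to invariance under positive stabilization via the relative Giroux correspondence of Honda--Kazez--Mati\'c, with the crucial ingredient being that a canceling contact $1$-handle/$2$-handle pair induces a generator-to-generator isomorphism between the one-dimensional groups of Proposition \ref{prop:prodsutured}.

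You correctly identify this last verification as the crux, but you leave it unargued. In \cite{bs-shi} that verification is the bulk of the work: one chooses a closure of $(H_{S'},\Gamma_{S'})$ in which the $\partial H_{S'}$-framed surgery along $\gamma_{n+1}$ can be interpreted explicitly, shows that the surgered manifold is again a product-type closure of $(H_S,\Gamma_S)$, and then analyzes the resulting instanton $2$-handle cobordism map directly to see that it is nonzero (hence an isomorphism of $\C$'s) and carries the contact generator to the contact generator. Without this, your argument is a correct roadmap rather than a proof. Your proposal is consistent with the cited proof, so there is no wrong step, but the missing cancellation lemma is not a routine detail---it is the technical heart of \cite[Theorem 4.3]{bs-shi}.
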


We will  often think about the contact class in terms of closures. For that point of view, let $\data_S = (Y,R,\eta,\alpha)$ be a closure of $(H_S,\Gamma_S)$ with \[Y=R\times_\varphi S^1\] as  in the proof of Proposition \ref{prop:prodsutured}.  As mentioned in the definition of the contact $2$-handle attachment maps, performing $\partial H_S$-framed surgery on the curves \[\gamma_1,\dots,\gamma_n\subset \partial H_S\subset Y\] naturally yields a closure $\data=(Y',R,\eta,\alpha)$ of $(M,\Gamma)$. We then define \[\cinvt(M,\Gamma,\xi,\data):=I_*(V)_\nu(\mathbf{1}),\] where \[I_*(V)_\nu:I_*(-Y|{-}R)_{-\alpha-\eta}\to I_*(-Y'|{-}R)_{-\alpha-\eta},\] is the map associated to the corresponding $2$-handle cobordism $V$, $\nu=(-\alpha\sqcup-\eta)\times[0,1]$ is the usual cylindrical cobordism, and $\mathbf{1}$ is a generator of \[I_*(-Y|{-}R)_{-\alpha-\eta}\cong \C.\] In particular, this class is well-defined up to multiplication in $\C^\times$. Our invariance result is then the statement that the classes $\cinvt(M,\Gamma,\xi,\data)$ defined in this manner, for any  partial open book decompositions and closures of genus at least two, are related by the canonical isomorphisms between the  groups assigned to  different closures. %We will therefore often ignore the distinction between $\cinvt(M,\Gamma,\xi)$ and the classes $\cinvt(M,\Gamma,\xi,\data)$.

\begin{remark}
For convenience of notation, we will often use the shorthand $\cinvt(\xi)$ and $\cinvt(\xi,\data)$ to denote the classes $\cinvt(M,\Gamma,\xi)$ and  $\cinvt(M,\Gamma,\xi,\data)$, respectively.
\end{remark}

Below are some important properties of the contact class, all proven in \cite[Section 4.2]{bs-shi}. In brief, the contact class vanishes for overtwisted contact manifolds and behaves naturally with respect to   contact handle attachment and contact $(+1)$-surgery on Legendrian knots.  We view both contact 1-handles and contact 2-handles as having the unique $I$-invariant contact structure on $D^2 \times I$ whose dividing set on $D^2$ is a properly embedded arc, as described at the beginning of Section~\ref{sssec:handles}.

\begin{theorem}
\label{thm:zero-overtwisted}
If $(M,\Gamma,\xi)$ is overtwisted then $\cinvt(\xi)=0$.
\end{theorem}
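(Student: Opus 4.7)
The plan is to prove vanishing using the explicit description $\cinvt(\xi) = H(\mathbf{1})$ from Definition~\ref{def:contact}, where $H$ is a composition of contact $2$-handle attachment maps applied to the canonical generator $\mathbf{1}$ of $\SHI(-H_S, -\Gamma_S) \cong \C$. The strategy is to choose a partial open book decomposition of $(M,\Gamma,\xi)$ in which the overtwisted disk appears as a distinguished basis arc, and then to show that the corresponding contact $2$-handle attachment map annihilates the class being carried forward by the preceding maps.

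The first step is to invoke the Honda--Kazez--Mati\'c characterization of overtwistedness via partial open books \cite{hkm-sutured}: if $\xi$ is overtwisted, then after possibly positively stabilizing we may choose a partial open book $(S,P,h)$ with basis $\mathbf{c} = \{c_1,\dots,c_n\}$ and a distinguished basis arc $c_i$ such that $h(c_i)$ differs from $c_i$ by a bigon witnessing the overtwisted disk. For such an arc, the attaching curve $\gamma_i = (c_i \times \{1\}) \cup (\partial c_i \times [-1,1]) \cup (h(c_i) \times \{-1\})$ on $\partial H_S$ is unknotted in the handlebody obtained after performing the preceding $i-1$ contact $2$-handle attachments, and bounds an explicit compressing disk $D$ whose framing differs from the page framing by a controlled integer.

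The second step is to analyze the contact $2$-handle cobordism $V_i$ along $\gamma_i$. Capping off $\gamma_i$ with $D$ together with the core of the $2$-handle produces an embedded sphere $\Sigma \subset V_i$ whose self-intersection and pairing with the twisting $1$-manifold $\nu = (-\alpha \sqcup -\eta) \times [0,1]$ can be computed directly from the partial open book data. The induced cobordism map $I_*(V_i)_\nu$ then annihilates the intermediate class: arranging $\Sigma$ to have odd intersection with $\nu$ forces the map to vanish by a standard reducible/bubbling argument in instanton Floer theory, while a zero self-intersection sphere combined with the eigenvalue constraints of Proposition~\ref{prop:mu-spectrum} and Lemma~\ref{lem:commute} confines the image to an eigenspace that intersects $\SHI$ trivially. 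Either route yields $\cinvt(\xi) = H(\mathbf{1}) = 0$ by composition.

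I expect the main obstacle to be the geometric bookkeeping required to choose the partial open book so that the resulting sphere $\Sigma$ has the correct intersection properties with $\nu$, and to verify the vanishing in the instanton setting: although the analogous statement in sutured Heegaard Floer homology is proved in \cite{hkm-sutured} by a direct Heegaard-diagrammatic argument, in the instanton setting we must package the vanishing either through the surgery exact triangle of Theorem~\ref{thm:exacttri} applied to the relevant $2$-handle attachment, or through a direct gauge-theoretic argument exploiting reducible connections on the neighborhood of $\Sigma$. The cleanest route is likely to realize the degenerate $2$-handle attachment as the first term in a surgery triangle whose third term is seen to vanish on the class in question, reducing the vanishing of $\cinvt(\xi)$ to the exactness statement.
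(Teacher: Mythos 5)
The paper does not prove this theorem in-text; it states it as a background result with a citation to [Baldwin--Sivek, ``Instanton Floer homology and contact structures,'' Section 4.2], so there is no in-paper proof to compare against directly. I will therefore assess your proposal on its own terms.

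Your overall strategy --- choose a partial open book so that one of the contact $2$-handle attachment maps in the composition defining $\cinvt(\xi)=H(\mathbf{1})$ is forced to vanish on the class being carried forward --- is the right general shape for this kind of vanishing result. But as written the proposal has several unverified steps that are not routine, and at least one that I do not believe is correct as stated. First, the claim that overtwistedness of $\xi$ produces a partial open book $(S,P,h)$ with a distinguished basis arc $c_i$ such that $h(c_i)$ ``differs from $c_i$ by a bigon witnessing the overtwisted disk'' is not the Honda--Kazez--Mati\'c statement. What \cite[Proposition 4.1]{hkm-sutured} says, and what this paper in fact uses in the proof of Lemma~\ref{lem:otstab}, is the converse direction: a non-right-veering compatible partial open book implies overtwistedness. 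Going the other way --- producing from an overtwisted disk a partial open book with a designated basis arc carrying a bigon --- requires an argument you have not supplied, and ``differs by a bigon'' is in any case not the same as ``sent to the left,'' so it is not clear this is even the right notion to aim for. Second, even granting such an arc, the claim that $\gamma_i$ is ``unknotted in the handlebody obtained after performing the preceding $i-1$ contact $2$-handle attachments'' and bounds a compressing disk $D$ with ``controlled'' framing is asserted, not proved. Whether $\gamma_i$ bounds a disk after the earlier attachments depends on the ordering of the handle attachments and on the global topology of $P\subset S$; nothing in the definition of a basis guarantees this. And to run either branch of your vanishing argument you need to know the framing difference between $D$ and $\partial H_S$ exactly, not up to a ``controlled integer.''

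Third, and most seriously, both vanishing mechanisms you invoke are gestured at rather than executed. The ``standard reducible/bubbling argument'' for a sphere of odd pairing with $\nu$ is not a cited or established vanishing criterion in the framework this paper actually uses (all of the vanishing arguments in this paper and its predecessors run through the adjunction inequality of \cite{km-gauge2} or through $I_*(Y|R)$ being zero when $2g(R)-2$ fails to be an eigenvalue, as in the proofs of Theorem~\ref{thm:legendrian-surgery2} and Theorem~\ref{thm:tb}). For the eigenvalue route you would need to exhibit, in the surgered closure, a surface homologous to $R$ of strictly smaller genus so that Theorem~\ref{thm:simultaneouseigenvalues} forces $I_*(Y'|R)=0$; a zero-self-intersection sphere in the cobordism does not by itself produce that. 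The exact-triangle suggestion at the end is reasonable in spirit --- this is exactly the device used in Theorem~\ref{thm:tb} --- but to make it work you would have to specify the surgery curve and framing, identify which vertex of the triangle vanishes, and verify that the cobordism map realizing $H_{\gamma_i}$ agrees with the corresponding map in the triangle (a delicacy about the cobordism $1$-manifolds that this paper handles explicitly in Theorem~\ref{thm:legendrian-surgery2} and in the proof of Theorem~\ref{thm:bypass}). As it stands the proposal identifies the right circle of ideas but leaves each of the load-bearing steps unestablished.
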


\begin{theorem}
\label{thm:handletheta2} Suppose $(M',\Gamma',\xi')$  is the result of attaching a contact $i$-handle to $(M,\Gamma,\xi)$ for $i=1$ or $2$. Then the associated map  \[H_i:\SHI(-M,-\Gamma)\to\SHI(-M',-\Gamma')\] defined in Section \ref{sssec:handles} sends $\cinvt(\xi)$ to $\cinvt(\xi')$.
\end{theorem}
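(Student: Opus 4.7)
The plan is to prove both cases ($i=1,2$) by exhibiting a partial open book decomposition for $(M',\Gamma',\xi')$ that extends a chosen one for $(M,\Gamma,\xi)$ in the ``right'' way, and then to derive the theorem by comparing the defining cobordisms on a common closure and invoking the functoriality of the instanton cobordism maps. In both cases I would use the closure-based formulation of the invariant, so that $\cinvt(\xi,\data)=I_*(V)_\nu(\mathbf{1})$ with $V$ the composite $2$-handle cobordism from a closure of $(H_S,\Gamma_S)$ to $\data$.

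For the \emph{contact $2$-handle case}, suppose $(S,P,h)$ is a partial open book for $(M,\Gamma,\xi)$ with basis $\mathbf{c}=\{c_1,\dots,c_n\}$ and associated $2$-handle attaching curves $\gamma_1,\dots,\gamma_n$. The first step is to show that, possibly after a preliminary stabilization of the partial open book, the curve $c\subset\partial M$ along which the new contact $2$-handle is attached can be realized as $\gamma_{n+1}$ coming from an additional basis arc $c_{n+1}$ in an extended partial open book $(S,P',h')$ for $(M',\Gamma',\xi')$. Granting this, on any common closure $\data$ of $(H_S,\Gamma_S)$ the cobordism $V'$ defining $\cinvt(\xi',\data')$ factors as $V'=V\cup V_{n+1}$, where $V_{n+1}$ is the $2$-handle cobordism corresponding to the new contact $2$-handle. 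Functoriality of instanton cobordism maps then gives
\[
\cinvt(\xi',\data')=I_*(V')_{\nu'}(\mathbf{1})=I_*(V_{n+1})_{\nu_{n+1}}\bigl(I_*(V)_\nu(\mathbf{1})\bigr)=H_2\bigl(\cinvt(\xi,\data)\bigr),
\]
where the last equality is the definition of $H_2$ recalled in Section~\ref{sssec:handles}.

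For the \emph{contact $1$-handle case}, the plan is to isotope the contact $1$-handle so that its feet lie in the interior of the product piece $S\times\{\pm 1\}\subset R_\pm(\Gamma_S)$ of a partial open book $(S,P,h)$ for $(M,\Gamma,\xi)$, and then to attach a $1$-handle to $S$ joining the projections of these feet. This yields a new page $S'$ and a natural extension $(S',P',h')$ that serves as a partial open book for $(M',\Gamma',\xi')$ with the \emph{same} basis arcs $\mathbf{c}$ and hence the \emph{same} $2$-handle curves $\gamma_1,\dots,\gamma_n$. By construction $(H_{S'},\Gamma_{S'},\xi_{S'})$ is obtained from $(H_S,\Gamma_S,\xi_S)$ by attaching a contact $1$-handle; in particular a common closure $\data$ of $(H_S,\Gamma_S)$ is simultaneously a closure of $(H_{S'},\Gamma_{S'})$, and the $1$-handle identification sends the generator $\mathbf{1}$ of $\SHI(-H_S,-\Gamma_S)\cong\C$ to the generator $\mathbf{1}'$ of $\SHI(-H_{S'},-\Gamma_{S'})\cong\C$. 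Because the $2$-handle composite cobordism on a common closure is literally the same in both descriptions, this forces $\cinvt(\xi',\data')=H_1(\cinvt(\xi,\data))$.

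I expect the main obstacle to lie in the compatibility claim at the partial open book level: in the $2$-handle case one must verify that every Legendrian attaching curve on $\partial M$ can, after a controlled stabilization of $(S,P,h)$, be realized as a $\gamma_{n+1}$ coming from a new basis arc; in the $1$-handle case one must push the feet of the new contact $1$-handle off the images of the existing $\gamma_i$'s into the product region. Once these contact-geometric moves are established, the remainder of the proof is formal: the $i=2$ statement reduces to functoriality of the instanton cobordism maps associated with the stacked $2$-handle cobordisms, and the $i=1$ statement reduces to the fact, recalled in Section~\ref{sssec:handles}, that $H_1$ is the identity map on a common closure.
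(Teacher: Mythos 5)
The paper does not actually supply a proof of Theorem~\ref{thm:handletheta2} in the text; it states that the theorem, along with Theorems~\ref{thm:zero-overtwisted} and~\ref{thm:legendrian-surgery}, is ``proven in \cite[Section~4.2]{bs-shi}.'' So there is no in-paper argument to compare you against, and your proposal must be judged on its own merits and against what is implicitly needed given the definitions in Section~\ref{sssec:handles}.

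Your strategy is the right one and, as far as I can tell, matches the argument carried out in \cite{bs-shi}. For $i=2$, the crux is exactly the compatibility claim you flag: given a contact $2$-handle attached along a Legendrian curve $c\subset\partial M$, one must produce a partial open book $(S,P',h')$ for $(M',\Gamma',\xi')$ extending one for $(M,\Gamma,\xi)$ by a single basis arc $c_{n+1}$ whose associated curve $\gamma_{n+1}$, viewed in the unsurgered part of $\partial H_S\subset\partial M$, agrees with $c$ together with its $\partial M$-framing. Once that is in hand, the composite surgery cobordism $V'$ defining $\cinvt(\xi',\data')$ literally factors as $V'=V\cup V_{n+1}$, and functoriality of the cobordism maps (which hold up to the $\C^\times$ ambiguity built into the theory) gives the claim. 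That compatibility statement is a contact-geometric fact about partial open books going back to Honda--Kazez--Mati\'c, and establishing it (after a preliminary stabilization, as you say) is the one non-formal step; your proposal correctly identifies it as the main obstacle rather than glossing over it. For $i=1$, the argument is as you say, with one small direction to watch: since $H_{S'}$ is obtained from $H_S$ by attaching a $1$-handle, the natural statement (mirroring the paper's own phrasing in Section~\ref{sssec:handles}) is that every closure of $(H_{S'},\Gamma_{S'})$ is naturally a closure of $(H_S,\Gamma_S)$, not the reverse; so the common closure should be chosen as a closure of the larger manifold $(H_{S'},\Gamma_{S'})$. With that orientation fixed, the surgery cobordism defining the contact class is identical in both descriptions and the $1$-handle map is the identity, completing the argument.

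In short: your proof is essentially correct and takes the same route that \cite{bs-shi} does. The one genuine thing left to verify is the compatibility lemma at the level of partial open books and contact handles, which you explicitly flagged; stating and proving that lemma would turn your sketch into a complete argument.
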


\begin{theorem}
\label{thm:legendrian-surgery}
Suppose $K$ is a Legendrian knot in $(M,\Gamma,\xi)$ and  that $(M',\Gamma',\xi')$ is the result of contact $(+1)$-surgery on $K$. Then the associated map \[F_K:\SHI(-M,-\Gamma)\to\SHI(-M',-\Gamma')\] defined in Section \ref{sssec:handles} sends $\cinvt(\xi)$ to $\cinvt(\xi')$.
\end{theorem}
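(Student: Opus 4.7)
The plan is to reduce Theorem \ref{thm:legendrian-surgery} to Theorem \ref{thm:handletheta2} by realizing contact $(+1)$-surgery as a contact $2$-handle attachment at the level of partial open book decompositions.

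First, I would choose a partial open book decomposition $(S,P,h)$ of $(M,\Gamma,\xi)$ in which $K$ is realized as a simple closed curve on the page $S$, disjoint from a basis $\mathbf{c}=\{c_1,\dots,c_n\}$ for $P$; this is possible by the relative Giroux correspondence, compare Example \ref{eq:legendrian-complement}. By Definition \ref{def:contact}, the class $\cinvt(\xi)$ is then the image of the generator of $\SHI(-H_S,-\Gamma_S)\cong \C$ under the composition $H$ of the contact $2$-handle maps associated to the curves $\gamma_1,\dots,\gamma_n\subset \partial H_S$ given by equation \eqref{eqn:basishandle}.

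Next, I would establish the partial-open-book version of the standard open book description of contact $(+1)$-surgery: the surgered manifold $(M',\Gamma',\xi')$ admits a partial open book decomposition $(S,P,h')$ with the same page $S$ and sub-page $P$, whose monodromy is $h'=h\circ D_K^{-1}$, where $D_K^{-1}$ is the left-handed Dehn twist along the copy of $K$ on the page. The associated boundary curves $\gamma_1',\dots,\gamma_n'\subset \partial H_S$ therefore differ from the $\gamma_i$ only by a Dehn twist along a curve parallel to $K$ on the $\{-1\}$-side of $\partial H_S$, a difference that can in turn be realized by performing a single additional contact $2$-handle attachment along the curve on $\partial H_S$ built from $K\subset S$ via \eqref{eqn:basishandle}, with framing chosen to record $(+1)$-surgery rather than $(-1)$-surgery.

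The upshot is that $\cinvt(\xi')=G(\cinvt(\xi))$, where $G$ is the instanton Floer map arising from this single additional $2$-handle cobordism attached along $K$ with the $(+1)$-surgery framing, applied to a closure of $(M,\Gamma)$. The key observation is that $G$ coincides with $F_K$: both are induced by the same $4$-dimensional $2$-handle cobordism decorated by the same cylindrical $1$-manifold $\nu=(-\alpha\sqcup-\eta)\times[0,1]$, since contact $(+1)$-surgery and topological $(+1)$-surgery on $K$ share the same underlying surgery cobordism; the contact structure on the cobordism does not enter the instanton Floer map. Combined with Theorem \ref{thm:handletheta2}, this yields $F_K(\cinvt(\xi))=\cinvt(\xi')$.

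The main obstacle is the partial-open-book analogue of the Dehn-twist description of contact $(+1)$-surgery, i.e.\ the identification $(M',\Gamma',\xi')\cong M(S,P,h\circ D_K^{-1},\mathbf{c})$. In the closed case this is classical, and in the partial case it should follow by localizing the closed argument to a neighborhood of $K$ in the interior of a page and verifying that the convex boundary and dividing set $\Gamma$ are unaffected. Once this geometric input is in hand, the remainder of the argument is a naturality check for cobordism maps on instanton Floer homology together with the observation that different presentations of the same decorated $4$-manifold induce the same map.
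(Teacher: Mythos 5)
The main issue is with the crucial claim that $\cinvt(\xi')=G(\cinvt(\xi))$: you try to deduce this from the partial-open-book description of $(M',\Gamma',\xi')$ by saying the extra surgery along $K$ realizes the monodromy change, but the resulting cobordisms are not ``different presentations of the same decorated $4$-manifold.'' The cobordism defining $\cinvt(\xi')$ from a closure of $(H_S,\Gamma_S)$ consists of $n$ two-handles attached along $\gamma_1',\dots,\gamma_n'$, while $G$ composed with the cobordism defining $\cinvt(\xi)$ involves $n+1$ two-handles (along $\gamma_1,\dots,\gamma_n$ and along $K$). These have different second Betti numbers and cannot be diffeomorphic rel boundary; the Lickorish trick guarantees that the resulting closed $3$-manifolds agree, not that the $4$-dimensional cobordism maps do. This is precisely the difficulty that the paper's proof of the closely related Theorem \ref{thm:legendrian-surgery2} is designed to get around, and that proof uses a genuinely different mechanism: it forms a commutative square with the $K$-surgery cobordism $X$ on the product side, and shows that $I_*(X)_\kappa$ is an isomorphism $\C\to\C$ by a surgery exact triangle argument in which the third term $I_*(-Y_0|{-}R)$ vanishes because $-R$ becomes homologous in $-Y_0$ to a surface of strictly smaller genus. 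Your sketch has no substitute for that vanishing argument, and without it the equality $\cinvt(\xi')=G(\cinvt(\xi))$ is simply a restatement of the theorem rather than a proof.

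Two further problems. First, the ``curve on $\partial H_S$ built from $K\subset S$ via \eqref{eqn:basishandle}'' is ill-defined: \eqref{eqn:basishandle} builds a curve from a properly embedded \emph{arc} $c_i$ with $\partial c_i\subset\partial S$, whereas $K$ is a closed curve in the interior of the page, so the union $(K\times\{1\})\cup(\partial K\times[-1,1])\cup(h(K)\times\{-1\})$ does not make sense. Second, Theorem \ref{thm:handletheta2} applies only to contact $1$- and $2$-handle attachments, which are boundary operations with $\partial M$-framed surgery along a curve meeting $\Gamma$ twice; a $(+1)$-framed surgery on an interior Legendrian knot is a different construction, which is exactly why the paper defines the separate map $F_K$ (and the later variant $G_K$) rather than treating it as a case of $H_2$. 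So Theorem \ref{thm:handletheta2} cannot be ``combined with'' your geometric claim to finish the argument as written.
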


For the sake of later applications, we now observe that the map $F_K$ fits into a surgery exact triangle, where the third term involves the Floer homology of the sutured manifold obtained by $0$-surgery on $K$. 
%
%The issue is that this map depends on the surface $\nu = (-\alpha-\eta)\times I$ in the surgery cobordism, and sometimes we may wish to set up the surgery triangle so that the surface $\kappa$ used to define the map in the surgery triangle is different from $\nu$: we might take $\kappa$ to be the union of $\nu$ and a closed surface in the ``interesting'' part of the cobordism, meaning the union of $M\times I$ and the 2-handle.  We show below that the latter map still sends $\cinvt(\xi)$ to $\cinvt(\xi')$. We begin with some setup.
%The issue is that the surface $\nu$ in the $2$-handle cobordism used in defining $F_K$, which is used to specify the bundle over this cobordism, is generally different from the surface $\kappa$ used in defining the map in the surgery triangle. We show below that the latter map still sends $\cinvt(\xi)$ to $\cinvt(\xi')$. We begin with some setup.
%
Suppose $(M',\Gamma')$ is obtained from $(M,\Gamma)$ via $(+1)$-surgery on a framed knot $K\subset M$.  Let $\data = (Z,R,\eta,\alpha)$ be a closure of $(M,\Gamma)$, and let \[\data_1=(Z_1,R,\eta,\alpha)\textrm{ and }\data_0=(Z_0,R,\eta,\alpha)\]
be the tuples obtained from $\data$ by performing $1$- and $0$-surgery on $K\subset Z$.
These  are naturally closures of the sutured manifolds 
\[(M,\Gamma)\textrm{ and }
(M_1(K),\Gamma)=(M',\Gamma')\textrm{ and }
(M_0(K),\Gamma).
\] Note that $-Z_1$ and $-Z_0$ are obtained from surgeries on  $ K\subset -Z$, 
\begin{align*}
-Z_1&\cong (-Z)_{-1}(K),\\
-Z_0&\cong (-Z)_{0}(K).
\end{align*}
By Theorem \ref{thm:exacttri} (and Lemma \ref{lem:commute}), there is an exact triangle 
\begin{equation}\label{eqn:surgeryexacttriangleZ} \xymatrix@C=-35pt@R=35pt{
I_*(-Z|{-}R)_{-\alpha -\eta} \ar[rr]^{I_*(W)_{{\kappa}}}  & &I_*(-Z_1|{-}R)_{-\alpha -\eta} \ar[dl]^{I_*(W_{0})_{{\kappa}_0}} \\
&I_*(-Z_0|{-}R)_{-\alpha -\eta+ \mu}\ar[ul]^{I_*(W_{1})_{{\kappa}_1}}. & \\
} \end{equation}
Here, $\mu$ is the curve in $-Z_0$ corresponding to the meridian of $K\subset -Z$, as in Figure \ref{fig:meridian}; and $\kappa$ is the product $(-\alpha-\eta) \times I$ in $W$, which was called $\nu$ in \eqref{eq:2-handle-surgery-nu-map}, so we have $F_K = I_*(W)_\kappa$.

Strictly speaking, this triangle is one of the variations described after Theorem~\ref{thm:exacttri}, corresponding to the second row of Figure~\ref{fig:surgery}; again, the 3-manifolds and cobordisms between them are the same, but the 1-manifolds and cobordisms between them are different.  This particular exact triangle corresponds to surgery on $\mu \subset -Z_0$, with its usual framing inherited from $Z$, since $0$- and $1$-surgeries on $\mu$ produce $-Z$ and $-Z_1$.

%Here, $\mu$ is the curve in $-Z_0$ corresponding to the meridian of $ K\subset -Z$, as in Figure \ref{fig:meridian}, such that $0$- and $1$-surgeries on $\mu$ produce $-Z$ and $-Z_1$.

\begin{figure}[ht]
\labellist

 \tiny\hair 2pt
\pinlabel $\mu$ at 26.5 16.5
\pinlabel $0$ at 61 16.5
\pinlabel $1$ at 95 16.5

\pinlabel $0$ at 60.5 34
\pinlabel $0$ at 26 34
\pinlabel $0$ at 94.5 34

\small
\pinlabel $-Z_0$ at 12 1
\pinlabel $-Z$ at 46 1
\pinlabel $-Z_1$ at 81 1

\endlabellist
\centering
\includegraphics[width=4cm]{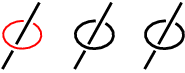}
\caption{The curve $\mu$ in $-Z_0$ on which $0$- and $1$-surgeries produce $-Z$ and $-Z_1$.}
\label{fig:meridian}
\end{figure}

\section{The Alexander grading}
\label{sec:alex}
In \cite[Section 7.6]{km-excision}, Kronheimer and Mrowka explain how a Seifert surface for a knot $K\subset Y$ gives rise to an ``Alexander grading'' on the instanton knot Floer homology $\KHI(Y,K)$ defined in Definition \ref{def:khi}; the name indicates that $\KHI$ together with this grading categorifies the Alexander polynomial of $K$.  In their construction, they use a genus one closure of $(Y(K),\Gamma_\mu)$ formed with an annular auxiliary surface. In this section, we describe how more generally a properly embedded surface in any sutured manifold which intersects the sutures twice gives rise to a grading on $\SHI$, which we also call an Alexander grading, using closures of any genus. In particular, we prove that this grading is preserved by the canonical isomorphisms relating the groups assigned to different closures.

Before beginning the discussion in earnest, we recall why this is necessary.  In Section~\ref{sec:leg} we will define an invariant which associates to a transverse knot $K \subset (Y,\xi)$ an element 
\[ \kinvt(K) \in \KHI(-Y,K). \]
In Section~\ref{sec:proof} we will prove that if $K$ is the connected binding of an open book supporting $\xi$, with fiber surface $\Sigma$, then $\kinvt(K)$ generates the top Alexander grading $\KHI(-Y,K,[\Sigma],g) \cong \C$.  Its grading is determined in Theorem~\ref{thm:ttopgrading}, which we prove by computing for a certain contact structure $\xi_0^-$ the Alexander grading of our contact invariant
\[\cinvt(\xi_0^-) \in \SHI(-Y(K), -\Gamma_0,[\Sigma],g), \]
where the sutures $\Gamma_0$ are \emph{not} meridional, and then showing that $\kinvt(K)$ is the image of this invariant under a map which preserves this grading.  This argument therefore requires a notion of grading for more general choices of sutures.  In fact, even in the case of $\KHI$, we need a more general construction because $\kinvt(K)$ cannot be defined directly using Kronheimer and Mrowka's preferred genus one closures.

Suppose $(M,\Gamma)$ is a balanced sutured manifold and $\Sigma$ is a properly embedded, oriented surface in $M$ with one boundary component such that the closed curve $\sigma=\Sigma\cap \partial M$  intersects $\Gamma$ transversely in two points, $p_+$ and $p_-$, and let \[\sigma_{\pm} = \sigma\cap R_{\pm}(\Gamma),\] as shown in Figure \ref{fig:surfaceclosure}. To  define the Alexander grading on $\SHI(M,\Gamma)$ associated with $\Sigma$, we first cap off $\Sigma$ in a closure of $(M,\Gamma)$.
 %and let us assume that \[(R_+(\Gamma),\sigma_+)\cong (R_-(\Gamma),\sigma_-).\] 

\begin{figure}[ht]
\labellist

\tiny \hair 2pt
\pinlabel $p_+$ at 94 87
\pinlabel $p_-$ at 204 87
\pinlabel $\sigma_+$ at 68 99
\pinlabel $\sigma_+$ at 230 99
\pinlabel $\sigma_-$ at 148 98
\pinlabel $\Sigma$ at 55 59
\pinlabel $\Sigma'$ at 332 59

\pinlabel $\sigma_+$ at 342 99
\pinlabel $\sigma_-$ at 427 98
\pinlabel $\tau_+$ at 358 143
\pinlabel $\tau_-$ at 387 143

\pinlabel $\tau_-$ at 467 143
\pinlabel $\tau_+$ at 496 143

\endlabellist
\centering
\includegraphics[width=12.3cm]{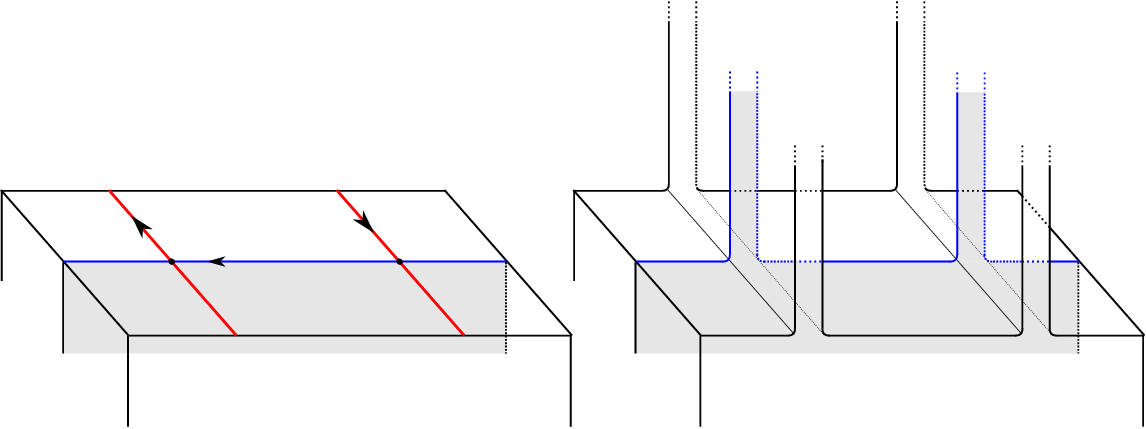}
\caption{Left, a portion of the manifold $M$ with $\Gamma$ shown in red. The surface $\Sigma$ is shown with oriented boundary  in blue. Right, the preclosure $M'$ formed by attaching $T\times[-1,1]$. The $1$-handle $\tau\times[-1,1]$ glues to $\Sigma$ to form the properly embedded surface $\Sigma'$ with boundary shown in blue. }
\label{fig:surfaceclosure}
\end{figure}

Let $T$ be an auxiliary surface with an identification \[f:\partial T\xrightarrow{\cong}\Gamma;\] we note that $\partial T$ and $\Gamma$ need not be connected.  Let $A(\Gamma)=\Gamma\times[-1,1]$ be an annular neighborhood of $\Gamma$ such that $\sigma$ intersects $A(\Gamma)$ in the arcs  \[\{p_+\}\times[-1,1]\textrm{ and }\{p_-\}\times[-1,1].\] and let \[h:\partial T\times[-1,1]\xrightarrow{\cong} A(\Gamma)\] be the diffeomorphism $h=f\times\id$.  Let $M'$  be the preclosure obtained by gluing $T\times[-1,1]$ to $M$ according to $h$. Let $\tau$ be a nonseparating, properly embedded arc in $T$ with endpoints at $p_+$ and $p_-$. Let $\Sigma'$ be the properly embedded surface in $M'$ obtained as the union of $\Sigma$ with the 1-handle $\tau\times[-1,1]$, as shown in Figure \ref{fig:surfaceclosure}. The boundary of $\Sigma'$ consists of the two circles \[c_\pm:=\sigma_\pm\cup\tau_\pm\subset \partial_{\pm}M',\] where $\tau_\pm = \tau\times\{\pm 1\}$. The fact that $\tau$ is nonseparating in $T$ implies that the circles $c_\pm$ are nonseparating in $\partial_\pm M'$. In forming a closure, we can therefore choose a diffeomorphism \[\varphi:\partial_+M'\to\partial_-M'\]  which identifies $c_+$ with $c_-$. Let $R=\partial_+M'$ and let \[Y= M'\cup \big(R\times[1,3]\big)\] be the closed manifold formed according to $\varphi$ in the manner described in Section~\ref{ssec:background-shi}, immediately after Definition~\ref{def:preclosure}. We define \[\alpha=\big(\{q\}\times[-1,1]\big)\cup\big(\{q\}\times[1,3]\big)\] in the usual way, for some $q\in T$ fixed by $\varphi$, and choose for $\eta\subset R$ a curve which intersects $c_+$ in one point.  (This last condition will be useful at the beginning of the proof of Theorem~\ref{thm:alexwelldefined}.)

\begin{definition}
We say that a closure $\data = (Y,R,\eta,\alpha)$ defined as above is \emph{adapted to $\Sigma$}. 
\end{definition}

By way of comparison, Kronheimer and Mrowka insist in \cite{km-excision} that $(M,\Gamma)$ be the complement of a null-homologous knot $K$ in a closed manifold $Z$, with $\Gamma$ a pair of oppositely-oriented meridians.  They then work with a specific genus-one closure (denoted $\tilde{Y}(Z,K)$ in \cite[Definition~5.4]{km-excision}) which is adapted to any Seifert surface $\Sigma$ for $K$.
 
Let \[\overline\Sigma=\Sigma'\cup \big( c_+\times[1,3]\big)\] be the closed surface in $Y$ obtained as the union of $\Sigma'$ with the annulus $c_+\times[1,3]\subset R\times[1,3]$. Note that $\overline\Sigma$ is obtained from $\Sigma$ by capping off its boundary with a punctured torus, so that \[g(\overline \Sigma)= g(\Sigma)+1.\] We use $\overline\Sigma$ to define an Alexander grading on $\SHI(\data)$, generalizing the construction of \cite[Section~7.6]{km-excision}, as follows.

\begin{definition}
\label{def:alexgrading} Given a closure $\data$ of $(M,\Gamma)$  adapted to $\Sigma$,  the sutured instanton homology of $\data$ \emph{in Alexander grading $i$ relative to $\Sigma$} is the generalized $2i$-eigenspace of the operator \[\mu(\overline\Sigma):\SHI(\data)\to \SHI(\data).\] We denote this generalized eigenspace by $\SHI(\data,[\Sigma],i)$.\end{definition}

\begin{remark} As the notation above suggests, the Alexander grading on $(M,\Gamma)$ relative to $\Sigma$ depends only on the class \[[\Sigma]\in H_2(M, \sigma)\] as this  class determines the homology class of $\overline\Sigma$, and the operator $\mu(\overline\Sigma)$ depends only on $[\overline\Sigma]$.\end{remark}

\begin{lemma}
\label{lem:gradingbound}
The group $\SHI(\data,[\Sigma],i)$ is trivial for $|i|> g(\Sigma)$. Thus, \[\SHI(\data) = \bigoplus_{i=-g(\Sigma)}^{g(\Sigma)}\SHI(\data,[\Sigma],i).\]
\end{lemma}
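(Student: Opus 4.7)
The plan is to derive this directly from Proposition~\ref{prop:mu-spectrum} applied to the closed surface $\overline\Sigma\subset Y$.

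First, I would observe that by construction $\overline\Sigma$ is a closed, oriented surface in $Y$ obtained by capping off $\Sigma$ with a punctured torus (namely the $1$-handle $\tau\times[-1,1]$ together with the annulus $c_+\times[1,3]$), so
\[ g(\overline\Sigma) = g(\Sigma) + 1 \geq 1. \]
Hence Proposition~\ref{prop:mu-spectrum} applies to $\overline\Sigma$ and tells us that the eigenvalues of
\[ \mu(\overline\Sigma) : I_*(Y|R)_{\alpha+\eta} \to I_*(Y|R)_{\alpha+\eta} \]
lie in the set of even integers between $2-2g(\overline\Sigma)$ and $2g(\overline\Sigma)-2$, i.e.\ between $-2g(\Sigma)$ and $2g(\Sigma)$. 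Since $\SHI(\data,[\Sigma],i)$ is by Definition~\ref{def:alexgrading} the $2i$-generalized eigenspace of $\mu(\overline\Sigma)$ on $\SHI(\data) = I_*(Y|R)_{\alpha+\eta}$, it is forced to be trivial whenever $|i|>g(\Sigma)$.

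For the direct sum decomposition, I would invoke the finite dimensionality of $\SHI(\data)$ over $\C$ together with the standard fact that any endomorphism of a finite-dimensional $\C$-module splits its domain as a direct sum of generalized eigenspaces. The eigenvalue bound just established ensures that only eigenvalues of the form $2i$ with $-g(\Sigma)\leq i\leq g(\Sigma)$ can contribute, yielding the asserted decomposition.

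I do not anticipate a serious obstacle here: all the content is contained in verifying that $\overline\Sigma$ is a legitimate input to Proposition~\ref{prop:mu-spectrum} and in the genus computation $g(\overline\Sigma)=g(\Sigma)+1$. The only mild care needed is the reminder (stated in the footnote to the definition of $I_*(Y|R)_\alpha$) that ``eigenspace'' means ``generalized eigenspace'' throughout, so that the direct sum decomposition is automatic from finite dimensionality rather than from diagonalizability of $\mu(\overline\Sigma)$.
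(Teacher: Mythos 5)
Your proof is correct and takes essentially the same approach as the paper: both invoke Proposition~\ref{prop:mu-spectrum} applied to $\overline\Sigma$, using the genus computation $g(\overline\Sigma)=g(\Sigma)+1$ already established in the construction. The paper states the conclusion more tersely, but the underlying argument and the remark that eigenspaces mean generalized eigenspaces (so the direct sum follows from finite dimensionality over $\C$) are the same.
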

\begin{proof}
This follows   immediately from Proposition \ref{prop:mu-spectrum}, which implies  that  the eigenvalues of $\mu(\overline\Sigma)$ on $\SHI(\data)$ belong to the set of even integers from $2-2g(\overline\Sigma)$ to $2g(\overline\Sigma)-2$.
 \end{proof}

We next prove that this construction gives a well-defined Alexander grading on $\SHI(M,\Gamma)$.

\begin{theorem}
\label{thm:alexwelldefined}
Suppose $\data$ and $\data'$ are closures of $(M,\Gamma)$ adapted to $\Sigma$. For each $i$, we have \[\SHI(\data,[\Sigma],i)\cong\SHI(\data',[\Sigma],i).\]  Moreover, when $\data$ and $\data'$ have genus at least two, the canonical  isomorphism $\Psi_{\data,\data'}$ restricts to an isomorphism \[\Psi_{\data,\data'}:\SHI(\data,[\Sigma],i)\xrightarrow{\cong}\SHI(\data',[\Sigma],i)\] for each $i$.  \end{theorem}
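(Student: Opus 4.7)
The plan is to apply Lemma \ref{lem:commute}: a cobordism map $I_*(W)_\nu$ sends the $2i$-eigenspace of $\mu(\overline\Sigma)$ to the $2i$-eigenspace of $\mu(\overline\Sigma')$ whenever $\overline\Sigma$ and $\overline\Sigma'$ are homologous in $W$. The task thus reduces to verifying that the elementary cobordisms used to assemble the canonical isomorphism $\Psi_{\data,\data'}$ in \cite{bs-naturality} can each be arranged so that the two capped-off surfaces $\overline\Sigma$ and $\overline\Sigma'$ are homologous in the corresponding cobordism.

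I would first prove the second statement. Recall that $\Psi_{\data,\data'}$ is defined (up to $\C^\times$) as a composition of maps associated to elementary moves of three types: isotopy of the gluing diffeomorphism $\varphi$, modification of the curve $\eta$, and stabilization of the auxiliary surface $T$ (which also changes the genus of $R$). The first two types induce cobordisms that are topologically products on $M$, with modifications localized in the $R\times[1,3]$ region; after a small isotopy of $\eta$ and of the support of the $\varphi$-change away from the annular capping piece $c_+\times[1,3]$, the product $\overline\Sigma\times[0,1]$ realizes the required homology. Stabilization moves can be performed by attaching the stabilizing handle to $T$ in a region disjoint from the arc $\tau$; the associated excision cobordism then leaves $\overline\Sigma$ untouched, and $\overline\Sigma$ extends across the cobordism as a product. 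In each case, Lemma \ref{lem:commute} implies that the elementary map restricts to an isomorphism on $2i$-eigenspaces, and composing yields the second statement.

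For the first statement, which also allows genus-$1$ closures, observe that any closure $\data$ adapted to $\Sigma$ may be stabilized (by enlarging the auxiliary surface away from $\tau$) to a closure $\data^+$ of genus at least two. The same argument as above shows that the stabilization cobordism induces an isomorphism $\SHI(\data,[\Sigma],i)\cong \SHI(\data^+,[\Sigma],i)$ for every $i$. Composing with the canonical graded isomorphism between two genus $\geq 2$ stabilizations of $\data$ and $\data'$ then gives the desired isomorphism of graded pieces in the general case.

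The main obstacle is the bookkeeping required to ensure that every elementary move from \cite{bs-naturality} can be localized away from $\tau$ and the capping annulus $c_+\times[1,3]$. Since different choices of implementing a given move yield the same canonical isomorphism up to $\C^\times$, there is enough flexibility to arrange disjointness in each case; the verification is essentially a general-position argument carried out step by step against the constructions in \cite{bs-naturality}. I expect this to be the most delicate part of the proof, but it does not require any new ideas beyond careful placement of handles, cutting surfaces, and sutures in the complement of $\overline\Sigma$.
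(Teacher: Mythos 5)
Your core strategy matches the paper's: decompose $\Psi_{\data,\data'}$ into elementary cobordism maps and show each preserves the $\mu(\overline\Sigma)$-eigenspace decomposition by verifying that the capped-off surfaces are appropriately homologous. There are, however, two points where the proposal as written would fail or is substantially underspecified.

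The main gap is the stabilization step. You propose to apply Lemma \ref{lem:commute}, but the canonical isomorphism between closures of different genus is induced by an excision cobordism $W$ with \emph{three} boundary components: besides the two closures there is a third end on $F\times S^1$, and the map is $I_*(W)_\nu(\cdot,\Theta)$ for a fixed generator $\Theta\in I_*(F\times S^1)_{\boldsymbol\eta}$. Lemma \ref{lem:commute} does not apply to such a cobordism; the correct tool is Lemma \ref{lem:commutethree}, which requires you to exhibit a surface in $F\times S^1$ that together with $\overline\Sigma$ is homologous in $W$ to $\overline\Sigma'$ and to check the eigenvalue of the corresponding $\mu$-operator on $\Theta$. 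In the paper's set-up the splice curve $d$ is chosen dual to $\tau$, so this surface is the torus $\boldsymbol\tau\times S^1$ and $\Theta$ lies in its $0$-eigenspace by Proposition \ref{prop:mu-spectrum}. If you instead splice away from $\tau$, the relevant surface is empty and the eigenvalue check becomes trivial, which is a reasonable simplification --- but you still need Lemma \ref{lem:commutethree}, and the phrase ``$\overline\Sigma$ extends across the cobordism as a product'' conceals the fact that there is no product cobordism here.

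The second, lesser, gap is the same-genus case. ``Isotoping the support of the $\varphi$-change away from $c_+\times[1,3]$'' is not the right framing: a general diffeomorphism of $R$ fixing $(c_+,\eta,q)$ is not compactly supported away from $c_+$. The paper first fixes a diffeomorphism $R\to R'$ carrying $(c_+,\eta,q)$ to $(c_+',\eta',q')$, so both closures are built from the same preclosure with different gluing maps, and then factors $\varphi^{-1}\varphi'$ into Dehn twists around curves disjoint from $c_+\cup\eta\cup\{q\}$. This is a change-of-coordinates/Lickorish-generator argument, not general position, and it is what makes the surgery curves disjoint from the capping annulus so that Lemma \ref{lem:commute} does apply. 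With these two corrections your outline becomes essentially the paper's proof; your handling of the genus-one case by first stabilizing is a reasonable alternative to the paper's direct observation about unique diffeomorphism types.
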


\begin{proof}
We will proceed in two steps: first we consider closures of the same genus, and then we see what happens when we increase the genus by one.  In the first case, the isomorphism $\Psi_{\data,\data'}$ is built out of cobordism maps induced by Dehn surgeries, and these surgeries avoid the closed surfaces $\overline\Sigma$ and $\overline\Sigma'$ built from $\Sigma$ whose eigenspaces define the Alexander grading, so the grading passes through $\Psi_{\data,\data'}$ unchanged.  In the second case, we choose a convenient closure $\data$ whose auxiliary portion can be cut open to ``insert'' extra genus, leading to a closure $\data'$ of genus $g(\data)+1$.  Then $\Psi_{\data,\data'}$ is realized by a more complicated ``excision'' cobordism, and we will have to work a bit harder to relate the homology classes (and hence eigenspaces) of $\overline\Sigma$ and $\overline\Sigma'$ on either end of this cobordism.

%We prove the two statements of the theorem at the same time. In doing so, we will only sketch the definitions of the $2$-handle and excision cobordisms used to define these isomorphisms relating the groups assigned to different closures. For more details, see \cite[Section 9.3]{bs-naturality} and \cite[Section 3]{km-excision}. 

To begin, we let \[\data=(Y,R,\eta,\alpha) \,\textrm{ and }\, \data' = (Y',R',\eta',\alpha')\] be closures of $(M,\Gamma)$ adapted to $\Sigma$. Let us suppose first that $g(\data)=g(\data')$. Since the curves $c_+$ and $\eta$ are essential in $R$ and intersect in one point, we can find a diffeomorphism \begin{equation}\label{eqn:isoR}R\xrightarrow{\cong} R'\end{equation} which identifies $c_+,\eta\subset R$ with the corresponding  $c_+',\eta'\subset R'$. We can also ensure that this map sends the point $q$ defining $\alpha$ to the corresponding point $q'$ defining $\alpha'$. 

Note that for genus $1$ closures, there is a unique isotopy class of such diffeomorphisms since the complements \[R\ssm(c_+\cup\eta) \textrm{ and } R'\ssm(c_+'\cup\eta')\] are disks in this case. Thus, we automatically have \[\SHI(\data,[\Sigma],i)\cong \SHI(\data',[\Sigma],i)\] when $g(\data) = g(\data')=1$.
More generally,   the diffeomorphism in \eqref{eqn:isoR} allows us to view $\data$ and $\data'$ as formed from the \emph{same} preclosure $M'$, according to different diffeomorphisms \[\varphi,\varphi':\partial_+M'\to\partial_-M',\] where $\varphi^{-1}\varphi'$ is a diffeomorphism of $R$  fixing $(c_+,\eta,q)$. We may  factor $\varphi^{-1}\varphi'$ as a composition of positive and negative Dehn twists around curves \[a_1,\dots,a_n\subset R\] disjoint from $(c_+,\eta,q)$. This then allows us to view $\data'$ as obtained from $\data$ via $(\pm 1)$-surgeries on copies \[a_i\times\{t_i\}\subset R\times\{t_i\}\subset R\times[1,3]\subset Y\] of the $a_i$.

Suppose first  that only positive Dehn twists  appear in the factorization of $\varphi^{-1}\varphi'$. Let $(W,\nu)$ be the associated cobordism from $Y$ to $Y'$, obtained from $Y\times[0,1]$ by attaching $(-1)$-framed $2$-handles along the $a_i\times\{t_i\}$ in $Y\times \{1\}$, with $\nu = (\alpha\sqcup\eta)\times[0,1]$. Since $R$ and $R'$ are isotopic in $W$ and \[2g(R)-2=2g(R')-2\]
 Lemma \ref{lem:commute} implies that the induced map \[I_*(W)_\nu:I_*(Y)_{\alpha+\eta}\to I_*(Y')_{\alpha'+\eta'}\] restricts to a map \begin{equation}\label{eqn:mapiso}I_*(W)_\nu:\SHI(\data)\to\SHI(\data').\end{equation}  The map in \eqref{eqn:mapiso} defines the canonical isomorphism $\Psi_{\data,\data'}$ when both  closures have genus at least 2 \cite[Definition~9.10]{bs-naturality}. Since the  $a_i$ are disjoint from $c_+$, the surgery curves $a_i\times\{t_i\}$ are disjoint from the annulus $c_+\times[1,3]$ which caps off the surface $\Sigma'$ to form $\overline \Sigma\subset Y$. The capped off  surfaces  $\overline\Sigma\subset Y$ and $\overline\Sigma'\subset Y'$ are therefore isotopic in $W$.  Lemma \ref{lem:commute} then implies that $I_*(W)_\nu$ restricts to an isomorphism \[I_*(W)_\nu:\SHI(\data,[\Sigma],i)\xrightarrow{\cong}\SHI(\data',[\Sigma],i)\] for each $i$, as claimed. 

If both positive and negative Dehn twists  appear in the factorization of $\varphi^{-1}\varphi'$ then the isomorphism relating $\SHI(\data)$ and $\SHI(\data')$ is defined as the composition of  a map as in \eqref{eqn:mapiso} with the inverse of such a map, so the same argument applies.

Next, suppose $\data=(Y,R,\eta,\alpha)$ is a closure of $(M,\Gamma)$ adapted to $\Sigma$ as in the beginning of this section, and let us borrow the notation used there. We will construct a closure $\data'$ adapted to $\Sigma$ with \[g(\data')=g(\data)+1\]  and show that the isomorphism relating $\SHI(\data)$ and $\SHI(\data')$ preserves Alexander gradings.

To start, let $d\subset T$ be a closed curve dual to the arcs $\tau\subset T$ and $\eta\cap T$ (we can choose $\eta$ so that these arcs are parallel). Let us assume that the map \[\varphi:\partial_+M'\to\partial_-M'\] used to form $Y$ identifies the curves \[d\times\{\pm 1\}\subset \partial T\times[-1,1]\subset \partial_{\pm}M'.\] Let $F$ be a closed genus 2 surface containing parallel  curves $\boldsymbol{\tau}$ and $\boldsymbol{\eta}$ and a curve $\mathbf{d}$  dual to both. Let $T'$ be the  surface obtained by cutting $T$ and $F$ open along $d$ and $ \mathbf{d}$ and gluing these cut-open surfaces together according to a homeomorphism of their boundaries which identifies $d\cap \tau$ and $d\cap \eta$ with $ \mathbf{d}\cap  \boldsymbol{\tau}$ and   $ \mathbf{d}\cap  \boldsymbol{\eta}$, respectively, as shown in Figure \ref{fig:splice}. 

\begin{figure}[ht]
\labellist
\tiny \hair 2pt
\pinlabel $R_+(\Gamma)$ at 25 23
\pinlabel $R_+(\Gamma)$ at 345 22

\pinlabel $T$ at 60 24
\pinlabel $F$ at 177 23
\pinlabel $T'$ at 382 23
\pinlabel $\tau$ at 60 97
\pinlabel $\sigma_{+}$ at 28 97
\pinlabel $d$ at 92 65
\pinlabel $\eta$ at 25 53
\pinlabel $\boldsymbol{\tau}$ at 179 97
\pinlabel $\mathbf{d}$ at 142 65
\pinlabel $\boldsymbol{\eta}$ at 178 52
\pinlabel $\sigma_{+}$ at 347 96
\pinlabel $\tau'$ at 382 98
\pinlabel $\eta'$ at 347 54

\endlabellist
\centering
\includegraphics[width=14cm]{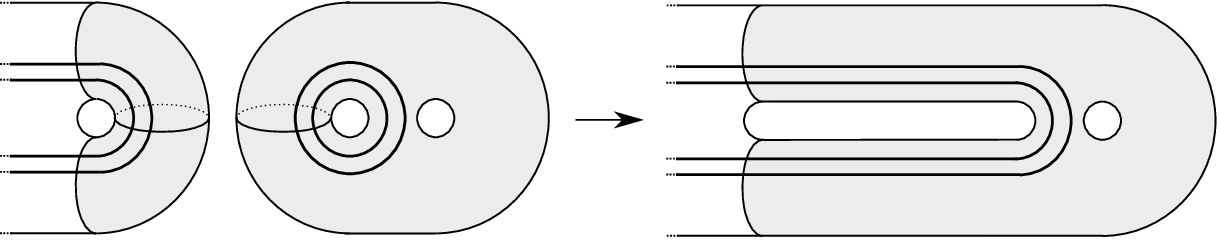}
\caption{Left, the genus $2$ surface $F$ and portion of $R = R_+(\Gamma)\cup T$  in an example where $T$ is has genus $0$ and $2$ boundary components. Right, the result of cutting and regluing to form $R' = R_+(\Gamma)\cup T'$. In performing this operation, we increase the genus of the closure by one.}
\label{fig:splice}
\end{figure}

The union $\tau'=\tau\cup \boldsymbol{\tau}$ is then a nonseparating, properly embedded arc in $T'$. Let \[\data'=(Y',R',\eta'=\eta\cup \boldsymbol{\eta},\alpha'=\alpha)\] be the closure of $(M,\Gamma)$  adapted to $\Sigma$ formed using the auxiliary surface $T'$ and a diffeomorphism $\varphi'$ which restricts to \[\varphi:\partial_+M'\ssm (d\times\{1\})\to\partial_-M'\ssm (d\times\{-1\})\] and to \[\id:(F\ssm \mathbf{d})\times\{1\}\to(F\ssm \mathbf{d})\times\{-1\}.\] In particular, $\varphi'$ identifies the two circles \[c'_\pm:=\sigma_\pm\cup\tau'_\pm\] so that $\Sigma$ caps off to a closed  surface $\overline\Sigma'$ in $Y'$ in the usual manner. 

Note that $Y'$ is obtained by cutting $Y$  and $F\times S^1$ open along the  tori \[d\times S^1=(d\times[-1,1])\cup (d\times[1,3])\] and $\mathbf{d}\times S^1$, respectively, and regluing. From this point of view, the capped off  surface $\overline\Sigma'$ is obtained by cutting $\overline\Sigma$ and the torus $\boldsymbol{\tau}\times S^1\subset F\times S^1$ open along essential curves and regluing. There is a standard excision-type cobordism \[(W,\nu):(Y,\alpha\sqcup \eta)\sqcup(F\times S^1,\boldsymbol{\eta})\to(Y',\alpha'\sqcup\eta')\] associated to this cutting and regluing, as described in \cite[Section 3]{km-excision} and \cite[Section 9.3.2]{bs-naturality}, and a corresponding map \[I_*(W)_\nu:  I_*(Y)_{\alpha+ \eta}\otimes I_*(F\times S^1)_{\boldsymbol{\eta}}\to  I_*(Y')_{\alpha'+ \eta'}.\] 

Kronheimer and Mrowka prove in \cite[Proposition 7.9]{km-excision} that the generalized $(2,2)$-eigenspace of the operator $(\mu(F),\mu(\pt))$ acting on $I_*(F\times S^1)_{\boldsymbol{\eta}}$ is one-dimensional. Let $\Theta$ be a generator of this eigenspace. Then, since the disjoint union of surfaces \[R\sqcup F\subset Y\sqcup(F\times S^1)\] is homologous in $W$ to $R'\subset Y'$, and \[2g(R')-2 = (2g(R)-2)+2,\] Lemma \ref{lem:commutethree} implies that  $I_*(W)_\nu(\cdot,\Theta)$ defines a map \begin{equation}\label{eqn:thetamap}I_*(W)_\nu(\cdot,\Theta): \SHI(\data)\to\SHI(\data').\end{equation}
The map in \eqref{eqn:thetamap} is an isomorphism \cite[Section 3]{km-excision}.  (Roughly, we can glue $W$ to an excision-type cobordism in the other direction, and the result can be turned into a product cobordism by replacing an embedded $R\times I\times S^1$ with two copies of $R\times D^2$; this only changes the induced map on the top eigenspace by a scalar factor, so it must have been an isomorphism.)  Moreover, it agrees with the canonical isomorphism $\Psi_{\data,\data'}$ defined in \cite{bs-naturality} when $g(\data)\geq 2$. 

Since $\boldsymbol\eta$ intersects the torus $\mathbf{d}\times S^1\subset F\times S^1$ in a single point, Theorem \ref{thm:simultaneouseigenvalues} says that the only eigenvalue of  $\mu(\mathbf{d}\times S^1)$ acting on $I_*(F\times S^1)_{\boldsymbol{\eta}}$ is $0$. It follows that the generalized $2$-eigenspace of $\mu(\pt)$ on $I_*(F\times S^1)_{\boldsymbol{\eta}}$ is simply the group we denote by $I_*(F\times S^1|\,\mathbf{d}\times S^1)_{\boldsymbol{\eta}}.$ We therefore have that \[\Theta\in I_*(F\times S^1|\,\mathbf{d}\times S^1)_{\boldsymbol{\eta}}.\] Note that  the only eigenvalue of $\mu(\boldsymbol{\tau}\times S^1)$ acting on $I_*(F\times S^1|\,\mathbf{d}\times S^1)_{\boldsymbol{\eta}}$ is $0$ by Proposition \ref{prop:mu-spectrum} since $\boldsymbol{\tau}\times S^1$ is a torus. In particular,  $\Theta$ is in the generalized $0$-eigenspace of this operator $\mu(\boldsymbol{\tau}\times S^1)$.
Then, since the disjoint union of surfaces \[ \overline\Sigma\sqcup (\boldsymbol{\tau}\times S^1)\subset  Y\sqcup(F\times S^1)\] is homologous in $W$ to $\overline\Sigma'\subset Y'$, Lemma \ref{lem:commutethree} implies that the map $I_*(W)_\nu(\cdot,\Theta)$ restricts to an isomorphism \[I_*(W)_\nu(\cdot,\Theta): \SHI(\data,[\Sigma],i)\xrightarrow{\cong}\SHI(\data',[\Sigma],i)\]  for each $i$.

Finally, for \emph{any} two closures $\data$ and $\data'$ of $(M,\Gamma)$ adapted to $\Sigma$, we define the isomorphism (the canonical isomorphism if both closures have genus at least 2) \[\SHI(\data)\to \SHI(\data')\] to be a composition of isomorphisms defined as above. This completes the proof.
\end{proof}

Given Theorem \ref{thm:alexwelldefined}, we make the following definition for a surface $\Sigma$ in $(M,\Gamma)$ as above.

\begin{definition}
 The sutured instanton homology of $(M,\Gamma)$ \emph{in Alexander grading $i$ relative to $\Sigma$} is the projectively transitive system of $\C$-modules \[\SHI(M,\Gamma,[\Sigma],i)\] consisting of the  groups $\SHI(\data,[\Sigma],i)$ for $g(\data)\geq 2$, together with the canonical isomorphisms $\Psi_{\data,\data'}$ between them from Theorem~\ref{thm:alexwelldefined}.
\end{definition}

\begin{remark}
Note that  \[\SHI(M,\Gamma) = \bigoplus_{i=-g(\Sigma)}^{g(\Sigma)}\SHI(M,\Gamma,[\Sigma],i),\] by Lemma \ref{lem:gradingbound}.
\end{remark}

%\begin{lemma}
 %\label{lem:symmetryreversal}
 %The Alexander grading  on $\SHI(M,\Gamma)$ relative to $\Sigma$ is invariant under orientation reversal, \[\SHI(M,\Gamma,[\Sigma],i)\cong\SHI(-M,-\Gamma,[\Sigma],i)\] for each $i$.
 %\end{lemma}
%\begin{proof}
%Let $(Y,R,\eta,\alpha)$ be a closure of $(M,\Gamma)$ adapted to $\Sigma$. Then \[(-Y,-R,-\eta,-\alpha)\] is a closure of $(-M,-\Gamma)$ adapted to $\Sigma$. Lemma \ref{lem:symmetryreversal}  follows immediately from the fact that there is a natural isomorphism between the $2i$-eigenspace of the operator $\mu(\Sigma)$ on $I_*(-Y|{-}R)_{-\alpha-\eta}$ and the $2i$-eigenspace of $\mu(\Sigma)$ acting on $I_*(Y|R)_{\alpha+\eta}$.
%\end{proof}

\begin{remark}
\label{rmk:symmetry} If $(M,\Gamma)$ admits a genus one closure then  the Alexander grading on $\SHI(M,\Gamma)$ with respect to $\Sigma$ is symmetric by Lemma \ref{lem:symmetric}. That is, 
\[\SHI(M,\Gamma,[\Sigma],i)\cong\SHI(M,\Gamma,[\Sigma],-i)\] for each $i$.
\end{remark}

Following \cite[Section 7.6]{km-excision}, we  make the  definition below.

\begin{definition} 
\label{def:khialex}Given a knot $K$ in a closed, oriented $3$-manifold $Y$ with Seifert surface $\Sigma$, the instanton knot Floer homology of $K$ \emph{in Alexander grading $i$ relative to $\Sigma$} is \[\KHI(Y,K,[\Sigma],i):=\SHI(Y(K),\Gamma_\mu,[\Sigma],i).\] When the relative homology class of $\Sigma$ is unambiguous we will omit it from the notation.
\end{definition}

\begin{remark} \label{rem:khi-genus-one-closure}
In \cite{km-excision}, Kronheimer and Mrowka defined the Alexander grading on $\KHI$ in exactly the same way as we do, but using  genus one closures only. It follows from Theorem \ref{thm:alexwelldefined} that the Alexander grading we give in Definition \ref{def:khialex} agrees with theirs up to isomorphism.
\end{remark}

%\begin{remark}
%\label{rmk:symmetryreversal}
%The Alexander grading on $\KHI(Y,K)$ with relative to $\Sigma$ is invariant under reversing the orientation of $Y$,
%\[\KHI(Y,K,[\Sigma],i)\cong\KHI(-Y,K,[\Sigma],i)\] for each $i$, as follows from Lemma \ref{lem:symmetryreversal} applied to the sutured manifold $(Y(K),\Gamma_\mu)$.
%\end{remark}

\begin{remark}
\label{rmk:symmetryknots} Since the knot complement $(Y(K),\Gamma_\mu)$ admits a genus one closure,  the Alexander grading on $\KHI(Y,K)$ relative to $\Sigma$ is symmetric by Remark \ref{rmk:symmetry}. That is, 
\[\KHI(Y,K,[\Sigma],i)\cong\KHI(Y,K,[\Sigma],-i)\] for each $i$.
\end{remark}

Kronheimer and Mrowka proved the following three results. The first follows from \cite[Theorem 7.18]{km-excision} and the discussion at the end of \cite[Section 7.6]{km-excision}; the second  is \cite[Proposition 7.16]{km-excision}; and the third is \cite[Proposition 4.1]{km-alexander}. As explained in the introduction, all three are important in our proof of  Theorem \ref{thm:khi-detects-trefoil}.

\begin{theorem}
\label{thm:fiberedY}
If $K\subset Y$ is fibered with fiber $\Sigma$ then $\KHI(Y,K,[\Sigma],g(K))\cong\C$.
\end{theorem}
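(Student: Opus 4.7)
My plan is to build a closure of $(Y(K), \Gamma_\mu)$ that inherits the fibered structure of $K$, and then reduce to Kronheimer-Mrowka's calculation of instanton Floer homology for surface bundles over $S^1$. Since $K$ is fibered with fiber $\Sigma$, the complement $Y(K)$ is a mapping torus of some $h: \Sigma \to \Sigma$ fixing $\partial\Sigma$. I would glue an annular auxiliary surface $A = S^1 \times [-1,1]$ to $Y(K)$ along a neighborhood of $\Gamma_\mu$ so that the preclosure $M'$ itself fibers over $S^1$ with fiber $\Sigma \cup A$, and choose the closure identification $\varphi: \partial_+ M' \to \partial_- M'$ to extend the fibration. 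The resulting closure $\data = (Y', R, \eta, \alpha)$ is adapted to $\Sigma$, $Y'$ is a surface bundle over $S^1$ with fiber the closed genus-$(g+1)$ surface $\bar\Sigma$ from Definition \ref{def:alexgrading}, and the distinguished surface $R = \partial_+ M'$ is a torus. I would arrange $\eta$ to intersect both $c_+$ and the fiber $\bar\Sigma$ of $Y' \to S^1$ transversely in a single point, with $\alpha$ disjoint from $\bar\Sigma$.

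By Definition \ref{def:alexgrading}, $\SHI(\data, [\Sigma], g)$ is the generalized $(2g, 0, 2)$-eigenspace of $(\mu(\bar\Sigma), \mu(R), \mu(\pt))$ on $I_*(Y')_{\alpha+\eta}$. Since $R$ is a torus and $(\alpha+\eta) \cdot R$ is odd, Theorem \ref{thm:simultaneouseigenvalues} forces the generalized eigenvalues of $\mu(R)$ on $I_*(Y')_{\alpha+\eta}$ to be only $0$, so the constraint $\mu(R) = 0$ is automatic and this simultaneous eigenspace coincides with the full $(2g, 2)$-eigenspace $I_*(Y' | \bar\Sigma)_{\alpha + \eta}$. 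Now $Y'$ is a $\bar\Sigma$-bundle over $S^1$ with $\bar\Sigma$ of positive genus and $(\alpha+\eta) \cdot \bar\Sigma$ odd, so the Kronheimer-Mrowka surface-bundle computation underlying the proof of Proposition \ref{prop:prodsutured} yields $I_*(Y' | \bar\Sigma)_{\alpha+\eta} \cong \C$. Combining these identifications gives $\KHI(Y, K, [\Sigma], g(K)) = \SHI(\data, [\Sigma], g) \cong \C$.

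The main obstacle is carrying out the first step rigorously: producing a closure whose total space really is a surface bundle extending the mapping-torus structure on $Y(K)$. This demands a careful choice of the auxiliary surface gluing (matching the $S^1$-factor of $A$ with the mapping-torus direction) and of $\varphi$ (so that the identifications on $\partial_+M'$ and $\partial_- M'$ assemble consistently into a global monodromy on $\bar\Sigma$), together with a check that the capped-off surface arising in this closure agrees, as a class in $H_2(Y')$, with the one used in Definition \ref{def:alexgrading} to define the Alexander grading. Once this geometric setup is in place, the remainder of the argument is a direct application of the spectral analysis already developed in Section \ref{sec:bkgnd}.
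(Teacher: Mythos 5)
The paper does not give a self-contained proof of this statement; it simply cites \cite[Theorem~7.18]{km-excision} and the discussion at the end of Section~7.6 there. Your argument is correct and essentially reconstructs that Kronheimer--Mrowka argument, recast in terms of the paper's more general Alexander grading: build a closure $Y'$ fibering over $S^1$ with fiber $\bar\Sigma$ of genus $g+1$ and distinguished torus $R$, arrange $(\alpha+\eta)\cdot\bar\Sigma=1$, observe via Theorem~\ref{thm:simultaneouseigenvalues} that $\mu(R)$ has $0$ as its only eigenvalue when $g(R)=1$ so that $\SHI(\data,[\Sigma],g)=I_*(Y'\,|\,\bar\Sigma)_{\alpha+\eta}$, and apply the surface-bundle computation. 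Theorem~\ref{thm:alexwelldefined} then ensures that this genus-one closure computes the Alexander grading as defined in Section~\ref{sec:alex}, so the conclusion $\KHI(Y,K,[\Sigma],g)\cong\C$ follows.

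One small imprecision worth flagging: the fiber of $M'\to S^1$ is not $\Sigma\cup A$. Since the $S^1$-factor of the annular auxiliary surface must be matched to the meridian (base) direction in the gluing, the fiber of the thickened annulus over a point of $S^1$ is a square, so the fiber of $M'$ is the genus-$g$, two-boundary-component surface $\Sigma'$ of Figure~\ref{fig:surfaceclosure}. This caps off in $Y'$ to $\bar\Sigma$ of genus $g+1$, as you state, so the error does not propagate, but the intermediate description should be corrected.
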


\begin{theorem}
\label{thm:genus}
For $K\subset S^3$, $\KHI(S^3,K,g(K))\neq 0$.
\end{theorem}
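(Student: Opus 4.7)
The plan is to reduce the statement to a theorem of Gabai on taut sutured manifold hierarchies, via a surface decomposition formula for sutured instanton homology. Let $\Sigma \subset S^3$ be a minimal genus Seifert surface for $K$, so $g(\Sigma) = g(K) =: g$, and view $\Sigma$ as a properly embedded surface in the knot exterior $(Y(K),\Gamma_\mu)$ whose boundary meets $\Gamma_\mu$ in exactly two points, as in Section \ref{sec:alex}.

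The first and most substantial step is to establish a \emph{surface decomposition formula}: if $(M',\Gamma')$ is the sutured manifold obtained from $(M,\Gamma)$ by decomposing along an admissible properly embedded surface $S$ in the sense of Section \ref{sec:alex}, then there is an isomorphism
\[
\SHI(M',\Gamma') \cong \SHI(M,\Gamma,[S],g(S))
\]
identifying the top Alexander grading with the sutured instanton homology of the decomposed manifold. I would prove this by building a closure $\data$ of $(M,\Gamma)$ adapted to $S$ in which the decomposition along $S$ corresponds geometrically to cutting $\data$ along the capped-off surface $\overline{S}$ and regluing via a twist, and then using the eigenvalue bound in Proposition \ref{prop:mu-spectrum} (so the top eigenspace of $\mu(\overline S)$ is exactly $2g(S)-2$), together with an excision-type cobordism argument as in \cite[Section 3]{km-excision} and the proof of Theorem \ref{thm:alexwelldefined}, to identify this top eigenspace with $\SHI$ of a closure of the decomposed manifold.

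Assuming this, apply the decomposition to $(Y(K),\Gamma_\mu)$ and $\Sigma$: we get $\KHI(S^3,K,[\Sigma],g) \cong \SHI(M_\Sigma,\Gamma_\Sigma)$, where $(M_\Sigma,\Gamma_\Sigma)$ is obtained by cutting $Y(K)$ along $\Sigma$. Because $\Sigma$ has minimal genus among Seifert surfaces, $(M_\Sigma,\Gamma_\Sigma)$ is taut. By Gabai's hierarchy theorem, it admits a finite sequence of taut sutured decompositions terminating in a product sutured manifold $(H_F,\Gamma_F)$. Iterating the surface decomposition formula along this hierarchy produces a chain of isomorphisms (or inclusions of top-grading summands) relating $\SHI(M_\Sigma,\Gamma_\Sigma)$ to $\SHI(H_F,\Gamma_F)$, and by Proposition \ref{prop:prodsutured} the latter is $\C \neq 0$. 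Hence $\KHI(S^3,K,g)\neq 0$.

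The main obstacle is setting up the surface decomposition formula rigorously in the instanton Floer setting, since unlike Juh\'asz's surface decomposition theorem in sutured Heegaard Floer homology, one has no Heegaard diagrams available. One must choose closures compatibly with the decomposition and verify that the putative identification of the top eigenspace of $\mu(\overline S)$ with $\SHI$ of the decomposed closure is really an isomorphism, not merely an injection. A secondary subtlety is ensuring that the iterated decompositions at each stage preserve the admissibility hypotheses (the surface meeting the sutures in exactly two points) so that the Alexander grading machinery of Section \ref{sec:alex} applies throughout the hierarchy, which in general requires a mild modification of the decomposing surfaces without sacrificing tautness.
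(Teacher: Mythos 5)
The paper does not give its own proof of Theorem \ref{thm:genus}; it is explicitly attributed to Kronheimer and Mrowka via \cite[Proposition 7.16]{km-excision}. Your proposal is essentially a reconstruction of that cited argument: KM's proof does indeed run through decomposition along a minimal-genus Seifert surface, Gabai's hierarchy theorem, and the computation that $\SHI$ of a product sutured manifold is $\C$. So you have the right route in broad strokes.

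However, you have organized the argument in a way that creates unnecessary trouble. You propose to iterate an Alexander-graded decomposition formula along the entire Gabai hierarchy, and you flag as a ``secondary subtlety'' the need to keep every decomposing surface in the hierarchy admissible in the sense of Section \ref{sec:alex} (one boundary component meeting the sutures in two points). That is not how the argument should be structured, and the subtlety you worry about is a symptom of the misorganization. The Alexander-grading machinery from Section \ref{sec:alex} is needed exactly once, at the first step, to relate the top summand $\KHI(S^3,K,[\Sigma],g)$ to $\SHI(M_\Sigma,\Gamma_\Sigma)$ of the manifold obtained by decomposing $(Y(K),\Gamma_\mu)$ along the Seifert surface $\Sigma$. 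What remains is a logically separate, \emph{ungraded} theorem: $\SHI$ of a taut balanced sutured manifold is nonzero. That is what one proves by Gabai hierarchy, and the decomposing surfaces appearing there will not satisfy your two-intersection-point hypothesis — nor do they need to, because at each hierarchy step one needs only the ungraded statement that $\SHI$ of a well-groomed sutured decomposition is a direct summand of (or injects into) $\SHI$ of the original. Trying to carry Alexander gradings through the whole hierarchy conflates these two pieces and manufactures the spurious admissibility problem.

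Finally, your worry that the first-step identification must be a genuine isomorphism ``not merely an injection'' is overcautious for this particular statement: nonvanishing of $\KHI(S^3,K,g)$ only requires that a nonzero group inject into the top grading. The full isomorphism is needed for the sharper fiberedness detection, Theorem \ref{thm:fiberedS}, but not here.
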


\begin{theorem}
\label{thm:fiberedS}
For $K\subset S^3$, $\KHI(S^3,K,g(K))\cong\C$ if and only if $K$ is fibered.
\end{theorem}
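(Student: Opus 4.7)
The plan is to prove the nontrivial (``only if'') direction, since the ``if'' direction is precisely Theorem \ref{thm:fiberedY}. So suppose $\dim_\C\KHI(S^3,K,g(K))=1$, and pick a minimal genus Seifert surface $\Sigma$ for $K$. I would translate the hypothesis into a statement about the sutured manifold obtained by cutting along $\Sigma$: write $(M',\Gamma')$ for the balanced sutured manifold obtained from $(S^3(K),\Gamma_\mu)$ by a sutured manifold decomposition along $\Sigma$. The first step is to prove (or invoke) a decomposition formula in sutured instanton homology, analogous to Juh\'asz's surface decomposition theorem in sutured Floer homology, which identifies
\[\SHI(M',\Gamma')\cong \KHI(S^3,K,[\Sigma],g(K))\]
whenever $\Sigma$ is Thurston-norm-minimizing. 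Under our hypothesis, this forces $\dim_\C\SHI(M',\Gamma')=1$.

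The second step is to observe that $(M',\Gamma')$ is automatically taut: it is irreducible (since $S^3\setminus K$ is), and $R_\pm(\Gamma')$ are both copies of $\Sigma$, which is incompressible and Thurston-norm-minimizing by our choice of Seifert surface. With tautness in hand, I would invoke the ``product detection'' theorem for sutured instanton homology: a taut balanced sutured manifold $(N,\gamma)$ with $\dim_\C \SHI(N,\gamma)=1$ is a product sutured manifold. This would conclude the argument, because $(M',\Gamma')$ being a product $\Sigma\times[-1,1]$ exactly says that $S^3\setminus\nu(K)$ is a mapping torus of a homeomorphism of $\Sigma$, i.e., $K$ is fibered with fiber $\Sigma$.

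The main obstacle is clearly the product detection theorem in the last step; this is the instanton analog of Ni's theorem (with its refinement by Ghiggini in genus one) and is the deepest ingredient. My approach would be to mimic the Heegaard Floer proof using a sutured hierarchy: choose a taut sutured hierarchy
\[(M',\Gamma')=(N_0,\gamma_0)\rightsquigarrow(N_1,\gamma_1)\rightsquigarrow\cdots\rightsquigarrow(N_k,\gamma_k),\]
where each decomposition is along a Thurston-norm-minimizing surface with ``nice'' boundary, terminating in a product sutured manifold. At each stage, the surface decomposition formula produces an injection (or a rank-preserving map) on $\SHI$, combined with Kronheimer and Mrowka's nonvanishing theorem ($\SHI\neq 0$ for taut sutured manifolds) to force ranks to add. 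One must then show that the rank strictly drops unless the decomposing surface is a product disk or annulus, so that rank $1$ propagates backwards to rank $1$ only if every decomposition was trivial, which means $(M',\Gamma')$ itself was already a product.

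The remaining technical ingredient is the surface decomposition formula itself: given a decomposing surface $S$ satisfying the ``good'' conditions of Juh\'asz (all boundary components meeting $R_-$ and $R_+$ alternately), one wants
\[\SHI(N,\gamma)\supseteq\SHI(N^S,\gamma^S)\]
realized as a direct summand corresponding to an extremal grading of an Alexander-type grading attached to $S$. The machinery of Section \ref{sec:alex}, which defines Alexander gradings associated to properly embedded surfaces and shows that they are preserved by the canonical isomorphisms, is essentially the tool needed; the extremal summand of this grading can be identified with $\SHI(N^S,\gamma^S)$ by adapting the argument from Theorem \ref{thm:fiberedY}. Once this is in place, the hierarchy argument and the conclusion that $K$ is fibered both follow.
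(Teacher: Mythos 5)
This theorem is not proved in the paper at all: the paper cites it verbatim as \cite[Proposition 4.1]{km-alexander}, so there is no ``paper's own proof'' to compare against beyond the citation. Your outline does, however, match the broad strategy that Kronheimer and Mrowka actually pursued in that reference: cut along a minimal-genus Seifert surface, identify the resulting $\SHI$ with the top Alexander summand via a Juh\'asz-style decomposition formula, invoke a non-vanishing theorem for taut sutured manifolds, and then apply a product-detection theorem in the style of Ghiggini--Ni. The two ingredients you flag as ``the main obstacle'' and ``the remaining technical ingredient'' are precisely the theorems they prove; your plan is sound as a plan, but it is a plan with two open boxes, and those boxes contain essentially all the difficulty.

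I would also push back gently on one detail of your hierarchy sketch. You propose to show that ``the rank strictly drops unless the decomposing surface is a product disk or annulus,'' but the decomposition formula already gives an inclusion of $\SHI(N_{i+1},\gamma_{i+1})$ as a direct summand of $\SHI(N_i,\gamma_i)$, and Kronheimer--Mrowka's non-vanishing theorem for taut sutured manifolds forces every stage to have rank at least $1$; so ranks are automatically non-increasing and bounded below by $1$, and that alone gives no contradiction. The actual content of Ni's argument (and of its instanton analogue) is a lemma asserting that a taut sutured manifold which is \emph{not} a product admits a decomposing surface whose associated grading splits the Floer homology into at least two nonzero summands, forcing rank $\geq 2$ directly. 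In the Heegaard Floer setting this uses the $\mathrm{Spin}^c$ refinement of $\SFH$; in the instanton setting the replacement is the $\mu(S)$-eigenspace decomposition of the kind developed in Section~\ref{sec:alex}. Spelling out that ``rank $\geq 2$'' lemma — not just that ranks behave monotonically along a hierarchy — is where the real argument lives, and your proposal stops just short of it.
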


\section{The bypass exact triangle}
\label{sec:bypass}

In this section, we prove the bypass exact triangle, stated in the introduction as Theorem \ref{thm:bypass}. Our proof is very similar to the proof we gave in \cite[Section 5]{bs-shm} for sutured monopole homology, and we will rely on the topological ideas developed there. We must be a bit careful in the instanton Floer setting, however, with the bundles involved in the exact triangle. 

Suppose $(M,\Gamma)$ is a balanced sutured manifold and  $\alpha\subset \partial M$ is an arc which intersects $\Gamma$ transversally in three points, including both endpoints of $\alpha$. A \emph{bypass move} along $\alpha$ replaces $\Gamma$ with a new set of sutures $\Gamma'$ which differ from $\Gamma$ in a neighborhood of $\alpha$, as shown in Figure \ref{fig:bypass-move}. 

\begin{figure}[ht]
\labellist
\small \hair 2pt
\pinlabel $\alpha$ at 22 28

\endlabellist
\centering
\includegraphics[width=4.5cm]{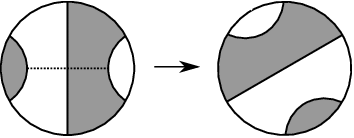}
\caption{A bypass move along the arc $\alpha$, with $\Gamma$ on the left and $\Gamma'$ on the right. The gray and white regions indicate the negative and positive regions, respectively.}
\label{fig:bypass-move}
\end{figure}

As background, bypasses were introduced by Honda \cite[\S3.4]{honda-lens}.  The rough idea, also present in \cite{giroux-lens}, is that a contact structure on a thickened surface $\Sigma \times [0,1]$ can be described in terms of some discrete data, namely the dividing curves on $\Sigma \times \{0\}$ and finitely many times in $[0,1]$ at which the dividing curves on $\Sigma \times \{t\}$ change.  These changes are modeled by taking $\Sigma \times \{t \pm\epsilon\}$ to be the boundary of a neighborhood of $\Sigma \cup D$, where the bypass $D$ is a certain half-disk with Legendrian boundary, attached to $\Sigma$ along an arc $\alpha$ as shown in Figure~\ref{fig:bypass-move}, and then the change in dividing curves from $\Sigma\times\{t-\epsilon\}$ to $\Sigma \times \{t+\epsilon\}$ is precisely a bypass move.  They have been widely applied to classify tight contact structures on many 3-manifolds, as well as Legendrian representatives of various smooth knot types in $S^3$; the original use in \cite{honda-lens} was to classify tight contact structures on lens spaces.

A bypass move can be achieved by attaching a contact $1$-handle  along disks in $\partial M$ centered at the endpoints of $ \alpha$ and then attaching a contact $2$-handle along the union $\beta$ of $\alpha$ with an arc  on the boundary of this $1$-handle, as shown in Figure \ref{fig:bypass-handles}. We  refer to this  sequence of handle attachments  as a \emph{bypass attachment along $\alpha$}, following \cite{honda-lens, ozbagci}. A bypass attachment along $\alpha$ therefore gives rise to a morphism \[\phi_\alpha:\SHI(-M,-\Gamma)\to \SHI(-M,-\Gamma')\] which is the composition of the corresponding contact $1$- and $2$-handle attachment maps defined in Section \ref{ssec:shi}.  Theorem \ref{thm:handletheta2} implies the following.

\begin{figure}[ht]
\labellist
\small \hair 2pt
\pinlabel $\alpha$ at 51 56
\pinlabel $\beta$ at 242 36
\endlabellist

\centering
\includegraphics[width=8cm]{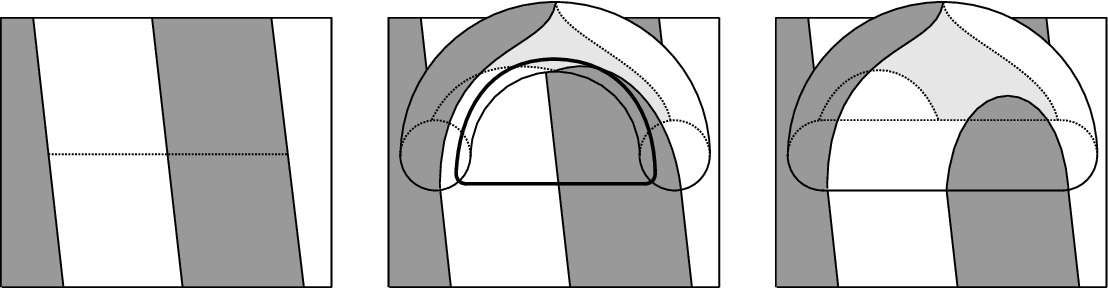}
\caption{Performing a bypass move by attaching a contact $1$-handle at the endpoints of $\alpha$ and a contact $2$-handle along $\beta$.}
\label{fig:bypass-handles}
\end{figure}

\begin{proposition}
\label{prop:bypass}
Suppose $(M,\Gamma',\xi')$ is obtained from $(M,\Gamma,\xi)$ by attaching a bypass along $\alpha$. Then the induced  map $\phi_\alpha$ sends $\cinvt(\xi)$ to $\cinvt(\xi')$.
\end{proposition}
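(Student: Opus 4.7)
The plan is essentially to unwind the definition of $\phi_\alpha$ and apply Theorem \ref{thm:handletheta2} twice. By construction (as just recalled in the paragraph preceding the proposition), a bypass attachment along $\alpha$ is realized as a sequence consisting of a contact $1$-handle attachment followed by a contact $2$-handle attachment along the arc $\beta$, and the associated morphism $\phi_\alpha$ is defined to be the composition of the two resulting handle attachment maps on $\SHI$.

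First I would name the intermediate object: let $(M_1,\Gamma_1,\xi_1)$ denote the sutured contact manifold obtained from $(M,\Gamma,\xi)$ by attaching the contact $1$-handle along disks centered at the endpoints of $\alpha$, so that $\xi_1$ is the canonical extension of $\xi$ across the $1$-handle described in Section \ref{sssec:handles}. Then the contact $2$-handle attachment along $\beta\subset\partial M_1$ yields precisely $(M,\Gamma',\xi')$; this is exactly the handle-theoretic description of the bypass move recorded in Figure \ref{fig:bypass-handles}. At the level of Floer groups, this means
\[ \phi_\alpha \;=\; H_2\circ H_1 \colon \SHI(-M,-\Gamma)\longrightarrow\SHI(-M_1,-\Gamma_1)\longrightarrow\SHI(-M,-\Gamma'). \]

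Next I would invoke Theorem \ref{thm:handletheta2} at each stage. Applied to the contact $1$-handle attachment, it gives $H_1(\cinvt(\xi))=\cinvt(\xi_1)$. Applied to the contact $2$-handle attachment along $\beta$, it gives $H_2(\cinvt(\xi_1))=\cinvt(\xi')$. Composing these two identities yields
\[ \phi_\alpha(\cinvt(\xi)) \;=\; H_2\bigl(H_1(\cinvt(\xi))\bigr) \;=\; H_2(\cinvt(\xi_1)) \;=\; \cinvt(\xi'), \]
which is the desired conclusion.

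There is essentially no obstacle here beyond the bookkeeping of verifying that the intermediate contact structure $\xi_1$ and the final contact structure $\xi'$ agree with the ones produced by the contact handle attachments of Section \ref{sssec:handles}; this is guaranteed by the Giroux-type local models for contact $1$- and $2$-handles. Once that identification is made, the statement is a direct consequence of the naturality of $\cinvt$ under contact handle attachments already established in Theorem \ref{thm:handletheta2}.
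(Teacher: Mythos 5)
Your proof is correct and matches the paper's approach exactly: the paper defines $\phi_\alpha$ as the composition of the contact $1$- and $2$-handle attachment maps and then simply observes that Theorem \ref{thm:handletheta2} implies the proposition. You have merely spelled out the two applications of Theorem \ref{thm:handletheta2}, which is precisely what the paper leaves implicit.
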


Figure \ref{fig:bypass-triangle} shows a   sequence of bypass moves, performed in some fixed neighborhood in $\partial M$, resulting in a 3-periodic sequence of sutures on $M$. Such a sequence  is called a \emph{bypass triangle}. Unpublished work of Honda shows that a bypass triangle gives rise to a \emph{bypass exact triangle} in sutured Heegaard Floer homology. We proved a similar result in the  monopole Floer setting  in \cite[Theorem 5.2]{bs-shm}. Here, we prove the analogue  for sutured instanton   homology, stated below.

\begin{figure}[ht]
\labellist
\small \hair 2pt
\pinlabel $\alpha_1$  at 24 117
\pinlabel $\alpha_2$  at 149 118
\pinlabel $\alpha_3$  at 97 37
\pinlabel $\Gamma_1$  at -3 145
\pinlabel $\Gamma_2$  at 173 144
\pinlabel $\Gamma_3$  at 85 -8
\endlabellist
\centering
\includegraphics[width=4.5cm]{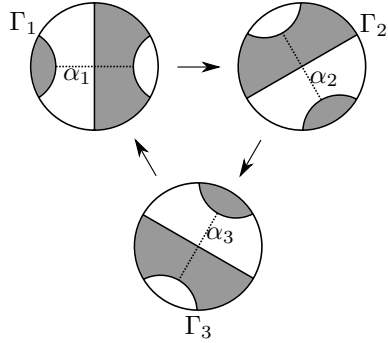}
\caption{The bypass triangle.  Each picture shows the attaching arc used to achieve the next set of sutures in the triangle.}
\label{fig:bypass-triangle}
\end{figure}

{
\renewcommand{\thetheorem}{\ref{thm:bypass}}
\begin{theorem}
Suppose $\Gamma_1,\Gamma_2,\Gamma_3\subset \partial M$ is a 3-periodic sequence of sutures resulting from successive bypass moves along arcs $\alpha_1,\alpha_2,\alpha_3$ as in Figure \ref{fig:bypass-triangle}. Then there is an exact triangle
\[ \xymatrix@C=-25pt@R=35pt{
\SHI(-M,-\Gamma_1) \ar[rr]^{\phi_{\alpha_1}} & & \SHI(-M,-\Gamma_2) \ar[dl]^{\phi_{\alpha_2}} \\
& \SHI(-M,-\Gamma_3), \ar[ul]^{\phi_{\alpha_3}} & \\
} \]
in which $\phi_{\alpha_1}, \phi_{\alpha_2},\phi_{\alpha_2}$ are the corresponding bypass attachment maps.
\end{theorem}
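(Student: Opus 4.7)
The plan is to reduce the bypass exact triangle to the surgery exact triangle in instanton Floer homology (Theorem~\ref{thm:exacttri}), following the topological template of our earlier work \cite{bs-shm} in the sutured monopole setting. The new content over that paper is the bundle-theoretic bookkeeping required to pass from $I_*(Y)_\alpha$ to its $\mu(R)$-eigenspace summand $\SHI(\data) = I_*(Y|R)_{\alpha+\eta}$, and the verification that the cobordism surfaces appearing in the surgery triangle can be chosen in the homology classes needed to recover the contact $2$-handle maps.

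First, I would factor each bypass attachment map $\phi_{\alpha_i}$ as a composition of a contact $1$-handle map and a contact $2$-handle map. Because the three arcs $\alpha_1,\alpha_2,\alpha_3$ lie in a common local region of $\partial M$, we can arrange that all three bypass moves attach the \emph{same} pair of contact $1$-handles at disks near the endpoints of the $\alpha_i$, followed by a contact $2$-handle along a curve $\beta_i$ in the resulting boundary. Since the $1$-handle factors are identical, it suffices to produce an exact triangle among the three contact $2$-handle maps determined by $\beta_1,\beta_2,\beta_3$.

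Second, I would choose a single closure $\data = (Y,R,\eta,\alpha)$ of the $1$-handle-attached sutured manifold. By the definition of the contact $2$-handle map in Section~\ref{sssec:handles}, the three maps are induced by $2$-handle cobordisms for $\partial M$-framed surgery on the knots $\beta_i \subset Y$. The topological analysis in \cite[Section~5]{bs-shm} shows that in the common disk region of the boundary, the three curves $\beta_1,\beta_2,\beta_3$ fit together as the three knots in a Dehn surgery triangle of the form pictured in Figure~\ref{fig:surgery}: after appropriate handleslides inside this disk, two of the surgeries are related by $0$- and $1$-framed surgeries on a common knot, and the third arises by surgery on the corresponding meridian. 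The three resulting $4$-manifold cobordisms are precisely the ones appearing in Theorem~\ref{thm:exacttri}, cyclically permuting the three closures up to diffeomorphism.

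Third, I would check compatibility of bundles and eigenspaces. The curves $\eta,\alpha$ and the distinguished surface $R$ live in the $R\times[1,3]$ portion of the closure and are disjoint from the surgery region, so admissibility of $(Y_{\beta_i},\alpha\sqcup\eta)$ is automatic and the surfaces $R\subset Y_{\beta_i}$ remain isotopic across each cobordism. Hence by Lemma~\ref{lem:commute} the surgery exact triangle restricts from $I_*(-Y_{\beta_i})_{-\alpha-\eta}$ to $\SHI(-\data_i)$. Moreover, the $2$-dimensional cobordism surfaces $\kappa_i$ appearing in Theorem~\ref{thm:exacttri} may be chosen so that they differ from the cylinders $(\alpha\sqcup\eta)\times I$ only inside the surgery region, where modifications lie in the homology class forced by making the pair admissible. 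Because all of these maps commute with the canonical isomorphisms of \cite{bs-naturality} as $\data$ is varied through closures of genus $\geq 2$, the resulting triangle descends to an exact triangle on $\SHI(-M,-\Gamma_i)$ whose maps are, by construction, the bypass attachment maps $\phi_{\alpha_i}$.

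The main obstacle is the last step: matching the $2$-dimensional cobordisms $\kappa_i$ from the surgery triangle with the cylinders used in the definition of the contact $2$-handle maps, so that the composite triangle really consists of the $\phi_{\alpha_i}$ rather than twisted variants. This requires an argument that any excess intersection of $\kappa_i$ with the surgery region is homologically absorbed without changing the induced map on the $(\mu(R),\mu(\mathrm{pt}))$-eigenspace, completely parallel to the monopole Floer argument in \cite[Section~5]{bs-shm} but with the additional input of the eigenspace lemmas from Section~\ref{ssec:instanton}.
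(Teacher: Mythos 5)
Your overall strategy---factor each bypass map as a $1$-handle plus $2$-handle attachment, pass to closures, recognize the three $2$-handle surgeries on $\beta_1,\beta_2,\beta_3$ as a Dehn surgery triangle via \cite{bs-shm}, and then reconcile the bundle data---is exactly the paper's strategy, and the observations that $R$ is disjoint from the surgery region and that Lemma~\ref{lem:commute} lets the triangle restrict to eigenspaces are correct. But you correctly identify the key obstacle at the end of your write-up and then do not actually resolve it, and there are two concrete points the gap hides. First, in Theorem~\ref{thm:exacttri} the Floer group at one vertex is $I_*(Y)_{\alpha + K}$, not $I_*(Y)_\alpha$: the surgery knot $K=\beta_1$ enters the bundle data on one end of the triangle. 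Your proposal never deals with this extra component. The paper's resolution is to build the closure so that $\beta_1$ bounds an embedded once-punctured torus in $Y_1$ with framing matching $\partial Z_1$ (using the construction from Section~\ref{sec:alex} and Figure~\ref{fig:surfaceclosure}), making $\beta_1$ null-homologous so that $\alpha\sqcup\eta\sqcup\beta_1$ and $\alpha\sqcup\eta$ determine the same group. The same choice is then used again to show the $2$-dimensional cobordisms $\overline{\kappa}_1,\kappa_2$ agree with the cylinders $\nu$ in relative homology: with $\beta_1$ null-homologous, $b_1(Y_1)=b_1(Y_2)-1=b_1(Y_3)$, and the long exact sequence of the pair $(W_i,\partial W_i)$ shows classes in $H_2(W_i,\partial W_i)$ are determined by their boundary, so equal boundaries force equal maps. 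Your appeal to ``any excess intersection is homologically absorbed'' gestures at this but provides no mechanism.

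Second, even granting all of that, a single closure only lets you match \emph{two} of the three cobordism maps in the surgery triangle with the bypass maps. The paper is explicit that one must therefore prove exactness one vertex at a time, cycling through three different closures (one adapted to each $\beta_i$ being null-homologous). Your proposal claims a single closure yields the whole triangle with all three maps identified with $\phi_{\alpha_i}$, which is stronger than what the argument actually delivers and would require additional justification that the $\kappa_i$ for the \emph{third} map also matches; the paper sidesteps this entirely by the one-group-at-a-time device.
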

\addtocounter{theorem}{-1}
}

\begin{proof}
The main idea behind the proof  is that there are closures of these three sutured manifolds which are related as in the surgery exact triangle of Theorem \ref{thm:exacttri}. This was first shown in the proof of \cite[Theorem 5.2]{bs-shm} by an argument   we review  below. Unlike in the monopole Floer case, though, it is not obvious that the cobordism maps in the surgery exact triangle relating the instanton Floer  groups of these  closures are the same as  those which induce the bypass attachment maps on $\SHI$, as the bundles on these cobordisms are different in the two cases. However, we show that one can choose closures so that any \emph{two} of the three maps agree with those that induce the bypass attachment maps. This allows us to prove exactness of the triangle in Theorem \ref{thm:bypass} at each group, one group at a time.

Note that by enlarging our local picture slightly, we can think of the arcs $\alpha_1,\alpha_2,\alpha_3$ as being arranged as in Figure \ref{fig:bypass-setup} with respect to $\Gamma_1$. We may therefore view \[(M,\Gamma_2)\,\,{\rm and} \,\,(M,\Gamma_3)\,\, {\rm and}\,\,(M,\Gamma_1)\] as being obtained from $(M,\Gamma_1)$ by attaching bypasses along the arcs \[\alpha_1\,\,{\rm and}\,\,\alpha_1,\alpha_2\,\, {\rm and}\,\, \alpha_1,\alpha_2, \alpha_3,\] respectively. As described above, attaching a bypass along $\alpha_i$ amounts to attaching a contact $1$-handle $h_i$ along disks centered at the endpoints of $\alpha_i$ and then attaching a contact $2$-handle along a curve $\beta_i$ which extends $\alpha_i$ over the handle, as in Figure \ref{fig:bypass-setup}. Let $(Z_1,\gamma_1)$ be the sutured manifold obtained by attaching all three $h_1,h_2,h_3$ to $(M,\Gamma_1)$, as  in Figure \ref{fig:bypass-setup}. %We will  view $\beta_1,\beta_2,\beta_3$ as curves in $\partial Z_1$, as shown in the figure. 
For $i=1,2,3$, let $(Z_{i+1},\gamma_{i+1})$ be the result of attaching a contact $2$-handle to $(Z_i,\gamma_i)$ along $\beta_i$. Then
\begin{align*} 
(Z_1,\gamma_1)&= (M,\Gamma_1)\cup h_1\cup h_2\cup h_3,\\
(Z_2,\gamma_2)&= (M,\Gamma_2)\cup h_2\cup h_3,\\
(Z_3,\gamma_3)&= (M,\Gamma_3)\cup h_3,\\
(Z_4,\gamma_4)&= (M,\Gamma_1).
\end{align*}
\begin{figure}[ht]
\labellist
\small \hair 2pt
\pinlabel $\alpha_1$ at 61 79
\pinlabel $\alpha_2$ at 106 181
\pinlabel $\alpha_3$ at 151 282
\pinlabel $\beta_1$ at 769 71
\pinlabel $\beta_2$ at 815 173
\pinlabel $\beta_3$ at 861 273
\endlabellist
\centering
\includegraphics[width=11cm]{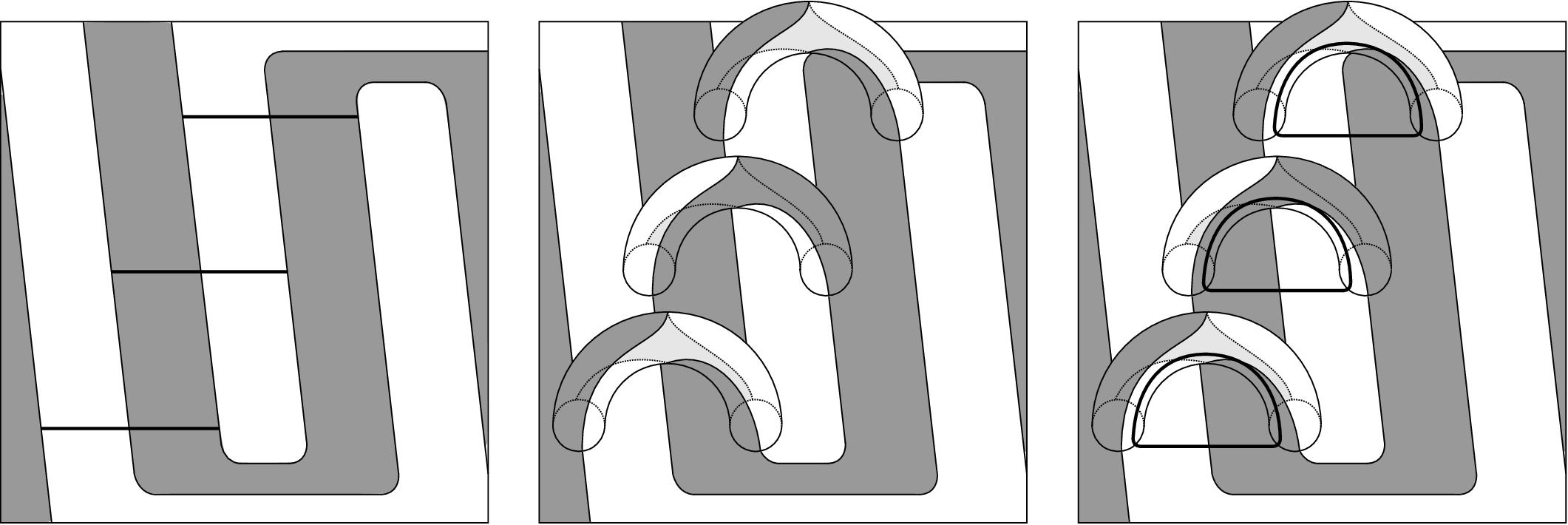}
\caption{Left, another view of the arcs $\alpha_1,\alpha_2,\alpha_3$ of attachment for the bypasses in the triangle, where the suture shown here is $\Gamma_1$. Middle, a view of $(Z_1,\gamma_1)$, obtained by attaching the contact 1-handles $h_1,h_2,h_3$ to $(M,\Gamma_1).$ Right, the attaching curves $\beta_1,\beta_2,\beta_3$ for the contact $2$-handles.}
\label{fig:bypass-setup}
\end{figure}

Recall from Section \ref{ssec:shi} that contact $1$-handle attachment has little effect on the level of closures. Specifically, a closure   of a sutured manifold after a $1$-handle attachment can also be viewed naturally as a closure of the   sutured manifold before the $1$-handle attachment, and the corresponding $1$-handle attachment morphism is simply  the identity map. We therefore have canonical identifications
\[\SHI(-Z_i,-\gamma_i)\cong \SHI(-M,-\Gamma_i),\] for $i=1,2,3,4$, where the subscript of $\Gamma_i$ is taken mod $3$. In particular, $\SHI(-Z_4,-\gamma_4)$ is canonically identified with $\SHI(-Z_1,-\gamma_1)$.
Therefore, to prove Theorem \ref{thm:bypass}, it  suffices to prove that there is an exact triangle 
\begin{equation}\label{eqn:exacttriz} \xymatrix@C=-25pt@R=35pt{
\SHI(-Z_1,-\gamma_1) \ar[rr]^{H_{\beta_1}} & & \SHI(-Z_2,-\gamma_2) \ar[dl]^{H_{\beta_2}} \\
& \SHI(-Z_3,-\gamma_3), \ar[ul]^{H_{\beta_3}}& \\
} \end{equation} where $H_{\beta_i}$ is the map associated to contact $2$-handle attachment along $\beta_i$.

Recall that on the level of closures, contact $2$-handle attachment corresponds to surgery. Specifically,  if $\data_i = (Y_i,R,\eta,\alpha)$ is a closure of $(Z_i,\gamma_i)$, then there is a closure of $(Z_{i+1},\gamma_{i+1})$ of the form $\data_{i+1} = (Y_{i+1},R,\eta,\alpha)$, where $Y_{i+1}$ is the result of $(\partial Z_i)$-framed surgery on $\beta_i\subset Y_i$.  To see this, we write $W_i = \overline{Y_i \setminus Z_i}$ and consider the effect of the surgery operation on each of $Z_i$ and $W_i$ separately.  Since $\beta_i$ sits on the boundaries of both $Z_i$ and $W_i$, removing a small neighborhood $N(\beta_i)$ carves a piece out of each without changing their topology.  After doing so, the intersection $\partial N(\beta_i) \cap Z_i$ is now an annulus fibered by $(\partial Z_i)$-framed push-offs of $\beta_i$, and the surgery fills each of them with a disk, which amounts to gluing a 2-handle to $Z_i$ along $\beta_i$.  The same happens on the auxiliary part $W_i$ of the closure, where gluing in the family of disks now effectively adds a handle to the auxiliary surface $T_i$ that was used to construct $Y_i$, filling in the arcs $\beta_i \cap R_\pm(\gamma_i)$ that the 2-handle operation removes from $R_\pm(\gamma_i)$, so that this surgery leads to a closure of $(Z_{i+1},\gamma_{i+1})$ without having to change the surface $R$.

Having said this, the map $H_{\beta_i}$ is now induced by the $2$-handle cobordism map
\[I_*(W_i)_{\nu}:I_*(-Y_i|{-}R)_{-\alpha-\eta}\to I_*(-Y_{i+1}|{-}R)_{-\alpha-\eta}\] corresponding to this surgery, where $\nu$ is the usual cylindrical cobordism \[\nu=(-\alpha\sqcup -\eta)\times [0,1].\] In order to prove the exactness of the triangle  \eqref{eqn:exacttriz} at $\SHI(-Z_2,-\gamma_2)$, say, it therefore suffices to prove exactness of the sequence
\begin{equation}\label{eqn:exacttriy} \xymatrix@C=18pt@R=35pt{
I_*(-Y_1|{-}R)_{-\alpha -\eta} \ar[rr]^{I_*(W_1)_{\nu}}  & &I_*(-Y_2|{-}R)_{-\alpha -\eta} \ar[rr]^{I_*(W_2)_{\nu}}  & &I_*(-Y_3|{-}R)_{-\alpha -\eta}. 
} \end{equation}
% \begin{equation}\label{eqn:exacttriy} \xymatrix@C=-25pt@R=35pt{
%I_*(-Y_1|{-}R)_{-\alpha\sqcup -\eta} \ar[rr]^{I_*(W_1)_{\nu}}  & &I_*(-Y_2|{-}R)_{-\alpha\sqcup -\eta} \ar[dl]^{I_*(W_2)_{\nu}} \\
%&I_*(-Y_3|{-}R)_{-\alpha\sqcup -\eta}.\ar[ul] & \\
%} \end{equation}
Our  approach  is to find a closure $\data_1$ of $(Z_1,\gamma_1)$ such that the surgeries relating the $-Y_i$ above are exactly the sort  one encounters in the  surgery exact triangle of Theorem \ref{thm:exacttri}, as depicted in Figure \ref{fig:surgery}. Fortunately, the proof of  \cite[Theorem 5.2]{bs-shm} shows that  for \emph{any} closure $\data_1$,

\begin{itemize}
 \item $W_1$ is the cobordism associated to $0$-surgery on some  $K=\beta_1\subset-Y_1$,
 \item $W_2$ is the cobordism associated to $(-1)$-surgery on a meridian $\mu_1\subset -Y_2$ of $K$,
 \item $W_3$ is the cobordism associated to $(-1)$-surgery on a meridian $\mu_2\subset -Y_3$ of $\mu_1$,
  \end{itemize}
  as desired.
 Theorem \ref{thm:exacttri} then says that there is an exact sequence of the form 
 \begin{equation}\label{eqn:exacttriy2} \xymatrix@C=18pt@R=35pt{
I_*(-Y_1|{-}R)_{-\alpha -\eta+ K} \ar[rr]^{I_*(W_1)_{\kappa_1}}  & &I_*(-Y_2|{-}R)_{-\alpha -\eta} \ar[rr]^{I_*(W_2)_{\kappa_2}}  & &I_*(-Y_3|{-}R)_{-\alpha -\eta}.
} \end{equation}
 %that there is an exact triangle of the form \begin{equation}\label{eqn:exacttriy2} \xymatrix@C=-25pt@R=35pt{
%I_*(-Y_1|{-}R)_{-\alpha\sqcup -\eta\sqcup K} \ar[rr]^{I_*(W_1)_{\nu_1}}  & &I_*(-Y_2|{-}R)_{-\alpha\sqcup -\eta} \ar[dl]^{I_*(W_2)_{\nu_2}} \\
%&I_*(-Y_3|{-}R)_{-\alpha\sqcup -\eta}\ar[ul] & \\
%} \end{equation}
%(the pairs above are  admissible as $K$ is disjoint from $R$ and $\alpha\sqcup\eta$ intersects $R$ once).

Note that this sequence \eqref{eqn:exacttriy2} is subtly different from that in \eqref{eqn:exacttriy}. For one thing, the Floer group on the  left is defined using the $1$-manifold $-\alpha\sqcup -\eta\sqcup K$ rather than $\alpha\sqcup -\eta.$ On the other hand, this  group depends, up to isomorphism, only on the homology class of this $1$-manifold, and we can find a closure $\data_1$ such that $K$ is null-homologous in $-Y_1$. Indeed, the construction in the beginning of Section \ref{sec:alex}, and illustrated in Figure \ref{fig:surfaceclosure}, shows that for any curve (like $K=\beta_1$) in the boundary of a sutured manifold which intersects the sutures twice, we can find a closure in which this curve bounds a once-punctured torus such that the  framing on the curve induced by this surface agrees with the framing induced by the boundary. Let $\data_1$ be such a closure. Then \eqref{eqn:exacttriy2} becomes the exact sequence
\begin{equation}\label{eqn:exacttriy3} \xymatrix@C=18pt@R=35pt{
I_*(-Y_1|{-}R)_{-\alpha -\eta} \ar[rr]^{I_*(W_1)_{\overline{\kappa}_1}}  & &I_*(-Y_2|{-}R)_{-\alpha -\eta} \ar[rr]^{I_*(W_2)_{\kappa_2}}  & &I_*(-Y_3|{-}R)_{-\alpha -\eta}
} \end{equation}
where $\overline{\kappa}_1$ is the cobordism given as the composition of the once-punctured torus viewed as a cobordism from $\emptyset$ to $K$ with  $\kappa_1$. To deduce \eqref{eqn:exacttriy} from \eqref{eqn:exacttriy3}, it suffices to show that \begin{equation}\label{eqn:mapsequal}I_*(W_1)_\nu = I_*(W_1)_{\overline{\kappa}_1} \textrm{ and }I_*(W_2)_\nu = I_*(W_2)_{\kappa_2}.\end{equation} Recall that these maps depend only on the relative homology classes of the various $2$-dimensional cobordisms involved. Since we have chosen a closure $\data_1$ in which $K$ is nullhomologous in $Y_1$ and $Y_2$ is obtained via $0$-surgery on $K$ with respect to the framing induced by the once-punctured torus providing the nullhomology, we have that \[b_1(Y_1)=b_1(Y_2)-1=b_1(Y_3).\] The long exact sequence of the pair $(W_1,\partial W_1)$ shows in this case that a relative homology class in $H_2(W_1,\partial W_1)$ is determined by its boundary. Since $\nu$ and $\overline{\kappa}_1$ have the same boundary, they represent the same class, which then implies the first equality in \eqref{eqn:mapsequal}; likewise, for the second equality. This proves that the triangle in \eqref{eqn:exacttriz} is exact at $\SHI(-Z_2,\gamma_2)$. Identical arguments show that it is exact at the other groups, completing the proof of Theorem \ref{thm:bypass}.
\end{proof}

\section{Invariants of Legendrian and transverse knots}
\label{sec:leg}

In this section, we  define invariants of Legendrian and transverse knots in instanton knot Floer homology. As described in the introduction, our construction is motivated by Stipsicz and V{\'e}rtesi's interpretation \cite{stipsicz-vertesi} of  the Legendrian and transverse knot invariants in Heegaard Floer homology defined by Lisca, Ozsv{\'a}th, Stipsicz, and Szab{\'o}  \cite{loss}, and is nearly identical to a previous construction of the authors in monopole knot Floer homology   \cite{bs-legendrian}. These  invariants and their properties will be  important in our proof of Theorem \ref{thm:khi-fibered} in the next section, as outlined in the introduction.

Suppose $K$ is an oriented Legendrian knot in a closed contact $3$-manifold $(Y,\xi)$. Let \begin{equation*}\label{eqn:complement1}(Y(K),\Gamma_K,\xi_{K})\end{equation*} be the sutured contact manifold obtained by removing a standard neighborhood of $K$ from $(Y,\xi)$. Attaching a bypass to this knot complement along the arc $c\subset \partial Y(K)$  shown in Figure \ref{fig:bypasses2} yields  the knot complement with its  two meridional sutures. We   refer to this operation as a \emph{Stipsicz-V{\'e}rtesi bypass attachment} as it was studied extensively by those authors in \cite{stipsicz-vertesi}.  Let us denote by \begin{equation}\label{eqn:complement}(Y(K),\Gamma_\mu,\xi_{\mu,K})\end{equation}  the sutured contact manifold  obtained via this  attachment. Inspired by  \cite[Theorem 1.1]{stipsicz-vertesi}, we define the Legendrian invariant $\linvt(K)$ to be the contact invariant of this  manifold.

\begin{figure}[ht]
\labellist
\small \hair 2pt
\pinlabel $-\mu$ at 170 0
%\pinlabel $f$ at 1 165

\pinlabel $+$ at 105 85
\pinlabel $-$ at 162 135

\pinlabel $c$ at 145 49
\endlabellist
\centering
\includegraphics[width=2.5cm]{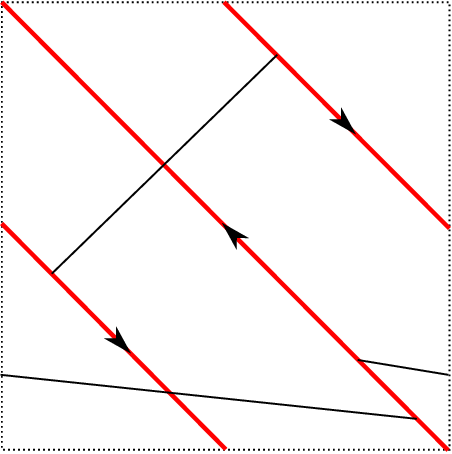}
\caption{The boundary of the complement of a standard neighborhood of $K$, with dividing set $\Gamma_K$ in red. Here the horizontal axis is a meridian, and the vertical axis is some longitude of $K$.  The Stipsicz-V{\'e}rtesi bypass is attached along the arc $c$. The $\pm$ indicate the regions $R_\pm(\Gamma_{K})$.}
\label{fig:bypasses2}
\end{figure}

\begin{definition} Given a Legendrian knot $K\subset (Y,\xi)$, let \[\linvt(K):=\cinvt(\xi_{\mu,K})\in\SHI(-Y(K),-\Gamma_\mu)=\KHI(-Y,K).\] This class is by construction an invariant of the Legendrian knot type of $K$.
\end{definition} 

Stipsicz and V{\'e}rtesi observed in the proof of \cite[Theorem 1.5]{stipsicz-vertesi} that the sutured contact manifold $(Y(K),\Gamma_\mu,\xi_{\mu,K})$ and therefore the class $\linvt(K)$ is invariant under negative Legendrian stabilization of $K$.
This  then enables us to define an invariant of transverse knots in $(Y,\xi)$ via Legendrian approximation as below since any two Legendrian approximations of a transverse knot are related by negative Legendrian stabilization.

\begin{definition}
Given a transverse knot $K\subset (Y,\xi)$ with  Legendrian approximation $\mathcal{K}$, let 
\[\kinvt(K):=\linvt(\mathcal{K})\in\KHI(-Y,K).\] This class is an invariant of the transverse knot type of $K$.
\end{definition}

\begin{remark} 
\label{rmk:sv}Given a transverse knot  $K\subset (Y,\xi)$ with Legendrian approximation $\mathcal{K}$, we have  \[\kinvt(K)=\linvt(\mathcal{K})=\cinvt(\xi_{\mu,\mathcal{K}})=\phi^{SV}(\cinvt(\xi_\mathcal{K}))\] where \[\phi^{SV}:\SHI(-Y(K),-\Gamma_\mathcal{K})\to\KHI(-Y,K)\] is the map our theory associates to the Stipsicz-V{\'e}rtesi bypass attachment.
\end{remark}

%\begin{lemma}
%The class $\linvt(K)$ is invariant under negative Legendrian stabilization of $K$.\qed
%\end{lemma}

 %As described in Section \ref{sec:bypass}, this \emph{Stipsicz-V{\'e}rtesi} bypass attachment along $c$ induces a map \[F_{SV}:=\mathscr{H}^c: \SHI(-Y(K),-\Gamma_{K})\to \SHI(-Y(K),-\Gamma_\mu)=\KHI(-Y,K). \] Proposition \ref{prop:bypass} implies the following.
 
% \begin{lemma}
% $F_{SV}\big(\cinvt(\xi_{K})\big)=\cinvt(\xi_{\mu,K})=\linvt(K)$.\qed
% \end{lemma}
 
 Below, we prove some results about  the Legendrian invariant $\linvt$ which will be important in Section \ref{sec:proof}. Our proofs  are similar to those of analogous  results in the monopole Floer setting  \cite{bs-legendrian, sivek-legendrian}. First, we establish the following notation.%We must be a bit careful with the bundles  in the exact triangle  used to prove Theorem \ref{thm:tb}, just as we were in our proof of Theorem \ref{thm:bypass}.

\begin{notation}Given a closed contact 3-manifold $(Y,\xi)$, we will denote by $Y(1)$ the sutured contact manifold \[Y(1)=(Y\ssm B^3, \Gamma_{S^1},\xi|_{Y\ssm B^3})\] obtained by removing a Darboux ball from $(Y,\xi)$, with dividing set $\Gamma_{S^1}$ consisting of a single curve on the boundary. In particular, we will write $\cinvt(Y(1))$ for the contact invariant of this sutured contact manifold.
\end{notation}

The result below is an analogue of \cite[Proposition 3.13]{bs-legendrian}.

\begin{lemma}
\label{lem:unknot}
Suppose $U\subset (Y,\xi)$ is a Legendrian unknot  with $tb(U)=-1$ contained inside a Darboux ball in $(Y,\xi)$. Then there is an isomorphism \[\SHI(-Y(1))\to\KHI(-Y,U)\] which sends $\cinvt(Y(1))$ to $\linvt(U)$. %Thus,  $\cinvt(\xi(1))\neq 0$ iff $\linvt(U)\neq 0.$
\end{lemma}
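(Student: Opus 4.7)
The plan is to realize $(Y(U),\Gamma_\mu,\xi_{\mu,U})$ as $Y(1)$ glued along its sphere boundary to a standard ``trivial'' sutured contact piece $N$, and to implement this gluing by contact handle attachments whose composite map on $\SHI$ is both an isomorphism and carries $\cinvt(Y(1))$ to $\linvt(U)$. To set this up, I would choose the Darboux ball $B^3$ used to form $Y(1) = Y \setminus B^3$ to coincide with the Darboux ball containing $U$, so that $\nu(U) \subset B^3$ and $Y(U) = Y(1) \cup_{S^2} N$, where $N = B^3 \setminus \nu(U)$. The piece $N$ inherits the Darboux contact structure, with the single great-circle suture on $\partial B^3 = S^2$ and the two meridional sutures on $\partial \nu(U) = T^2$ produced by the Stipsicz--V\'ertesi bypass.

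The key technical step is to show that $\SHI(-N) \cong \mathbb{C}$, with $\cinvt(\xi|_N)$ generating it. Topologically $N \cong (S^1 \times D^2) \setminus B^3$, and with its sutures it is obtained from the product sutured manifold $A \times [-1,1]$ (where $A$ is an annulus, so the underlying $3$-manifold is the solid torus with two meridional sutures)---which has $\SHI \cong \mathbb{C}$ by Proposition~\ref{prop:prodsutured}---by removing an open ball with a single boundary suture. Such a removal leaves $\SHI$ unchanged, since the ball itself is a product sutured manifold contributing only a trivial $\mathbb{C}$ factor. Moreover, by Honda's classification of tight contact structures with prescribed dividing sets, the restriction $\xi|_N$ is isotopic to the contact structure on the corresponding product sutured manifold, so the contact invariants agree and generate $\SHI(-N)$.

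I would then realize the $S^2$-gluing $Y(1) \cup_{S^2} N$ through a pair of contact handle attachments applied to the disjoint union $Y(1) \sqcup N$: a contact $1$-handle connecting disks in opposite-sign regions of the two sphere sutures, followed by a contact $2$-handle along the closed curve formed by the remaining halves of the two great-circle sutures joined through the $1$-handle. By Theorem~\ref{thm:handletheta2}, this composition of handle maps sends $\cinvt(Y(1)) \otimes \cinvt(\xi|_N) \in \SHI(-Y(1)) \otimes \SHI(-N)$ to $\cinvt(\xi_{\mu,U}) = \linvt(U)$. Combined with the identification $\SHI(-N) \cong \mathbb{C}$ from the previous step, this produces the desired morphism $\SHI(-Y(1)) \to \KHI(-Y,U)$ sending $\cinvt(Y(1))$ to $\linvt(U)$.

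The main obstacle is verifying that this morphism is an \emph{isomorphism}. I expect this to follow from a direct analysis of the $1$- and $2$-handle pair above, whose combined effect implements a trivial $S^2$-identification and so should induce an isomorphism on $\SHI$ rather than only a map. The cleanest route is to choose a closure adapted to these handle attachments and use Lemma~\ref{lem:commute} together with the eigenvalue constraints of Proposition~\ref{prop:mu-spectrum} to exhibit the relevant cobordism map as an isomorphism, mirroring the strategy used for the analogous monopole-Floer statement in \cite{bs-legendrian}.
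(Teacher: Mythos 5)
Your approach is genuinely different from the paper's, and it has two real gaps.

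The paper's proof rests on a single slick observation: attaching \emph{one} contact $1$-handle to $(Y(1),\Gamma_{S^1},\xi|_{Y(1)})$, with both feet on the sphere boundary straddling the suture, directly produces a sutured contact manifold of the form $(Y(U),\Gamma_\mu,\xi')$ --- topologically, $Y\setminus B^3$ plus a $1$-handle is the complement of an unknotted solid torus. Since a contact $1$-handle attachment map is the identity at the level of closures, the induced map $\SHI(-Y(1))\to\KHI(-Y,U)$ is automatically an isomorphism carrying $\cinvt(Y(1))$ to $\cinvt(\xi')$ by Theorem~\ref{thm:handletheta2}, and the entire problem reduces to showing $\xi'\cong\xi_{\mu,U}$. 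Because both the $1$-handle attachment and the Stipsicz--V\'ertesi construction happen inside a Darboux ball, that identification is local and reduces to $(S^3,\xi_{std})$, where $(Y(U),\Gamma_\mu)$ is the solid torus with two longitudinal sutures; uniqueness of tight contact structures there finishes it once both $\xi'$ and $\xi_{\mu,U}$ are shown to be tight, which follows from nonvanishing of the respective invariants. No decomposition of $Y(U)$, no $2$-handle, and no isomorphism check beyond the trivial one is needed.

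By contrast, your decomposition $Y(U) = Y(1)\cup_{S^2} N$ leads to two genuine problems. First, the claim that removing an open ball with a single boundary suture from a sutured manifold leaves $\SHI$ unchanged does not follow from the reason you give. The fact that $(B^3,S^1)$ is a product sutured manifold with $\SHI\cong\C$ tells you that a \emph{disjoint} ball contributes a tensor factor of $\C$; it says nothing about the effect of puncturing the ambient manifold, which changes the number of boundary components, the auxiliary surface, and hence the closure. A sphere-filling invariance statement is true in the end, but it requires an actual argument (e.g.\ along excision lines), and you cannot simply assert it. Second, and more fundamentally, you acknowledge that you do not know how to show the composite $1$-handle/$2$-handle map is an isomorphism --- and that gap is not cosmetic, because contact $2$-handle maps on $\SHI$ are not isomorphisms in general. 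Indeed, the handle count is itself suspect: realizing an $S^2$-gluing of two sutured pieces by attaching material to their disjoint union is not a one-plus-two-handle operation (a $2$-handle on a sphere produces two sphere boundary components and would need $3$-handles to close up), so the construction as written does not actually produce $Y(U)$. The paper sidesteps all of this by never forming the disjoint union: the single $1$-handle directly turns $Y(1)$ into $Y(U)$ inside $Y$.

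Your instinct to use uniqueness of tight contact structures to identify the contact structures is the right one and matches the paper in spirit, but you apply it to the auxiliary piece $N$ rather than reducing the whole question to $(S^3,\xi_{std})$; the latter is cleaner because the paper can then quote Stein fillability and the Heegaard Floer comparison to get tightness. If you retool around the single-$1$-handle observation, your tightness arguments should port over directly.
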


\begin{proof}
Attaching a contact $1$-handle to $Y(1)$ results in a  sutured contact manifold of the form \[(Y(U),\Gamma_\mu,\xi').\]  The associated contact $1$-handle attachment isomorphism \begin{equation}\label{eqn:isomorphism}\SHI(-Y(1))\to\SHI(-Y(U),\Gamma_\mu)=\KHI(-Y,U)\end{equation} identifies $\cinvt(Y(1))$ with $\cinvt(\xi')$. (We recall from \S\ref{sssec:handles} that this isomorphism is merely the identity map on $\SHI(-\data)$ for some $\data$ which is simultaneously a closure of both $Y(1)$ and $Y(U)$.)  It thus suffices to check that $\xi'$ is isotopic to the contact structure $\xi_{\mu,U}$  used to define $\linvt(U)$, obtained by removing a standard neighborhood of $U$ and attaching a Stipsicz-V{\'e}rtesi bypass. Since  we can arrange that  this operation and the contact $1$-handle attachment both take place in a Darboux ball, it suffices to check this in the case  \[(Y,\xi)=(S^3,\xi_{std}).\] But in this case, $(Y(U),\Gamma_\mu)$ is a solid torus with two longitudinal sutures. As there is a unique isotopy class of tight contact structures on the solid torus with this dividing set, we need only check that  $\xi'$ and $\xi_{\mu,U}$ are both tight. 

The class $\cinvt(Y(1))$ is nonzero since $\xi_{std}$ is Stein fillable \cite[Theorem 1.4]{bs-shi}. It follows that $\cinvt(\xi')$ is nonzero as well since the isomorphism \eqref{eqn:isomorphism} identifies this class  with $\cinvt(Y(1))$. This implies that $\xi'$ is tight by Theorem \ref{thm:zero-overtwisted}.

The fact that $\xi_{\mu,U}$ is tight follows from the fact that the Heegaard Floer  Legendrian invariant of the $tb=-1$ unknot in $(S^3,\xi_{std})$ is nonzero, since   this   invariant agrees with  the Heegaard Floer contact invariant of $\xi_{\mu,U}$ according to \cite[Theorem 1.1]{stipsicz-vertesi}. (One can also prove tightness directly---i.e., without using Heegaard Floer homology---though it takes more room.)
\end{proof}

The result below follows immediately from Theorem \ref{thm:legendrian-surgery}.

\begin{lemma}
\label{lem:leg}
Let $K$ and $S$ be disjoint Legendrian knots in $(Y,\xi)$, and let $(Y',\xi')$ be the contact manifold obtained by contact $(+1)$-surgery on $S$. If $K$ has image $K'$ in $Y'$ then there is a homomorphism \[\KHI(-Y,K)\to\KHI(-Y',K')\] which sends $\linvt(K)$ to $\linvt(K')$.\qed
\end{lemma}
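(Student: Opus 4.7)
The plan is to read the Legendrian invariants on both sides as contact invariants of appropriately prepared sutured contact manifolds, and then invoke Theorem \ref{thm:legendrian-surgery} directly. Concretely, by definition $\linvt(K) = \cinvt(\xi_{\mu,K})$ where $(Y(K),\Gamma_\mu,\xi_{\mu,K})$ is the sutured contact manifold obtained from $(Y,\xi)$ by removing a standard neighborhood of $K$ and then attaching a Stipsicz--V\'ertesi bypass; likewise $\linvt(K') = \cinvt(\xi_{\mu,K'})$ for the analogous manifold built from $(Y',\xi')$ and $K'$.

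The key observation is that since $S$ is disjoint from $K$, we can arrange $S$ to lie in the interior of $Y(K)$ and to be disjoint from the boundary region where the Stipsicz--V\'ertesi bypass is attached. Performing contact $(+1)$-surgery on $S$ and the two boundary modifications (standard neighborhood removal and bypass attachment) take place in disjoint regions of $Y$, so they commute: applying the surgery first yields $(Y',\xi')$ with Legendrian image $K'$, after which the same boundary modifications produce $(Y'(K'),\Gamma_\mu,\xi_{\mu,K'})$. In other words, $(Y'(K'),\Gamma_\mu,\xi_{\mu,K'})$ is obtained from $(Y(K),\Gamma_\mu,\xi_{\mu,K})$ by contact $(+1)$-surgery along the Legendrian knot $S$ sitting in its interior.

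With this identification in hand, Theorem \ref{thm:legendrian-surgery} applies verbatim: it provides a map
\[ F_S : \SHI(-Y(K),-\Gamma_\mu) \to \SHI(-Y'(K'),-\Gamma_\mu) \]
which sends $\cinvt(\xi_{\mu,K})$ to $\cinvt(\xi_{\mu,K'})$. Rewriting the domain and codomain as $\KHI(-Y,K)$ and $\KHI(-Y',K')$, and the classes as $\linvt(K)$ and $\linvt(K')$, gives the desired conclusion.

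There is essentially no obstacle here beyond the bookkeeping of verifying that the standard neighborhood of $K$ used in defining $\linvt$ can be taken disjoint from $S$ and from the surgery handle, which is immediate from the hypothesis that $K$ and $S$ are disjoint Legendrian knots. The content of the lemma is therefore a direct packaging of Theorem \ref{thm:legendrian-surgery} through the definition of $\linvt$.
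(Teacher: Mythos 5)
Your proof is correct and coincides with what the paper intends by "follows immediately from Theorem \ref{thm:legendrian-surgery}": since $S$ is disjoint from $K$, it lies in the interior of $(Y(K),\Gamma_\mu,\xi_{\mu,K})$, the surgery commutes with the boundary modifications defining $\linvt$, and the map $F_S$ does the rest. No further comment needed.
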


At the level of closures of sutured manifolds, the homomorphism of Lemma~\ref{lem:leg} is simply the cobordism map on instanton homology associated to attaching a $4$-dimensional 2-handle along $S$, as described in \S\ref{sssec:handles}.

The following is an analogue of \cite[Theorem 5.2]{sivek-legendrian} and \cite[Corollary 3.15]{bs-legendrian}.

\begin{lemma}
\label{lem:plusone}Suppose $K$ is a Legendrian knot in $(Y,\xi)$ and that $(Y',\xi')$ is the result of contact $(+1)$-surgery on $K$. Then there is a homomorphism \[\KHI(-Y,K)\to\SHI(-Y'(1))\] which sends $\linvt(K)$ to $\cinvt(Y'(1))$. %Thus, if $\cinvt(\xi'(1))\neq 0$ then $\linvt(K)\neq 0$.
\end{lemma}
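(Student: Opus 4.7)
The plan is to realize $(Y'(1), \Gamma_{S^1}, \xi'|_{Y'(1)})$ as the sutured contact manifold obtained from $(Y(K), \Gamma_\mu, \xi_{\mu,K})$ by a single contact $2$-handle attachment; the associated map $H_2$ from Section \ref{sssec:handles} then provides the desired arrow, and by Theorem \ref{thm:handletheta2} it sends $\linvt(K) = \cinvt(\xi_{\mu,K})$ to $\cinvt(\xi'|_{Y'(1)}) = \cinvt(Y'(1))$.

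In detail, I would first choose the attaching curve. The torus $\partial Y(K)$ carries the dividing set $\Gamma_\mu$ of two meridians of $K$. Let $c \subset \partial Y(K)$ be a Legendrian curve of slope $+1$ with respect to the contact framing of $K$, i.e., the attaching slope for contact $(+1)$-surgery. This curve meets each meridional component of $\Gamma_\mu$ transversely once, for a total of two intersections, which is exactly what is required for a contact $2$-handle attachment. Topologically, contact $(+1)$-surgery on $K$ glues a solid torus $V$ to $Y(K)$ along $c$ to produce $Y'$; decomposing $V$ relative to $\partial V$ as a $2$-handle plus a $3$-handle, one sees that attaching only the $2$-handle along $c$ yields $Y'$ minus an open ball, i.e., $Y'(1)$, and the resulting $S^2$ boundary carries the single dividing curve $\Gamma_{S^1}$.

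Next I would verify that the tight contact structure produced by the contact $2$-handle attachment agrees with $\xi'|_{Y'(1)}$. This is essentially the Ding--Geiges--Ozbagci description of contact $(\pm 1)$-surgery (see \cite{ozbagci}): contact $(+1)$-surgery on $K$ is realized by a contact $2$-handle attachment along a Legendrian push-off of $K$ with the $+1$ framing, followed by a contact $3$-handle. The Stipsicz--V\'ertesi bypass only modifies the contact structure in a collar of $\partial Y(K)$, and its effect is absorbed into the $2$-handle, so the resulting tight contact structure on $Y'(1)$ is the same whether the handle is attached starting from $(Y(K), \Gamma_K, \xi_K)$ or from $(Y(K), \Gamma_\mu, \xi_{\mu,K})$.

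With this geometric identification in hand, Theorem \ref{thm:handletheta2} immediately supplies the desired map
\[ H_2 : \KHI(-Y,K) = \SHI(-Y(K), -\Gamma_\mu) \to \SHI(-Y'(1)) \]
sending $\linvt(K) = \cinvt(\xi_{\mu,K})$ to $\cinvt(\xi'|_{Y'(1)}) = \cinvt(Y'(1))$. The main obstacle is the contact-geometric identification in the previous paragraph: one must carefully compare, via convex surface theory near the attaching region, the tight contact structure and dividing set produced on the new $S^2$ boundary by the contact $2$-handle attachment with the standard convex sphere carrying $\Gamma_{S^1}$ in $(Y', \xi')$. This is analogous to the arguments in \cite{bs-legendrian, sivek-legendrian} in the monopole Floer setting.
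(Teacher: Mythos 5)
Your approach is genuinely different from the paper's and it contains a real gap in the step you yourself flag as the ``main obstacle.'' You propose to realize $(Y'(1),\Gamma_{S^1},\xi'|_{Y'(1)})$ as the result of a single contact $2$-handle attachment on $(Y(K),\Gamma_\mu,\xi_{\mu,K})$ along the slope-$(+1)$ curve, and then invoke Theorem \ref{thm:handletheta2}. The topological part is fine: the slope-$(+1)$ curve does meet $\Gamma_\mu$ in two points, and attaching a $2$-handle along it does produce $Y'(1)$ with a single dividing circle on the new $S^2$ boundary. But the assertion that the resulting contact structure is isotopic to $\xi'|_{Y'(1)}$ is exactly the content of the lemma, and the justification you give --- that ``the Stipsicz--V\'ertesi bypass only modifies the contact structure in a collar and its effect is absorbed into the $2$-handle'' --- is a heuristic, not an argument. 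The Ding--Geiges--Ozbagci handle decomposition of contact $(+1)$-surgery is a $2$-handle (plus a cancelling $3$-handle) attached to $(Y(K),\Gamma_K,\xi_K)$, with $\Gamma_K$ consisting of two longitudes of contact slope. Your construction instead prepends a Stipsicz--V\'ertesi bypass (itself a contact $1$-handle followed by a contact $2$-handle) before attaching a $2$-handle along a curve that now meets the \emph{meridional} sutures $\Gamma_\mu$. These are different sequences of handle attachments through different intermediate sutured contact manifolds, and showing they yield isotopic results requires a convex-surface-theory argument --- in particular a uniqueness statement for the tight contact structure on the solid torus minus a ball that gets glued in --- which you do not supply. (One symptom of the difficulty: when $\xi_{\mu,K}$ is overtwisted, your $2$-handle attachment produces an overtwisted contact structure on $Y'(1)$, while $\xi'$ may well be tight; so the identification is not even true without hypotheses, and the lemma must be proved another way in that regime.)

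The paper sidesteps all of this. It takes a Legendrian pushoff $S$ of $K$ with one extra positive twist, so that $K$ and $S$ are disjoint, and observes (following Ding--Geiges) that the image of $K$ under contact $(+1)$-surgery on $S$ is a $tb=-1$ Legendrian unknot $U$ in $(Y',\xi')$. Lemma \ref{lem:leg} (surgery on a knot disjoint from $K$, via Theorem \ref{thm:legendrian-surgery}) then gives a map $\KHI(-Y,K)\to\KHI(-Y',U)$ carrying $\linvt(K)$ to $\linvt(U)$, and Lemma \ref{lem:unknot} gives an isomorphism $\KHI(-Y',U)\to\SHI(-Y'(1))$ carrying $\linvt(U)$ to $\cinvt(Y'(1))$. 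The contact-geometric identification you are missing is thereby pushed entirely into Lemma \ref{lem:unknot}, where it reduces to the unknot in a Darboux ball, and there one has the luxury of the classification of tight contact structures on a solid torus with two longitudinal sutures; that uniqueness is what makes the identification tractable. If you wish to pursue your more direct route, you would need to prove the analogue of that uniqueness statement for the solid-torus-minus-ball piece glued in by your $2$-handle, which is precisely the work the paper's proof is structured to avoid.
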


\begin{proof}
Let $S$ be a Legendrian isotopic pushoff of $K$ with an extra positive twist around $K$, as in Figure \ref{fig:legpushoff}. 
\begin{figure}[ht]
\labellist
\tiny \hair 2pt
\pinlabel $K$ at 15 10
\pinlabel $S$ at 15 28
\endlabellist
\centering
\includegraphics[width=3.3cm]{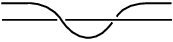}
\caption{$S$ is obtained by adding a positive twist to a Legendrian pushoff of $K$.}
\label{fig:legpushoff}
\end{figure}

 As  in the proof of \cite[Proposition 1]{ding-geiges-handles},  the image of $K$ in contact $(+1)$-surgery on $S$ is a $tb=-1$ Legendrian unknot $U\subset(Y',\xi')$.  By Lemma \ref{lem:leg},  there is a map \[\KHI(-Y,K)\to\KHI(-Y',U)\] which sends $\linvt(K)$ to $\linvt(U)$. But by Lemma \ref{lem:unknot}, we also have an isomorphism \[\KHI(-Y',U)\to\SHI(-Y'(1))\] which sends $\linvt(U)$ to $\cinvt(Y'(1)).$ Composing these two maps gives the desired map.
\end{proof}

The lemmas above culminate in  Theorem \ref{thm:tb} below, which is an analogue of \cite[Corollary 5.4]{sivek-legendrian} and \cite[Corollary 3.16]{bs-legendrian}. The exact triangle argument used in its proof was inspired by  Lisca and Stipsicz's  proof of \cite[Theorem 1.1]{lisca-stipsicz}.  We will use Theorem \ref{thm:tb}  in our proof of Theorem \ref{thm:nonzero} in the next section, which states that $\kinvt$ is nonzero for transverse bindings of open books.

\begin{theorem}
\label{thm:tb}
Suppose $K$ is a Legendrian knot in $(S^3,\xi_{std})$ with 4-ball genus $g_4(K)>0$ such that $tb(K)=2g_4-1$. Then $\linvt(K)\neq 0.$
\end{theorem}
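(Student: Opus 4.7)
The plan is to combine Lemma \ref{lem:plusone} with a surgery exact triangle (Theorem \ref{thm:exacttri}), exploiting the hypothesis $tb(K) = 2g_4(K) - 1$ geometrically via a slice surface for $K$, in the spirit of Lisca and Stipsicz's proof of \cite[Theorem 1.1]{lisca-stipsicz}.

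First I would apply Lemma \ref{lem:plusone} to the contact $(+1)$-surgery on $K$ in $(S^3,\xi_{std})$. Writing $(Y',\xi')$ for the resulting contact manifold, that lemma supplies a map $\KHI(-S^3,K) \to \SHI(-Y'(1))$ sending $\linvt(K)$ to $\cinvt(Y'(1))$, so it suffices to show $\cinvt(Y'(1)) \neq 0$.

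Next I would set up the instanton surgery exact triangle of Theorem \ref{thm:exacttri} on suitable closures of $S^3(1)$, $Y'(1)$, and the $0$-surgery on $K$, with bundle data chosen to be compatible with the contact-invariant naturality of Theorem \ref{thm:legendrian-surgery}. In such a setup the first map $\SHI(-S^3(1)) \to \SHI(-Y'(1))$ sends $\cinvt(\xi_{std})$ to $\cinvt(\xi')$. The hypothesis enters through a closed surface $\Sigma$ of genus $g = g_4(K)$ and self-intersection $tb(K)+1 = 2g$ inside the $(+1)$-handle cobordism, obtained by gluing a genus-$g$ slice surface for $K$ in $B^4$ to the core of the $2$-handle. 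Because $[\Sigma]^2 = 2g(\Sigma)$ saturates the adjunction bound of Proposition \ref{prop:mu-spectrum}, an argument parallel to the vanishing of $I_*(-Y_0|{-}R)_{-\alpha-\eta+\mu}$ in the proof of Theorem \ref{thm:legendrian-surgery2} should force the third term of the triangle to contribute trivially, so that the map $\SHI(-S^3(1)) \to \SHI(-Y'(1))$ is nonzero on the generator of $\SHI(-S^3(1)) \cong \C$ and consequently $\cinvt(Y'(1)) \neq 0$.

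The main obstacle will be arranging the triangle and its bundle data so that (i) the first map genuinely computes $\cinvt(\xi')$ from $\cinvt(\xi_{std})$ through the contact-$(+1)$-surgery naturality, and (ii) the saturated adjunction on $\Sigma$ translates into an algebraic vanishing strong enough to force $\cinvt(Y'(1))$ to be nonzero rather than merely constrained in eigenvalue. I expect this bookkeeping, together with the verification that the adjunction argument survives when $\Sigma$ is incorporated into a closure alongside the distinguished surface $R$, to be the technical heart of the proof.
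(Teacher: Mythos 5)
Your high-level strategy matches the paper's proof exactly --- reduce via Lemma \ref{lem:plusone} to showing $\cinvt(Y'(1))\neq 0$, invoke Theorem \ref{thm:legendrian-surgery2} so the first map of a surgery exact triangle carries $\cinvt(S^3(1))$ to the target class, and finish via the adjunction inequality --- but the adjunction step as you have sketched it would not work, and in fact runs in the wrong direction.

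You place the capped-off slice surface in the $(+1)$-handle cobordism, i.e.\ the map $I_*(W)_\kappa : I_*(-Z|{-}R) \to I_*(-Z_1|{-}R)$, with self-intersection $tb(K)+1 = 2g_4$. If that surface really had positive self-intersection $2g_4 > 2g_4-2$ in $W$, the adjunction inequality would force $I_*(W)_\kappa \equiv 0$, which is exactly the opposite of what you need. In reality this doesn't happen because in the cobordism $W : -Z \to -Z_1$ the relevant framing becomes $-2g_4$ after passing to $-S^3$, so the capped surface has negative self-intersection and adjunction is silent. Separately, your claim that the third term $I_*(-Z_0|{-}R)_{-\alpha-\eta+\mu}$ vanishes is unfounded: the vanishing in the proof of Theorem \ref{thm:legendrian-surgery2} arises because there $K$ lies in a fiber, so $0$-surgery on $K$ lowers the genus of the distinguished surface $R$; here $K$ lives inside the $S^3(1)$ piece, far from $R\times[1,3]$, so $0$-surgery does not touch $R$ and the group has no reason to vanish.

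What actually closes the argument is that the \emph{third map} $I_*(W_1)_{\kappa_1} : I_*(-Z_0|{-}R) \to I_*(-Z|{-}R)$ vanishes. The cobordism $W_1$ is, as an oriented $4$-manifold, the $2$-handle cobordism from $Z$ to $Z_0$ given by $(2g_4-1)$-framed (Seifert framing) $= 0$-contact-framed surgery on $K$. Capping a genus-$g_4$ slice surface with the core of this $2$-handle produces a closed $\overline\Sigma \subset W_1$ with $g(\overline\Sigma) = g_4$ and $\overline\Sigma\cdot\overline\Sigma = 2g_4-1 > 2g_4-2$, which \emph{violates} (not saturates) the adjunction inequality of \cite{km-gauge2} and forces $I_*(W_1)_{\kappa_1} \equiv 0$. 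Exactness then makes $I_*(W)_\kappa$ injective, hence $G_K$ is injective on $\SHI(-S^3(1)) \cong \C$, and the proof concludes as you outlined.
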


\begin{proof}
Choose a Darboux ball disjoint from $K$. Let $S^3(1)$ denote the sutured contact manifold obtained by removing this ball from $(S^3,\xi_{std})$. Note that contact $(+1)$-surgery on $K\subset S^3(1)$ results in a sutured contact manifold \[S^3_{tb+1=2g_4}(K)(1),\] which is topologically the result of $2g_4$-surgery on $K$ with respect to its Seifert framing, minus a ball.  We have that $\cinvt(S^3(1))$ is nonzero since $\xi_{std}$ is Stein fillable  \cite[Theorem 1.4]{bs-shi}. We will use the surgery exact triangle and the adjunction inequality to show that the map \begin{equation}\label{eqn:injective}F_K:\SHI(-S^3(1))\to\SHI(-S^3_{2g_4}(K)(1))\end{equation} defined in Section \ref{sssec:handles} is injective. But we know that \[F_K(\cinvt(S^3(1)))=\cinvt(S^3_{2g_4}(K)(1))\] by Theorem \ref{thm:legendrian-surgery}. This will show that $\cinvt(S^3_{2g_4}(K)(1))$ is nonzero as well, which will then imply that $\linvt(K)$ is nonzero by Lemma \ref{lem:plusone}, proving the theorem.

Let $\data = (Z,R,\eta,\alpha)$ be a closure of $S^3(1)$ and let \begin{equation*}
\data_1=(Z_1,R,\eta,\alpha)\textrm{ and }
\data_0=(Z_0,R,\eta,\alpha)
\end{equation*} be the tuples obtained from $\data$ by performing $1$- and $0$-surgeries on $K\subset Z$ with respect to its contact framing. These tuples are naturally  closures of \[S^3_{2g_4}(K)(1)\textrm{ and } S^3_{2g_4-1}(K)(1),\] respectively. 
As observed in \eqref{eqn:surgeryexacttriangleZ}, Theorem~\ref{thm:exacttri} gives us an exact triangle
\[ \xymatrix@C=-35pt@R=30pt{
I_*(-Z|{-}R)_{-\alpha -\eta} \ar[rr]^{I_*(W)_{{\kappa}}}  & &I_*(-Z_1|{-}R)_{-\alpha -\eta} \ar[dl]^{I_*(W_{0})_{{\kappa}_0}} \\
&I_*(-Z_0|{-}R)_{-\alpha -\eta+ \mu}\ar[ul]^{I_*(W_{1})_{{\kappa}_1}}, & \\
} \]
where $\mu$ is the curve in $-Z_0$ corresponding to the meridian of $K\subset -Z$, as in Figure \ref{fig:meridian}, and moreover the map $F_K$ on $\SHI$ is induced by $I_*(W)_\kappa$.
%The map $G_K$ in \eqref{eqn:injective} is  induced by $I_*(W)_{{\kappa}}$, so it suffices to show that this map is injective.

Note that $W_{1}$ is topologically the same as the  cobordism from $Z$ to $Z_0$ obtained from $Z\times[0,1]$ by attaching a $(2g_4-1)$-framed $2$-handle along $K\subset Z\times\{1\}$ with respect to its Seifert framing. This cobordism  contains a closed surface $\overline\Sigma$ of genus $g_4$ and self-intersection \[\overline\Sigma\cdot\overline\Sigma=2g_4-1\] gotten by capping off the surface $\Sigma\subset Z\times[0,1]$ bounded by $K$ which attains \[g(\Sigma)=g_4(K)\] with the core of the $2$-handle. The surface $\overline\Sigma$ violates the adjunction inequality \cite[Theorem 1.1]{km-gauge2}, which implies that the map \[I_*(W_{1})_{{\kappa}_1}\equiv 0.\]
Therefore, \[I_*(W)_{{\kappa}}:I_*(-Z|{-}R)_{-\alpha -\eta} \to I_*(-Z_1|{-}R)_{-\alpha -\eta} \] is injective, and this means that the map $F_K$ in \eqref{eqn:injective} is injective as well. This completes the proof of Theorem \ref{thm:tb} as discussed above.
\end{proof}

\section{Proof of Theorem \ref{thm:khi-fibered}}
\label{sec:proof}

In this section we prove Theorem \ref{thm:khi-fibered}, restated below, as outlined  in the introduction.
{
\renewcommand{\thetheorem}{\ref{thm:khi-fibered}}
\begin{theorem}
Suppose $K$ is a genus $g>0$ fibered knot in $Y\not\cong \#^{2g}(S^1\times S^2)$ with fiber $\Sigma$. Then $\KHI(Y,K,[\Sigma],g-1)\neq 0$.
\end{theorem}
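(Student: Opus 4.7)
The plan is to execute the outline from Section \ref{ssec:outline} with the tools now in place (the bypass exact triangle of Theorem \ref{thm:bypass}, the Legendrian/transverse invariants of Section \ref{sec:leg}, and the Alexander grading machinery of Section \ref{sec:alex}). First I would reduce to the case that the monodromy $h$ of the fibration is \emph{not} right-veering: since $h=\id$ would force $Y\cong\#^{2g}(S^1\times S^2)$, either $h$ or $h^{-1}$ fails to be right-veering; replacing $(Y,K)$ by $(-Y,K)$ with open book $(\Sigma,h^{-1})$ if necessary is harmless because  $\KHI(Y,K,[\Sigma],g-1)\cong\KHI(-Y,K,[\Sigma],g-1)$ by the symmetry coming from the involution reversing orientation on instanton Floer homology.

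Next, view $K$ as the transverse binding of the open book and build, for each $i\ge 0$, the Legendrian approximations $\mathcal{K}_i^\pm$ and the resulting sutured contact manifolds $(Y(K),\Gamma_i,\xi_i^\pm)$ relating them by bypass attachment moves. The first key step is to prove Lemma \ref{lem:otstab}: if $h$ is not right-veering then $\xi_1^+$ is overtwisted. I would argue this directly in terms of the partial open book adapted to $\mathcal{K}_1^+$: an arc $a$ sent to the left by $h$ together with the stabilization data provides an explicit overtwisted disc (equivalently, a sobering arc or a Giroux-torsion configuration) on a convex page. Theorem \ref{thm:zero-overtwisted} then gives $\cinvt(\xi_1^+)=0$, and hence $\phi_0^+(\cinvt(\xi_0^-))=\cinvt(\xi_1^+)=0$ (Lemma \ref{lem:bypassclaim}).

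Now I would assemble the bypass exact triangle for the arcs producing $\Gamma_0\to\Gamma_1\to\Gamma_\mu\to\Gamma_0$: by Theorem \ref{thm:bypass} this yields
\[
\xymatrix@C=-25pt@R=30pt{
\SHI(-Y(K),-\Gamma_0) \ar[rr]^{\phi_0^+} & & \SHI(-Y(K),-\Gamma_1) \ar[dl]^{\phi_1^{SV}} \\
& \KHI(-Y,K). \ar[ul]^{C} &
}
\]
The vanishing $\phi_0^+(\cinvt(\xi_0^-))=0$ gives, by exactness, a class $x\in\KHI(-Y,K)$ with $C(x)=\cinvt(\xi_0^-)$. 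Applying $\phi_0^{SV}$ we obtain $\phi_0^{SV}\circ C\,(x)=\phi_0^{SV}(\cinvt(\xi_0^-))=\kinvt(K)$, which is nonzero by Theorem \ref{thm:nonzero}; in particular $x\neq 0$.

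It remains to locate $x$ in the Alexander grading $g-1$. Two points must be established. First, the composition $\phi_0^{SV}\circ C\colon \KHI(-Y,K)\to\KHI(-Y,K)$ shifts the Alexander grading by at most $1$; this I would deduce by tracking the closed surfaces representing $[\Sigma]$ on each side through the $2$-handle/excision cobordisms defining $C$ and $\phi_0^{SV}$, and then applying Proposition \ref{prop:grading-shift} (or Corollary \ref{cor:grading-shift}) to bound the eigenvalue spread of $\mu(\overline\Sigma)$ by twice the genus of a surface cobounding the two representatives---in this case a torus, giving a shift of at most $1$. Second, this composition kills the top summand $\KHI(-Y,K,[\Sigma],g)$: by Theorems \ref{thm:ttopgrading} and \ref{thm:nonzero} this summand is $\C\cdot\kinvt(K)=\C\cdot\phi_1^{SV}(\cinvt(\xi_1^-))$, and exactness of the triangle gives $C\circ\phi_1^{SV}=0$, hence $\phi_0^{SV}\circ C$ vanishes on the generator. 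These two facts force the component of $x$ in $\KHI(-Y,K,[\Sigma],g-1)$ to be nonzero, for otherwise $x$ would lie in $\bigoplus_{i\le g-2}\KHI(-Y,K,[\Sigma],i)$, so $\phi_0^{SV}\circ C(x)$ would live in gradings $\le g-1$, contradicting $\phi_0^{SV}\circ C(x)=\kinvt(K)$ in grading $g$. Theorem \ref{thm:khi-fibered} then follows from the orientation-reversal symmetry $\KHI(Y,K,[\Sigma],g-1)\cong\KHI(-Y,K,[\Sigma],g-1)$.

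The main obstacles I anticipate are (i) proving Lemma \ref{lem:otstab} cleanly in the partial open book model of $\xi_1^+$, and (ii) establishing the precise Alexander grading shift bound for $\phi_0^{SV}\circ C$, which requires identifying, inside the cobordism assembling $C$ followed by $\phi_0^{SV}$, a genus one surface joining the two capped-off copies of the Seifert/fiber surface so that Proposition \ref{prop:grading-shift} applies.
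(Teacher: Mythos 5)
Your proposal follows the paper's proof essentially step for step: reduce to the non-right-veering case, show $\xi_1^+$ is overtwisted so $\cinvt(\xi_1^+)=0$, run the bypass exact triangle of Theorem \ref{thm:bypass} to produce a class $x$ with $\phi_0^{SV}\circ C(x)=\kinvt(K)\neq 0$, then combine the grading-shift bound with the vanishing of $\phi_0^{SV}\circ C$ on the top summand to force $x_{g-1}\neq 0$. The argument is correct.

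One small arithmetic slip to flag in your anticipated obstacle (ii): Proposition \ref{prop:grading-shift} says that a genus-$k$ surface $F$ interpolating between $\overline\Sigma_1$ and $\overline\Sigma_2$ in $W$ bounds the \emph{Alexander} grading shift by $k-1$, since the $\mu(\overline\Sigma_2)$-eigenvalues that can appear range over $\{2m-2k+2,\dots,2m+2k-2\}$. So a torus ($k=1$) would give shift $0$; to obtain a shift of at most $1$ you need $k=2$. This is exactly what the paper produces: the two capped-off copies of $\Sigma$ differ by once-punctured tori $T_a,T_b$ glued along $\partial\Sigma\times I$, and $F=T_b\cup(\partial\Sigma\times I)\cup(-T_a)$ is a closed genus-two surface of self-intersection $0$. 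When you carry out the construction you will find this genus-two $F$, not a torus, and the shift bound falls out with $k=2$. The rest of your deduction (discarding the $g$-component of $x$ because $C\circ\phi_1^{SV}=0$, then seeing that the $(g-1)$-component is the only one that can map to grading $g$) is precisely the paper's closing step.
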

\addtocounter{theorem}{-1}
}

 Suppose for the rest of this section that $K$ is a fibered knot as in the hypothesis of Theorem \ref{thm:hfk-fibered}. Let $(\Sigma,h)$ be an open book corresponding to the fibration of $K$ with $g(\Sigma)=g,$ supporting a contact structure $\xi$ on $Y$. 
 We begin with some preliminaries.

\begin{definition}
 Given a properly embedded arc $a\subset \Sigma$, we say that $h$ sends $a$ \emph{to the left at an endpoint $p$} if $h(a)$ is not isotopic to $a$ and if, after isotoping $h(a)$ so that it intersects $a$ minimally, $h(a)$ is to the left of $a$ near $p$, as shown in Figure \ref{fig:left2}. 
 \end{definition}
 
This definition and the one below are due to Honda, Kazez, and Mati{\'c} \cite{hkm-rv}. 
 
 \begin{definition} $h$ is  \emph{not right-veering} if it sends some arc to the left at one of its endpoints.
\end{definition}

\begin{figure}[ht]
\labellist
\small \hair 2pt

\pinlabel $a$ at 47 28
\pinlabel $h(a)$ at 19 28
\pinlabel $p$ at 41 -3
\pinlabel $\Sigma$ at 65 57

\endlabellist
\centering
\includegraphics[width=2cm]{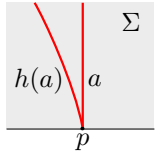}
\caption{$h$ sends $a$ to the left at $p$.
}
\label{fig:left2}
\end{figure}

\begin{remark}For the proof of Theorem \ref{thm:khi-fibered}, we may assume without loss of generality that the monodromy $h$ is not right-veering. 
Indeed, note that one of $h$ or $h^{-1}$  is not right-veering since otherwise  \[h=\id \textrm{ and } Y\cong \#^{2g}(S^1\times S^2).\] If $h$ is right-veering then we  use the fact that $\KHI$ is invariant under reversing the orientation of $Y$ and consider instead the knot $K\subset -Y$ with open book $(\Sigma,h^{-1})$. 
We will make clear below where we are relying on the assumption that $h$ is not right-veering; most of the results in this section do not require it.
\end{remark}
 
\begin{remark}Since $\KHI$ is invariant under reversing the orientation of $Y$, % by Remark \ref{rmk:symmetryreversal}, 
it suffices to prove that \[\KHI(-Y,K,[\Sigma],g-1)\neq 0.\] That is what we  do in this section.
\end{remark}

With these preliminaries out of the way, we may proceed to the proof of Theorem \ref{thm:khi-fibered}.

As the binding of the open book $(\Sigma,h)$, the knot $K$ is naturally a transverse knot in $(Y,\xi)$.  We may not be able to approximate $K$ by a Legendrian on a page of $(\Sigma, h)$, but we can positively stabilize $(\Sigma,h)$ to an open book $(S,f)$ as in Figure~\ref{fig:surface3}, whose monodromy
\[ f = h \circ D_\gamma \]
is the composition of $h$ with a positive Dehn twist around the curve $\gamma$ shown in the figure, and then let $\mathcal{K}_0^-$ denote the Legendrian approximation of $K$ shown there on a page of $(S,f)$.  The fact that this can be made Legendrian is a consequence of the Legendrian realization principle \cite[Theorem~3.7]{honda-lens}, because the union of two pages of $(S,f)$ is a convex surface $\tilde{S}$ whose dividing set $\tilde{\Gamma}$ is the binding, and because $\mathcal{K}_0^-$ is nonseparating in $R_+(\tilde{S}) = S$ and hence a ``non-isolating'' curve in $\tilde{S}$.

Since $\mathcal{K}_0^-$ lies in a convex surface $\tilde{S}$ and is disjoint from the dividing curves $\tilde{\Gamma}$, we have
\[ tb_S(\mathcal{K}_0^-) = tb_{\tilde{S}}(\mathcal{K}_0^-) = -\frac{1}{2}\#(\mathcal{K}_0^- \cap \tilde{\Gamma}) = 0, \]
where $tb_S(\mathcal{K}_0^-)$ denotes the twisting of the contact planes along $\mathcal{K}_0^-$ with respect to the framing given by $S$.
%In particular, the monodromy \[f=h\circ D_\gamma\] is the composition of $h$ with a positive Dehn twist around the curve $\gamma$ shown in the figure. 
We observe that $S$ is obtained from $\Sigma$ by plumbing on a positive Hopf band, through which $\mathcal{K}_0^-$ passes once, and so it follows that \begin{equation}\label{eqn:tbneg1}tb_\Sigma(\mathcal{K}_0^-)=-1.\end{equation} We will see later (Remark \ref{rmk:tbneg1}) why this is important. %Moreover, the arc $a\subset\Sigma'$ intersects  the realization of $\mathcal{K}$ in one point.

Let 
 $\mathcal{K}_1^{\pm}$ denote the positive/negative Legendrian stabilization of $\mathcal{K}_0^-$. In particular, $\mathcal{K}_1^-$ is also a Legendrian approximation of $K$. Let \[(Y(K),\Gamma_i,\xi^\pm_i)\] denote the contact manifold with convex boundary and dividing set $\Gamma_i$ obtained by removing a standard neighborhood of $\mathcal{K}_i^\pm$ from $Y$.  %The arc $a\subset \Sigma'$ intersects $\mathcal{K}$ in a single point as shown in Figure \ref{fig:surface}. 
 
 \begin{figure}[ht]
\labellist
\small \hair 2pt

\pinlabel $\Sigma$ at 97 249
\pinlabel $S$ at 425 249

\pinlabel $\mathcal{K}_0^-$ at 610 93

\tiny

\pinlabel $\gamma$ at 396 91

\endlabellist
\centering
\includegraphics[width=6.8cm]{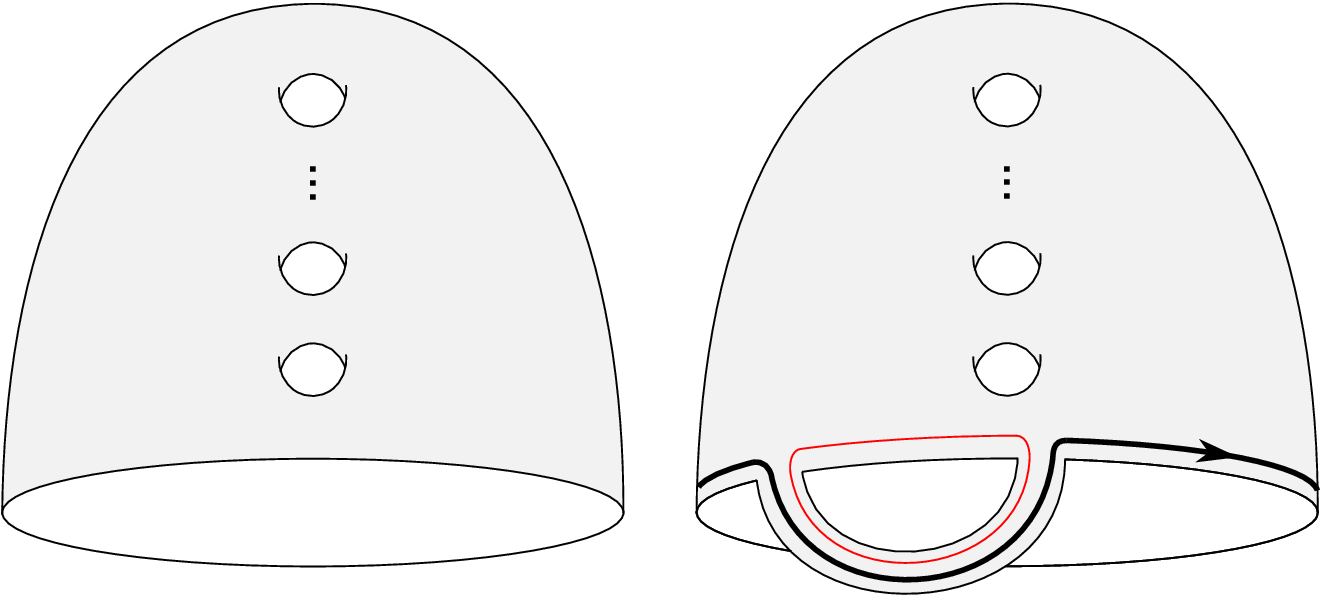}
\caption{Left, the fiber surface  for $K$. Right, a  stabilization of the open book $(\Sigma,h)$ obtained by attaching a $1$-handle  and composing $h$ with a positive Dehn twist around the  curve $\gamma$ in red, with  $\mathcal{K}=\mathcal{K}_0^-$ realized on the page $S$. 
}
\label{fig:surface3}
\end{figure}
 
 Recall from the previous section (Remark \ref{rmk:sv}) that the transverse invariant $\kinvt(K)$ is given by \[\kinvt(K) = \phi_0^{SV}(\cinvt(\xi_0^-)) = \phi_1^{SV}(\cinvt(\xi_1^-))\in\KHI(-Y,K)\] where  \[\phi_i^{SV}:\SHI(-Y(K),-\Gamma_i)\to\SHI(-Y(K),-\Gamma_\mu)=\KHI(-Y,K)\] is the map induced by a Stipsicz-V{\'e}rtesi bypass attachment. In the case $i=0$, this bypass is attached along the arc $c$ shown in Figure \ref{fig:bypasses}. Stipsicz and V{\'e}rtesi also showed in the proof of \cite[Theorem 1.5]{stipsicz-vertesi} that \[(Y(K),\Gamma_1,\xi^+_1)\textrm{ is obtained from }(Y(K),\Gamma_0,\xi^-_0)\] by attaching a bypass  along the arc $p$ shown in Figure \ref{fig:bypasses}. Letting
  \[\phi_0^+:\SHI(-Y(K),-\Gamma_0)\to\SHI(-Y(K),-\Gamma_1)\] denote the associated bypass attachment map, as in the introduction, we  have that \[\phi_0^+(\cinvt(\xi_0^-))=\cinvt(\xi_1^+).\]
Recall from the introduction that our proof of Theorem \ref{thm:khi-fibered}  begins with the following lemma.

  \begin{figure}[ht]
\labellist
\small \hair 2pt
\pinlabel $-\mu$ at 170 0
%\pinlabel $f$ at 1 165

\pinlabel $+$ at 105 85
\pinlabel $-$ at 162 135

\pinlabel $p$ at 75 115
\pinlabel $c$ at 145 49
\tiny
\pinlabel $\partial\Sigma$ at 448 132
\endlabellist
\centering
\includegraphics[width=5.5cm]{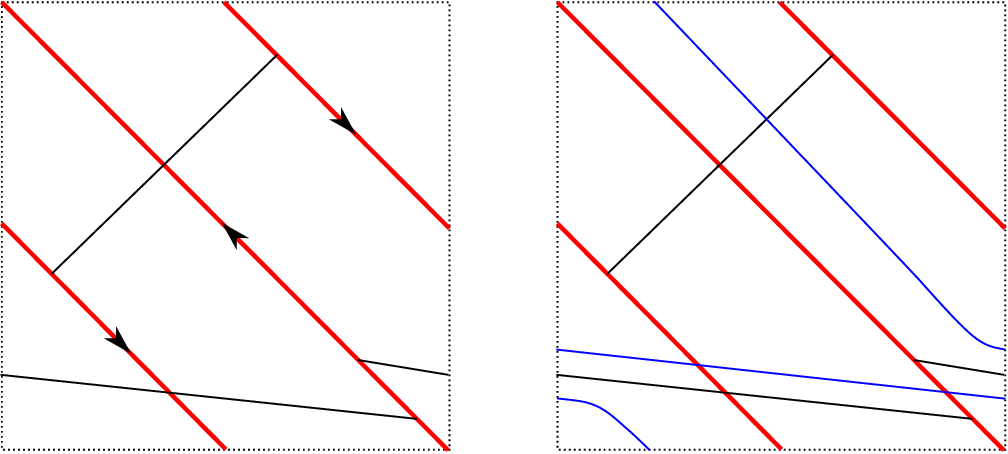}
\caption{The boundary of the complement of a standard neighborhood of $\mathcal{K}_0^-$ with dividing set $\Gamma_0$  in red. The Stipsicz-V{\'e}rtesi bypass  is attached along the arc $c$. Attaching a bypass along $p$ yields the complement of a standard neighborhood of  $\mathcal{K}_1^+$. Right, the  blue curve is the intersection of the boundary of the fiber surface $\Sigma$ of $K$ with this  torus.}
\label{fig:bypasses}
\end{figure}

 {
\renewcommand{\thetheorem}{\ref{lem:bypassclaim}}
\begin{lemma}
If the monodromy $h$ is not right-veering then $\phi_0^+(\cinvt(\xi_0^-))=\cinvt(\xi_1^+)=0.$
\end{lemma}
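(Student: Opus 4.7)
The plan is to establish the lemma in two steps. For the first equality $\phi_0^+(\cinvt(\xi_0^-))=\cinvt(\xi_1^+)$, no hypothesis on $h$ is needed. As recalled above (from Stipsicz--V\'ertesi), $(Y(K),\Gamma_1,\xi_1^+)$ is obtained from $(Y(K),\Gamma_0,\xi_0^-)$ by a bypass attachment along the arc $p$ of Figure \ref{fig:bypasses}, and by definition this bypass attachment induces the map $\phi_0^+$. Proposition \ref{prop:bypass} then yields $\phi_0^+(\cinvt(\xi_0^-))=\cinvt(\xi_1^+)$ directly.

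For the vanishing $\cinvt(\xi_1^+)=0$, Theorem \ref{thm:zero-overtwisted} reduces the claim to showing that $\xi_1^+$ is overtwisted whenever $h$ is not right-veering; this is Lemma \ref{lem:otstab}. To prove the latter, I would start from an arc $a\subset \Sigma$ that $h$ sends to the left at a boundary point $p$, and track $a$ through the positive stabilization producing $(S,f)$ on which $\mathcal{K}_0^-$ is Legendrian-realized, and then through the positive Legendrian stabilization to $\mathcal{K}_1^+$. The sutured contact manifold $(Y(K),\Gamma_1,\xi_1^+)$ admits a partial open book description whose monodromy inherits the left-veering behavior of $h$ near $p$; the goal is then to combine this left-veering arc with the extra negative Dehn twist introduced by positive Legendrian stabilization in order to exhibit an honest overtwisted disk. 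Concretely, one seeks to piece together $a$ and a small arc on $\partial\nu(\mathcal{K}_1^+)$ into the boundary of a convex disk whose dividing set is a single boundary-parallel arc, i.e.\ an overtwisted disk in $\xi_1^+$.

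The main obstacle, as the paper's outline acknowledges, is this explicit overtwisted-disk construction in Lemma \ref{lem:otstab}. It requires a careful local picture near $p$ to see that the negative twist added by the positive stabilization and the leftward motion of $h(a)$ conspire to invert the bypass encoded by $p$, producing a \emph{trivial} bypass whose attachment is overtwisted in the sense of Honda. The choice of Legendrian approximation $\mathcal{K}_0^-$ with $tb_\Sigma(\mathcal{K}_0^-)=-1$ is essential here, as it pins down the framing with respect to which the stabilization adds a single negative twist. Once this geometric step is carried out, the rest of the proof of Lemma \ref{lem:bypassclaim} is an immediate assembly: Proposition \ref{prop:bypass} supplies the first equality, Lemma \ref{lem:otstab} supplies overtwistedness, and Theorem \ref{thm:zero-overtwisted} supplies the vanishing.
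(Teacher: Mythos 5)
Your reduction is exactly the paper's: the first equality $\phi_0^+(\cinvt(\xi_0^-))=\cinvt(\xi_1^+)$ needs no hypothesis on $h$ and follows from Proposition~\ref{prop:bypass} because $\phi_0^+$ is by definition the bypass map along $p$; the vanishing is then reduced, via Theorem~\ref{thm:zero-overtwisted}, to showing $\xi_1^+$ is overtwisted, i.e.\ to Lemma~\ref{lem:otstab}. So the assembly is right.

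Where you part ways with the paper --- and where your sketch has a real gap --- is in Lemma~\ref{lem:otstab}. You propose to piece together an honest overtwisted disk near $p$, and you yourself flag this construction as the main obstacle; you do not carry it out, and it would take some care to get the convex disk and its dividing set right. The paper sidesteps this entirely. Its proof simply observes that, by Example~\ref{eq:legendrian-complement}, the sutured contact manifold $(Y(K),\Gamma_1,\xi_1^+)$ has partial open book $(S',\,P=S'\ssm\nu(\mathcal{K}_1^+),\,f'|_P)$; that the arc $a$ which $h$ sends to the left at $p$ lies entirely in $P$; and that the stabilizations producing $(S',f')$ are supported away from $a$ near $p$, so $f'|_P$ still sends $a$ to the left at $p$. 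Thus $f'|_P$ is not right-veering, and the Honda--Kazez--Mati{\'c} criterion for partial open books (Proposition 4.1 of \cite{hkm-sutured}) immediately gives overtwistedness --- no explicit disk is constructed. This is both shorter and more robust than what you outline. A smaller point: a positive Legendrian stabilization realized via open book stabilization adds a \emph{positive} Dehn twist $D_\gamma$ to the monodromy (see Figure~\ref{fig:posstab}), not a negative one as your sketch says; the sign issue is irrelevant to the paper's argument precisely because it never appeals to the sign of that twist, only to the fact that it is supported away from $a$ near $p$. You are also right that $tb_\Sigma(\mathcal{K}_0^-)=-1$ matters, but in the paper it is used (Remark~\ref{rmk:tbneg1}) to pin down the dividing sets and the position of $\partial\Sigma$ in Figure~\ref{fig:bypasses}, which feeds into Theorem~\ref{thm:ttopgrading}, not into Lemma~\ref{lem:otstab}.
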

\addtocounter{theorem}{-1}
}
This in turn follows immediately from the lemma below, by Theorem \ref{thm:zero-overtwisted}.

{
\renewcommand{\thetheorem}{\ref{lem:otstab}}
\begin{lemma}
If the monodromy $h$ is not right-veering then $\xi_1^+$ is overtwisted.
\end{lemma}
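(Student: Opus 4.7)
The plan is to reduce overtwistedness of $\xi_1^+$ on the sutured manifold $Y(K)$ to overtwistedness of the closed contact manifold $(Y,\xi)$, and then invoke the theorem of Honda, Kazez, and Mati\'c that characterizes tight contact structures in terms of right-veering monodromies. This is by far the shortest route; a self-contained argument through partial open books is possible but substantially more delicate.

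First I would observe that, since $(\Sigma,h)$ is an open book supporting $(Y,\xi)$ and $h$ is not right-veering by hypothesis, the Honda-Kazez-Mati\'c theorem (``if $(Y,\xi)$ is tight then the monodromy of every compatible open book is right-veering'') immediately yields that $(Y,\xi)$ is overtwisted. This is the only non-trivial input to the argument.

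Next I would transfer this overtwistedness to $(Y(K),\Gamma_1,\xi_1^+)$. By construction $\xi_1^+$ is the restriction of $\xi$ to the complement $Y\smallsetminus\nu(\mathcal{K}_1^+)$ of a standard contact neighborhood of the Legendrian $\mathcal{K}_1^+$, so it suffices to produce an overtwisted region of $(Y,\xi)$ disjoint from $\nu(\mathcal{K}_1^+)$. Starting from any overtwisted ball $B\subset Y$, a small ambient isotopy (using that $B$ is $3$-dimensional and $\mathcal{K}_1^+$ is $1$-dimensional) makes $B$ disjoint from $\mathcal{K}_1^+$; shrinking the standard neighborhood $\nu(\mathcal{K}_1^+)$ then places $B$ inside $Y(K)$. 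Since overtwistedness is a local property inherited by any contact submanifold containing $B$, the ball $B$ witnesses overtwistedness of $\xi_1^+$, completing the proof.

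The main obstacle, such as it is, lies entirely in Step~1: justifying overtwistedness of the closed manifold $(Y,\xi)$. Once the HKM theorem is invoked, the descent to the sutured complement is a routine general-position argument. If one preferred an argument internal to the framework of the paper, one would instead work with the partial open book decomposition of $(Y(K),\Gamma_1,\xi_1^+)$ coming from $(S,f)$ and the arc $a$ along which $h$ fails to be right-veering, and construct an overtwisted disk directly; this is more work, and invoking HKM appears to be the cleanest solution.
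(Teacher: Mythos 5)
Your Step~1 is correct: by the Honda--Kazez--Mati\'c theorem applied to the open book $(\Sigma,h)$, the ambient contact structure $\xi$ on $Y$ is overtwisted, since some compatible open book has non-right-veering monodromy. The fatal problem is Step~2. In a $3$-manifold, a $3$-ball (or a $2$-disk) meeting a $1$-dimensional knot cannot be made disjoint from it by a \emph{small} ambient isotopy --- the intersection is stable under small perturbation since $3+1>3$ (and $2+1\geq 3$). And even a large smooth isotopy would not carry the contact structure along, so it would not produce an overtwisted region of $(Y,\xi)$ inside $Y(K)$. What your argument would really require is that $\mathcal{K}_1^+$ is a \emph{loose} Legendrian knot, i.e.\ that its complement is overtwisted; but it is a standard fact that overtwisted contact manifolds may contain exceptional (non-loose) Legendrian knots whose complements are tight, so this is genuinely nontrivial and is precisely the content of the lemma.

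Indeed, the conclusion your Step~2 would yield is demonstrably false in this very paper. If ``overtwistedness of $\xi$ descends to the complement of any Legendrian approximation of $K$'' were true, it would apply equally to $\mathcal{K}_0^-$ and show that $\xi_0^-$ is overtwisted, whence $\cinvt(\xi_0^-)=0$ by Theorem~\ref{thm:zero-overtwisted}. But $\phi_0^{SV}(\cinvt(\xi_0^-))=\kinvt(K)$ is nonzero by Theorem~\ref{thm:nonzero}, so $\cinvt(\xi_0^-)\neq 0$ and $\xi_0^-$ is \emph{not} overtwisted. The asymmetry between $\xi_1^+$ (overtwisted) and $\xi_0^-$ (not overtwisted) is the crux of the entire argument, and no general-position argument can detect it, because both $\mathcal{K}_1^+$ and $\mathcal{K}_0^-$ are Legendrian approximations of the same transverse knot $K$ in the same overtwisted $(Y,\xi)$.

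The route you dismissed as ``substantially more delicate'' is in fact the only one that works. The paper stabilizes $(\Sigma,h)$ to $(S',f')$ so that $\mathcal{K}_1^+$ is realized on a page of $S'$, making $(S',\,P=S'\ssm\nu(\mathcal{K}_1^+),\,f'|_P)$ a partial open book for $(Y(K),\Gamma_1,\xi_1^+)$. The construction is arranged so that the arc $a\subset\Sigma$ along which $h$ fails to be right-veering survives as a properly embedded arc in $P$, and $f'|_P$ still sends $a$ to the left at $p$; this is exactly where the choice of the \emph{positive} stabilization $\mathcal{K}_1^+$ (rather than $\mathcal{K}_1^-$) matters. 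Then \cite[Proposition~4.1]{hkm-sutured}, the sutured/partial open book version of the HKM right-veering criterion, gives that $\xi_1^+$ is overtwisted.
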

\addtocounter{theorem}{-1}
}

\begin{remark}
Baker and Onaran proved in \cite[Theorem 5.2.3]{baker-onaran} a similar result under the strictly stronger assumption that $(\Sigma,h)$ is the negative stabilization of another open book.
\end{remark}

\begin{proof}[Proof of Lemma \ref{lem:otstab}]
Suppose $h$ is not right-veering. Let $a$ be a properly embedded arc in $\Sigma$ with endpoints $p$ and $q$ such that $h$ sends $a$ to the left at $p$. We can arrange that the $1$-handle added to $\Sigma$ in forming the stabilization $(S,f)$ and the curve $\gamma$ are as shown in the middle of Figure \ref{fig:surface}. Note in particular that $a$ intersects the Legendrian realization $\mathcal{K}_0^-\subset S$ in one point and with negative sign.

  \begin{figure}[ht]
\labellist
\small \hair 2pt

\pinlabel $\Sigma$ at 106 258
\pinlabel $S$ at 434 257

\pinlabel $S'$ at 769 254
\pinlabel $a$ at 233 118

\pinlabel $\mathcal{K}_0^-$ at 620 100
\pinlabel $\mathcal{K}_1^+$ at 955 98

\tiny
\pinlabel $p$ at 208 68
\pinlabel $q$ at 118 68
\pinlabel $\gamma$ at 396 96

\endlabellist
\centering
\includegraphics[width=11cm]{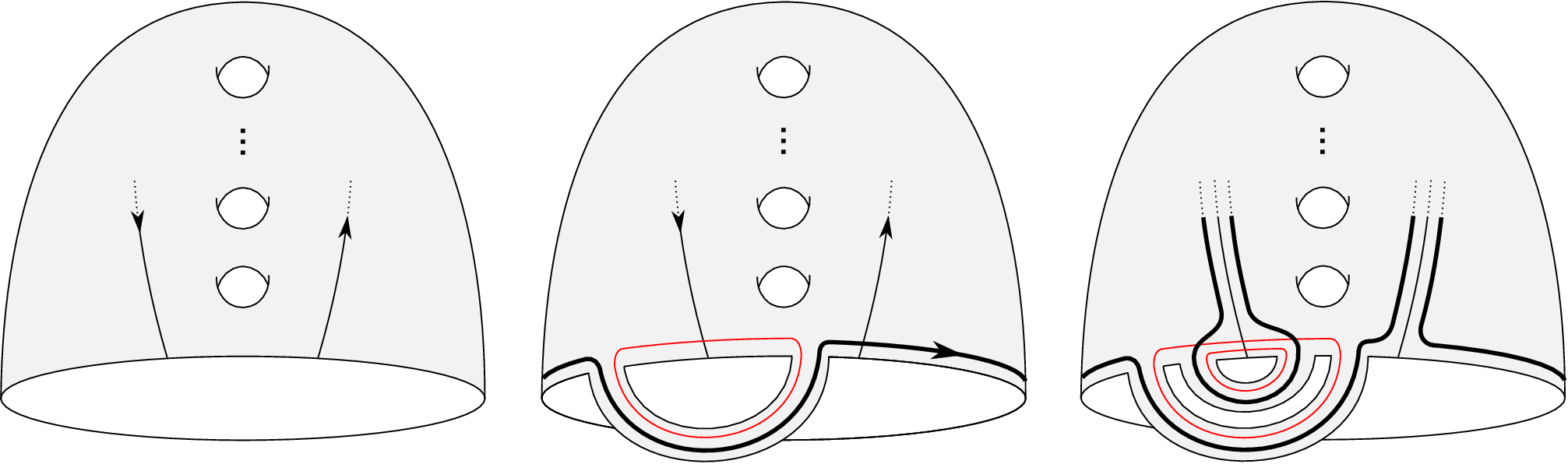}
\caption{Left, the fiber surface  for $K$ and the arc $a$ which is sent to the left at $p$ by the monodromy $h$. Middle, the   stabilization $(S,f)$ with  $\mathcal{K}_0^-$ realized on the page. Right, a further  stabilization  with $\mathcal{K}_1^+$ realized on the page $S'$.%Left, a fiber surface  for $K$ and an arc $\alpha$ which is sent to the left at $p$ by the monodromy. Middle, a stabilization of this open book obtained by attaching a $1$-handle  around the  point $q$ and composing the monodromy with a positive Dehn twist around the curve in red. The Legendrian approximation $\mathcal{K}$ can be realized on the page $S$ as shown. Right, another stabilization and a Legendrian realization of $\mathcal{K}_+$ on the page $S'$. 
}
\label{fig:surface}
\end{figure}

Given this setup, there is a standard way to realize $\mathcal{K}_1^+$ on a page of the open book $(S',f')$ obtained by positively stabilizing $(S,f)$, as shown in Figure \ref{fig:posstab}. Namely, $\mathcal{K}_1^+$ is  given by  the curve obtained by pushing $\mathcal{K}_0^-$ along the arc $a$ and then over the   $1$-handle that was added  in the stabilization. The right of Figure \ref{fig:surface} provides another view of this realization $\mathcal{K}_1^+\subset S'$.

\begin{figure}[ht]
\labellist
\small \hair 2pt
\pinlabel $S$ at 15 34

\pinlabel $S'$ at 178 34
\pinlabel $a$ at 78 29

\pinlabel $\mathcal{K}_0^-$ at 122 27
\pinlabel $\mathcal{K}_1^+$ at 285 27
\tiny
\pinlabel $p$ at 72 -1
\pinlabel $q$ at 72 50
\endlabellist
\centering
\includegraphics[width=8.7cm]{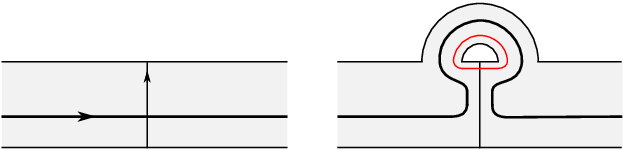}
\caption{Left, the Legendrian knot $\mathcal{K}_0^-$ on a page of the open book $(S,f)$. Right, the positive Legendrian stabilization $\mathcal{K}_1^+$ on a page of the stabilized open book $(S',f')$, where $f'$ is the composition of $f$ with the positive Dehn twist around the  curve in red. %Left, a Legendrian knot $\mathcal{K}$ realized on a page of an open book $(S,\phi)$. For its positive Legendrian stabilization $\mathcal{K}_+$, we find any arc $\alpha$ as shown, form the positive stabilization $(S',\phi')$, where $S'$ is obtained by attaching a $1$-handle straddling the endpoint $q$ and $\phi'$ is the composition of $\phi$ with a positive Dehn twist around the red curve on the right. We then push $\mathcal{K}$ along $\alpha$ and over the new handle. The result  is a Legendrian realization of $\mathcal{K}_+$ on the page $S'$.
}
\label{fig:posstab}
\end{figure}

To show that $\xi_1^+$ is overtwisted, recall from Example \ref{eq:legendrian-complement} that  \[(S',P=S'\ssm \nu(\mathcal{K}_1^+), f'|_{P})\] is a partial open book for the complement $(Y(K),\Gamma_1,\xi_1^+).$ Note  that $a$ is an  arc in $P$. Moreover,  $f'|_{P}$ sends $a$ to the left at $p$ since $h$ does. This means that $f'|_{P}$ is not right-veering. Work of Honda, Kazez, and Mati{\'c}  \cite[Proposition 4.1]{hkm-sutured} then says that $\xi_1^+$ is overtwisted.
\end{proof}

 %Let $\mathcal{K}^   let $\mathcal{K}^{\pm}_i$ be result of negatively Legendrian stabilizing  $\mathcal{K}=\mathcal{K}^-_0$ $(i-1)$ times and then positively/negatively stabilizing the result one additional time. Note that each $\mathcal{K}^-_i$ is also a Legendrian approximation of $K$. 

We now  prove the remaining theorems which combine with Lemma \ref{lem:bypassclaim} to prove Theorem \ref{thm:khi-fibered}, as outlined in the introduction. First, we have the following.

{
\renewcommand{\thetheorem}{\ref{thm:ttopgrading}}
\begin{theorem}
$\kinvt(K)\in\KHI(-Y,K,[\Sigma],g)$.  In other words, the class $\kinvt(K)$ is supported in Alexander grading $g$.
\end{theorem}
\addtocounter{theorem}{-1}
}

\begin{proof}
We will first prove  that \begin{equation}\label{eqn:otheralex}\cinvt(\xi_0^-)\in \SHI(-Y(K),-\Gamma_0,[\Sigma],g).\end{equation} 
As   in Example \ref{eq:legendrian-complement}, a partial open book for the complement $(Y(K),\Gamma_0,\xi_0^-)$ is given by \[(S,P=S\ssm \nu(\mathcal{K}_0^-), f|_P).\] Let $\mathbf{c} = \{c_1,\dots,c_n\}$ be a basis for $P$ such  that the endpoints of the basis arcs intersect $\gamma$ as shown in the middle of Figure \ref{fig:handleseifert}. Let \[\gamma_1,\dots,\gamma_n\] be the corresponding curves on the boundary of $H_{S}=S\times[-1,1]$ as defined  in \eqref{eqn:basishandle}, so that  \[(Y(K),\Gamma_0,\xi_0^-)\] is obtained from \[(H_{S},\Gamma_{S},\xi_{S})\] by attaching contact $2$-handles along the $\gamma_i$. By Definition \ref{def:contact}, the element $\cinvt(\xi_0^-)$ is then the image of the generator \[\mathbf{1}=\cinvt(\xi_{S})\in\SHI(-H_{S},-\Gamma_{S})\]  under the map associated to these $2$-handle attachments.  
\begin{figure}[ht]
\labellist
\small \hair 2pt

\pinlabel $\Sigma$ at 218 33
\pinlabel $S$ at 538 33
\pinlabel $K$ at 33 52
\pinlabel $\mathcal{K}_0^-$ at 360 52
\pinlabel $\gamma$ at 520 75
\tiny
\pinlabel $c_i$ at 470 5

\pinlabel $\gamma_i$ at 790 5
\endlabellist
\centering
\includegraphics[width=12cm]{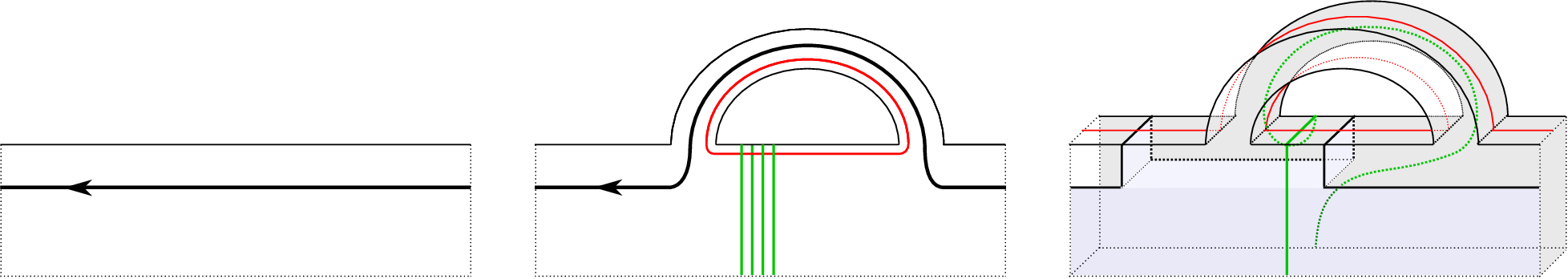}
\caption{Left, a portion of $\Sigma$ near its boundary with the knot $K$. Middle,  the stabilized surface $S$ with $\mathcal{K}_0^-$. The basis arcs $c_i$ are shown in green. Right, $(H_{S},\Gamma_{S})$ with a copy of the Seifert surface $\Sigma$ properly embedded in $H_{S}$ as shown in blue. Note that this copy $\Sigma$ is disjoint from the curves $\gamma_i$. Here, the red curves represent $\Gamma_S$. 
}
\label{fig:handleseifert}
\end{figure}

We claim that \begin{equation}\label{eqn:calex}\cinvt(\xi_{S})\in\SHI(-H_{S},-\Gamma_{S},[\Sigma],g).\end{equation} 
To prove  this, we let $\data_{S} = (Z,R,\eta,\alpha)$ be a closure of $(H_{S},\Gamma_{S})$ adapted to a properly embedded copy of $\Sigma$ in  $H_S$ as shown on the right of Figure \ref{fig:handleseifert}. Let us assume that this closure is formed  using an annular auxiliary surface. That is, to form $Z$ we first form a preclosure $M'$ by gluing  a thickened annulus to $H_S$ in such a way  that  $\Sigma'$ extends to a surface in $M'$ obtained from $\Sigma$ by adding a $1$-handle, as shown in the lower left of Figure \ref{fig:torusdifference}. 
\begin{figure}[ht]
\labellist
\small \hair 2pt
\pinlabel $B$ at 510 145
%\pinlabel $\Sigma$ at 165 413
%\pinlabel $\Sigma$ at 513 413
%\pinlabel $\Sigma'$ at 165 50
%\pinlabel $\Sigma'$ at 513 50
\endlabellist
\centering
\includegraphics[width=7.2cm]{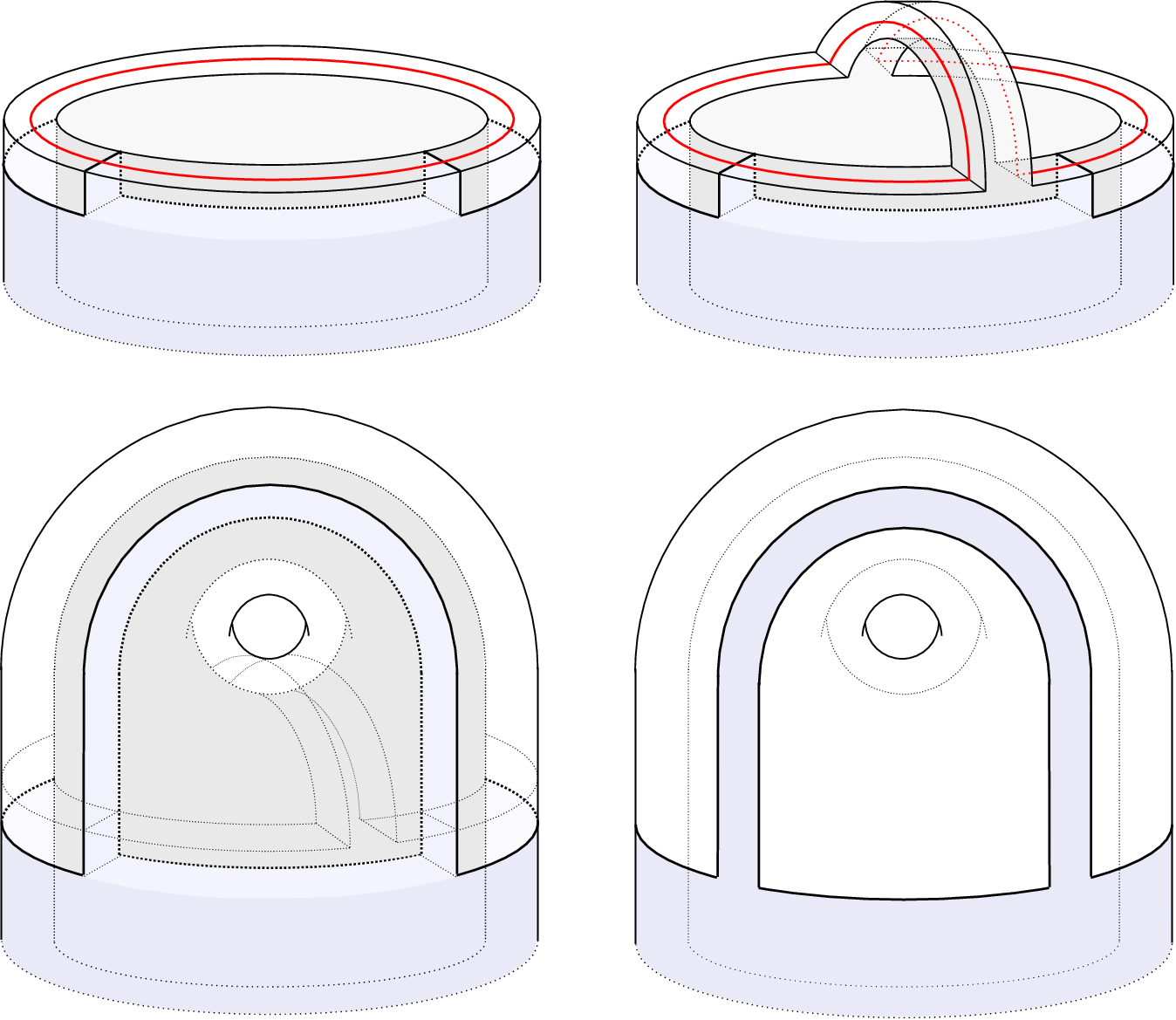}
\caption{Top left, the  product $(H_{\Sigma},\Gamma_\Sigma)$ with a copy of the surface $\Sigma$ shown in blue and properly embedded. Top right, the product $(H_{S},\Gamma_{S})$. Bottom left, the preclosure $M'$ of $(H_{S},\Gamma_{S})$ formed by gluing on a thickened annulus. The surface $\Sigma$ extends by a $1$-handle to the surface $\Sigma'$ shown in blue, with boundary curves $c_\pm$. Bottom right, $\Sigma'$ is isotopic to the subsurface of $R=\partial_+M'$ shown in blue. This subsurface differs from $R$ by the white annulus $B$.
}
\label{fig:torusdifference}
\end{figure}
Then $R$ is defined to be $\partial_+M$ and $Z$ is  formed  by gluing $R\times[1,3]$ to $M'$ as specified by  a diffeomorphism \[\varphi:\partial_+M'\to\partial_-M'\] which identifies the two boundary components \[c_\pm\subset \partial M'\] of $\Sigma'$. The surface $\Sigma'$  caps off in $Z$ to a closed surface $\overline\Sigma$, which is the union of $\Sigma'$ with the annulus \[A=c_+\times [1,3]\subset R\times[1,3].\] 
We claim that $\overline\Sigma$ and $R$ differ in $H_2(Z)$ by a torus. Indeed, if we remove the annulus $A$ from $\overline\Sigma$ we recover $\Sigma'$, which is isotopic to the subsurface of $R$ shown in blue in the lower right of Figure \ref{fig:torusdifference}. The union of this subsurface with the white annulus $B$ is equal to $R$. So, the difference $\overline\Sigma-R$ is homologous to the torus  $T=A\cup-B$ obtained as the union of these two annuli. Note that \[2g(R)-2=2g(\overline\Sigma)-2=2g.\] By definition, then, the class $\cinvt(\xi_{S},\data_{S})$ is  an element of the generalized $2g$-eigenspace of the operator \[\mu(R):I_*(-Z)_{-\alpha-\eta}\to I_*(-Z)_{-\alpha-\eta}.\] Corollary \ref{cor:grading-shift} then implies that it is also an element of the generalized $2g$-eigenspace of the operator \[\mu(\overline\Sigma):I_*(-Z|{-}R)_{-\alpha-\eta}\to I_*(-Z|{-}R)_{-\alpha-\eta}\] since $\overline\Sigma$ and $R$ differ by a torus. But the latter eigenspace is, by definition, $\SHI(-\data_{S},[\Sigma],g)$. This implies that $\cinvt(\xi_{S})$ lies in Alexander grading $g$, as claimed in \eqref{eqn:calex}.

To show that $\cinvt(\xi_0^-)$  lies in Alexander grading $g$, as  in \eqref{eqn:otheralex}, recall that on the level of closures, this class is the image of $\cinvt(\xi_S,\data_S)$ under the map induced by  the  cobordism associated to $\partial H_S$-framed surgeries on $\gamma_1,\dots,\gamma_n\subset Z$. The right of Figure \ref{fig:handleseifert} shows that these curves do not intersect the surface $\Sigma$ in $H_S$, which means that they are disjoint from the capped off surface $\overline\Sigma\subset Z$. This means that the capped off surfaces defining the Alexander gradings on the two ends of this cobordism are isotopic in the cobordism. Lemma \ref{lem:commute} then implies the cobordism map respects the Alexander grading, which proves the claim.

 To complete the proof of Theorem \ref{thm:ttopgrading}, we need only show that the bypass attachment map $\phi_0^{SV}$ preserves Alexander grading, keeping in mind that \[\kinvt(K)=\phi_0^{SV}(\cinvt(\xi_0^-)).\] The argument for this is the same as above. The map $\phi_0^{SV}$ is the composition of the maps associated to  attaching a $1$-handle with feet at the endpoints of the arc $c$ in Figure \ref{fig:bypasses} and then attaching a $2$-handle along the union of $c$ with an arc passing  once over this 1-handle.  Call this union $s$. Suppose $\data=(Z,R,\alpha,\eta)$ is a closure of the manifold obtained after the $1$-handle attachment, adapted to $\Sigma$. Then it is also  a closure of $(Y(K),\Gamma_0)$, and, on the level of closures, $\phi_0^{SV}$  is   induced by  the  cobordism associated to surgery on $s\subset Z$. The right of Figure \ref{fig:bypasses} shows that the arc $c$ does not intersect the surface $\Sigma\subset Y(K)$, which implies that the surgery curve $s$ is disjoint from the capped off surface $\overline\Sigma\subset Z$. This cobordism map therefore respects the Alexander grading, as argued above, which proves the claim.
\end{proof}

\begin{remark}
\label{rmk:tbneg1} It was important at the end of the proof of Theorem \ref{thm:ttopgrading} that the arc $c$ was disjoint from the surface $\Sigma\subset Y(K)$. This claim relied on the depiction  of $\partial \Sigma$  in Figure \ref{fig:bypasses}, so it behooves us to justify this depiction. This is where the observation \[tb_\Sigma(\mathcal{K}_0^-)=-1\]  in \eqref{eqn:tbneg1}  comes into play. First, this condition implies that \[tb_\Sigma(\mathcal{K}_1^+)=-2.\] These two Thurston-Bennequin calculations then  imply that  each component of  $\Gamma_0$ and $\Gamma_1$  intersects $\partial \Sigma$ in $1$ and $2$ points, respectively. This  forces  $\partial\Sigma$ to be as indicated in Figure \ref{fig:bypasses}.
\end{remark}

Next, we prove the following analogue of Vela-Vick's theorem \cite{vv} that the connected binding of an open book has nonzero transverse invariant in Heegaard Floer homology.

{
\renewcommand{\thetheorem}{\ref{thm:nonzero}}
\begin{theorem}
$\kinvt(K)$ is nonzero.
\end{theorem}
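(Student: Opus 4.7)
My plan is to prove $\kinvt(K) = \linvt(\mathcal{K}_0^-) \neq 0$ by following the template of the proof of Theorem \ref{thm:tb}: first reduce to a non-vanishing statement about an ordinary contact invariant via Lemma \ref{lem:plusone}, and then establish that non-vanishing via the instanton surgery exact triangle (Theorem \ref{thm:exacttri}) combined with the adjunction inequality in instanton Floer homology.

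For the reduction, apply Lemma \ref{lem:plusone} to $\mathcal{K}_0^-$. Let $(Y',\xi')$ be the result of contact $(+1)$-surgery on $\mathcal{K}_0^-$. The lemma gives a map
\[ \KHI(-Y,K) \longrightarrow \SHI(-Y'(1)) \]
sending $\linvt(\mathcal{K}_0^-) = \kinvt(K)$ to $\cinvt(Y'(1))$, so it suffices to show $\cinvt(Y'(1))\neq 0$. Because $\mathcal{K}_0^-$ sits on the page $S$ of the positively stabilized open book $(S,f)$ as a boundary push-off of the binding with $tb_\Sigma(\mathcal{K}_0^-)=-1$, this contact $(+1)$-surgery is topologically $0$-surgery on $K$ with respect to its Seifert framing. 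Thus $Y'$ fibers over $S^1$ with closed fiber $\hat\Sigma$ of genus $g$ (the capped-off Seifert surface), supported by an open book with page $\hat\Sigma$ and monodromy $\hat h$ extending $h$ by the identity on the capping disk.

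For the non-vanishing step, embed $(Y(1),\xi)$, $(Y'(1),\xi')$, and a third manifold $(Y''(1),\xi'')$ coming from the intermediate framing on $\mathcal{K}_0^-$ into the instanton surgery exact triangle of Theorem \ref{thm:exacttri}. I arrange bundles so that two of the three cobordism maps transport contact invariants according to Theorems \ref{thm:legendrian-surgery} and \ref{thm:legendrian-surgery2}. In the $2$-handle cobordism representing the third edge of the triangle, the Seifert surface of $K$ caps off along the $2$-handle core to a closed genus-$g$ surface, and by tubing with the fiber $\hat\Sigma\subset Y'$ (or by iterating along a surgery sequence on further stabilizations $\mathcal{K}_i^\pm$) I aim to produce a closed surface whose interaction with the chosen bundle violates the adjunction inequality \cite[Theorem 1.1]{km-gauge2}, forcing the corresponding cobordism map to vanish. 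Exactness then makes the preceding cobordism map an injection, carrying a contact class in $\SHI(-Y(1))$ onto $\cinvt(Y'(1))$.

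The main obstacle is the simultaneous bundle bookkeeping in the exact triangle: one must pick bundle data so that (i) two of the three cobordism maps transport the contact invariants in the sense of Theorem \ref{thm:legendrian-surgery2}, which uses particular cylindrical extensions, and (ii) the adjunction inequality applies on the remaining cobordism to kill that map. These requirements pull in different directions, since contact naturality sees the bundle on a product region while adjunction needs a closed surface inside the cobordism with the right intersection, and reconciling them will likely require the eigenspace-transfer technology of Proposition \ref{prop:grading-shift}. A secondary hurdle is supplying the starting nonzero contact class at the injective end; this should come from a Stein-fillable auxiliary contact manifold connected to $(Y,\xi)$ by Legendrian surgeries (whose contact invariants map via Theorem \ref{thm:legendrian-surgery}), or from an inductive application of the same surgery-triangle argument with a trivial base case given by Lemma \ref{lem:unknot}. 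Once both hurdles are cleared the reduction of Step 1 closes the argument.
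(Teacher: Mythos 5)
The proposal has a genuine gap: the direct adjunction argument from the proof of Theorem~\ref{thm:tb} cannot be made to work in your setting, because the required Thurston--Bennequin/four-ball-genus relationship is not available. In Theorem~\ref{thm:tb} the hypothesis $tb(K)=2g_4(K)-1$ in $(S^3,\xi_{std})$ is exactly what produces, in the cobordism $W_1$, a closed surface $\overline\Sigma$ with $g(\overline\Sigma)=g_4$ and $\overline\Sigma\cdot\overline\Sigma=2g_4-1>2g(\overline\Sigma)-2$, forcing the adjunction violation. For $\mathcal{K}_0^-$, however, $tb_\Sigma(\mathcal{K}_0^-)=-1$, so contact $(+1)$-surgery is $0$-surgery with respect to the Seifert framing and the third vertex of the triangle is $(-1)$-surgery; the capped-off Seifert surface in either $2$-handle cobordism has genus $g$ and self-intersection $0$ or $-1$, both of which satisfy $\overline\Sigma\cdot\overline\Sigma\le 2g-2$ for $g>0$. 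Tubing with the fiber $\hat\Sigma\subset Y'$ changes neither the self-intersection nor this conclusion, and iterating over the negative stabilizations $\mathcal{K}_i^\pm$ only decreases $tb$, moving further from the regime where adjunction applies. Your own remark that you "aim to produce a closed surface" violating adjunction signals the unfilled hole; no such surface exists in these cobordisms.

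The paper avoids this by a two-stage argument with a different shape. First it establishes $\kinvt\neq 0$ for one \emph{model} fibered knot of each genus --- the $(2,2g+1)$ torus braid $K'\subset(S^3,\xi_{std})$ --- where a Legendrian representative $\mathcal{K}'$ has $tb(\mathcal{K}')=2g-1=2g_4(K')-1$, so Theorem~\ref{thm:tb} applies directly (together with Etnyre--Honda transverse simplicity to verify $\mathcal{K}'$ is a Legendrian approximation of the binding). Second, it propagates the non-vanishing to arbitrary $(\Sigma,\phi)$ by observing that any two open book monodromies with page $\Sigma$ differ by Dehn twists, and that the associated surgery cobordism map restricts to an \emph{isomorphism} on the top Alexander summand $\KHI(-Y',K',[\Sigma],g)$ --- the key point being that the third term of the surgery triangle vanishes in grading $g$ because $0$-surgery on the Dehn-twist curve lowers the genus of $\overline\Sigma$, so Proposition~\ref{prop:mu-spectrum} kills its $2g$-eigenspace. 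This restriction to the top Alexander grading is what replaces the adjunction violation you were seeking. You should discard the reduction through Lemma~\ref{lem:plusone} applied to $\mathcal{K}_0^-$ and instead anchor the argument on a model knot where Theorem~\ref{thm:tb} genuinely applies, then transport along the grading-preserving surgery isomorphisms.
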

\addtocounter{theorem}{-1}
}
\begin{proof}
We first show that there is \emph{some} genus $g$ fibered knot $K'$  with open book $(\Sigma,\phi)$  such that the corresponding transverse invariant $\kinvt(K')$ is nonzero. We then use the surgery exact triangle to show that there  is an isomorphism \begin{equation}
\label{eqn:isoknots}\KHI(-Y',K',[\Sigma],g)\xrightarrow{\cong} \KHI(-Y'',K'',[\Sigma],g)\end{equation}  for \emph{any} two such fibered knots which sends $\kinvt(K')$ to $\kinvt(K'').$   Theorem \ref{thm:nonzero} will follow.

For the first, let $K'$ be the braid with two strands and $2g+1$ positive crossings. Note that $K'$ has genus $g$. As a positive braid, $K'$ is fibered with $g_4(K')=g$, with open book supporting the tight contact structure on $S^3$. As the transverse binding of this open book, we have that \[sl(K') = 2g-1.\] Let $\mathcal{K}'$ be the Legendrian representative of $K'$ shown in Figure \ref{fig:legbraid}. We have that \[tb(\mathcal{K}')=2g-1=2g_4(K')-1 \text{ and } rot(\mathcal{K}')=0.\] Theorem \ref{thm:tb} therefore implies that \[\linvt(\mathcal{K}')\neq 0.\]  We  claim  that $\mathcal{K}'$ is a Legendrian approximation of $K'$. For this, simply note that the transverse pushoff $K''$ of $\mathcal{K}'$  has self-linking number \[sl(K'')=tb(\mathcal{K}')+rot(\mathcal{K}')=2g-1.\] The transverse simplicity of torus knots,  proven by Etnyre and Honda  \cite{etnyre-honda}, then implies that $K''$ is transversely isotopic to the transverse binding $K'$. In other words, $\mathcal{K}'$ is a Legendrian approximation of $K'$. It  then follows that \[\kinvt(K'):=\linvt(\mathcal{K'})\] is nonzero, as desired.

\begin{figure}[ht]
\labellist
\tiny
\pinlabel $2g{+}1$ at 60 45
\pinlabel $\cdots$ at 64.8 31.5

\endlabellist
\centering
\includegraphics[width=4.5cm]{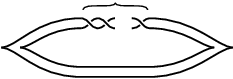}
\caption{A Legendrian approximation $\mathcal{K}'$ of the torus braid $K'=T(2g+1,2)$.
}
\label{fig:legbraid}
\end{figure}

Suppose next that $K'\subset Y'$ is a genus $g$ fibered knot  with open book $(\Sigma,\phi)$. Let $K''\subset Y''$ be the fibered knot corresponding to  the open book $(\Sigma,\phi\circ D_{\beta}^{-1})$, where $D_{\beta}$ is a positive Dehn twist around a nonseparating curve $\beta\subset \Sigma$. We  first prove the isomorphism \eqref{eqn:isoknots} for this pair.

Note that $\beta$ can be viewed as a Legendrian knot in the contact structure on $Y'$ corresponding to $(\Sigma,\phi)$, with contact framing equal to the framing induced by $\Sigma$. In this case, $Y''$ is the result of contact $(+1)$-surgery on $\beta$.  Let \[(Y'(K'),\Gamma_\mu,\xi_{\mu,\mathcal{K}'})\textrm{ and } (Y''(K''),\Gamma_\mu,\xi_{\mu,\mathcal{K}''})\] be the sutured contact manifolds obtained by removing standard neighborhoods of Legendrian approximations $\mathcal{K}'$ and $\mathcal{K}''$  of $K'$ and $K''$ and attaching Stipsicz-V{\'e}rtesi bypasses, as  described in the previous section. Then  $\beta$ is naturally a Legendrian knot in the first of these sutured contact manifolds, as $\Sigma$ is a subsurface of a page of a partial open book for this contact manifold. The second sutured contact manifold above is the result of  contact $(+1)$-surgery on $\beta$. Moreover, we have that \begin{equation}\label{eqn:Tcontact}\kinvt(K'):=\theta(\xi_{\mu,\mathcal{K}'})\textrm{ and }\kinvt(K''):=\theta(\xi_{\mu,\mathcal{K}''})\end{equation} as per Remark \ref{rmk:sv}.%The associated map \[\mathscr{F}^\beta:\KHI(-Y',K')\to\KHI(-Y'',K'')\] sends $\kinvt(K')$ to $\kinvt(K'')$ by Theorem \ref{thm:legendrian-surgery}.

Let $\data = (Z,R,\alpha,\eta)$ be a closure of $(Y'(K'),\Gamma_\mu)$ adapted to $\Sigma$. Let 
\[\data_1=(Z_1,R,\eta,\alpha)\textrm{ and }
\data_0=(Z_0,R,\eta,\alpha)
\] be the tuples obtained from $\data$ by performing $1$- and $0$-surgery on $\beta$ with respect to the framing induced by $\Sigma$. These are naturally closures of the sutured manifolds 
\[(Y''(K''),\Gamma_\mu) =(Y'(K')_1(\beta),\Gamma_\mu)\textrm{ and } (Y'(K')_0(\beta),\Gamma_\mu),\] adapted to $\Sigma$ in each case.
By Theorem \ref{thm:exacttri}, there is a surgery exact triangle 
\[ \xymatrix@C=-35pt@R=30pt{
I_*(-Z|{-}R)_{-\alpha -\eta} \ar[rr]^{I_*(W)_{{\kappa}}}  & &I_*(-Z_1|{-}R)_{-\alpha -\eta} \ar[dl] \\
&I_*(-Z_0|{-}R)_{-\alpha -\eta+ \mu}\ar[ul] & \\
} \] as in \eqref{eqn:surgeryexacttriangleZ}, where $\mu$ is the curve in $-Z_0$ corresponding to the meridian of $ \beta\subset -Z$.

We can push $\beta$ slightly off of  $\Sigma$ to ensure that the capped off surfaces $\overline{\Sigma}$ in these closures are isotopic in the cobordisms between them. Lemma \ref{lem:commute} then implies that the maps in this exact triangle respect the eigenspace decompositions associated with the operators $\mu(\overline\Sigma)$. In particular, we have an exact triangle \[ \xymatrix@C=-35pt@R=30pt{
\SHI(-\data,[\Sigma],g) \ar[rr]^{I_*(W)_{{\kappa}}}  & &\SHI(-\data_1,[\Sigma],g)  \ar[dl] \\
&I_*(-Z_0|{-}R)_{-\alpha -\eta+ \mu}^{(2g)},\ar[ul] & \\
} \] where the third group denotes the generalized $2g$-eigenspace of the operator $\mu(\overline\Sigma)$ acting on $I_*(-Z_0|{-}R)_{-\alpha -\eta+ \mu}$. But the surface $\overline\Sigma$ is homologous  in $-Z_0$ to a surface of genus \[g(\overline\Sigma)-1=g\] obtained by surgering $\overline\Sigma$ along $\beta$, so Proposition \ref{prop:mu-spectrum} tells us that this $2g$-eigenspace is trivial. The map $I_*(W)_{{\kappa}}$ is therefore an isomorphism. Recall that this map gives rise to the map we denote by $F_\beta$ in Section~\ref{sssec:handles}. We have shown above that $F_\beta$ restricts to an isomorphism \[F_\beta:\KHI(-Y',K',[\Sigma],g)\to\KHI(-Y'',K'',[\Sigma],g).\] Moreover, this map sends $\kinvt(K')$ to $\kinvt(K'')$ by Theorem~\ref{thm:legendrian-surgery} and \eqref{eqn:Tcontact}.

Finally, since any two genus $g$ fibered knots $K'\subset Y'$ and $K''\subset Y''$ with fiber $\Sigma$ have  open books with  monodromies related by positive and negative Dehn twists around nonseparating curves in $\Sigma$, we conclude that there is for any two such knots an isomorphism  \[\KHI(-Y',K',[\Sigma],g)\to \KHI(-Y'',K'',[\Sigma],g)\] sending $\kinvt(K')$ to $\kinvt(K'').$  As discussed above, this completes the proof of Theorem \ref{thm:nonzero}.
\end{proof}

The bypass attachment along the arc $p$ in Figure \ref{fig:bypasses} fits into a bypass triangle as shown in Figure \ref{fig:bypasstri2}. Note that the arc of attachment in the upper right is precisely the arc defining the Stipsicz-V{\'e}rtesi  bypass attachment which induces the map $\phi_1^{SV}$.

\begin{figure}[ht]
\labellist
\small \hair 2pt

\pinlabel $+$ at 402 370
\pinlabel $-$ at 459 420

\pinlabel $+$ at 782 370
\pinlabel $=$ at 905 421

\pinlabel $-$ at 837 420
\tiny
\pinlabel $p$ at 345 428
%\pinlabel $c$ at 1060 360
\pinlabel $\phi_0^+$ at 578 452
\pinlabel $\phi_1^{SV}$ at 758 261
\pinlabel $C$ at 410 260
\endlabellist
\centering
\includegraphics[width=11.5cm]{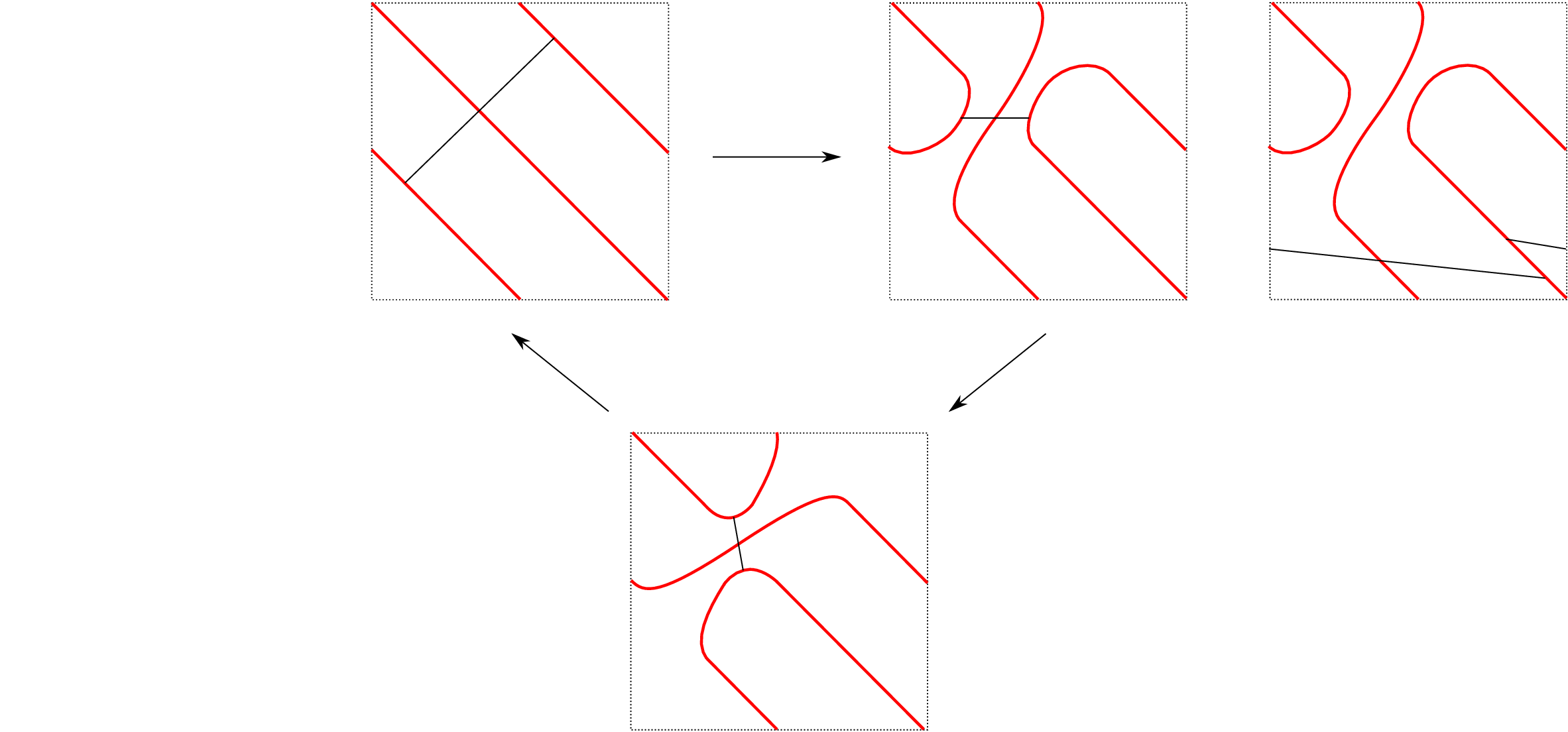}
\caption{The arcs of attachment for the bypass exact triangle \eqref{eqn:bypasstri}. The arc in the upper right  specifies  a Stipsicz-V{\'e}rtesi  bypass attachment.
}
\label{fig:bypasstri2}
\end{figure}

By Theorem \ref{thm:bypass}, there is a corresponding bypass exact triangle of the form
\begin{equation}\label{eqn:bypasstri} \xymatrix@C=-25pt@R=35pt{
\SHI(-Y(K),-\Gamma_0) \ar[rr]^{\phi_0^+} & & \SHI(-Y(K),-\Gamma_1) \ar[dl]^{\phi_1^{SV}} \\
& \KHI(-Y,K) \ar[ul]^C. & \\
} \end{equation}

To prove Theorem \ref{thm:khi-fibered}, let us now assume that the monodromy $h$ is not right-veering. Then \[\phi_0^+(\cinvt(\xi_0^-))=0\] by Lemma \ref{lem:bypassclaim}. Exactness of the triangle \eqref{eqn:bypasstri} then tells us that there is a class \[x\in\KHI(-Y,K)\] such that \[C(x)=\cinvt(\xi_0^-).\] 
 The composition \begin{equation}\label{eqn:phicomp}\phi_0^{SV}\circ C: \KHI(-Y,K)\to \KHI(-Y,K)\end{equation} therefore satisfies \begin{equation}\label{eqn:xmapsto}\phi_0^{SV}(C(x))=\kinvt(K),\end{equation}  which is nonzero by Theorem \ref{thm:nonzero}. Thus, $x$ is nonzero.
 
The class $x$ is not \emph{a priori}  homogeneous with respect to the Alexander grading on $\KHI(-Y,K)$. However, we prove the following, which completes the proof of Theorem \ref{thm:khi-fibered}.

\begin{theorem}
The component of $x$  in $\KHI(-Y,K,[\Sigma],g-1)$ is nonzero.
\end{theorem}

\begin{proof}
The composite map $\phi_0^{SV}\circ C$ is ultimately induced by a  cobordism \[(W,\nu):(-Y_1,-\alpha\sqcup-\eta)\to (-Y_2,-\alpha\sqcup-\eta)\] from a closure \[-\data_1=(-Y_1,-R,-\eta,-\alpha)\textrm{ of }(-Y(K),-\Gamma_\mu)\] to another closure \[-\data_2=(-Y_2,-R,-\eta,-\alpha)\textrm{ of }(-Y(K),-\Gamma_\mu),\] corresponding to surgery on curves away from the embedded copies of $-Y(K)$ in these $-Y_i$. We can arrange that these two closures are adapted to $\Sigma$. The capped off surfaces $\overline\Sigma_a\subset -Y_1$ and  $\overline\Sigma_b\subset -Y_2$ in these closures are unions of $\Sigma$ with once-punctured tori, \[\overline\Sigma_a=\Sigma\cup T_a\textrm{ and } \overline\Sigma_b=\Sigma\cup T_b.\] The surfaces $\Sigma\subset Y_i$ are isotopic within $W$ as they are contained on the boundary of a product region \[-Y(K)\times I\subset W.\] We therefore have that \begin{equation}\label{eqn:difference}\overline\Sigma_a + F = \overline\Sigma_b\end{equation} in $H_2(W)$, where $F$ is the genus two surface in $W$ with self-intersection $0$ given as the union \[F=T_b\cup (\partial \Sigma\times I) \cup -T_a\] of these once-punctured tori. Let $y \in \SHI(-\data_1)$ be the element representing the class $x \in \KHI(-Y,K).$  Write \[y=y_{-g}+y_{1-g}+\dots +y_{g-1}+y_g\] where $y_i\in \SHI(-\data_1,[\Sigma],i)$. %In other words, $y_i$ is in the $2i$-eigenspace of the operator $\mu(\overline\Sigma_a)$ on $I_*(-Y_1|{-}R)_{-\alpha\sqcup-\eta}$.
We know from \eqref{eqn:xmapsto} that the map $I_*(W)_\nu$ sends $y$  to a representative of $\kinvt(K)$, which must be   a nonzero element of $\SHI(-\data_2,[\Sigma],g)$ by Theorems \ref{thm:ttopgrading} and \ref{thm:nonzero}. %But this group is the $2g$-eigenspace of the operator $\mu(\overline\Sigma_b)$ on $I_*(-Y_e|{-}R)_{-\alpha\sqcup-\eta}$.
It follows from Proposition \ref{prop:grading-shift} and the relation \eqref{eqn:difference}, however, that the  only components of $y$ whose images can have nonzero components in $\SHI(-\data_2,[\Sigma],g)$ are $y_{g-1}$ and $y_g$, since $g(F)=2$. On the other hand, we claim that \[I_*(W)_\nu(y_g)=0.\] This will prove that $y_{g-1}$ must be nonzero, proving the theorem. For this claim, note that \[\SHI(-Y,K,[\Sigma],g)\cong \C\] since $K$ is fibered. This group is therefore generated by $\kinvt(K)$ by Theorems \ref{thm:ttopgrading} and \ref{thm:nonzero}. If $y_g$ is zero then we are done. If not, then $y_g$ represents a nonzero multiple of $\kinvt(K)$, so \[I_*(W)_\nu(y_g) = 0 \textrm{ if and only if }\phi_0^{SV}(C(\kinvt(K)))= 0.\] Thus, it remains to show that the latter is zero. For this, recall that \[\kinvt(K) = \phi_1^{SV}(\cinvt(\xi_1^-)).\] It then follows that \[C(\kinvt(K)) = C(\phi_1^{SV}(\cinvt(\xi_1^-)))=0\] by the exactness of the triangle \eqref{eqn:bypasstri}.
\end{proof}

\bibliographystyle{alpha}
\bibliography{References}

\end{document}